\newtheorem{theorem}{Theorem}[section]
\newtheorem{lemma}[theorem]{Lemma}
\newtheorem{conjecture}[theorem]{Conjecture}
\newtheorem{corol}[theorem]{Corollary}
\theoremstyle{definition}
\newtheorem{claim}[theorem]{Claim}
\newtheorem{remark}[theorem]{Remark}
\newcommand{\E}{\mathbb E}
\newcommand{\1}{\mathbbm{1}}
\newcommand{\Prob}{\mathbb{P}}
\author{Maksim Zhukovskii}
\address{School of Computer Science, University of Sheffield, UK}
\email{m.zhukovskii@sheffield.ac.uk}
\title{Sharp thresholds for spanning regular subgraphs}
\date{}
\begin{document}

\maketitle

\begin{abstract}
We prove that $(1+o(1))\sqrt{e/n}$ is the sharp threshold for the appearance of the square of a Hamilton cycle in $G(n,p)$, confirming the conjecture of Kahn, Narayanan, and Park. We also find the exact asymptotics of the threshold for the emergence of a spanning subgraph isomorphic to a fixed  graph $F$ for a wide family of $d$-regular graphs $F$.  This family includes almost all $d$-regular graphs.
\end{abstract}

\section{Introduction}

A graph property is called {\it increasing} if it is closed under the addition of edges. If $\mathcal{Q}$ is a non-trivial increasing property and $\mathbf{G}\sim G(n,p)$ is a binomial random graph, then, for every fixed $n$, the function $\mathbb{P}(\mathbf{G}\in\mathcal{Q})$ increases in $p$. In particular, there exists a unique solution $p_c(\mathcal{Q})$ of the equation $\mathbb{P}(\mathbf{G}\in\mathcal{Q})=1/2$, which is called the {\it probability threshold for $\mathcal{Q}$}. In 1987, Bollob\'{a}s and Thomason~\cite{BT_87} proved that, for any non-trivial increasing property $\mathcal{Q}$, whp\footnote{With high probability, that is, with probability tending to 1 as $n\to\infty$.} $\mathbf{G}\in\mathcal{Q}$ if $p\gg p_c(\mathcal{Q})$\footnote{For positive sequences $a=(a_n,n\in\mathbb{N})$ and $b=(b_n,n\in\mathbb{N})$, we write $a\ll b$ when $\lim_{n\to\infty}a_n/b_n=0$. In this case, we also write $b\gg a$.} and  whp $\mathbf{G}\notin\mathcal{Q}$ if $p\ll p_c(\mathcal{Q})$.

Since the original paper of Erd\H{o}s and R\'{e}nyi~\cite{ER} the task of determining the asymptotic behaviour of  $p_c(\mathcal{Q})$ for increasing properties $\mathcal{Q}$ has been a central topic in probabilistic combinatorics. 
  While the asymptotic order of the probability threshold has been determined for many natural increasing graph properties, a general solution remains unknown,  and determining the {\it exact asymptotics} of $p_c(\mathcal{Q})$ is even more challenging. In this paper we address the latter question, explicitly posed in~\cite[Question 2]{Perkins}. We provide an answer for a class of increasing properties generated by $d$-regular graphs and resolve a notable conjecture in this area regarding the asymptotics of $p_c$ for the appearance of the square of a Hamilton cycle. In particular, our results imply that
 \begin{itemize} 
 \item for (asymptotically) almost all $d$-regular graphs $F$ on $\{1,\ldots,n\}$, the threshold for containing a spanning subgraph isomorphic to $F$ is $(1+o(1))(e/n)^{2/d}$;
 \item the thresholds for containing a square lattice and a triangular lattice are, respectively, $(1+o(1))(e/n)^{1/2}$ and $(1+o(1))(e/n)^{1/3}$;
 \item for every integer $k\geq 2$, the threshold for containing the $k$-th power of a Hamilton cycle is  $(1+o(1))(e/n)^{1/k}$, thus resolving the conjecture of Kahn, Narayanan, and Park~\cite{KNP}.
 \end{itemize}

\subsection{Expectation threshold.} Although pinning down exact asymptotics for probability thresholds is notoriously difficult, for any increasing property $\mathcal{Q}$ the value of $p_c(\mathcal{Q})$ can be established up to a $\log n$-factor if the so called {\it expectation threshold} is known --- thanks to the remarkable result of Park and Pham~\cite{PP_KK}, which resolved the renowned conjecture of Kahn and Kalai. 
   Let us recall the definition of the expectation threshold and then state the theorem of Park and Pham. Roughly speaking, the expectation threshold $p_e(\mathcal{Q})$ is the moment when the expected number of graphs generating $\mathcal{Q}$ hits $1/2$. Formally, $p_e(\mathcal{Q})$ is the maximum $p$ such that there exists a set of graphs $\mathcal{Q}'$ satisfying $\sum_{G\in\mathcal{Q}'}p^{|E(G)|}\leq\frac{1}{2}$ and $\mathcal{Q}\subseteq\langle\mathcal{Q}'\rangle$, where $\langle\mathcal{Q}'\rangle$ is the upwards closure of $\mathcal{Q}'$. Due to Markov's inequality, $p_e(\mathcal{Q})\leq p_c(\mathcal{Q})$. Let $n_\mathcal{Q}$ be the number of edges in a largest minimal element of $\mathcal{Q}$.

\begin{theorem}[Park, Pham~\cite{PP_KK}]
There exists a universal constant $C>0$ such that the inequality $p_c(\mathcal{Q})\leq Cp_e(\mathcal{Q})\cdot\log n_\mathcal{Q}$ holds for any increasing property $\mathcal{Q}$.
\label{th:PP_KK}
\end{theorem}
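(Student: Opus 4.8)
The statement is precisely the Kahn--Kalai conjecture, and my plan is to follow the argument of Park and Pham, which I sketch below. It suffices to establish the equivalent ``expectation threshold'' form: there is a universal $K>0$ such that if $q\ge K\,p_e(\mathcal{Q})\log(2n_\mathcal{Q})$ then $\Prob(\mathbf{G}\in\mathcal{Q})\ge\tfrac12$, where $\mathbf{G}\sim G(n,q)$. Indeed, since $\Prob(G(n,p)\in\mathcal{Q})$ increases in $p$, this gives $p_c(\mathcal{Q})\le q$ for every such $q$, hence $p_c(\mathcal{Q})\le K\,p_e(\mathcal{Q})\log(2n_\mathcal{Q})$, which is the asserted inequality (take $C=2K$ via $\log(2n_\mathcal{Q})\le 2\log n_\mathcal{Q}$ for $n_\mathcal{Q}\ge2$, small $n_\mathcal{Q}$ being handled directly, and note the claim is vacuous once the right side exceeds $1$). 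Fix a cover $\mathcal{G}$ of $\mathcal{Q}$ realising $p:=p_e(\mathcal{Q})$, that is, $\mathcal{Q}\subseteq\langle\mathcal{G}\rangle$ and $\sum_{S\in\mathcal{G}}p^{|S|}\le\tfrac12$. A preliminary reduction lets us assume every $S\in\mathcal{G}$ is a subset of some minimal element of $\mathcal{Q}$, so in particular $|S|\le n_\mathcal{Q}=:\ell$: for each minimal $F\in\mathcal{Q}$ choose $S_F\in\mathcal{G}$ with $S_F\subseteq F$ (possible because $F\in\mathcal{Q}\subseteq\langle\mathcal{G}\rangle$), and pass to the subfamily $\{S_F\}$, which is still a cover (any member of $\mathcal{Q}$ contains a minimal $F$, hence contains $S_F$) and, being a subfamily, still $p$-small; for each surviving $S$ fix a minimal target $F(S)\supseteq S$ in $\mathcal{Q}$.

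The main work is an iteration that halves $\ell$, which I would phrase as an induction on $\ell$ (the base $\ell\le1$ being a direct computation). Couple so that $\mathbf{G}\supseteq W_1\cup\cdots\cup W_m$, where the $W_i$ are independent, each $\Theta(Kp)$-random, and $m\asymp\log_2(2\ell)$, tuned so that the union has density at most $q$; since $\mathcal{Q}$ is increasing it then suffices to show $W_1\cup\cdots\cup W_m\in\mathcal{Q}$ with probability $\ge\tfrac12$. One maintains, for $i=0,1,\dots,m$, an increasing ``residual'' property $\mathcal{Q}_i$ together with a $p$-small cover $\mathcal{G}_i$ whose sets are subsets of minimal elements of $\mathcal{Q}_i$ of size at most $\ell/2^{\,i}$, starting from $(\mathcal{Q}_0,\mathcal{G}_0)=(\mathcal{Q},\mathcal{G})$, and the invariant ``$W_{i+1}\cup\cdots\cup W_m$ lies in $\mathcal{Q}_i$ $\Longrightarrow$ $\mathbf{G}\in\mathcal{Q}$''. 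In round $i+1$ one exposes $W_{i+1}$ and shows that, with probability at least $1-\varepsilon$ for some $\varepsilon=\varepsilon(K)$ that is small when $K$ is large, one may pass to $(\mathcal{Q}_{i+1},\mathcal{G}_{i+1})$ at scale $\ell/2^{\,i+1}$ with $\mathcal{Q}_{i+1}$ contained in the link $\{A:A\cup W_{i+1}\in\mathcal{Q}_i\}$, so the invariant persists (the degenerate outcome $\mathcal{Q}_{i+1}=2^X$, ``already done'', being permitted). After $m$ rounds $\mathcal{G}_m$ has sets of size $<1$, forcing $\mathcal{Q}_m$ to be trivial, so the invariant yields $\mathbf{G}\in\mathcal{Q}$; a union bound over the $m\asymp\log_2(2\ell)$ rounds costs $m\varepsilon$, which is $\le\tfrac12$ once $K$ is taken large enough (the only place $K$ is fixed).

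The crux is the single-round lemma producing $\mathcal{G}_{i+1}$ from $\mathcal{G}_i$ and the $\Theta(p)$-random slice $W_{i+1}$. For each $S\in\mathcal{G}_i$ the natural candidate for the new cover is the uncaught part $S\setminus W_{i+1}$, and here lies a genuine tension: to reach size $\le\ell/2^{\,i+1}$ one needs $W_{i+1}$ to absorb roughly half of $S$, yet absorbing a block $A\subseteq S$ inflates the cost $p^{|S|}$ by a factor $p^{-|A|}\gg1$, which would wreck $p$-smallness. This trade-off cannot be won for every $S$ individually; the resolution --- and it is exactly here that the sunflower/``minimal fragment'' machinery of Alweiss--Lovett--Wu--Zhang enters, in the form engineered by Frankston--Kahn--Narayanan--Park for the fractional version of the conjecture --- is that with good probability over $W_{i+1}$ there is a subfamily of $\mathcal{G}_i$, still rich enough to retain the covering power needed downstream, on which $W_{i+1}$ \emph{does} swallow the required fraction while the total residual cost $\sum p^{|S\setminus W_{i+1}|}$ remains $\le\tfrac12$; sets of $\mathcal{G}_i$ that are already small are carried through unchanged, and each target $F(S)$ is simultaneously replaced by $F(S)\setminus W_{i+1}$, which is what keeps ``$\mathcal{Q}_{i+1}$ realised $\Rightarrow\mathcal{Q}_i$ realised'' correct. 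Making this lemma quantitative --- controlling the exceptional probability, tracking how the $p$-smallness budget is consumed, and above all verifying that only a \emph{constant}-factor overshoot in density is needed per round, so that over $\log_2(2\ell)$ rounds the overall loss is just the claimed $\log n_\mathcal{Q}$ factor --- is where essentially all of the difficulty of the theorem concentrates.
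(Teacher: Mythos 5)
The paper does not prove this statement: Theorem~\ref{th:PP_KK} is cited as a black-box result from Park and Pham~\cite{PP_KK}, so there is no proof in the paper against which your sketch can be compared.

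As an outline of the known argument, what you write is structurally reasonable — the reduction to a cover by subsets of minimal elements, the $m\asymp\log_2(2\ell)$ rounds of sprinkling $\Theta(Kp)$-random slices, the invariant on the residual property, and the union bound over rounds are all as in~\cite{PP_KK} — but you explicitly punt on the single-round lemma, and that lemma \emph{is} the theorem; the rest is routine bookkeeping. There is also a misattribution worth correcting: you describe the resolution of the cost/size tension as ``the sunflower/minimal fragment machinery of Alweiss--Lovett--Wu--Zhang \dots in the form engineered by Frankston--Kahn--Narayanan--Park.'' In fact the ``minimal fragments'' device is Park and Pham's own contribution and is precisely what separates~\cite{PP_KK} from~\cite{FKNP}: rather than hoping $S\setminus W$ shrinks (it generally does not), for each target $S$ one records a \emph{minimal} $T\subseteq S$ with the property that $T\cup W$ already contains some member of the current cover, and the key counting lemma bounds the total weighted cost of all such minimal fragments over all $(S,W)$. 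The ALWZ sunflower lemma and the FKNP spread/fractional argument are precursors and give only the fractional Kahn--Kalai statement; it is the minimal-fragment counting that upgrades this to the full conjecture. Since the paper itself draws exactly this distinction in Section~\ref{sc:intro_strategy} (``typical fragments'' as in~\cite{KNP} versus ``minimal fragments'' as in~\cite{PP_KK}), conflating the two is a substantive point to fix, not merely a citation nicety.
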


The upper bound in this theorem is tight: there exists an increasing $\mathcal{Q}$ such that $p_c(\mathcal{Q})=\Theta(p_e(\mathcal{Q})\cdot\log n_{\mathcal{Q}})$. For example, let $\mathcal{Q}$ be the property of being Hamiltonian. It is a routine to check that $p_e=\Theta(1/n)$, while $p_c=(1+o(1))\frac{\ln n}{n}$~\cite{AKS,Ham3,Ham1,Ham2,Posa}. 
 It is fair to say that the main remaining challenge is to classify properties based on the value of the ratio $\frac{p_c(\mathcal{Q})}{p_e(\mathcal{Q})}\geq 1$, see, e.g.,~\cite{BBDFP}. In particular, {\it for which properties $\mathcal{Q}$ is $p_c(\mathcal{Q})=(1+o(1))p_e(\mathcal{Q})$?} In this paper, we explore this question for the property of containing a given (unlabelled) regular spanning subgraph.

\subsection{Regular spanning subgraphs.} 
\label{sc:intro_coarse}

Let $d\geq 3$ be a fixed constant. Let $F=F(n)$, $n\in\mathbb{N}$, be a sequence of $d$-regular $n$-vertex graphs. We assume that $dn$ is even, let $V(F(n))=\{1,\ldots,n\}$, and use the shorthand notation $[n]:=\{1,\ldots,n\}$. For $\tilde F\subset F$, we call a vertex $v\in V(\tilde F)$ {\it boundary}, if it has degree strictly less than $d$ in $\tilde F$. We call {\it the vertex boundary of $\tilde F$} the set $\partial_v(\tilde F)$ of all boundary vertices and {\it the edge boundary} the set $\partial_e(\tilde F)$ of all edges of $F$ between $\partial_v(\tilde F)$ and $V(F)\setminus V(\tilde F)$. Let the increasing property $\mathcal{Q}_F$ be generated by the family $\mathcal{F}_n$ of all isomorphic copies of $F$ on $[n]$. For simplicity, we denote $p_c(F):=p_c(\mathcal{Q}_F)$ and $p_e(F):=p_e(\mathcal{Q}_F)$. We also let $X_F$ be the number of graphs from $\mathcal{F}_n$ that are subgraphs of $\mathbf{G}\sim G(n,p)$. Since
$$
\E X_F=|\mathcal{F}_n|p^{dn/2}=\frac{n!}{|\mathrm{Aut}(F)|}p^{dn/2}\leq n!\cdot p^{dn/2},
$$ 
we get that $p_c(F)\geq p_e(F)\geq (1-o(1))(e/n)^{2/d}$. On the other hand, if every proper subgraph of $F$ has edge boundary of size at least $d$, then $p_e(F)=O(n^{-2/d})$ since it is not hard to see that $\mathcal{F}_n$ is {\it $O(n^{-2/d})$-spread}, i.e. for every graph $H\subset F$, 
$$
|\mathcal{F}_n\cap\langle\{H\}\rangle|\leq (Cn^{-2/d})^{|E(H)|}\cdot |\mathcal{F}_n|\quad\text{ for some constant }C=C(d)>0.
$$
  Therefore, due to Theorem~\ref{th:PP_KK}, $p_c(F)=O(n^{-2/d}\cdot\log n)$.

Riordan proved~\cite{Riordan} that the logarithmic factor can be removed, i.e. 
\begin{equation}
 p_c(F)=\Theta(p_e(F))=\Theta(n^{-2/d}),
\label{eq:p_c=p_e}
\end{equation} 
if $F$ satisfies a stronger condition on the edge boundary: every $\tilde F\subset F$ with $3\leq|V(\tilde F)|=o(\log n)$ satisfies $|\partial_e(\tilde F)|\geq 2d$. In particular, for powers of a Hamilton cycle\footnote{For a graph $G$, its $k$-th power $G^k$ is obtained by adding to $G$ edges between vertices that are at distance at most $k$ in the graph metric induced by $G$.}, this result implies the following: for every $k\geq 3$, the threshold probability for containing the $k$-th power of a Hamilton cycle equals $\Theta(n^{-1/k})$. However, the proof of Riordan does not work for $k=2$ since the minimum size of the edge boundary of a non-trivial subgraph of the square of a Hamilton cycle equals $d+2=6<2d$. We also note that, for a $d$-regular graph to satisfy the condition of Riordan, $d$ should be at least 4. In~\cite{KO}, K\"{u}hn and Osthus proved that $n^{-1/2+o(1)}$ is the threshold probability for containing the square of a Hamilton cycle and conjectured that the threshold is actually $\Theta(n^{-1/2})$. In~\cite{NS}, Nenadov and \v{S}kori\'{c} proved the upper bound $n^{-1/2}(\log n)^4$, which was improved  to $n^{-1/2}(\log n)^3$ by Fischer, \v{S}kori\'{c}, Steger, and Truji\'{c} in~\cite{FSST}, and to $n^{-1/2}(\log n)^{2}$ in an unpublished work of Montgomery (see~\cite{Frieze}). From Theorem~\ref{th:PP_KK} (as well as from its weaker fractional version~\cite{FKNP}), it follows that the threshold is $O(n^{-1/2}\log n)$. The conjecture of K\"{u}hn and Osthus was eventually resolved by Kahn, Narayanan, and Park in 2020~\cite{KNP}. They further conjectured that $p_c(F)=(1+o(1))\sqrt{e/n}$. This paper resolves the conjecture, see Section~\ref{sc:KNP_resolution}. 

On the other hand, if $F$ has many small subgraphs with smaller edge boundaries, then it may happen that $p_c(F)\gg p_e(F)$. For instance, assume that there exists $v=v(n)=o(\log n)$ and a graph $H=H(n)$ on $[v]$ with $\frac{dv}{2}-\frac{d}{2}$ edges such that every vertex of $F$ belongs to a subgraph which is isomorphic to $H$ (clearly, its edge boundary equals $d$). 
  Then, an increasing (at most logarithmic) factor arises 
    --- for constant $v$ see the proof in~\cite{Spencer} which can be directly generalised to all $v=o(\log n)$. 
  Actually, if we allow even smaller edge boundaries, then the threshold probability may increase by a power of~$n$, as well as the expectation threshold. For instance, for every $d$-regular strictly 1-balanced graph $H$ of constant size $v$, the threshold probability for containing an $H$-factor equals $\Theta\left(n^{-2/d+2/(vd)}(\ln n)^{2/(dv)}\right)$~\cite{BHKMP,JKV}.

In~\cite{CHL}, Chen, Han, and Luo showed that the proof of Kahn, Narayanan, and Park~\cite{KNP} can be generalised to show that~\eqref{eq:p_c=p_e} holds for all $d$-regular graphs $F$ satisfying $|\partial_e(\tilde F)|\geq d+1$ for every $\tilde F\subset F$ with $3\leq|V(\tilde F)|\leq \varepsilon n$. Notice that, for large enough $|V(\tilde F)|$, the bound on the edge boundary in this result is weaker than the condition of Riordan. We show that the weakest of the two conditions always implies~\eqref{eq:p_c=p_e}.  

\begin{theorem}
Let $d\geq 3$, $\varepsilon>0$ be constants, and let $F=F(n)$ be a sequence of $d$-regular graphs on $[n]$, $n\in\mathbb{N}$, such that $|\partial_e(\tilde F)|\geq d+1$, 
  for every $\tilde F\subset F$ with $3\leq|V(\tilde F)|\leq\varepsilon\ln n$.
 Then~\eqref{eq:p_c=p_e} holds. 
\label{th:main1}
\end{theorem}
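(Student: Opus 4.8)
Since $p_c(F)\ge p_e(F)\ge(1-o(1))(e/n)^{2/d}$ has already been observed, the plan is to establish the matching upper bound $p_c(F)=O(n^{-2/d})$. Fix a large constant $C=C(d,\varepsilon)$, set $p:=C n^{-2/d}$ and let $\mathbf{G}\sim G(n,p)$; the goal is to prove that whp $\mathbf{G}$ contains a copy of $F$.

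The first step is a spreadness estimate: $\mathcal{F}_n$ is $(C_0 n^{-2/d})$-spread for a suitable $C_0=C_0(d,\varepsilon)$. Writing $\mathrm{emb}(H,F)$ for the number of injective homomorphisms $H\to F$, one checks that $|\mathcal{F}_n\cap\langle\{H\}\rangle|/|\mathcal{F}_n|=\mathrm{emb}(H,F)\cdot(n-v(H))!/n!$, so the desired inequality reads $\mathrm{emb}(H,F)\le(C_0 n^{-2/d})^{e(H)}\cdot n!/(n-v(H))!$ for every $H\subseteq F$, and one may assume $H$ has no isolated vertices. I would verify this componentwise --- which is routine for $v(H)\le n/2$, the losses $2^{v(H)}\le 4^{e(H)}$ being absorbed into $C_0$, and needs separate care in the near-diagonal range $v(H)=n-o(n)$ (see below). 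For a component $\tilde H$ with $3\le v(\tilde H)\le\varepsilon\ln n$ the hypothesis gives $e(\tilde H)\le\tfrac d2 v(\tilde H)-\tfrac{d+1}2$, and the bounded-degree bound $\mathrm{emb}(\tilde H,F)\le n\,d^{\,v(\tilde H)-1}$ then reduces the inequality to $d^{\,v(\tilde H)-1}\lesssim C_0^{\,e(\tilde H)} n^{1/d}$, which holds with room to spare since $v(\tilde H)=O(\ln n)$; components on two vertices are trivial. For a component $\tilde H$ with $v(\tilde H)>\varepsilon\ln n$ I would use \emph{only} $d$-regularity: every subgraph of $F$ has at most $\tfrac d2 v$ edges, so the inequality reduces to roughly $(C_0^{\,d/2}/d)^{\,v(\tilde H)}\gtrsim n$, which holds once $v(\tilde H)\ge\varepsilon\ln n$ provided $C_0$ is large in terms of $\varepsilon$. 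In particular the weak hypothesis still forces $\mathcal{F}_n$ to be $O(n^{-2/d})$-spread, so Theorem~\ref{th:PP_KK} --- or, directly, the spread criterion of \cite{FKNP} --- already yields $p_c(F)=O(n^{-2/d}\log n)$.

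To remove the logarithm I would run the absorption/fragmentation scheme of Kahn, Narayanan and Park \cite{KNP}, in the form extended by Chen, Han and Luo \cite{CHL}: first locate in $\mathbf{G}$ a flexible ``absorbing'' partial structure that can be completed to a copy of $F$ after almost any covering of the remaining vertices; then cover the bulk of the vertices using the spreadness above through the geometric version of the spread lemma, so that the probabilities spent over the successive rounds telescope to $O(n^{-2/d})$ rather than $O(n^{-2/d}\log n)$; and finally complete the copy with the absorber. The point to be checked is that every quantitative input the scheme requires is a bound on the number of copies in $F$ of some $\tilde F\subseteq F$, produced exactly as in the spreadness step, and that the edge-boundary hypothesis enters only through the per-component inequality $e(\tilde H)\le\tfrac d2 v(\tilde H)-\tfrac{d+1}2$ for components with $v(\tilde H)\le\varepsilon\ln n$, all larger subgraphs being controlled by the soft $d$-regularity estimate. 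If so, the logarithmic-size hypothesis indeed suffices.

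The main obstacle is precisely this last verification: in \cite{KNP,CHL} the edge-boundary condition is invoked, implicitly, for subgraphs of up to linear size, so the real work is to isolate which subgraph counts are genuinely used and to supply those for subgraphs on more than $\varepsilon\ln n$ vertices from $d$-regularity alone --- morally, to recast the large-subgraph analysis of Riordan \cite{Riordan} so that it composes with the spread/absorption framework rather than with the second moment. A secondary, more technical obstacle is the near-diagonal regime where $\tilde F$ has $n-o(n)$ vertices: there $n!/(n-v(\tilde F))!$ is no longer comparable to $n^{v(\tilde F)}$, and one must instead count copies of $F$ that differ from a fixed one by a rearrangement of only a few vertices, which requires a separate estimate.
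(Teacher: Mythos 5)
Your outline correctly locates the two pieces of structure that the proof must exploit --- the edge-boundary hypothesis bites only on components with at most $\varepsilon\ln n$ vertices, and plain $d$-regularity already controls everything larger --- and it correctly anticipates that the route is ``fragmentation with polynomial rather than logarithmic round count.'' Two substantive problems remain, one a mischaracterization of the method you invoke and one the real missing step, which you yourself flag but do not resolve.

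First, neither Kahn--Narayanan--Park nor this paper uses absorption. There is no ``flexible absorbing partial structure'' anywhere in the argument, and the final step is not ``complete the copy with the absorber''; it is a plain second-moment computation applied to the multiset of small fragments surviving the last round. Conflating fragmentation with absorption is not cosmetic here: the quantitative input the two methods require is different, and the plan as written points at machinery that is not available.

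Second, and decisively, you do not supply the mechanism by which a \emph{bounded} number of fragmentation rounds, each with edge budget $\Theta(n^{-2/d})$, suffices. Ordinary $q$-spreadness gives $O(q\log n)$ via $\Theta(\log n)$ rounds that each shrink the fragment by a constant factor. What the paper proves --- this is the content of Lemma~\ref{lm:not_bad} combined with Claims~\ref{cl:J_upper_bound} and~\ref{cl:simple_embeddings}, carried out in Appendix~\ref{sc:theorem_coarse_proof} --- is that a single round at budget $\Theta(n^{-2/d})$ shrinks typical fragments by a \emph{polynomial} factor $n^{1/d}$, so that $d$ rounds land you at size $O(\ln n)$ and a second moment finishes. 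The sum one must control is over subgraphs $J\subset F$ weighted by $(N/m)^{\ell(J)}$, and what makes it converge is the lower bound on the excess, $\sigma(J)\ge -\tfrac{2d\,\ell(J)}{\varepsilon\ln n}$. That bound is exactly where your two regimes enter: for a component on at most $\varepsilon\ln n$ vertices the hypothesis gives $\sigma_i\ge0$, and for larger components $d$-regularity gives $\ell_i\le\tfrac d2 x_i$, hence $\sigma_i\ge -\tfrac\Delta 2\ge -O(\ell_i/\ln n)$. Without this quantitative spine, ``the probabilities telescope to $O(n^{-2/d})$'' is an assertion and not a derivation. As a minor point, your worry about the near-diagonal regime $x\approx n$ turns out to cost nothing: the factorial ratio $(n-x+c)!/n!$ is bounded uniformly down to $n-x+c=0$ (cf.\ estimate~\eqref{eq:factorials_fraction}), so no separate count of nearly-full copies is needed.
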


As follows from the above discussion, our bound on edge boundaries is tight, that is, in some sense, the condition on the edge boundary is the only obstacle in getting the equality~\eqref{eq:p_c=p_e}. Indeed, for every $3\leq v=o(\log n)$ and every $0\leq\ell\leq d$ (such that $dv-\ell$ is even), there exists a graph $F$ that have a subgraph on $v$ vertices with edge boundary of size $\ell$ such that~\eqref{eq:p_c=p_e} does not hold. The proof of Theorem~\ref{th:main1} uses similar techniques as in~\cite{CHL,KNP} and  does not require accurate spread approximations, for this reason we relegate it to Appendix~\ref{sc:theorem_coarse_proof}.

\begin{remark}
\label{rk:best_possible_upper_bounds}
Our proof allows to get an explicit upper bound on the threshold probability, which we did not try to optimise. However, for graphs with small number of automorphisms, a refined argument gives a better upper bound: Assume, for instance, that, for every proper connected subgraph of $F$, the number of automorphisms that preserve boundary vertices is bounded and that every subgraph on at least 3 and at most $n/2$ vertices has edge boundary of size at least $d+1$. Then $p_c(F)\leq\left(e^{2/d}+\frac{d}{1+\1_{d\text{ is even}}}-1+o(1)\right)n^{-2/d}$. The proof of this bound follows the same lines as the proof of Theorem~\ref{th:main1}, but requires more accurate computations. In particular, in the proof of Theorem~\ref{th:main1}, we do not need very accurate estimates for the number of subgraphs in $F$ and for the number of ways to embed such a subgraph into $F$, provided by Claim~\ref{cl:main} in Section~\ref{sc:spread}. However, the refined bound requires the full power of Claim~\ref{cl:main}.  For instance, for the square of a Hamilton cycle $F$, the refined bound implies that the threshold probability $p_c(F)$ belongs to a fairly tight interval $\left[\frac{\sqrt{e}-o(1)}{\sqrt{n}},\frac{\sqrt{e}+1+o(1)}{\sqrt{n}}\right]$. As we will see further, it actually coincides asymptotically with the left boundary of this interval.
\end{remark}



\subsection{Sharp thresholds.} Let $\mathbf{G}\sim G(n,p)$. The threshold probability $p_c(\mathcal{Q})$ is called {\it sharp}, if, for every constant $\varepsilon>0$, the following is true: if $p>(1+\varepsilon)p_c(\mathcal{Q})$, then whp $\mathbf{G}\in\mathcal{Q}$; if $p<(1-\varepsilon)p_c(\mathcal{Q})$, then whp $\mathbf{G}\notin\mathcal{Q}$. Friedgut proved~\cite{Friedgut} that all increasing and isomorphism-closed\footnote{In the appendix to this paper, written by Bourgain, an alternative version of this result --- one that does not require the symmetry condition --- is proved.} properties that do not have sharp thresholds are essentially local: with large probability, they are equivalent to the presence of certain property-specific subgraphs of bounded sizes. Omitting technical details, it means that, for some $p=\Theta(p_c(\mathcal{Q}))$, $\mathbf{G}\sim G(n,p)$ sprinkled with $\mathbf{G}'\sim G(n,\varepsilon p)$, for some $\varepsilon>0$, is less likely to have the property $\mathcal{Q}$, than the union of $\mathbf{G}$  with an independently sampled random clique of some bounded size (see, e.g.,~\cite[Theorem 2.3]{Friedgut2}). With this remarkable result, it is fairly straightforward to check that the threshold probabilities in Theorem~\ref{th:main1} are sharp. To the best of our knowledge, there are no general results describing asymptotics of sharp $p_c$ for a wide class of increasing properties (say, for $\mathcal{Q}_F$, where $F$ are $d$-regular). In this paper, we find asymptotics of $p_c(F)$ for most $d$-regular graphs $F$.



\begin{theorem}
Let $d\geq 3$ and let $F=F(n)$ be a sequence of $d$-regular graphs on $[n]$, $n\in\mathbb{N}$, such that one of the following two conditions holds.
\begin{enumerate}
\item There exists $\delta\in(0,1/d)$ and $w=w(n)=\omega(1)$ such that 

(a) $|\mathrm{Aut}(F)|\leq e^{o(n^{1-\delta})}$; and 

(b) every $\tilde F\subset F$ with $3\leq|V(\tilde F)|\leq n-3$ has 
$$
|\partial_e(\tilde F)|\geq d+1+\left\lfloor |V(\tilde F)|\frac{w}{\ln n}\right\rfloor\cdot\1_{\ln n/w\leq|V(\tilde F)|\leq n^{1-\delta}}.
$$

\item Let $\gamma\in(0,1)$ be a fixed constant, let $C=C(d,\gamma)$ be large enough, and let $D\geq 1$ be a fixed constant.

(a) for every subgraph $\tilde F\subset F$ with $|E(\tilde F)|\leq C n^{2/d}$ and $|\partial_v(\tilde F)|\leq \gamma|V(\tilde F)|$, the number of automorphisms of $\tilde F$ that fix all vertices of $\partial_v(\tilde F)$ is at most $D$;

(b) for every subgraph $\tilde F\subseteq F$ with $|\partial_v(\tilde F)|\leq \gamma|V(\tilde F)|$, the number of automorphisms of $\tilde F$ that fix all vertices of $\partial_v(\tilde F)$ is at most $e^{o(|V(\tilde F)|)}$;

(c) every $\tilde F\subset F$ has $|\partial_e(\tilde F)|\geq 2d$, if $3\leq|V(\tilde F)|\leq n-3$.
\end{enumerate}
Then 
\begin{equation}
p_c(F)=(1+o(1))p_e(F)=(1+o(1))\left(\frac{e}{n}\right)^{2/d}\quad\text{ is sharp}.
\label{eq:sharp}
\end{equation}
\label{th:main2}
\end{theorem}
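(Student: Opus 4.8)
Write $p^{\ast}:=(e/n)^{2/d}$. The statement decomposes into: (a) whp $\mathbf G\notin\mathcal{Q}_F$ when $p\le(1-\varepsilon)p^{\ast}$; (b) whp $\mathbf G\in\mathcal{Q}_F$ when $p\ge(1+\varepsilon)p^{\ast}$; and (c) sharpness of $p_c(F)$. Part (a) follows from the first moment: since $\E X_F=\frac{n!}{|\mathrm{Aut}(F)|}p^{dn/2}\le n!\,p^{dn/2}$, Stirling's approximation gives $\E X_F\le(1+o(1))\sqrt{2\pi n}\,(1-\varepsilon)^{dn/2}=o(1)$ at $p=(1-\varepsilon)p^{\ast}$, so Markov's inequality together with monotonicity yields whp $X_F=0$ (this also re-derives $p_c(F)\ge p_e(F)\ge(1-o(1))p^{\ast}$). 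For part (c) I would invoke Friedgut's theorem: were $p_c(\mathcal{Q}_F)$ coarse, membership in $\mathcal{Q}_F$ near the threshold would be essentially equivalent to the appearance of a copy of some fixed graph $M$; but the edge-boundary hypotheses guarantee that $F$ contains no bounded-size subgraph whose deletion creates a local bottleneck, so adjoining a random copy of $M$ to $\mathbf G$ cannot help complete a \emph{spanning} copy of $F$ by more than a little independent sprinkling --- precisely the routine check already used for Theorem~\ref{th:main1}. Thus $p_c(F)$ is sharp and only its location, i.e.\ part (b), remains.

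For (b), note first that in both cases the automorphism hypotheses force $|\mathrm{Aut}(F)|\le e^{o(n)}$ (in case (2), apply (b) to $\tilde F=F$, whose vertex boundary is empty), so for $p\ge(1+\varepsilon)p^{\ast}$ the first moment $\E X_F\ge e^{-o(n)}(1+\varepsilon)^{dn/2}$ is already exponentially large; the task is to upgrade this to $X_F>0$ whp. The plan is to run the absorption-plus-spread strategy underlying~\cite{KNP,CHL} and Theorem~\ref{th:main1}, but fed with the sharp enumerative input of Claim~\ref{cl:main}. Concretely I would: (i) reserve, using only $o(n)$ of the edges of $\mathbf G$, a Montgomery-type absorbing structure able to swallow any $o(n)$ leftover vertices into a spanning copy of $F$; (ii) with the remaining budget --- still $(1+o(1))p^{\ast}$, since the absorbers cost $o(n)$ edges --- embed $F$ on all but $o(n)$ vertices; (iii) absorb the leftover. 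Step (ii) requires showing that the family of near-spanning partial copies is $(1+o(1))p^{\ast}$-spread \emph{with the right leading constant}: this is exactly what Claim~\ref{cl:main} provides, namely tight bounds on the number of subgraphs of $F$ of each isomorphism type and on the number of embeddings of such a subgraph into $F$, with the $|\mathrm{Aut}(\tilde F)|$-type symmetry factors controlled by (1a)/(2a)/(2b) so that one recovers the constant $e^{2/d}$ and not merely $\Theta(1)$.

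The hard part is step (ii): simultaneously avoiding a $\log n$ loss and pinning the constant. As in~\cite{KNP,Riordan}, the logarithmic factor of the black-box spread lemma must be absorbed by showing that every ``dangerous'' subgraph of $F$ --- one that would contribute a $\log$ or a constant factor to a spread or second-moment estimate --- is either forbidden outright by the edge-boundary hypothesis or contributes a negligible correction; one must then check that the single worst surviving subgraph type $\tilde F$, the one maximising $p^{|E(\tilde F)|}$ times the number of copies of $\tilde F$ in $F$ relative to its natural scale, still obeys the bound $(1+o(1))p^{\ast}$ once its boundary constraint is imposed, while the exponentially many remaining types sum to $o(p^{\ast})$. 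In case (1) this hinges on the graduated edge-boundary condition of (1b) becoming effective exactly on the window $\ln n/w\le|V(\tilde F)|\le n^{1-\delta}$; in case (2) the constant $C=C(d,\gamma)$ is taken large enough that every subgraph with at most $Cn^{2/d}$ edges --- the only ones relevant to the spread at density $\Theta(n^{-2/d})$ --- falls under (2a), the uniform bound $|\partial_e(\tilde F)|\ge 2d$ of (2c) handling the rest. By contrast the endgame, steps (i) and (iii), is comparatively routine given $d\ge3$ and that $F$, being $d$-regular with no small sparse cuts, admits plenty of local rerouting; and tracking the automorphism-dependent symmetry factors in (iii) is bookkeeping.
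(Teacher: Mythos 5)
Your parts (a) and (c) are fine (the first-moment bound is correct, and once both directions are proven sharpness is immediate without needing to appeal to Friedgut separately), but your part (b) rests on a mischaracterisation of the cited references and proposes a strategy that would not give the sharp constant.

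You describe the method of~\cite{KNP,CHL} and of Theorem~\ref{th:main1} as an ``absorption-plus-spread strategy'' and propose to (i) reserve a Montgomery-type absorber able to swallow $o(n)$ leftover vertices, (ii) embed an almost-spanning copy, and (iii) absorb. None of those sources use absorption; they all use the \emph{fragmentation} trick, which the paper also uses and describes explicitly in Section~\ref{sc:intro_strategy}. Fragmentation is logically quite different: one starts from a \emph{fixed full} spanning copy $F$, exposes $\mathbf W\sim G(n,m)$, shows via Lemma~\ref{lm:not_bad} (and Claim~\ref{cl:main}'s enumerative bounds) that most $F'\subset F\cup\mathbf W$ from $\mathcal F_n$ leave only a small ``fragment'' $F'\setminus\mathbf W$, and iterates with fresh sprinkles until fragments have $O(\log n)$ edges, at which point a simple second-moment argument covers one of them. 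There is no near-spanning partial embedding and no absorber at any stage; the ``leftover'' is a subgraph of $F$, not a set of vertices to be re-routed.

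The substantive gap is in your claim that steps (i) and (iii) are ``comparatively routine''. At density $(1+o(1))(e/n)^{2/d}$, constructing an absorber for a $d$-regular spanning structure is not known to be possible without extra logarithmic factors: the paper points out that the best absorption-based bounds for the square of a Hamilton cycle (Nenadov--\v{S}kori\'c, Fischer--\v{S}kori\'c--Steger--Truji\'c, and Montgomery's unpublished improvement) stall at $n^{-1/2}(\log n)^{c}$. Avoiding that $\log$ factor, and in particular getting the constant $e^{2/d}$ exactly, is \emph{the} difficulty, and the fragmentation trick is what the paper uses to circumvent it. So a proof by ``absorber + spread'' would need, as its first step, a new absorber construction at the sharp threshold --- that is not a bookkeeping detail, it would be a theorem in its own right. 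Your step (ii) is closer in spirit to what the paper does (tight spread estimates via Claim~\ref{cl:main} to nail the constant), but in the paper this appears as the engine of an iterated fragmentation, not as the embedding of an almost-spanning copy; the paper runs $\lceil 1/\delta\rceil$ fragmentation rounds under condition (1) and two rounds under condition (2), and the endgame is a plain Chebyshev bound on the number of covered $O(\log n)$-edge fragments, not any rerouting or absorption.
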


We note that the condition 2.(c) implies $d\geq 4$. Though the second condition in Theorem~\ref{th:main2} is strictly weaker than the condition of Riordan, observe that the first condition is not such. For instance, a random triangle-free 3-regular graph has linearly many subgraphs on 3 vertices with edge boundary of size $5<6=2d$ --- so, it does not satisfy the condition of Riordan, while it satisfies 1.(a) and 1.(b) whp, see below. 

Note that, for every $d\geq 4$, almost all $d$-regular graphs satisfy the first condition in Theorem~\ref{th:main2} (see~\cite{Bollobas_iso,KSV,MW}). Indeed, for every $d\geq 3$, letting $\mathbf{G}_d$ be a random $d$-regular graph on $[n]$, whp $\mathbf{G}_d$ is asymmetric~\cite{MW}, implying 1.(a). 
 Moreover, whp $\mathbf{G}_d$ has Cheeger constant $\Theta(1)$~\cite{Bollobas_iso} and, whenever $d\geq 4$, for every fixed graph $H$ with at least $(d|V(H)|-d)/2$ edges, whp $\mathbf{G}_d$ does not have a subgraph isomorphic to $H$~\cite{KSV}, implying 1.(b). If $d=3$, then whp there are no subgraphs with $3\leq v\leq n-3$ vertices and the edge boundary of size at most $d$ 
   other than $K_3$ and their vertex-complements. Therefore, whp $\mathbf{G}_d$ satisfies the first condition, if it does not contain triangles. Also, $k$-th powers of Hamilton cycles for every $k\geq 3$ and toroidal square grids $T_{m,n/m}\cong C_m\square C_{n/m}$ for $m\geq 4$ satisfy the second condition as well. It is also easy to see that the second condition implies sharp thresholds for square and triangular lattices (by considering their regular completions). It is worth recalling that thresholds for lattices were investigated in the past: the question of which random graphs contain spanning lattices was raised by Venkatesan and Levin~\cite{LV}. Thresholds for lattices, up to a polylog-factor, were proved in~\cite{FdVM} and, up to a constant factor, were proved in~\cite{Riordan} (they also follow from Theorem~\ref{th:main1} and~\cite[Corollary 1.5]{CHL}). Summing up, we get the following corollary that demonstrates that the set of regular graphs $F$ satisfying the restrictions in Theorem~\ref{th:main2} is quite broad.





\begin{corol}
The following graphs $F$ satisfy~\eqref{eq:sharp}:
\begin{itemize}
\item $k$-th power of a cycle, $k\geq 3$,
\item toroidal grid $T_{m\times n/m}$, $m\geq 4$ (assuming that $n$ is divisible by $m$),
\item square and triangular lattices with $d=4$ and $d=6$ respectively.
\end{itemize}
Moreover, \eqref{eq:sharp} holds for every $d\geq 4$ and (asymptotically) almost all $d$-regular graphs $F$ on $[n]$ (assuming $dn$ is even) and for (asymptotically) almost all triangle-free 3-regular graphs  $F$ on $[n]$ (assuming $n$ is odd).
\label{corol}
\end{corol}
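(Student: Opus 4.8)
The plan is to obtain each item of Corollary~\ref{corol} by verifying that the relevant family satisfies one of the two hypotheses of Theorem~\ref{th:main2}: the second condition for the $k$-th powers of cycles and the toroidal grids, the first condition (which then holds whp) for the random regular graphs, and, for the two lattices, the second condition for a $d$-regular host graph to which the lattice threshold is reduced by monotonicity.

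\emph{Powers of cycles and toroidal grids.} Here $F=C_n^k$ with $d=2k$, $k\ge 3$, or $F=T_{m\times n/m}=C_m\square C_{n/m}$ with $d=4$, $m\ge 4$, both connected; the crux is condition 2.(c). First I would record the elementary observation that, since a vertex incident to an edge leaving $V(\tilde F)$ automatically has degree $<d$ in $\tilde F$, the set $\partial_e(\tilde F)$ is exactly the edge cut $\bigl(V(\tilde F),V(F)\setminus V(\tilde F)\bigr)$, so minimising $|\partial_e(\tilde F)|$ over $v$-vertex sets amounts to maximising the number of internal edges, and one may invoke the standard discrete edge-isoperimetric inequalities for circulants and for tori. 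In $C_n^k$ the extremal sets are intervals, whose edge cut is $k(k+1)$ for $k\le v\le n-k$ and $v(2k-v+1)$ for $v<k$, so the minimum over $3\le v\le n-3$ is $6k-6$, which is $\ge 4k=2d$ precisely when $k\ge 3$. In $C_m\square C_{n/m}$ the extremal sets are sub-rectangles and bands, with edge cuts $2a'+2b'$ and $2m,\,2(n/m)$ respectively, so for $3\le v\le n-3$ the edge cut is at least $\min\{8,\,2m,\,2(n/m)\}=8=2d$ once $m\ge 4$ and $n$ is large (so $n/m\ge 4$). Conditions 2.(a) and 2.(b) follow from the rigidity of these host graphs: if $|\partial_v(\tilde F)|\le\gamma|V(\tilde F)|$ then all but a $\gamma$-fraction of the vertices of $\tilde F$ have full degree in $\tilde F$ and hence keep all of their $F$-edges, and the boundary vertices anchoring this solid part pin down the geometry, so a proper $\tilde F$ has only $O(1)$ boundary-fixing automorphisms (the degenerate case $\tilde F=F$ contributing $|\mathrm{Aut}(F)|=O(n)=e^{o(n)}$); note also $|E(F)|\gg n^{2/d}$, so $\tilde F=F$ never falls under 2.(a). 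Theorem~\ref{th:main2} then gives~\eqref{eq:sharp}.

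\emph{Random regular graphs.} For $d\ge 3$ let $F$ be a uniformly random $d$-regular graph on $[n]$, conditioned, when $d=3$, on being triangle-free --- an event of probability tending to a positive constant, which carries whp statements over from the unconditioned model. By~\cite{MW}, whp $F$ is asymmetric, so 1.(a) holds with room to spare. By~\cite{Bollobas_iso} together with a routine first-moment bound, whp $F$ has Cheeger constant bounded below by an absolute $c>0$ and, in addition, contains no $\tilde F$ with $3\le|V(\tilde F)|\le n-3$ and $|\partial_e(\tilde F)|\le d$: for $d\ge 4$ such a $\tilde F$ cannot exist at all (already a $3$-set would have to induce more than $d-1$ edges), while for $d=3$ the only offenders are $K_3$ and the vertex-complement of a $K_3$, both destroyed by triangle-freeness. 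Fixing any $w=w(n)\to\infty$ with $w=o(\ln n)$, the no-sparse-subgraph property gives $|\partial_e(\tilde F)|\ge d+1$ for all $v$ in range, while the expansion gives $|\partial_e(\tilde F)|\ge cv\ge d+1+\lfloor vw/\ln n\rfloor$ as soon as $v\ge\ln n/w$ (because then $v(c-w/\ln n)\to\infty$ swamps the constant $d+1$), which covers the window $\ln n/w\le v\le n^{1-\delta}$; hence 1.(b) holds whp. Theorem~\ref{th:main2} then yields~\eqref{eq:sharp} for almost all $d$-regular graphs with $d\ge 4$ and for almost all triangle-free cubic graphs.

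\emph{Square and triangular lattices.} Let $L$ be the finite $a\times b$ square (resp.\ triangular) lattice with $n=ab$ and $a,b=\Theta(\sqrt n)$; it embeds as a spanning subgraph of the toroidal square grid (resp.\ toroidal triangular lattice) $\hat L$ on the same vertex set, which is $d$-regular with $d=4$ (resp.\ $6$) and, for $n$ large, satisfies the second condition by the verification above (in the triangular case the extremal value $12=2d$ of the edge cut is attained on triangles). Thus $p_c(\hat L)=(1+o(1))(e/n)^{2/d}$ is sharp, and since $\mathcal{Q}_{\hat L}\subseteq\mathcal{Q}_L$, monotonicity gives $p_c(L)\le p_c(\hat L)=(1+o(1))(e/n)^{2/d}$; moreover if $p>(1+\varepsilon)(e/n)^{2/d}$ then $p>(1+\varepsilon/2)p_c(\hat L)$ for large $n$, so whp $\mathbf{G}\supseteq\hat L\supseteq L$. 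For the matching lower bound and the other half of sharpness I would use a first moment: $|E(L)|=\tfrac{dn}{2}-O(\sqrt n)$, so $\E X_L\le n!\,p^{|E(L)|}=n!\,p^{dn/2}\cdot p^{-O(\sqrt n)}$, and for $p<(1-\varepsilon)(e/n)^{2/d}$ the factor $n!\,p^{dn/2}=\Theta(\sqrt n)(1-\varepsilon)^{dn/2}$ decays like $e^{-\Theta(n)}$, which dominates the correction $p^{-O(\sqrt n)}=e^{O(\sqrt n\ln n)}$; hence $\E X_L\to 0$ and whp $\mathbf{G}\not\supseteq L$. The same estimate also shows $p_e(L)=(1+o(1))(e/n)^{2/d}$, completing~\eqref{eq:sharp} for both lattices. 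The main obstacle throughout lies in the first block of arguments: identifying the edge-isoperimetric optimisers of $C_n^k$ and of the tori with the correct case analysis at the boundary between the ``interval/rectangle'' and ``band'' regimes, and establishing the rigidity estimates 2.(a)--2.(b) --- the decisive point being that a subgraph with many interchangeable parts necessarily has a vertex boundary proportional to its order, so the hypothesis $|\partial_v(\tilde F)|\le\gamma|V(\tilde F)|$ precisely excludes such subgraphs.
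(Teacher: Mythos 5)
You reconstruct the proof along the same lines the paper indicates: reduce every case to checking the hypotheses of Theorem~\ref{th:main2} (condition~2 for the cycle powers and tori, condition~1 for the random regular models), and treat the two planar lattices by monotonicity through their $d$-regular toroidal completions together with a first-moment lower bound. This matches the discussion preceding Corollary~\ref{corol}; your edge-isoperimetric evaluation on $C_n^k$ and $C_m\square C_{n/m}$ (yielding the thresholds $k\ge 3$ and $m\ge 4$), the reduction of the lattices to the torus, and the use of triangle-freeness to salvage $d=3$ are all consistent with the paper's route.

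There is, however, an overclaim in the random regular part. You assert that ``for $d\ge 4$ such a $\tilde F$ cannot exist at all,'' supported only by the observation that a $3$-set cannot have boundary $\le d$ once $d\ge 4$. That observation is correct for $|V(\tilde F)|=3$, but the blanket statement is deterministically false: a copy of $K_d$ inside any $d$-regular graph is a $d$-set with edge boundary exactly $d$, and $d$-regular graphs containing $K_d$ certainly exist. The statement actually required — and the one the paper asserts — is probabilistic: \emph{whp} $\mathbf{G}_d$ contains no such $\tilde F$, which one gets by combining a first-moment bound (killing constant-size offenders, where the forced density $\tfrac{d}{2}(1-1/v)>1$ makes the expected count vanish) with the expansion bound $|\partial_e|\ge c\,v$ for $3\le v\le n/2$, and complementation for $v>n/2$. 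You cite the correct tools, but the deterministic parenthetical should be replaced by this probabilistic argument. Separately, your verification of conditions 2.(a)--(b) for $C_n^k$ and the tori is a plausibility sketch rather than a proof: the claim that a subgraph with $|\partial_v(\tilde F)|\le\gamma|V(\tilde F)|$ admits only $O(1)$ boundary-fixing automorphisms needs a short rigidity lemma (e.g.\ in $C_n^k$, that a connected subgraph in which almost all vertices have full degree $2k$ is forced to be a power of an interval and hence rigid relative to its boundary vertices). You flag this yourself as the main obstacle; the paper likewise supplies no details here, so the depth of your argument matches the source.
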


\begin{remark}
Actually, using our methods, we are able to establish the same sharp threshold for almost all 3-regular graphs --- the condition of the absence of triangles is redundant, since the number of triangles converges in probability to a Poisson random variable~\cite{Wormald}, and so it is bounded in probability. In other words, we may allow $F$ to have a bounded number of subgraphs with a smaller edge boundary. However, we do not want to overload the proof with technical details, and so we formulate Theorem~\ref{th:main2} as well as Corollary~\ref{corol} in their current forms.

Through private communication, we learned that Tam\'{a}s Makai, Matija Pasch, Kalina Petrova, and Leon Schiller are independently working on determining the asymptotics of $p_c(F)$ for $k$-th powers of cycles $F$. They proved~\eqref{eq:sharp} for all $k\geq 4$ and plan to upload this result.
\end{remark}


Examples of graphs that do not satisfy restrictions from Theorem~\ref{th:main2} include the square of a cycle, toroidal grid $T_{3,n/3}$, and, so-called, 2-overlapping 4-cycles. The latter graph, denoted by $C^e_{4,n}$, is obtained from $n/2$ cyclically order copies of $C_4$, where two consecutive $C_4$ overlap in exactly one edge, whereby each $C_4$ overlaps with two copies of $C_4$ in opposite edges. Recall that Kahn, Narayanan, and Park in~\cite{KNP} proved~\eqref{eq:p_c=p_e} for the square of a cycle $F$, resolving the conjecture of K\"{u}hn and Osthus~\cite{KO}. Moreover, using the same method, Espuny D\'{i}az and Person in~\cite{DP} proved~\eqref{eq:p_c=p_e} for $F=C^e_{4,n}$ answering the question of Frieze~\cite{Frieze_C4} (we note that~\eqref{eq:p_c=p_e} for these graphs also follow both from Theorem~\ref{th:main1} and~\cite[Corollary 1.5]{CHL}).

\subsection{Squares of Hamilton cycles.}
\label{sc:KNP_resolution}

In this section, we present results that complement Theorem~\ref{th:main2} and resolve the conjecture of Kahn, Narayanan, and Park. The novelty of its proof technique constitutes our main contribution.

In 2020, Kahn, Narayanan, and Park~\cite{KNP} conjectured that~\eqref{eq:sharp} holds when $F$ is the square of a cycle. Due to the natural barrier in the application of the fragmentation technique that was used to prove~\eqref{eq:p_c=p_e} for the square of a Hamilton cycle~\cite{KNP}, to resolve the conjecture of Kahn and Kalai~\cite{PP_KK}, and to prove the so called spread lemma~\cite{ALWZ} (see details in Section~\ref{sc:intro_strategy}), this conjecture attracted significant interest and was reiterated in~\cite{Frieze,Perkins}. In this paper, we prove the conjecture.

\begin{theorem}
Let $F=F(n)$ be the square of an $n$-cycle on $[n]$. Then $F$ satisfies~\eqref{eq:sharp}.
\label{th:second_power}
\end{theorem}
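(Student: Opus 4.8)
The plan is to deduce Theorem~\ref{th:second_power} as essentially a special (but borderline) case of the machinery developed for Theorem~\ref{th:main2}, after first establishing the subgraph-structure facts that let the square of the Hamilton cycle $F=C_n^2$ play the role of the regular graph in that theorem. Recall that the relevant obstruction is the small subgraph with edge boundary $d+2=6<2d=8$ (here $d=4$): a path-segment of $C_n^2$ has edge boundary exactly $6$, so $F$ fails condition $2.(c)$ of Theorem~\ref{th:main2} and also fails Riordan's condition. Thus the whole point is that the ``forbidden'' edge-boundary value is attained only by a very restricted, essentially one-parameter family of subgraphs (intervals of the cycle), and this rigidity must be exploited directly rather than dismissed by a counting inequality. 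So the first step is a structural lemma: classify all connected $\tilde F\subseteq F$ with $|V(\tilde F)|$ up to linear size and $|\partial_e(\tilde F)|\le 7$; show that each such $\tilde F$ is (a) a union of a bounded number of cyclic intervals, and (b) has a bounded number of boundary-fixing automorphisms. This gives the analogues of conditions $2.(a)$ and $2.(b)$, and it gives a sharp count of the number of copies of each such $\tilde F$ in $F$ and of the number of ways to re-embed them --- exactly the ``full power of Claim~\ref{cl:main}'' alluded to in Remark~\ref{rk:best_possible_upper_bounds}.

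Next, I would run the spread/fragmentation argument of the proof of Theorem~\ref{th:main2} with the refined spread constant. The upper bound $p_c(F)\le(1+o(1))\sqrt{e/n}$ is the hard direction. Here one cannot afford the slack of $O(n^{-2/d})$ or even of the constant $e^{2/d}+d/(1+\1_{d\text{ even}})-1$ from Remark~\ref{rk:best_possible_upper_bounds} (which for $C_n^2$ would only give $\sqrt{e}+1$); one needs the exact leading constant $\sqrt e$. The strategy is: fix $p=(1+\varepsilon)\sqrt{e/n}$, and show $\mathbf G\sim G(n,p)$ whp contains a copy of $F$. Following Kahn--Narayanan--Park and Park--Pham, one iteratively builds a copy of $F$ by absorbing already-guaranteed fragments, at each stage bounding the number of ``bad'' partial embeddings using the spread estimates from the structural lemma; the small-edge-boundary intervals are precisely where the union bound is tightest, so the interval classification must be plugged in to control the exponential sums $\sum_{\tilde F}p^{|E(\tilde F)|}(\text{\#copies})$ with the correct constant. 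The genuinely new ingredient --- advertised as the paper's main contribution --- is presumably a refinement of the fragmentation step that avoids the ``natural barrier'' mentioned in the introduction: rather than fragmenting into pieces of bounded size (which loses a constant factor), one fragments more cleverly, perhaps into long intervals whose endpoints are handled separately, so that the total spread cost telescopes to $(1+o(1))$. I would structure this as: (i) set up the random-greedy/absorption process; (ii) state the key ``cost per fragment'' bound with constant $\sqrt e\,$; (iii) sum over the process using the interval structure; (iv) conclude by a second-moment or Park--Pham-type argument for the final matching-up.

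For the lower bound $p_c(F)\ge(1-o(1))\sqrt{e/n}$ one uses the first-moment computation already in the excerpt: $\E X_F=\frac{n!}{|\mathrm{Aut}(F)|}p^{2n}$, and since $|\mathrm{Aut}(C_n^2)|=\Theta(n)$ (rotations and one reflection), $\E X_F\to 0$ when $p<(1-\varepsilon)\sqrt{e/n}$ by Stirling, so whp $\mathbf G$ contains no copy of $F$. Sharpness then follows by combining the two bounds with Friedgut's criterion exactly as described after Theorem~\ref{th:main1}: the property $\mathcal Q_F$ is increasing and isomorphism-closed, and the coarse threshold being $\Theta(n^{-1/2})$ with the structural control of small subgraphs rules out the ``local'' alternative in Friedgut's theorem, upgrading the $(1+o(1))$-asymptotic into genuine sharpness. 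I expect the main obstacle to be step (ii)--(iii) of the upper bound: squeezing the fragmentation argument so that the accumulated multiplicative cost is $e^{2/d}=\sqrt e$ and not merely $O(1)$ requires handling the boundary vertices of each interval fragment with essentially no waste, which is where the bounded-automorphism input and the precise interval counts from the structural lemma all have to be combined simultaneously --- this is the delicate heart of the proof and the place where the new technique replaces the KNP fragmentation.
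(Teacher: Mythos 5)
The overall skeleton you propose --- structural classification of small-boundary subgraphs, sharpened spread estimates, iterated fragmentation, second-moment finish, first-moment lower bound --- is consistent with the paper's framework, and your diagnosis of the obstruction (a square of a path has edge boundary $6<2d$, so neither Riordan's condition nor condition $2.(c)$ applies, and the naive fragmentation argument only reaches $\sqrt e+1$) is exactly right. But the genuinely new step is left as a black box in your sketch, and your guess about it goes in the wrong direction. You suggest fragmenting ``into long intervals whose endpoints are handled separately, so that the total spread cost telescopes.'' The difficulty is the opposite: typical fragments of size $O(\sqrt n)$ \emph{already} contain squares of paths of length $\Omega(\ln n)$ (the ``closed'' subgraphs), and such a fragment cannot be covered by $\Theta(n^{3/2})$ further random edges, since a closed interval of length $\Omega(\ln n)$ contributes $((1+o(1))N/m)^{\ell}$ with too few boundary edges to compensate. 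One must \emph{destroy} long intervals inside fragments, not arrange them; and no decoupling argument is available to simply select a rarer, interval-free fragment for a typical pair $(F,W)$ --- the paper says explicitly it could not find one.

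What the paper actually does is surgical \emph{improvement} of typical fragments. It fixes $\chi=\omega(\sqrt n/\ln n)$ disjoint diamonds $\overrightarrow D=(D_1,\dots,D_{\chi})$ placed at equal cyclic distances and restricts to the subfamily $\mathcal F_n(\overrightarrow D)$ of squares of Hamilton cycles containing all of them; a fragment of size $O(\sqrt n)$ necessarily contains every $D_i$, and the $D_i$ are far apart. Given a fragment $H$, it cuts each long closed subgraph into pieces of length $O(\ln n/w)$, reinserts those pieces inside the diamonds, and glues the remnants. The key observation is that the resulting graph $H'$ is still a fragment of \emph{some other} $F'\in\mathcal F_n$, so $H'\cup\mathbf W$ still contains a copy of $F$. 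To keep the multiset of improved fragments well-spread, the assignment of pieces to diamonds is made via a perfect matching in a sparse random bipartite graph $\mathbf B$, which bounds the pre-image sizes of the improvement map (the reconstruction claim), and on top of that a non-uniform distribution $\mathcal Q_{F,W}$ over fragments is introduced so that a ``separating'' property holds, needed to cover the final $o(\log n)$-size fragments. None of this appears in your outline, and without it the telescoping you hope for does not materialize: you remain stuck at $\sqrt e+1$. Also, the paper does not invoke Friedgut's criterion here; it proves whp containment directly at $(1+\varepsilon)\sqrt{e/n}$ and pairs this with the elementary first-moment lower bound.
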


We then show that the method that we use to prove this conjecture can be used to prove a more general result for all $d$-regular graphs that have ``cyclic'' structure.

\begin{theorem}
Let $d\geq 3$ be a constant, $r=r(n)=o(\log\log n/\log\log\log n)$\footnote{We did not try to optimise the bound on $r$ and we believe that it can be improved using our methods, but not beyond $\log n$.}, and $F=F(n)$ be a sequence of graphs on $[n]$ satisfying the following.  
\begin{itemize}
\item Every subgraph $\tilde F\subset F$ with $3\leq|V(\tilde F)|\leq r$ has $|\partial_e(\tilde F)|\geq d+1$.
\item The bijection that maps each $v\in[n]$ to $v+r$\footnote{Vertices here are treated as elements of the cyclic group $(\mathbb{Z}_n,+)$.} is an automorphism of $F$.
\item Every $u,v\in[n]$, such that the distance between $u$ and $v$ in the cyclic order on $[n]$ is more than $r$, are not adjacent in $F$.
\item There are no automorphisms of $F$ that fix vertices from $[r]$. 
\end{itemize}
Then $F$ satisfies~\eqref{eq:sharp}.
\label{th:second_power_generalisation}
\end{theorem}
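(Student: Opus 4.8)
The claim combines a $0$-statement and a $1$-statement, and all the content is in the latter. The $0$-statement --- that for $p<(1-\varepsilon)(e/n)^{2/d}$ whp $\mathbf{G}\sim G(n,p)$ has no copy of $F$ --- follows from the first moment exactly as in the display preceding Theorem~\ref{th:main1}: $\E X_F\le n!\,p^{dn/2}\le(1+o(1))\sqrt{2\pi n}\,(1-\varepsilon)^{dn/2}\to 0$, giving $p_c(F)\ge(1-o(1))(e/n)^{2/d}$; together with $p_e(F)\le p_c(F)$ and the lower bound $p_e(F)\ge(1-o(1))(e/n)^{2/d}$ noted in the excerpt, it then suffices to prove the $1$-statement: for $p=(1+\varepsilon)(e/n)^{2/d}$, whp $F\subseteq\mathbf{G}$. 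It is worth noting what the constant is: $(e/n)^{2/d}$ is, up to $1+o(1)$, just the least $p$ with $\frac{n!}{|\mathrm{Aut}(F)|}p^{dn/2}\ge 1$, since $v\mapsto v+r$ gives $|\mathrm{Aut}(F)|\ge n/\gcd(n,r)$ while rigidity (only the identity fixes $[r]$ pointwise) embeds $\mathrm{Aut}(F)$ into the ordered $r$-tuples of distinct vertices, so $|\mathrm{Aut}(F)|\le n^{r}=e^{o(n)}$. Hence at $p=(1+\varepsilon)(e/n)^{2/d}$ we already have $\E X_F\to\infty$; the obstruction is concentration / actually exhibiting a copy, not the expectation, so the classical second moment is not obviously enough and one must construct an embedding.

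The plan is to build an embedding $\phi: V(F)\to V(\mathbf{G})$ that exploits the one-dimensional periodic structure of $F$ and so avoids the fragmentation machinery of~\cite{KNP,PP_KK,ALWZ}, whose inherent $\log n$ loss is exactly the barrier to a sharp constant. Concretely: (i) reserve at the outset a random linear-sized ``absorbing'' vertex set together with a small ``connecting'' reservoir; (ii) using the rest of $\mathbf{G}$, embed all but $o(n)$ of $V(F)$ by sweeping around the cycle, processing the vertices of $F$ in cyclic order in blocks of size $\Theta(r)$. By periodicity each block, with the $O(r)$ $F$-edges tying it to its predecessor, is a fixed gadget; by the bandwidth bound every edge of $F$ lies inside one block or joins consecutive blocks, so extending the embedding past a block is a local task --- realise a bounded-size gadget attached to the image of the previous block among still-unused vertices. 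The hypothesis $|\partial_e(\tilde F)|\ge d+1$ for $3\le|V(\tilde F)|\le r$ is precisely that every such gadget is sparse enough relative to its attachment not to be overrepresented in $\mathbf{G}$ --- a gadget with edge boundary $\le d$ would be too common and would already spoil $p_c=p_e$, as in the discussion after Theorem~\ref{th:main1}; and (iii) close the cycle --- match the last block back onto the first while respecting the remaining edges --- using the reserved absorbing set and the connecting reservoir, which are flexible enough to complete the embedding regardless of how (ii) turned out.

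To obtain the \emph{sharp} constant, step (ii) must be run with essentially no slack. A naive greedy sweep has only $\Theta(1)$ valid extensions per block (roughly $(1-\alpha)e$ times a constant at pool-fraction $1-\alpha$) and hence stalls after a constant fraction, so instead the sweep must keep, at every block, a rich, well-spread family of extension options and choose among them so as to leave the unused set ``balanced'', all while the number of options and the size and structure of the unused set stay within $1+o(1)$ of their extremal/first-moment values. This is where the refined enumeration of subgraphs of $F$ and of their embeddings (Claim~\ref{cl:main}) and the rigidity hypothesis enter: they pin every constant to leading order, so that the $\sim n/r$ local gains multiply to the Stirling factor $e^{2/d}$. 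Because the sweep is $\Theta(n/r)$ geometrically local, almost mutually independent steps, a union bound over the steps --- each failing with probability $o(1/n)$ given a concentration estimate on the unused set --- costs nothing; this is the structural reason the method escapes the scale-by-scale, hence $\log n$, union bound of fragmentation.

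The step I expect to be the crux is controlling the adaptivity and the no-slack requirement at once. The vertices used at a given block depend on the edges exposed so far, so the randomness across blocks is not literally independent, and one must rule out the sweep being driven into a ``bad'' state --- an unused set balanced in bulk but locally depleted so as to starve a future block --- without conceding any constant factor; proving this quantitatively enough for $(1+o(1))p_e(F)$, rather than merely $\Theta(p_e(F))$ where (as in Theorem~\ref{th:main1}) there is ample room, will need two-sided, $(1+o(1))$-accurate estimates for the local extension counts and an absorber in (iii) that is itself lossless. The periodicity and bandwidth hypotheses are what make both the bulk gadget and the absorber translation-invariant and boundedly local, and the bound $r=o(\log\log n)$ is spent keeping the number of isomorphism types of size-$\Theta(r)$ gadgets, hence the error accumulated over the $\Theta(n/r)$ steps, negligible.
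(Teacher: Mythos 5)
You abandon the paper's fragmentation method on the grounds that it has an ``inherent $\log n$ loss,'' but this is a misreading of the landscape: the $\log n$ factor arises only in the Park--Pham/Kahn--Kalai argument, where one settles for $O(n^{-2/d})$-spread and minimal fragments. The paper proves this very theorem \emph{via} fragmentation, with no $\log n$ loss, because of two refinements: (a) a two-sided, $(1+o(1))$-accurate count of subgraphs of $F$ and their extensions (Claim~\ref{cl:main}/\ref{cl:main_square}), which plays exactly the role of your ``$(1+o(1))$-accurate local extension counts''; and (b) the diamond-improvement trick of Section~\ref{sc:improvement}, which redistributes the vertices of long closed subgraphs of a fragment among a fixed net of $\omega(n^{(2d-\Delta)/d}/\log n)$ planted copies of $F[[2r]]$. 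Point (b) addresses the precise obstruction to sharpness that the paper identifies in Section~\ref{sc:intro_strategy}: typical fragments contain closed subgraphs on $\Omega(\log n)$ vertices, which cannot be covered with $\varepsilon n^{1-2/d}$ extra edges. Your sweep would hit the same obstruction in a different guise --- a depleted pool may force the sweep to stall on a long locally-dense configuration --- and your proposal has no analogue of (b); ``leave the unused set balanced'' is the problem restated, not solved.

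The other genuine gap is the absorber. At $p=(1+\varepsilon)(e/n)^{2/d}$ there is no headroom to reserve a linear fraction of edges (or of vertices, since every vertex has only $\Theta(1)$ edges at this density), so the standard absorption template --- where the absorber costs a constant fraction of the randomness --- destroys the sharp constant. You flag this yourself (``an absorber in (iii) that is itself lossless'') but provide no construction, and I am not aware of an absorber that works with $o(1)$-fraction of edges at a density this low for a spanning $d$-regular target; this is not a routine adaptation. Since you also identify the adaptive no-slack control of step (ii) as ``the crux'' and leave it open, the proposal is a plausible-sounding research programme, not a proof: the two steps that carry the sharp constant (the lossless absorber and the drift control of the sweep) are both absent. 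For comparison, the paper's route is to run $O(\log\log n)$ fragmentation rounds on the diamond-improved fragments, each costing $\varepsilon n^{1-2/d}/\log\log n$ edges, and to control the pre-image multiplicities of the improvement map (Claim~\ref{cl:pre-images-size}/\ref{cl:improvement_general}) well enough that the spread of the improved fragment multiset stays within the budget; the constraint $r=o(\log\log n)$ enters there (through the piece length $\mu=\Theta(\log n/(rw))$ and the number of rounds), not through a count of gadget isomorphism types as you propose.
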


In particular,~\eqref{eq:sharp} holds for the two other `naive' cases that Theorem~\ref{th:main2} does not cover --- for $F=C^e_{4,n}$ and $F=T_{3,n/3}$. Indeed, for these two graphs the requirements of Theorem~\ref{th:second_power_generalisation} are satisfied with $r=4$ and $r=3$, respectively. 

\subsection{Proof strategy.}
\label{sc:intro_strategy}

The crux of our proofs is the {\it fragmentation trick} that in different forms appeared in many applications. One of them is the famous {\it spread lemma}~\cite{ALWZ} which in particular gives good sunflower bounds~\cite{Tao}; in probabilistic terms the application of the trick for the spread lemma is described in~\cite{MN-WSZ}. This trick was also the main ingredient in the proof of Theorem~\ref{th:PP_KK} in~\cite{PP_KK} and in the proof of Frankston, Kahn, Narayanan, and Park~\cite{FKNP} of Talagrand's fractional version of the conjecture of Kahn and Kalai~\cite{Talagrand}. Our proofs partially rely on {\it typical} fragments, similarly to~\cite{KNP}, rather than {\it minimal} fragments, that were used by Park and Pham in~\cite{PP_KK} to prove Theorem~\ref{th:PP_KK} --- see more details below. 

{\it In order to prove Theorem~\ref{th:second_power}}, we introduce a novel technique of choosing fragments --- we show that the ``cyclic'' structure of squares of Hamilton cycles allows to choose rare fragments that are sparser than typical ones, from sufficiently many copies of $F$.
 The key idea from~\cite{KNP} that proves~\eqref{eq:p_c=p_e} for squares of cycles is as follows. Fix any $F\in\mathcal{F}_n$, before the edges of the uniformly random graph $\mathbf{W}\sim G(n,m)$ are exposed, where $m=\lfloor p{n\choose 2}\rfloor$. Consider $F\cup\mathbf{W}$ and show that whp almost every $F'\subset F\cup\mathbf{W}$ from $\mathcal{F}_n$ is such that $|F'\setminus\mathbf{W}|=O(\sqrt{n})$. It means that we may replace almost every $F\in\mathcal{F}_n$ with its {\it fragment} $F'\setminus\mathbf{W}\subset F$ --- for typical $F'\subset F\cup\mathbf{W}$ --- of size $O(\sqrt{n})$ and expose another $\mathbf{\tilde W}\sim G(n,\tilde m)$ independently where $\tilde m$ is large enough to cover at least one fragment whp. Then whp $\mathbf{W}\cup\mathbf{\tilde W}$ contains a graph from $\mathcal{F}_n$.

However, if a fragment contains a subgraph with edge boundary of size $d+2=6$ with $\Omega(\ln n)$ edges --- such a subgraph is exactly the square of a path, we call it {\it closed} --- then $\tilde m=\lceil\delta n\sqrt{n}\rceil$ additional uniformly random edges with a certain bounded from zero $\delta>0$ is not enough to cover it. Unfortunately, we suspect that typical fragments do contain closed subgraphs of size $\Omega(\ln n)$. So, this approach does not allow to get a sharp bound on the threshold. This is the main complication that does not allow to apply directly the fragmentation technique to resolve the conjecture of Kahn, Narayanan, and Park. A careful implementation of the technique gives upper bound on the threshold $(\sqrt{e}+1+o(1))n^{-1/2}$, see Remark~\ref{rk:best_possible_upper_bounds}.

Nevertheless, we show that there exists a large family of fragments that do not contain closed subgraphs of size $\Omega(\ln n)$ and which is still ``well-spread''. 
   A direct approach would be, for a typical $F\in\mathcal{F}_n$ (in the $\mathbf{W}$-measure), to search for a fragment $H\subset F$ that does not have large closed subgraphs --- indeed, since typically, the set of fragments, for a given $F$, is large, we suspect that such a fragment exists. There might be some clever decoupling argument that allows to implement this approach, but we could not find it. Instead, we improve a typical fragment manually: for a fragment $H\subset F$ we distribute evenly vertices of maximal closed subgraphs between them. This is possible due to the following crucial observation. Let $P_1=(u^1_1u^1_2v_1\ldots v_pu^1_3u^1_4)$ and $P_2=(u^2_1u^2_2v_{p+1}\ldots v_{p+p'}u^2_3u^2_4)$ be disjoint closed subgraphs of a fragment $H\subset F\in\mathcal{F}_n$. Then the graph $H'$ obtained from $H$ by applying any permutation to ``internal'' vertices $v_i$ of $P_1\sqcup P_2$ remains a fragment of {\it some other} $F'\in\mathcal{F}_n$. More formally,
\begin{itemize} 
\item let $\pi$ be any permutation on $[p+p']$,
\item let $t\in\{0,1,\ldots,p+p'\}$, and
\item let $H'$ be obtained from $H$ by replacing $P_1$ with $P_1^{\pi}:=(u^1_1u^1_2 v_{\pi(1)}\ldots v_{\pi(t)} u^1_3u^1_4)$ and $P_2$ with $P_2^{\pi}:=(u^2_1u^2_2 v_{\pi(t+1)}\ldots v_{\pi(p+p')} u^2_3u^2_4)$.
\end{itemize} 
Then, if $H\cup\mathbf{W}$ contains a graph from $\mathcal{F}_n$, we get that $H'\cup\mathbf{W}$ contains a graph from $\mathcal{F}_n$ as well. Moreover, $H$ and $H'$ have exactly the same number of edges, vertices, and connected components.

Although it allows to get a graph $H'$ that does not have large closed subgraphs, this graph is no longer a subgraph of $F$. Since the `improving' function that maps each $F$ to the modified fragment may have large pre-images, this modification of fragments may affect spreadness properties of the multiset of fragments. We implement a probabilistic approach to show that a suitable way to modify fragments exists: we distribute vertices among closed subgraphs according to a rule that is described by a perfect matching in a binomial random bipartite graph with an appropriate edge probability, sampled independently of $\mathbf{W}\sim G(n,m)$. Using this rule, we prove that the maximum cardinality of a pre-image of the `improving' function differs by a sufficiently small factor from the maximum cardinality of a pre-image of the original function that maps every $F$ to its fragment. It allows to reduce sizes of fragments to $o(\log n)$ after $O(\log\log n)$ fragmentation rounds, where edges are sprinkled with probability $\varepsilon/(\log\log n\cdot\sqrt{n}))$, for a sufficiently small $\varepsilon>0$.

Another complication is that, in order to implement this improvement, we need to have sufficiently many disjoint maximal closed subgraphs with at least 4 vertices, so that after vertices are distributed evenly between them, each closed subgraph has size $o(\log n)$. Although it seems plausible that typical fragments contain sufficiently many such closed subgraphs whp, we did not find a way to show this. Instead, we force typical fragments to contain a fixed set of $\omega(\sqrt{n}/\log n)$ closed subgraphs with 4 vertices ---{\it diamonds}. This is implemented via restricting the family $\mathcal{F}_n$ to the set of all cycles that contain fixed diamonds on specific positions. Due to symmetry and linearity of expectation, we then are able to extend the multiset of fragments to the entire $\mathcal{F}_n$.

It then remains to cover a fragment of size $o(\log n)$. To show that $G(n,\varepsilon n^{-1/2})$ contains such a fragment whp, it suffices to ensure that the initial fragments --- those collected after the first fragmentation round, before any refinement --- do not contain closed subgraphs of size at least $\sqrt{n}/\ln n$. Although we believe this is typically the case, we do not see a way to decouple the random graph and the random fragment to rigorously establish this. Instead, for every pair $(F\in\mathcal{F}_n,W\in{{[n]\choose 2}\choose m})$, we introduce a non-uniform distribution over the set of all $(F,W)$-fragments with atoms being fragments that have {\it disjoint} sets of inclusion-maximal closed subgraphs of size at least $\sqrt{n}/\ln n$. We show that such a distribution exists for almost all pairs $(F,W)$, and then we sample a fragment from this distribution for every suitable pair $(F,W)$ independently.

{\it The proof of Theorem~\ref{th:main2}} is much more straightforward --- typical fragments are good enough to derive~\eqref{eq:sharp}. Indeed, both conditions exclude closed subgraph of size $\Omega(\log n)$.  We prove the first part of Theorem~\ref{th:main2} via $1/\delta$ fragmentation steps. The proof of the second part  of Theorem~\ref{th:main2} requires only two fragmentations. Actually the usual second moment method works in this case --- for the uniform model instead of the binomial, as in the paper of Riordan~\cite{Riordan}. It is possible to show that $\mathrm{Var}X_F=O((\mathbb{E}X_F)^2)$ and then to apply the powerful results of Friedgut~\cite{Friedgut2,Friedgut}. In particular, this strategy was used by Narayanan and Schacht to determine sharp thresholds for non-loose Hamiltonian cycles in random hypergraphs~\cite{NS}. However, we give the proof of the second part of Theorem~\ref{th:main2} using fragmentation for the sake of convenience and coherence. 

We believe that our analysis is essentially optimal and significant improvements of conditions in Theorem~\ref{th:main2} require new ideas. Indeed, we obtain fairly optimal bounds on the number of graphs $|\mathcal{F}_n\cap\langle I\rangle |$ containing a given set of edges $I$ and on the number of subgraphs of $F$ with fixed numbers of vertices, edges, and components (see Claim~\ref{cl:main}). The main novel ingredient in the proof of Claim~\ref{cl:main} is a very nice property of $d$-regular graphs that do not have non-trivial subgraphs with edge boundary of size at most $d$: for every $v$, there are 1) constantly many closed subgraphs on $v$ vertices that share a vertex, 2) at most linearly many closed subgraphs on $v$ vertices in $F$, see Claim~\ref{cl:count_copies_vertex} and Claim~\ref{cl:total_number_closed}.


{\it Let us finally say a few words about our proof of Theorem~\ref{th:main1}.} Kahn, Narayanan, and Park in~\cite{KNP} noted that the key observation enabling their proof that the threshold for the appearance of the second power of a Hamilton cycle $F$ equals $p_c(F)=\Theta(n^{-1/2})$ is that $\mathcal{F}_n$ is $(1+o(1))\sqrt{e/n}$-spread. They refined the concept of spreadness by incorporating the number of components in a subgraph. This refinement was later distilled by Espuny D\'{i}az and Person in~\cite{DP} under the name of {\it superspreadness} and was used to extend the result of Kahn, Narayanan, and Park to a broader class of spanning subgraphs in $G(n,p)$. Somewhat surprisingly, this property allows to prove the weaker result \cite[Corollary 1.5]{CHL} but does not allow to prove Theorem~\ref{th:main1}, as it treats contributions from connected components of both bounded and growing sizes in the same manner. It is also worth noting that Spiro \cite{Spiro} proposed another generalisation of spreadness, which, in particular, recovers the results from \cite{DP} and served as a key ingredient in the proofs of \cite{CHL}. 


In our approach, we employ the same fragmentation technique but with a more refined analysis of subgraphs of growing sizes, which enables us to prove Theorem~\ref{th:main1}. Since the proof does not introduce any substantially new ideas, we defer it to Appendix~\ref{sc:theorem_coarse_proof}.



\subsection{Organisation.} Properties of closed subgraphs in $d$-regular graphs $F$ are studied in Section~\ref{sc:properties}. We further use them in Section~\ref{sc:spread} to get tight estimates on the number of subgraphs and their extensions in $F$. The fragmentation trick and the main Lemma~\ref{lm:not_bad}, that allows to apply it, are described in Section~\ref{sc:planted}. Section~\ref{sc:theorem_sharp_proof} is devoted to the proof of Theorem~\ref{th:main2}. Then, in Section~\ref{sc:KNP_conjecture_resolution} we prove Theorem~\ref{th:second_power}. It generalises directly to Theorem~\ref{th:second_power_generalisation}. For the sake of clarity of presentation instead of providing a full proof of the more general result, we give a detailed proof of Theorem~\ref{th:second_power} (in Section~\ref{sc:KNP_conjecture_resolution}), which is more transparent and avoids unnecessary technical details, and then sketch the proof of Theorem~\ref{th:second_power_generalisation} in Section~\ref{sc:generalisation_square_sketch}. In Section~\ref{sc:challenges} we discuss some remaining challenges. The proof of Theorem~\ref{th:main1} is presented in Appendix~\ref{sc:theorem_coarse_proof}. 

\subsection{Notation.} For every positive integer $n$, we denote $[n]:=\{1,\ldots,n\}$. The number of edges in the clique on $[n]$ is denoted by $N=N(n):={n\choose 2}$. We denote by $\partial_e(\tilde F)$ and $\partial_v(\tilde F)$ the edge and the vertex boundaries of a subgraph $\tilde F$ in a fixed graph $F$ (see definitions in Section~\ref{sc:intro_coarse}). For a graph $G$, we denote its minimum degree, its number of vertices, its number of edges, and its number of connected components by $\delta(G)$, $x(G)$, $\ell(G)$, and $c(G)$, respectively. We also denote the automorphism group of $G$ by $\mathrm{Aut}(G)$. For a set $U\subset V(G)$, we let $G[U]$ be the subgraph of $G$ induced by $U$. For a vertex $v\in V(G)$, its degree in $G$ is denoted by $\mathrm{deg}_G(v)$.

\section{Linearly many closed subgraphs}
\label{sc:properties}

In this section, we consider $d$-regular graphs with good enough expansion properties (edge boundaries have sizes at least $d+1$) and prove that they have a limited amount of closed subgraphs. Two main results of this section --- Claims~\ref{cl:count_copies_vertex} and~\ref{cl:total_number_closed} are used in Section~\ref{sc:spread} to prove Claim~\ref{cl:main}. The latter claim is essential in the proof of Theorem~\ref{th:main2}.


Let us call a graph $F$ {\it locally sparse} if the edge boundary of every subgraph $\tilde F\subset F$ with $3\leq|V(\tilde F)|\leq n-3$ is of size at least $d+1$. Clearly $d+1$ can be replaced with $d+2$ for even $d$ since in this case $|\partial_e(\tilde F)|$ cannot be odd. Let $\Delta:=d+1$ for odd $d$ and $\Delta:=d+2$ for even $d$. It is easy to see that the condition $|\partial_e(\tilde F)|\geq\Delta$ holds for all $\tilde F$ with $2\leq |V(\tilde F)|\leq d-1$ just due to the $d$-regularity of $F$.
Let us call an {\it induced} subgraph $\tilde F$ of a locally sparse $d$-regular graph $F$ with the edge boundary of size exactly $\Delta$ {\it closed} (note that a closed subgraph is always connected --- otherwise, it has a connected component with a smaller boundary). 

Let $F$ be a locally sparse $d$-regular graph on $[n]$.

\begin{claim}
\label{cl:degrees}
Every closed subgraph of $F$ with at least 3 vertices has minimum degree at least~$d/2$.
\end{claim}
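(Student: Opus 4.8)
The plan is a short proof by contradiction. Suppose $\tilde F$ is a closed subgraph of the locally sparse $d$-regular graph $F$ with $|V(\tilde F)|\ge 3$, and suppose some vertex $v\in V(\tilde F)$ satisfies $\mathrm{deg}_{\tilde F}(v)<d/2$. I will produce a subgraph of $F$ whose edge boundary is smaller than $\Delta$ while still lying in a range where the size-$\Delta$ lower bound on edge boundaries is available, which is the desired contradiction.

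First I record the identity $|\partial_e(\tilde H)|=d\,|V(\tilde H)|-2\,|E(\tilde H)|$, valid for every induced subgraph $\tilde H\subseteq F$: summing $\mathrm{deg}_F$ over $V(\tilde H)$ counts each edge inside $\tilde H$ twice and each boundary edge once. Applying this to $\tilde F$ and to the induced subgraph $\tilde F-v$ on $V(\tilde F)\setminus\{v\}$, and using that deleting $v$ removes exactly $\mathrm{deg}_{\tilde F}(v)$ edges, I get
\[
|\partial_e(\tilde F-v)|=d\bigl(|V(\tilde F)|-1\bigr)-2\bigl(|E(\tilde F)|-\mathrm{deg}_{\tilde F}(v)\bigr)=|\partial_e(\tilde F)|-\bigl(d-2\,\mathrm{deg}_{\tilde F}(v)\bigr)=\Delta-\bigl(d-2\,\mathrm{deg}_{\tilde F}(v)\bigr).
\]
Since $2\,\mathrm{deg}_{\tilde F}(v)$ is an integer and $2\,\mathrm{deg}_{\tilde F}(v)<d$, we have $d-2\,\mathrm{deg}_{\tilde F}(v)\ge 1$, hence $|\partial_e(\tilde F-v)|\le\Delta-1<\Delta$.

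It remains to check that $\tilde F-v$ falls into a range where $|\partial_e(\cdot)|\ge\Delta$ is guaranteed. On the one hand $|V(\tilde F-v)|=|V(\tilde F)|-1\ge 2$. On the other hand, because $\tilde F$ is closed, $|\partial_e(\tilde F)|=\Delta\ge d+1>d$; if $\tilde F$ contained all but one vertex of $F$, that single outside vertex would send all of its $d$ edges into $\tilde F$, forcing $|\partial_e(\tilde F)|=d$, which is impossible. Hence $|V(\tilde F)|\le n-2$ and $|V(\tilde F-v)|\le n-3$. Now if $|V(\tilde F-v)|=2$, then (as $2\le d-1$) the bound $|\partial_e(\tilde F-v)|\ge\Delta$ holds simply by $d$-regularity, as noted in the excerpt; and if $3\le|V(\tilde F-v)|\le n-3$, it holds by local sparsity. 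Either way $|\partial_e(\tilde F-v)|\ge\Delta$, contradicting the previous paragraph, and the claim follows. I do not anticipate any real difficulty here; the only point requiring attention is the bookkeeping of the extremal vertex counts, namely ruling out $|V(\tilde F)|\in\{n-1,n\}$ and observing that the two-vertex case of $\tilde F-v$ is covered by $d$-regularity alone.
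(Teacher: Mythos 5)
Your proof is correct and takes essentially the same approach as the paper: you delete the low-degree vertex and show that the edge boundary of the resulting induced subgraph drops strictly below $\Delta$, deriving a contradiction with local sparsity (for the 3-to-$n{-}3$ range) or with the $d$-regularity bound on two-vertex subgraphs. The extra step you add — explicitly ruling out $|V(\tilde F)| \in \{n-1, n\}$ via the fact that such induced subgraphs have edge boundary $d$ or $0$, neither equal to $\Delta$ — is correct and slightly more careful bookkeeping than the paper, which leaves this implicit in the definition of a closed subgraph.
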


\begin{proof}
Assume that $\tilde F$ is a closed subgraph of $F$ with at least 3 vertices and with a vertex $w$ having degree $d'<d/2$. If we remove the vertex $w$ from $\tilde F$, then we get the graph $\tilde F\setminus w$ with edge boundary of size $|\partial_e(\tilde F)|+2d'-d<|\partial_e(\tilde F)|=\Delta$. This contradicts the local sparsity of $F$ when $|V(\tilde F)|\geq 4$. Otherwise it contradicts the fact that a subgraph on 2 vertices has the edge boundary of size at least $2d-2\geq\Delta$.
\end{proof}

 \begin{claim}
For any pair of adjacent vertices $x,y$ in $F$ and for every $3\leq v\leq n-3$, there are at most two closed subgraphs in $F$ on $v$ vertices containing $x$ and not containing $y$.
\label{cl:count_copies_vertex}
 \end{claim}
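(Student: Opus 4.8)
The plan is to assume for contradiction that there are three distinct closed subgraphs $F[U_1],F[U_2],F[U_3]$ on $v$ vertices with $x\in U_i$ and $y\notin U_i$ for each $i$, and to derive a contradiction with the local sparsity of $F$. Since each $F[U_i]$ is an induced subgraph, $|\partial_e(F[U_i])|$ equals the plain edge cut $e_F(U_i,V(F)\setminus U_i)=\Delta$, which is the minimum cut value among sets of size in $[3,n-3]$. As the $U_i$ all have size $v$ and are pairwise distinct, none contains another, so they pairwise cross. The whole argument runs on the submodularity of the cut function $e_F(\cdot)$ through its exact defect identity $e_F(U)+e_F(U')=e_F(U\cap U')+e_F(U\cup U')+2\,e_F(U\setminus U',U'\setminus U)$.

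First I would show that any two distinct such $U,U'$ uncross tightly. We have $|U\cap U'|\ge 2$ and $|U\cup U'|\le n-2$: if $U\cap U'=\{x\}$, then the $F$-neighbours of $x$ inside $U$ and the $F$-neighbours of $x$ inside $U'$ form disjoint sets, each of size $\ge d/2$ by Claim~\ref{cl:degrees}, so together they exhaust $N_F(x)$ — contradicting that $y\in N_F(x)$ lies outside $U\cup U'$; the claim about the union is this statement applied to $F[V(F)\setminus U]$ and $F[V(F)\setminus U']$, which are closed subgraphs on $n-v\in[3,n-3]$ vertices containing $y$ but not $x$. With these size bounds, local sparsity (and $2d-2\ge\Delta$ for sets of size $2$ or $n-2$) gives $e_F(U\cap U')\ge\Delta$ and $e_F(U\cup U')\ge\Delta$; combined with $e_F(U)=e_F(U')=\Delta$ and the defect identity, this squeezes out $e_F(U\cap U')=e_F(U\cup U')=\Delta$ and $e_F(U\setminus U',U'\setminus U)=0$.

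The decisive step is to upgrade this to: any two distinct such $U,U'$ differ in exactly one vertex. Put $A=U\cap U'$, $B=U\setminus U'$, $B'=U'\setminus U$, $C=V(F)\setminus(U\cup U')$, so $|B|=|B'|$ and, by the previous paragraph, $e_F(A)=\Delta$ and $e_F(B,B')=0$. Comparing $e_F(A)=e_F(A,B)+e_F(A,B')+e_F(A,C)$ with $e_F(U)=e_F(A,B')+e_F(A,C)+e_F(B,C)$ yields $e_F(B,C)=e_F(A,B)$, and symmetrically $e_F(B',C)=e_F(A,B')$; hence $e_F(B)=2\,e_F(A,B)$ and $e_F(B')=2\,e_F(A,B')$. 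If $|B|\ge 2$, then $2\le|B|\le n-3$, so $e_F(B)\ge\Delta$, whence $e_F(A,B)\ge\lceil\Delta/2\rceil$ and likewise $e_F(A,B')\ge\lceil\Delta/2\rceil$; but $x\in A$, $y\in C$ and $x\sim y$ force $e_F(A,C)\ge 1$, and then $e_F(A)\ge 2\lceil\Delta/2\rceil+1>\Delta=e_F(A)$, a contradiction. Therefore $|U\setminus U'|=1$.

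Finally I would run the three‑set endgame. Writing $U_2=(U_1\setminus\{a\})\cup\{b\}$ and $U_3=(U_1\setminus\{a'\})\cup\{b'\}$, the requirement $|U_2\setminus U_3|=1$ leaves only two shapes: the \emph{star} $U_i=W\cup\{z_i\}$ with $|W|=v-1$ and $z_1,z_2,z_3$ distinct, or the \emph{triangle} $U_1=W'\cup\{a,a'\}$, $U_2=W'\cup\{a',b\}$, $U_3=W'\cup\{a,b\}$ with $|W'|=v-2$; in both cases $x$ lies in the common part and $y$ lies neither in it nor among the variable vertices. The common part ($W$, resp.\ $W'$) has cut exactly $\Delta$: for the star it is the pairwise intersection $U_1\cap U_2$, so this is immediate from the second paragraph, and for the triangle it follows from the same squeezing argument applied to $F[U_1\cap U_2]$ and $F[U_3]$ — with the degenerate case $v=3$ (common part $\{x\}$) handled separately, where $F[U_i\cap U_j]$ being closed forces $2d-2=\Delta$, hence each $F[U_i]$ has $(3d-\Delta)/2$ internal edges and is therefore a triangle, so $\{x,a,a',b\}$ induces a $K_4$ with cut $d<\Delta$, contradicting local sparsity. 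Each variable vertex has $\ge d/2$ neighbours in the relevant closed set by Claim~\ref{cl:degrees}, hence $\ge d/2$ boundary edges to the common part, giving $3\lceil d/2\rceil$ distinct boundary edges of the common part and so $3\lceil d/2\rceil\le\Delta$. For odd $d$ this reads $3(d+1)/2\le d+1$, impossible; for even $d$ it forces $d=4$, $\Delta=6$, and equality, so \emph{all} six boundary edges of the common part land on the variable vertices — but $x$ is in the common part and $x\sim y$ with $y$ not a variable vertex, so the edge $xy$ is a boundary edge of the common part landing on $y$, a contradiction. I expect the bulk of the work to be the bookkeeping in this last paragraph — checking that the star and triangle are the only configurations and clearing the small cases $v\in\{3,4\}$ — while the conceptual heart is the one‑vertex‑difference statement of the third paragraph.
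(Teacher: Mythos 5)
Your proposal is correct and follows essentially the same route as the paper's proof, differing only in framing. The paper carries out the same bookkeeping via direct edge-counting: it shows $H_0=H_1\cap H_2$ is closed, then bounds the edges that $H_j\setminus H_0$ sends to $H_0$ using $|E(H_j)\setminus E(H_0)|=\tfrac{d}{2}|V(H_j\setminus H_0)|$ and local sparsity of $H_j\setminus H_0$ — which is the same computation as your $e_F(B)=2\,e_F(A,B)\ge\Delta$, just packaged differently. Your explicit use of the submodularity defect identity (squeezing out $e_F(U\cap U')=e_F(U\cup U')=\Delta$ and $e_F(B,B')=0$) is a cleaner presentation of what the paper does implicitly, and it also lets you avoid the paper's preliminary reduction to $v\le n/2$ by handling the union via the complement trick in one line. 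Your explicit star/triangle dichotomy is the same as the paper's distinction between $H_3\not\subset H_1\cup H_2$ and $H_3\subset H_1\cup H_2$; the paper resolves the triangle case by observing that the three pairwise intersections themselves form a star on $v-1$ vertices and pushing this down to $v=3$, while you argue directly on the common $(v-2)$-vertex core. One point where your write-up is slightly elliptical: in the triangle endgame, "$\ge d/2$ boundary edges to the common part" requires taking the relevant closed set to be the $(v-1)$-vertex intersection $U_i\cap U_j$ (so that its deletion of the variable vertex lands inside $W'$), not one of the $U_i$ themselves; with the wrong reading one would need the extra observation $e_F(B,B')=0$ — which your squeeze does deliver — to see that the variable vertices are pairwise non-adjacent. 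It would be worth spelling this out, but the tools are all in place and the argument goes through.
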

 
 \begin{proof}
 
Fix adjacent vertices $x,y$ and $3\leq v\leq n-3$.
 
A closed subgraph $\tilde F\subset F$ sends exactly $\Delta$ edges to $F\setminus\tilde F$ implying that $F\setminus \tilde F$ is also closed. Assume that $v\geq n/2$, and that there are at least 3 closed graphs on $v$ vertices that share $x$ and do not contain $y$. Then their complements are closed graphs on $n-v\leq n/2$ vertices that share $y$ and do not contain $x$. Therefore, it suffices to prove the claim for $v\leq n/2$.
 
Let $H_1,H_2$ be different closed subgraphs of $F$ on $v$ vertices that contain $x$ and do not contain $y$. Note that $H_1,H_2$ should have at least one other common vertex since otherwise the degree of $x$ is bigger than $d$ due to Claim~\ref{cl:degrees}. Then $|V(H_1)\cup V(H_2)|\leq n-2$.

Let $H_0=H_1\cap H_2$. Note that $|E(H_0)|\leq \frac{d}{2}|V(H_0)|-\frac{\Delta}{2}$ implying that $|E(H_j)\setminus E(H_0)|\geq \frac{d}{2}|V(H_j\setminus H_0)|$ for both $j=1$ and $j=2$ since $H_1,H_2$ are closed. On the other hand, if, say $|E(H_2)\setminus E(H_0)|>\frac{d}{2}|V(H_2\setminus H_0)|$, then $|E(H_1\cup H_2)|>\frac{d}{2}|V(H_1\cup H_2)|-\frac{\Delta}{2}$ which contradicts the local sparsity of $F$ since $|V(H_1\cup H_2)|\leq n-2$. Therefore, $|E(H_j)\setminus E(H_0)|=\frac{d}{2}|V(H_j\setminus H_0)|$ for both $j=1$ and $j=2$, but then $|E(H_0)|= \frac{d}{2}|V(H_0)|-\frac{\Delta}{2}$, i.e. $H_0$ is closed. 

Then, there are exactly $\Delta$ edges between $H_0$ and $F\setminus H_0$, and one of them is the edge between $x$ and $y$. It means that $H_j\setminus H_0$, $j\in\{1,2\}$, send at most $\Delta-1$ edges (in total) to $H_0$. This may happen only if $|V(H_j\setminus H_0)|=1$ for both $j=1$ and $j=2$. Indeed, $|V(H_1\setminus H_0)|=|V(H_2\setminus H_0)|$. Moreover, the number of edges that $H_j\setminus H_0$ sends to $H_0$ equals
 $$
  |E(H_j)\setminus E(H_0)|-|E(H_j\setminus H_0)|\geq \frac{d}{2}|V(H_j\setminus H_0)|-\left(\frac{d}{2}|V(H_j\setminus H_0)|-\frac{\Delta}{2}\right)=\frac{\Delta}{2}
 $$ 
 whenever $|V(H_j\setminus H_0)|\geq 2$.
 
Assume that there exists a closed graph $H_3\not\subset H_1\cup H_2$ on $v$ vertices that contains $x$ and does not contain $y$. From the above it follows that $|V(H_1)\cap V(H_3)|=|V(H_2)\cap V(H_3)|=v-1$. If $H_0\not\subset H_3$, then $H_3$ has to contain both vertices from $(V(H_1)\cup V(H_2))\setminus V(H_0)$. Therefore, there are at least two vertices in $V(H_0)\setminus V(H_3)$ and then $|V(H_1)\cap V(H_3)|\leq v-2$ --- a contradiction. We get $H_3\cap H_1=H_3\cap H_2=H_0$. Each vertex of $H_j\setminus H_0$, $j\in\{1,2,3\}$, sends at least $\frac{d}{2}$ edges to $H_0$ due to Claim~\ref{cl:degrees}. But then the vertices from $H_j\setminus H_0$ send at least $\frac{3d}{2}\geq\Delta$ edges to $H_0$ --- contradiction again, since there is one additional edge $\{x,y\}$ in the edge boundary of $H_0$. 

Therefore, any other closed graph that contains $x$ and does not contain $y$ should be entirely inside $H_1\cup H_2$. Assume that such a graph $H_3$ exists. Let $w_1\in H_1\setminus H_0$, $w_2\in H_2\setminus H_0$. Clearly, $H_3$ contains $w_1,w_2$ and all but one vertex of $H_0$.   In the same way as above we get that $H_1\cap H_2=H_0$, $H_1\cap H_3$ and $H_2\cap H_3$ are three closed graphs on $v-1$ vertices that contain $x$ and do not contain $y$. These three closed graphs on $v-1$ vertices have the property that none of them is inside the union of the other two --- this is only possible when $v-1=2$, i.e. $v=3$. The only possible closed graph on 3 vertices is a triangle. Moreover, a triangle is closed only when $d=4$. So, $H_1,H_2$ are triangles sharing an edge, but then $H_3$ adds another edge to the union $H_1\cup H_2$ implying that $H_1\cup H_2\cup H_3$ is a 4-clique. We get a contradiction with the local sparsity since the edge boundary of a 4-clique in a 4-regular graph is of size $4<\Delta=6$.  
 \end{proof}
 
From this, it immediately follows that, for every $v$, there are at most $2dn$ closed subgraphs on $v$ vertices in $F$ --- since $F$ is connected, every subgraph contains a vertex that is incident to an edge than does not belong to this subgraph. The following claim gives a slightly better bound.
 
\begin{claim}
Let $k\in\mathbb{N}$, and let $F':=F[[k]]$ be the induced subgraph of $F$ on $[k]$.
For every $3\leq v\leq n-3$, the number of closed subgraphs of $F'$ with $v$ vertices is at most $\frac{2dk}{3}$.
\label{cl:total_number_closed}
\end{claim}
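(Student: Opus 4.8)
The plan is to bootstrap Claim~\ref{cl:count_copies_vertex}. As noted just before the statement, charging each closed subgraph $C\subseteq F'$ to one of its boundary edges $\{x,y\}$ with $x\in V(C)\subseteq[k]$ and applying Claim~\ref{cl:count_copies_vertex} (at most two closed $v$-vertex subgraphs per ordered adjacent pair $(x,y)$, and at most $dk$ such pairs with $x\in[k]$) already gives the bound $2dk$. So it suffices to save a factor of $3$, and I would do this by spreading the charge of each $C$ over \emph{three} vertices rather than one.

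The key intermediate step is to show that every closed subgraph $\tilde F\subset F$ with $3\le|V(\tilde F)|\le n-3$ has $|\partial_v(\tilde F)|\ge 3$. By Claim~\ref{cl:degrees}, every vertex of $\tilde F$ has $\tilde F$-degree at least $d/2$, hence at least $\lceil d/2\rceil$; since $\tilde F$ is induced, a boundary vertex $w$ therefore sends at most $d-\lceil d/2\rceil=\lfloor d/2\rfloor$ edges to $V(F)\setminus V(\tilde F)$. Summing over boundary vertices and using $|\partial_e(\tilde F)|=\Delta$ gives $|\partial_v(\tilde F)|\ge\Delta/\lfloor d/2\rfloor$, and since $\Delta=d+1>d-1=2\lfloor d/2\rfloor$ for odd $d$ while $\Delta=d+2>d=2\lfloor d/2\rfloor$ for even $d$, this ratio exceeds $2$, forcing $|\partial_v(\tilde F)|\ge 3$.

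With this in hand I would finish by double counting pairs $(C,x)$ where $C$ is a closed $v$-vertex subgraph of $F$ with $V(C)\subseteq[k]$ and $x\in\partial_v(C)$. Writing $\mathcal{C}_v$ for the set of such $C$, the previous step gives at least $3|\mathcal{C}_v|$ pairs (note $\partial_v(C)\subseteq V(C)\subseteq[k]$). Conversely, fixing $x\in[k]$: whenever $x\in\partial_v(C)$ the vertex $x$ has some neighbour $y\notin V(C)$, so $C$ is one of the at most two closed $v$-vertex subgraphs containing $x$ but not $y$ furnished by Claim~\ref{cl:count_copies_vertex}; summing this over the $d$ neighbours of $x$ shows at most $2d$ members of $\mathcal{C}_v$ have $x$ in their vertex boundary. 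Hence the number of pairs is at most $2dk$, and comparing the two estimates yields $|\mathcal{C}_v|\le\frac{2dk}{3}$.

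I expect no genuine obstacle here: the only point needing attention is the elementary inequality $\Delta>2\lfloor d/2\rfloor$ in the middle step (the two parity cases for $d$, which also confirm that $\Delta/\lfloor d/2\rfloor$ is strictly above $2$ and so gives $\ge 3$ after rounding up); everything else is a direct application of Claims~\ref{cl:degrees} and~\ref{cl:count_copies_vertex}.
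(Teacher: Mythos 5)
Your proof is correct and matches the paper's argument exactly: both double count pairs $(C,x)$ with $x\in\partial_v(C)$, using Claim~\ref{cl:count_copies_vertex} to bound by $2d$ the number of closed $v$-vertex subgraphs having a fixed boundary vertex, and using Claim~\ref{cl:degrees} to show each closed subgraph has at least $3$ boundary vertices. The only difference is that the paper asserts the boundary-size-at-least-three fact without elaboration, whereas you spell out the parity computation $\Delta>2\lfloor d/2\rfloor$ — a welcome but not substantively different addition.
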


\begin{proof}
Fix a vertex $w$ in $F'$ and let us bound the number --- denoted by $\mu(w)$ --- of closed subgraphs $\tilde F\subset F'$ on $v$ vertices containing $w$ such that $\mathrm{deg}_{\tilde F}w<d$. Due to Claim~\ref{cl:count_copies_vertex}, $\mu(w)\leq 2d$. On the other hand, Claim~\ref{cl:degrees} implies that every closed subgraph has vertex boundary of size at least 3. Letting $f$ to be the number of closed subgraphs in $F'$ on $v$ vertices, by double counting, we get that $3f\leq\sum_{w\in V(F')}\mu (w)\leq 2dk$ as needed.
\end{proof}

\section{Subgraphs and spread}
\label{sc:spread}


In this section we estimate two quantities: 1) the number of subgraphs with given numbers of vertices, edges, and connected components in a fixed $d$-regular locally sparse graph $F$, 2) the number of ways to extend such a subgraph to an isomorphic copy of $F$. It is split into two subsections: In Section~\ref{sc:sub_sharp}, we prove Claim~\ref{cl:main}, that gives fairly tight estimations of both quantities. These estimations are used in the first fragmentation step in the proof of Theorem~\ref{th:main2}. Section~\ref{sc:sub_coarse} proves much more straightforward and coarse bounds that hold for {\it any} $d$-regular graph and that we use in later fragmentation steps as well as in the proof of Theorem~\ref{th:main1} in Appendix~\ref{sc:theorem_coarse_proof}.

Let $F$ be an arbitrary $d$-regular locally sparse graph on $[n]$. Let $\mathcal{F}_n$ be the set of all isomorphic copies of $F$ on $[n]$. Fix $\ell\in[dn/2]$, $c\in[\ell]$, and $x\in\left[\frac{2\ell}{d}+\frac{\Delta}{d}c,\ell+c\right]$. The following quantity will play a crucial role in our proofs: 
$$
\sigma:=\frac{d}{2}x-\left(\ell+\frac{\Delta}{2}c\right).
$$ 
We will call $\sigma$ the {\it excess} of a graph with $x$ vertices, $\ell$ edges, and $c$ connected components. It measures the edit distance to the closest union of $c$ closed subgraphs on $x$ vertices.

\subsection{Sharp estimates}
\label{sc:sub_sharp}

This section is devoted to the proof of the following claim.

\begin{claim}
\label{cl:main}
There exist constants $A^1_{\alpha},A^2_{\alpha}>0$ such that the number of subgraphs in $F$ without isolated vertices with $x$ vertices, $\ell$ edges, and $c$ components is
\begin{equation}
\alpha(\ell,x,c)\leq{n\choose c}{x\choose c}e^{A^1_{\alpha} c+A^2_{\alpha}\sigma} \max_{o\leq (\Delta+2)\sigma}{x\choose o}.
\label{eq:count_subgraphs_general}
\end{equation}
Moreover, there exist constants $A^1_{\beta},A^2_{\beta}>0$ such that, given $H\subset F$ with $x$ vertices, $\ell$ edges, and $c$ components, the number of ways to extend $H$ to a graph from $\mathcal{F}_n$ is at most
\begin{equation}
 \beta(\ell,x,c)=(n-x+c)!e^{A^1_{\beta}c+A^2_{\beta}\sigma}\min\left\{\left(\frac{x}{c}\right)^c(d-1)^x/|\mathrm{Aut}(F)|,1\right\}\max_{o\leq (\Delta+2)\sigma}{x\choose o}.
\label{eq:embed_subgraphs_general}
\end{equation}
\end{claim}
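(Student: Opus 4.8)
The plan is to reduce both bounds to a count over \emph{unions of closed subgraphs}, to which Claims~\ref{cl:count_copies_vertex} and~\ref{cl:total_number_closed} apply directly, and then to account for the ``extra'' structure by the excess parameter $\sigma$. Concretely, given a subgraph $H\subset F$ with $x$ vertices, $\ell$ edges, and $c$ components, I would first show that $H$ differs from a union of $c$ closed subgraphs (on the same vertex set, with the same component structure) by at most $O(\sigma)$ edge deletions/additions: since each component $H_i$ of $H$ has $|\partial_e(H_i)|\ge\Delta$ (after possibly merging with small pieces), the deficiency $\sum_i\bigl(\tfrac d2|V(H_i)|-\tfrac\Delta2-|E(H_i)|\bigr)$ is exactly $\sigma$, so one can delete a set $S$ of at most $(\Delta+2)\sigma$ vertices (the $\max_{o\le(\Delta+2)\sigma}\binom{x}{o}$ factor) and at most $O(\sigma)$ more edges to land on a disjoint union of $c$ closed subgraphs $\hat H_1\sqcup\dots\sqcup\hat H_c$ of $F$. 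This is the conceptual core; the constant $\Delta+2$ comes from the fact that removing a low-degree vertex changes the boundary by a bounded amount, as in the proof of Claim~\ref{cl:degrees}.

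For the bound \eqref{eq:count_subgraphs_general} I would then count: choose the $c$ ``root'' vertices of the closed components in at most $\binom{n}{c}$ ways (using that $F$ is connected, every component has a vertex incident to a boundary edge of that component); choose, for each root, the number of vertices of its closed component — this is a composition of $x'\le x$ into $c$ parts, bounded by $\binom{x}{c}$ up to the $e^{O(c)}$ slack; by Claim~\ref{cl:count_copies_vertex} there are at most $2d$ closed subgraphs on a given number of vertices through a fixed vertex avoiding a fixed neighbour (and $O(1)$ if we do not fix the neighbour, using connectedness again), giving a factor $d^{O(c)}=e^{O(c)}$; and finally re-insert the deleted/added structure, which costs $\max_{o\le(\Delta+2)\sigma}\binom{x}{o}\cdot e^{O(\sigma)}$ (choose where the $O(\sigma)$ edited edges/vertices go among the $x$ vertices, each choice among $\le d$ neighbours or $\le d$ incident edges in $F$). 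Collecting the $e^{O(c)}$ and $e^{O(\sigma)}$ factors into $e^{A^1_\alpha c+A^2_\alpha\sigma}$ yields \eqref{eq:count_subgraphs_general}.

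For \eqref{eq:embed_subgraphs_general} I would count embeddings $\phi$ of $F$ into $[n]$ extending a fixed $H$. The $(n-x+c)!$ factor is the trivial bound on the number of ways to place the $n-x$ vertices outside $V(H)$ given the images of $V(H)$, \emph{except} that the $c$ closed components of (the cleaned-up) $H$ each have a rigid extension into $F$: once a closed subgraph of $F$ is identified with $\hat H_i$, the rest of $F$ is determined up to the automorphisms fixing it, so effectively only $n-x+c$ ``free'' vertices remain, times the number of automorphisms, which is absorbed. The $\min\{(x/c)^c(d-1)^x/|\mathrm{Aut}(F)|,\,1\}$ term: the bound $1$ is the crude ``at most $(n-x+c)!$ extensions'' estimate; the refined bound comes from fixing an embedded copy of $F$, noting it induces at most $(x/c)^c$ ways to choose which vertices play the roots of the closed components and at most $(d-1)^x$ ways to grow each closed component one vertex at a time (each new vertex is a neighbour in $F$ of an already-placed one, and $F$ is $d$-regular so $\le d-1$ continuing choices after the first edge), and dividing by $|\mathrm{Aut}(F)|$ since copies of $F$ are counted with multiplicity $|\mathrm{Aut}(F)|$ by this scheme. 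The $e^{A^1_\beta c+A^2_\beta\sigma}\max_{o\le(\Delta+2)\sigma}\binom{x}{o}$ factor accounts, exactly as in the first part, for the $O(\sigma)$-bounded discrepancy between $H$ and a union of closed subgraphs.

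The main obstacle I anticipate is the bookkeeping in the reduction from $H$ to a union of closed subgraphs when $H$ is disconnected and its components individually may fail to be closed or even to have boundary $\ge\Delta$ (a component could be a single edge or a path on $<d$ vertices, which has boundary $\ge\Delta$ automatically, but an intermediate-size non-closed component needs care): one must argue that the total ``excess'' is still captured by $\sigma$ and is nonnegative, using the constraint $x\in[\tfrac{2\ell}{d}+\tfrac{\Delta}{d}c,\ell+c]$, and that the at most $(\Delta+2)\sigma$ vertex-edits suffice uniformly. A secondary technical point is making the $O(1)$ bound on closed subgraphs through a vertex (without fixing a forbidden neighbour) precise from Claim~\ref{cl:count_copies_vertex} — one sums over the $\le d$ choices of neighbour $y\notin\tilde F$, which is fine since $d$ is constant. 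Everything else is routine once the excess-based edit distance is set up.
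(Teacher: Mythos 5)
Your plan is in the right spirit—Claims~\ref{cl:count_copies_vertex} and~\ref{cl:total_number_closed}, the excess $\sigma$, and the component-by-component decomposition are exactly the ingredients the paper uses—but two of your core reductions do not actually hold as stated, and they are not merely bookkeeping.

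First, the ``edit distance to a union of closed subgraphs'' picture is false as a structural claim, not just awkward to formalise. Deleting the $o_i\leq\Delta+2\sigma_i$ boundary vertices of a non-closed component $H_i$ does \emph{not} leave a closed subgraph of $F$: the non-boundary vertices can lose edges to the deleted set, what remains need not be connected, and its boundary in $F$ need not equal $\Delta$. (Take $H_i$ a long path: $\sigma_i$ is proportional to $|V(H_i)|$ and deleting $(\Delta+2)\sigma_i\geq|V(H_i)|$ vertices yields the empty graph.) What is true, and what the paper uses, is only that $|\partial_v(H_i)|\leq\Delta+2\sigma_i$ and that non-boundary vertices have degree $d$ \emph{in $H_i$}; the proof exploits this by running a deterministic exploration of each component in which only the boundary positions carry freedom, so the $\max_{o\leq(\Delta+2)\sigma}\binom{x}{o}$ factor comes from choosing which of the exploration steps are boundary steps, not from choosing a vertex set to excise and then reinsert. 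Your re-insertion step has no mechanism to recover the bound once the excision claim is dropped.

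Second, your account of the $\min\{(x/c)^c(d-1)^x/|\mathrm{Aut}(F)|,\,1\}$ factor in $\beta$ misattributes the ``$1$'' term. It is not the trivial ``at most $(n-x+c)!$ extensions'' estimate: the exploration count already carries an unavoidable factor of $\prod_i|\mathrm{Aut}(z_i)|$ (the ways to map $z_i$ onto its identified image), and the number of extensions is $\mathrm{mon}(H\to F')/|\mathrm{Aut}(F')|$, so one must control $\prod_i|\mathrm{Aut}(z_i)|/|\mathrm{Aut}(F')|$. The paper does this with the crucial observation that every automorphism of $z_i$ fixing $\partial_v(z_i)$ extends to an automorphism of $F'$, giving $\prod_i|\mathrm{Aut}(z_i)|\leq|\mathrm{Aut}(F')|\prod_i d^{o_i}$, after which the $d^{o_i}$ factors are absorbed into $e^{O(\sigma)}$. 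You have not identified this observation, and without it the ``$1$'' branch of the min does not follow. Your statement that ``once a closed subgraph of $F$ is identified with $\hat H_i$, the rest of $F$ is determined'' is also incorrect—closed subgraphs are local and pin down nothing outside themselves. The $(x/c)^c(d-1)^x$ branch you do essentially recover (AM–GM on component sizes plus an exploration/automorphism bound of the $(d-1)^{|V(z_i)|}$ type, the paper using the Krasikov–Lev–Thatte bound for the latter), but the $\min$ structure is the whole point of the claim and requires the automorphism-extension observation.
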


\begin{proof}
We first prove~\eqref{eq:count_subgraphs_general}. There are at most ${x\choose c}{\sigma+c\choose c}$ ways to choose positive integers $\ell_1,\ldots,\ell_{c}$ and $x_1,\ldots,x_{c}$  such that 
\begin{itemize}
\item $\frac{2\ell_i}{d}+\frac{\Delta}{d}\leq x_i\leq\ell_i+1$ for all $i\in[c]$,
\item $\sum_{i=1}^{c}\ell_i=\ell$, and $\sum_{i=1}^{c} x_i=x$.
\end{itemize} 
For every $i\in[c]$, fix such $x_i$ and $\ell_i$, and set
\begin{equation}
 \sigma_i=\frac{d}{2}x_i-\left(\ell_i+\frac{\Delta}{2}\right).
\label{eq:sigma_i_definition}
\end{equation}

 We first choose closed subgraphs of $F$ that correspond to $\sigma_i=0$ one by one. The number of ways to choose the first closed subgraph is at most $\frac{2d}{3}\cdot n$, due to Claim~\ref{cl:total_number_closed} (if $\ell_1=1$, then the number of choices is $\frac{dn}{2}<\frac{2dn}{3}$). In the same way, if some set of $\tilde n$ vertices is already included in the subgraph under construction, then the next closed component can be chosen in at most $\frac{2d}{3}(n-\tilde n)$ ways.

We now switch to not closed components. Note that the $i$-th component $z_i$ has vertex boundary of size $o_i\leq \Delta+2\sigma_i$. Indeed, let $\ell'_i$ be the number of edges in $E(F[V(z_i)])\setminus E(z_i)$, that is the number of missing edges in $z_i$. Then $dx_i=2\ell_i+2\ell'_i+|\partial_e(z_i)|$, implying $|\partial_e(z_i)|+2\ell'_i=\Delta+2\sigma_i$ due to~\eqref{eq:sigma_i_definition}. Since every missing edge in $z_i$ is incident to two boundary vertices, we get that $|\partial_v(z_i)|\leq|\partial_e(z_i)|+2\ell'_i$.  The desired bound on $o_i$ follows. Assume that $\tilde n$ vertices have been already included in the subgraph and we now describe the procedure of choosing the $i$-th component $z_i$:
\begin{enumerate}
\item choose the size of the vertex boundary $o_i\leq \Delta+2\sigma_i\leq (\Delta+2)\sigma_i\leq (\Delta+2)^{\sigma_i}$;
\item choose a set $\mathcal{O}\in{[x_i]\choose o_i}$ that identifies the labels of boundary vertices in the $i$-th component; 
\item choose a vertex $w$ out of the set of remaining $n-\tilde n$ vertices and {\it activate} it --- we treat this vertex as the minimum vertex in the component under construction;
\item at every step $j\geq 1$, {\it consider} the minimum vertex $v_j$ among active vertices:
\begin{itemize}
\item if $j\in\mathcal{O}$ (i.e. $v_j$ should be boundary), then add to the component some set of edges $E_j$ incident to $v_j$ (in at most $2^d$ ways), deactivate $v_j$, and {\it activate} all the vertices incident to edges of the set $E_j$ that have not been considered,
\item if $j\notin\mathcal{O}$, then add to the component all the edges incident to $v_j$, deactivate $v_j$, and {\it activate} all the neighbours of $v_j$ that have not been considered.
\end{itemize}
\end{enumerate}
The set of edges that have been added during this process forms the desired component $z_i$. So, the number of ways to choose the $i$-th component is at most $(n-\tilde n)(\Delta+2)^{\sigma_i}\max\limits_{o_i\leq (\Delta+2)\sigma_i}{x_i\choose o_i}2^{do_i}$.

Eventually we get that the number of {\it ordered} choices of components with parameters $\ell_i,x_i$, $i\in[c]$, in $F$ is at most 
$$
 c!{n\choose c}\left(\frac{2d}{3}\right)^{c}\prod_{i=1}^{c}(\Delta+2)^{\sigma_i}\max\limits_{o_i\leq (\Delta+2)\sigma_i}{x_i\choose o_i}2^{do_i} \leq c!{n\choose c} d^{c} 2^{2(\Delta+1)\sigma} \max_{o\leq (\Delta+2)\sigma}{x\choose o}.
$$
Note that this bound does not depend on the order of the choice of components, thus
\begin{align*}
 \alpha(\ell,x,c) & \leq{n\choose c}{x\choose c}{\sigma+c\choose c}d^{c}\cdot 2^{2(\Delta+1)\sigma} \max_{o\leq (\Delta+2)\sigma}{x\choose o}\\
 &\leq{n\choose c}{x\choose c}(2d)^c \cdot 2^{(2\Delta+3)\sigma} \max_{o\leq (\Delta+2)\sigma}{x\choose o}
\end{align*}
as needed.

Let us now fix $H\subset F$ with $x$ vertices, $\ell$ edges, and $c$ components. Let us bound the number of ways to extend $H$ to an $F'\in\mathcal{F}_n$. We construct such an extension in the following way.

First of all, we add to $H$ all the isolated vertices from $[n]$ that it misses, and then we  forget the labels of all the $n$ vertices of $H$. Fix some $F'\in\mathcal{F}_n$ such that each vertex (but the first one) has a smaller neighbour (in the linear order on $[n]$).  We will compute the number of ways to embed the unlabelled $H$ into $F'$, i.e. the number $\mathrm{mon}(H\to F')$ of monomorphisms $H\to F'$. Clearly, the desired number of extensions is exactly $\frac{\mathrm{mon}(H\to F')}{|\mathrm{Aut}(F')|}$. Let $\mathcal{Z}$ be the set of all $n-x+c$  connected components of $H$. We should compute the number of ways to embed the elements of $\mathcal{Z}$ in $F'$ disjointly. 

Let $z_1,\ldots,z_{n-x+c}$ be an arbitrary ordering of $\mathcal{Z}$ (there are $(n-x+c)!$ ways to order the elements of $\mathcal{Z}$). We embed sequentially each $z_i$ in $F'$ in a way such that all vertices of $z_i$ are bigger than all the $i-1$ minimum vertices of $z_1,\ldots,z_{i-1}$.  At every step $i=1,\ldots,n-x+c$, consider the minimum vertex $\kappa_i$ of $F'$ such that none of the embedded elements of $\mathcal{Z}$ in $F'$ contain this vertex. If $z_i$ is a single vertex, then we assign $\kappa_i$ with $z_i$ and proceed to the next step. Otherwise, we let $\kappa_i$ be the minimum vertex of the embedding of $z_i$ and, then, distinguish between the following cases. 

First, we assume that $z_i$ is closed. If $|V(z_i)|=2$, then there are at most $d$ ways to choose this edge and at most two ways to place it (two rotations). Thus there are at most $2d$ ways to embed $z_i$. If $|V(z_i)|\geq 3$, then there are at most $2d|\mathrm{Aut}(z_i)|$ ways to choose a (labelled) copy of $z_i$ in $F'$ with the minimum vertex $\kappa_i$, due to Claim~\ref{cl:count_copies_vertex}. Indeed, there are at most $2d$ ways to choose a subgraph in $F'$ that is isomorphic to $z_i$: first, choose an edge that is adjacent to $\kappa_i$, and then choose a closed subgraph on $|V(z_i)|$ vertices that does not contain the selected edge. Moreover, 
$$
|\mathrm{Aut}(z_i)|\leq|V(z_i)|(d-1)^{|V(z_i)|},
$$
due to~\cite[Theorem 2]{KLT}. 

Second, let $z_i$ be not closed with $|\partial_v z_i|=o_i$. Choose a set $\mathcal{O}$ from ${[|V(z_i)|]\choose o_i}$ that will identify the labels of boundary vertices in the embedding of $z_i$ into $F'$. {\it Activate} $v_1:=\kappa_i$. At every step $j\geq 1$, {\it consider} the minimum active vertex $v_j$ in $F'$ and
\begin{itemize}
\item if $j\in\mathcal{O}$, then add to the image of $z_i$ under construction some set of edges $E_j$ incident to $v_j$ (in at most $2^d$ ways), deactivate $v_j$, and {\it activate} all the vertices incident to edges of the set $E_j$ that have not been considered,
\item if $j\notin\mathcal{O}$, then add to the image of $z_i$ all the edges incident to $v_j$, deactivate $v_j$, and {\it activate} all the neighbours of $v_j$ that have not been considered.
\end{itemize}
The image is constructed. However, we have not yet mapped the vertices of $z_i$ to the vertices of the image. The number of such mappings $\rho_i$ is exactly the number of automorphisms of $z_i$. As above, it is bounded by $|V(z_i)|(d-1)^{|V(z_i)|}$.

We also notice that every automorphism of every $z_i$ respects the property of a vertex to be boundary. Moreover, any automorphism that preserves boundary vertices of $z_i$ extends trivially to an automorphism of the entire $F'$. Thus, $\prod_{i=1}^c|\mathrm{Aut}(z_i)|\leq |\mathrm{Aut}(F')|\prod_{i=1}^c d^{o_i}$, where $d^{o_i}$ is the upper bound on the number of automorphisms of $z_i$ that preserve all non-boundary vertices.

We conclude that there are at most 
\begin{multline*}
(n-x+c)!(2d)^{c}\min\left\{\left(\frac{x}{c}\right)^c(d-1)^x\prod_{i=1}^{c}{x_i\choose o_i}2^{do_i},|\mathrm{Aut}(F')|\prod_{i=1}^c {x_i\choose o_i}(d2^d)^{o_i}\right\}\\
\leq
(n-x+c)!(2d)^{c}\min\left\{\left(\frac{x}{c}\right)^c(d-1)^x2^{d(2+\Delta)\sigma},|\mathrm{Aut}(F')|(d2^d)^{(2+\Delta)\sigma}\right\}\max_{o\leq (2+\Delta)\sigma}{x\choose o}
\end{multline*}
ways to embed $H$ into $F'$. It remains to divide the final bound by $|\mathrm{Aut}(F')|$.
\end{proof}

\subsection{Coarse estimates}
\label{sc:sub_coarse}

We will also use the following coarse version of the first part of Claim~\ref{cl:main}. For a graph $H$ with maximum degree at most $d$, let us denote by $\mathcal{J}^H_{\ell,x,c}$ the set of all subgraphs $J\subset H$ with $\ell$ edges, $x$ non-isolated vertices, and $c$ connected components (excluding isolated vertices).
\begin{claim}
\label{cl:J_upper_bound}
For every graph $H$ with maximum degree at most $d$, 
$$
|\mathcal{J}^H_{\ell,x,c}|\leq {\min\{|E(H)|,|V(H)|\}\choose c} (16d)^{\ell}.
$$
\end{claim}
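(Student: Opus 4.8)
\textbf{Proof plan for Claim~\ref{cl:J_upper_bound}.}

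The plan is to encode each subgraph $J\in\mathcal{J}^H_{\ell,x,c}$ by a bounded amount of data and count the number of possible encodings. The two factors in the bound suggest the natural split: the binomial $\binom{\min\{|E(H)|,|V(H)|\}}{c}$ will account for choosing one ``anchor'' per component, and the factor $(16d)^{\ell}$ will account for growing each component edge by edge from its anchor, using that $H$ has maximum degree at most $d$.

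First I would fix, for each component $J_1,\dots,J_c$ of $J$, a canonical representative: either its minimum-labelled vertex or, if one prefers to make the bound symmetric in edges and vertices, a canonical edge. Choosing these $c$ anchors among the vertices (resp. edges) of $H$ costs at most $\binom{|V(H)|}{c}$ (resp. $\binom{|E(H)|}{c}$); taking the smaller gives the stated binomial factor $\binom{\min\{|E(H)|,|V(H)|\}}{c}$. Next, I would reconstruct the whole of $J$ from the anchors by a single exploration process of the type already used in the proof of Claim~\ref{cl:main}: maintain a set of ``active'' vertices (initially the anchors), and repeatedly take the minimum active vertex $v$ and decide which of its $H$-edges belong to $J$. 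Since $\deg_H(v)\le d$, there are at most $2^d$ choices at each such step, and each chosen edge gets a new endpoint that becomes active. The total number of steps is at most $x\le 2\ell$ (one per non-isolated vertex), but it is cleaner to charge the branching to edges: a slightly more careful bookkeeping — process edges one at a time, at each stage choosing from the $\le 2d$ edges incident to the current frontier vertex whether and where to extend — gives at most $(\text{const}\cdot d)^{\ell}$ reconstructions, and one checks the constant is comfortably below $16$.

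The one point requiring a little care — and the only place where something could go wrong — is that the exploration must be \emph{canonical}, i.e. determined by the data we have committed to, so that distinct $J$'s genuinely receive distinct codes and we are not overcounting in the wrong direction (we only need an upper bound, so mild overcounting is fine, but the process must be well-defined). This is handled exactly as in Claim~\ref{cl:main}: always expand at the minimum active vertex and record, at each expansion, the subset of incident $H$-edges included in $J$; this sequence of subsets is a function of $J$ (together with the anchors), so the map $J\mapsto(\text{anchors},\text{sequence of choices})$ is injective. Bounding the number of choice-sequences by absorbing the per-step $2^d$ (or $2d$) factors into $(16d)^{\ell}$ — using $x\le 2\ell$ and $c\le\ell$ — completes the count. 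I do not expect any genuine obstacle here; the statement is deliberately a crude, all-purpose bound, and the main ``work'' is just choosing the bookkeeping so the final constant is a clean $16d$.
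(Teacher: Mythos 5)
Your plan --- choose one anchor (vertex or edge) per component, then reconstruct each component by a canonical exploration --- is exactly the strategy the paper uses, and the anchor count is handled identically. The part you gloss over is the precise bookkeeping for when the exploration of one component ends and control passes to the next anchor; this is where the factor 16 actually comes from. The paper resolves it by first distributing the $\ell$ edges among the $c$ components, costing $\binom{\ell+c-1}{c-1}\le 2^{2\ell}$ since $c\le\ell$, and then explores each component edge by edge with exactly $4d$ choices per edge (two for whether the edge leaves the currently considered vertex or the next unconsidered frontier vertex, at most $d$ for the edge itself, two for whether the newly exposed endpoint carries further $J$-edges). The constant $16=2^2\cdot 4$ is thus the tight product of these two contributions, not slack; your assertion that ``one checks the constant is comfortably below 16'' is precisely the content of the claim and should not be deferred. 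Your alternative --- dispense with pre-assigning the $\ell_i$ and let the exploration terminate naturally when the active set empties --- can indeed be made rigorous and even shaves the constant (roughly $2^{x+\ell}d^\ell\le(8d)^\ell$ using $x\le 2\ell$), but you would need to spell out the termination protocol carefully enough to see that the decoding is unambiguous, which is exactly the care the paper's size pre-assignment buys for free.
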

\begin{proof}
We first choose $c$ edges (or vertices, if $|V(H)|<|E(H)|$) in $H$ that belong to different components of $J$. Then, the number of ways to assign to each of the $c$ components its number of edges is at most ${\ell+c-1\choose c-1}\leq 2^{2\ell}$ since $\ell\geq c$. Finally, as soon as, for every $i\in[c]$, a vertex in the $i$-th component of size $\ell_i$ is fixed, the number of ways to explore it sequentially, edge by edge, is at most $(4d)^{\ell_i}$: at every step, first decide whether the current edge is incident to the vertex considered at the previous step or to the next explored but unconsidered vertex --- two choices (the order of vertices in $H$ is arbitrary and fixed in advance). If the second choice is made, consider the minimum vertex in the set of explored unconsidered vertices. Second, choose the new edge incident to the considered vertex in at most $d$ ways. Finally, if a new vertex has been explored, decide whether it has additional edges in $J$ or not (two choices). If it does not have additional edges, remove it from the pool of unconsidered vertices. It completes the proof.
\end{proof}

Moreover, for any $d$-regular graph $F$ on $[n]$ and any $H\subset F$ with $\ell$ edges, $x$ non-isolated vertices, and $c$ connected components (excluding isolated vertices), by counting $\mathrm{mon}(H\to F)$ using the same strategy as in the proof of the second part of Claim~\ref{cl:main}, we get 
\begin{claim}
\label{cl:simple_embeddings}
The number of ways to extend $H$ to an isomorphic copy of $F$ on $[n]$ is at most $e^{O(\ell)}(n-x+c)!/|\mathrm{Aut}(F)|$.
\end{claim}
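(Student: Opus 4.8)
The plan is to transcribe the extension-counting argument from the second half of the proof of Claim~\ref{cl:main}, but to replace the sharp per-component estimates there (which used local sparsity through Claims~\ref{cl:degrees} and~\ref{cl:count_copies_vertex}) by the crude count afforded by $d$-regularity alone. First I would do the standard reduction. Pad $H$ with the $n-x$ isolated vertices of $[n]$ that it misses, obtaining a spanning graph $\bar H$ on $[n]$ with $n-x+c$ connected components, and fix an arbitrary $F'\in\mathcal{F}_n$. A graph $F''\cong F$ on $[n]$ with $H\subseteq F''$ is exactly a graph of the form $\pi^{-1}(F')$ for a bijection $\pi\colon[n]\to[n]$ satisfying $\pi(\bar H)\subseteq F'$; moreover two such bijections give the same $F''$ precisely when they differ by an element of $\mathrm{Aut}(F')$, so the correspondence $\pi\mapsto\pi^{-1}(F')$ is $|\mathrm{Aut}(F')|$-to-one onto the set of extensions, and $|\mathrm{Aut}(F')|=|\mathrm{Aut}(F)|$. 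Since $\bar H$ and $F'$ both have $n$ vertices, the bijections $\pi$ with $\pi(\bar H)\subseteq F'$ are exactly the injective homomorphisms counted by $\mathrm{mon}(\bar H\to F')$. Hence it suffices to show $\mathrm{mon}(\bar H\to F')\le e^{O(\ell)}(n-x+c)!$.

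To bound $\mathrm{mon}(\bar H\to F')$ I would use the same sequential embedding scheme as in Claim~\ref{cl:main}: fix an ordering $z_1,\dots,z_{n-x+c}$ of the components of $\bar H$ (this is where the factor $(n-x+c)!$ comes from), and embed them one at a time, always letting $\kappa_i$ be the smallest vertex of $F'$ not yet covered and demanding that $\kappa_i$ be the least vertex in the image of $z_i$. Exactly as in the proof of Claim~\ref{cl:main}, every injective homomorphism $\bar H\to F'$ is produced by this scheme for at least one ordering — namely the one in which the components are sorted by the minimum of their images — so summing over all orderings is an over-count. For a fixed ordering, a singleton $z_i$ forces its image to be $\kappa_i$ (one way). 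For a component $z_i$ with $\ell_i\ge 1$ edges and $x_i\le\ell_i+1$ vertices one first chooses which vertex of $z_i$ maps to $\kappa_i$ (at most $x_i$ ways) and then explores $z_i$ in a fixed BFS order, placing each of the remaining $x_i-1$ vertices as a neighbour in $F'$ of an already-placed vertex; by $d$-regularity this is at most $d$ choices per vertex, so at most $x_i d^{x_i-1}\le(\ell_i+1)d^{\ell_i}\le(ed)^{\ell_i}$ ways in total.

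Multiplying over the components and using $\sum_i\ell_i=\ell$ gives $\mathrm{mon}(\bar H\to F')\le(n-x+c)!\prod_i(ed)^{\ell_i}=(ed)^{\ell}(n-x+c)!=e^{O(\ell)}(n-x+c)!$, and dividing by $|\mathrm{Aut}(F)|$ completes the proof. There is no genuine obstacle here; the only two points needing a little care are the bookkeeping of the reduction — that extensions of $H$ correspond $|\mathrm{Aut}(F)|$-to-one to edge-preserving bijections once $H$ is padded out to a spanning graph — and the over-counting claim that the greedy scheme summed over all component orderings dominates $\mathrm{mon}(\bar H\to F')$. Both are already carried out verbatim in the proof of Claim~\ref{cl:main}, so here they only need to be restated with the weak bound $d$ per exploration step in place of the local-sparsity estimates.
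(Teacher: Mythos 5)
Your proposal is correct and is essentially the paper's own argument: the paper likewise pads $H$ to a spanning graph, orders the $n-x+c$ components, greedily assigns the minimum uncovered vertex $\kappa_i$ to some vertex of $z_i$ (contributing $\prod x_i\le (x/c)^c\le e^x\le e^{2\ell}$), and then embeds the rest using $d$-regularity alone (contributing $d^\ell$), which is exactly your $(ed)^{\ell}$ after minor repackaging of $\prod_i x_i d^{x_i-1}\le\prod_i(\ell_i+1)d^{\ell_i}$. The reduction via $\mathrm{mon}(\bar H\to F')/|\mathrm{Aut}(F')|$ that you spell out is the same reduction the paper invokes implicitly by referring back to the proof of Claim~\ref{cl:main}.
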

\begin{proof} 
As in the proof of Claim~\ref{cl:main}, we let $\mathcal{Z}$ be the set of all $n-x+c$  connected components of $H$. We order the elements of $\mathcal{Z}$ and then, at every step $i\in\{1,\ldots,n-x+c\}$, we consider the minimum vertex $\kappa_i$ of $F$ such that none of the embedded elements of $\mathcal{Z}$ contain this vertex and map some vertex of $z_i$ to $\kappa_i$. After that, we embed $z_i$ edge by edge in an arbitrary order that respects connectivity. We then get at most $(n-x+c)!\left(\frac{x}{c}\right)^c d^{\ell}=e^{O(\ell)}(n-x+c)!$ embeddings~as 
\begin{equation}
\left(\frac{x}{c}\right)^c=e^{c\ln (x/c)}=e^{x\cdot\frac{\ln(x/c)}{x/c}}\leq e^x.
\label{eq:x/c}
\end{equation}
\end{proof}

\section{Fragmentation: arbitrary families}
\label{sc:planted}

This section presents our main tool ---  fragmentation trick via typical fragments. We, first, use it in the proof of Theorem~\ref{th:main2} in Section~\ref{sc:theorem_sharp_proof}, and then we use this tool to show the existence of rare fragments without closed subgraphs of size $\Omega(\log n)$ in our proof of Theorem~\ref{th:second_power} in Section~\ref{sc:KNP_conjecture_resolution}. Although the trick appeared in several papers (in particular, in~\cite{KNP}, see also Section~\ref{sc:intro_strategy}), we describe it here in full detail for the sake of completeness. Here we present it in a slightly different form, without explicitly distinguishing between pathological and non-pathological pairs, which is more transparent for us.

Let $\delta_n=o(1)$ be a slowly decreasing function. Let 
$$
p=p(n)=\Omega(n^{-2/3}), \quad \quad
m=m(n)=pN,\quad  \quad
f=f(n)=O(n).
$$
Let $\mathcal{F}_n$ be an arbitrary multiset of graphs on $[n]$, each graph has $f$ edges. 
Let $\mathcal{B}$ be an arbitrary graph property (not necessarily isomorphism-closed). For $F\in\mathcal{F}_n$ and $W\subset{[n]\choose 2}$, let $\mathcal{M}(F,W)$ be the multiset of all $F'\in\mathcal{F}_n$ such that $F'\subset F\cup W$. Let $\mathcal{M}_{\mathcal{B}}(F,W)$ be the multiset of all $F'\in\mathcal{M}(F,W)$ such that $F'\cap F\in\mathcal{B}$. Let us say that the pair $(F,W)$ is {\it $\mathcal{B}$-bad}, if $|\mathcal{M}_{\mathcal{B}}(F,W)|>\delta_n|\mathcal{M}(F,W)|$. Let $\mathbf{F}$ be a uniformly random element of $\mathcal{F}_n$ sampled independently of a uniformly random $\mathbf{W}\in{{[n]\choose 2}\choose m}$. We shall prove sufficient conditions for $(\mathbf{F},\mathbf{W})$ not being $\mathcal{B}$-bad whp. That would mean that whp we may replace most of $F\in\mathcal{F}_n$ with fragments $F\cap F'$ that do not have the property $\mathcal{B}$. Let $\mathcal{B}_{\ell}$ contain all graphs with the property $\mathcal{B}$ and $\ell$ edges. 
For $F\in\mathcal{F}_n$, let  
\begin{equation}
\label{eq:Pi_def}
\Pi^F_{\mathcal{B}}:=\Prob(F\cap \mathbf{F}\in\mathcal{B}).
\end{equation}
\begin{lemma}
\label{lm:not_bad}
If
\begin{equation}
 \max_{F\in\mathcal{F}_n}\sum_{\ell=0}^f\Pi^F_{\mathcal{B}_{\ell}}\left(\left(1+\frac{3f}{m}\right)\frac{N}{m}\right)^{\ell}e^{-f^2/m+f^3/(3m^2)}\leq\delta_n^3,
\label{eq:main_lm}
\end{equation}
then $(\mathbf{F},\mathbf{W})$ is not $\mathcal{B}$-bad with probability at least $1-2\delta_n$.
\end{lemma}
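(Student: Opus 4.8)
The plan is to estimate, for a \emph{fixed} $F\in\mathcal F_n$, the expected size of the bad part $\mathcal M_{\mathcal B}(F,\mathbf W)$ relative to $\mathcal M(F,\mathbf W)$, and then conclude via Markov's inequality (applied to $|\mathcal M_{\mathcal B}(F,\mathbf W)|$) and a union over the two sources of randomness. The first step is to pass from the uniform model $\mathbf W\in\binom{\binom{[n]}2}{m}$ to a more tractable object. Observe that for a fixed $F'\subset F\cup\mathbf W$ with $|E(F')\setminus E(F)|=\ell$, the probability (over $\mathbf W$) that $E(F')\setminus E(F)\subset\mathbf W$ is $\binom{N-\ell}{m-\ell}/\binom{N}{m}$, which one bounds from above by $(m/N)^\ell\bigl(1+\tfrac{3f}{m}\bigr)^{\ell}e^{-f^2/m+f^3/(3m^2)}$ uniformly for $0\le\ell\le f$ — this is exactly where the awkward correction factor in \eqref{eq:main_lm} comes from, and it is a routine (if slightly fiddly) estimate using $\binom{N-\ell}{m-\ell}/\binom{N}{m}=\prod_{i=0}^{\ell-1}\frac{m-i}{N-i}$ together with $1-x\le e^{-x}$ and the fact that $\ell\le f$, $m=pN=\Theta(Nn^{-2/d})$.

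Next I would compute $\E_{\mathbf W}|\mathcal M_{\mathcal B}(F,\mathbf W)|$. By linearity of expectation over $F'\in\mathcal F_n$,
\[
\E_{\mathbf W}\,|\mathcal M_{\mathcal B}(F,\mathbf W)|
=\sum_{F'\in\mathcal F_n}\mathbf 1[F\cap F'\in\mathcal B]\,\Prob_{\mathbf W}\bigl(E(F')\setminus E(F)\subset\mathbf W\bigr),
\]
and grouping the $F'$ by the number $\ell$ of edges of the fragment $F'\cap F$ (equivalently $|E(F')\setminus E(F)|=f-\ell$ — here one must be careful which $\ell$ indexes what; I will index by $\ell=|E(F'\cap F)|$ so the number of $\mathbf W$-edges needed is $f-\ell$, matching $\mathcal B_\ell$) and using the bound from the previous paragraph, this is at most
\[
|\mathcal F_n|\sum_{\ell=0}^{f}\Pi^F_{\mathcal B_\ell}\Bigl(\bigl(1+\tfrac{3f}{m}\bigr)\tfrac{N}{m}\Bigr)^{f-\ell}e^{-f^2/m+f^3/(3m^2)},
\]
since by definition $\Pi^F_{\mathcal B_\ell}=\Prob(F\cap\mathbf F\in\mathcal B_\ell)=|\{F'\in\mathcal F_n:F\cap F'\in\mathcal B_\ell\}|/|\mathcal F_n|$. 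A small bookkeeping remark: the exponent in \eqref{eq:main_lm} is written as $\ell$, not $f-\ell$; this is because one can equally well index $\mathcal B_\ell$ so that $\ell$ counts the missing ($\mathbf W$-supplied) edges, and I will align the notation with the paper's convention so that the sum in \eqref{eq:main_lm} is literally what appears. In any case, hypothesis \eqref{eq:main_lm} says precisely that this sum is at most $\delta_n^3$, so $\E_{\mathbf W}|\mathcal M_{\mathcal B}(F,\mathbf W)|\le\delta_n^3|\mathcal F_n|$ for every $F$.

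Now I would control $|\mathcal M(F,\mathbf W)|$ from below with high probability. The trivial lower bound $|\mathcal M(F,\mathbf W)|\ge 1$ (since $F\subset F\cup\mathbf W$ always) is not enough; instead, taking expectations over $F$ as well, $\E_{\mathbf F,\mathbf W}|\mathcal M_{\mathcal B}(\mathbf F,\mathbf W)|\le\delta_n^3|\mathcal F_n|$, while $\E_{\mathbf F,\mathbf W}|\mathcal M(\mathbf F,\mathbf W)|\ge|\mathcal F_n|\cdot\Prob(\text{fixed }F'\subset\mathbf F\cup\mathbf W)$; more simply, one notes that the pair is $\mathcal B$-bad iff $|\mathcal M_{\mathcal B}(F,\mathbf W)|>\delta_n|\mathcal M(F,\mathbf W)|\ge\delta_n$, so $\Prob(\mathcal B\text{-bad})\le\Prob(|\mathcal M_{\mathcal B}(\mathbf F,\mathbf W)|>\delta_n)\le\delta_n^{-1}\E|\mathcal M_{\mathcal B}(\mathbf F,\mathbf W)|\le\delta_n^{-1}\cdot\delta_n^3|\mathcal F_n|$ — but this has a stray $|\mathcal F_n|$, so one must instead work with the \emph{normalised} fragment count. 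The correct route: let $R(F,W):=|\mathcal M_{\mathcal B}(F,W)|/|\mathcal M(F,W)|$; Markov gives $\Prob_{\mathbf F,\mathbf W}(R>\delta_n)\le\delta_n^{-1}\E R$, and since $|\mathcal M(F,W)|\ge1$ always, $\E R\le\E(|\mathcal M_{\mathcal B}|/|\mathcal F_n|)\cdot(|\mathcal F_n|/1)$ is still not directly bounded — the clean fix is to observe $\E_{\mathbf F,\mathbf W}\bigl[|\mathcal M_{\mathcal B}(\mathbf F,\mathbf W)|\big/|\mathcal M(\mathbf F,\mathbf W)|\bigr]$ itself, expand $1/|\mathcal M(\mathbf F,\mathbf W)|$ using that summing $F'\in\mathcal M(F,W)$ of the uniform weight $1/|\mathcal M(F,W)|$ gives $1$, and swap the roles of $F$ and $F'$ (a symmetry of $\mathcal F_n$ as a multiset) to turn the expectation into $\E_{\mathbf W}|\mathcal M_{\mathcal B}(\mathbf F,\mathbf W)|/|\mathcal F_n|\le\delta_n^3$. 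Then $\Prob(\mathcal B\text{-bad})=\Prob(R>\delta_n)\le\delta_n^{-1}\cdot\delta_n^3=\delta_n^2\le2\delta_n$, which is the claim. \textbf{The main obstacle} is this last symmetrisation/swapping step: getting a usable lower bound on $|\mathcal M(F,\mathbf W)|$ (or, equivalently, handling the ratio $|\mathcal M_{\mathcal B}|/|\mathcal M|$ directly) requires exploiting that $\mathcal F_n$ is symmetric under $F\leftrightarrow F'$ inside any $F\cup W$, i.e.\ that $F'\subset F\cup W\iff F\subset F'\cup W$ when $F,F'$ have the same edge count $f$ — this is the identity $E(F)\setminus E(F')$ and $E(F')\setminus E(F)$ have equal size, so $F\cup W=F'\cup W'$ for a suitable $W'\subset W$; pinning down this interchange carefully (and the accompanying change of measure on $\mathbf W$) is the technical heart of the argument, whereas the binomial estimate of the first paragraph and the final Markov step are routine.
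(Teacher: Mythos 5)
There is a genuine gap, and it is located exactly where you flag your own uncertainty. The paper does \emph{not} try to lower-bound $|\mathcal M(F,\mathbf W)|$ by a symmetrisation between $F$ and $F'$; instead it conditions on $t=|\mathbf F\cap\mathbf W|$ and compares both $|\mathcal M(F,\mathbf W)|$ and $|\mathcal M_{\mathcal B}(F,\mathbf W)|$ to the deterministic quantity
$M(t)=|\mathcal F_n|\binom{N-f}{m-t}/\binom{N}{m+f-t}$,
the expected number of members of $\mathcal F_n$ inside a uniformly random $(m+f-t)$-subset of $\binom{[n]}{2}$. The first half of the argument is a reconstruction count: any pair $(F,W)$ with $|F\cap W|=t$ and $|\mathcal M(F,W)|<\delta_n M(t)$ is recovered by choosing the union $A=F\cup W$ (at most $\binom{N}{m+f-t}$ ways), then $F\subset A$ (fewer than $\delta_n M(t)$ ways), then $F\cap W$ (at most $\binom{f}{t}$ ways); dividing by the total $|\mathcal F_n|\binom{N-f}{m-t}\binom{f}{t}$ gives exactly $\delta_n$. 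That one paragraph is the crux; your proposed replacement --- "$F'\subset F\cup W\iff F\subset F'\cup W$ since $|E(F)\setminus E(F')|=|E(F')\setminus E(F)|$" --- is simply false: with $F=\{e_1\}$, $F'=\{e_2\}$, $W=\{e_2\}$, one has $F'\subset F\cup W$ but $F\not\subset F'\cup W$. Equal sizes of the symmetric differences do not make membership in $W$ transferable, and the "change of measure on $\mathbf W$" you allude to is not something you can carry out. You are right that this is the technical heart; but it is not a routine step to "pin down carefully", it is a different idea that is absent.

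There is also a secondary problem in the opening estimate. The inequality you invoke,
$\binom{N-\lambda}{m-\lambda}/\binom{N}{m}=\prod_{i=0}^{\lambda-1}\tfrac{m-i}{N-i}\le(m/N)^{\lambda}\bigl(1+\tfrac{3f}{m}\bigr)^{\lambda}e^{-f^2/m+f^3/(3m^2)}$,
is false for small $\lambda$ (take $\lambda=0$: the left side is $1$ but the right side is $e^{-f^2/m+f^3/(3m^2)}<1$). The factor $e^{-f^2/m+f^3/(3m^2)}$ in~\eqref{eq:main_lm} does not come from an upper bound on this single-$F'$ appearance probability. It arises from the Stirling estimate of the \emph{ratio} $\binom{m-t}{f-\ell}\binom{N}{m+f-t}/\bigl(\binom{N-f}{m-t}\binom{N-f}{f-\ell}\bigr)$ after normalising $\E X'_F$ by $M(t)$; this is also why the exponent on $N/m$ in~\eqref{eq:main_lm} is $\ell=|F\cap F'|$ (the fragment size) rather than $f-\ell$ (the number of $\mathbf W$-edges used). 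Your bookkeeping remark reinterprets $\mathcal B_\ell$ so that $\ell$ counts $\mathbf W$-supplied edges, which is not the paper's convention and which, together with the false binomial bound, means your displayed intermediate formula does not in fact match~\eqref{eq:main_lm}. The Markov step at the very end is fine once the conditional comparison to $M(t)$ is in hand, but without the reconstruction count the argument does not close.
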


\begin{proof}
For $t\in\{0,1,\ldots,f\}$, let
$$ 
 M(t):=|\mathcal{F}_n|{N-f\choose m-t}/{N\choose m+f-t}
$$
be the expected number of $F\in\mathcal{F}_n$ such that $F$ belongs to a uniformly random subset of ${[n]\choose 2}$ of size $m+f-t$. 

Let $t\in\{0,1,\ldots,f\}$. Each pair $\left\{F\in\mathcal{F}_n, \, W\in{{n\choose 2}\choose m}\right\}$ such that $|\mathcal{M}(F,W)|<\delta_n M(t)$ and $|F\cap W|=t$  can be obtained by choosing a set $A\subset{[n]\choose 2}$ of size $m+f-t$, on the role of $F\cup W$ (in at most ${N\choose m+f-t}$ ways), choosing an $F\subset A$, $F\in\mathcal{F}_n$ (in less than $\delta_n M(t)$ ways), and choosing the intersection $F\cap W$ (in ${f\choose t}$ ways).
We conclude that
\begin{align*}
 \mathbb{P}\left(|\mathcal{M}(\mathbf{F},\mathbf{W})|<\delta_n M(t)\mid|\mathbf{F}\cap\mathbf{W}|=t\right)
 \leq\frac{\delta_n M(t){N\choose m+f-t}{f\choose t}}{|\mathcal{F}_n|{N-f\choose m-t}{f\choose t}}=\delta_n.
\end{align*}
Then,
$$
 \Prob((\mathbf{F},\mathbf{W})\text{ is $\mathcal{B}$-bad},\, |\mathcal{M}(\mathbf{F},\mathbf{W})|<\delta_nM(t)
 \mid|\mathbf{F}\cap\mathbf{W}|=t)\leq\delta_n=o(1).
$$ 
Thus, it is sufficient to prove that, 
$$
 \Prob((\mathbf{F},\mathbf{W})\text{ is $\mathcal{B}$-bad},\, |\mathcal{M}(\mathbf{F},\mathbf{W})|\geq\delta_nM(t)
 \mid|\mathbf{F}\cap\mathbf{W}|=t)\leq\delta_n.
$$
The latter probability is at most
\begin{equation}
\Prob\left(\left|\mathcal{M}_{\mathcal{B}}(\mathbf{F},\mathbf{W})\right|> \delta_n^2M(t)
 \mid|\mathbf{F}\cap\mathbf{W}|=t\right)
 \leq\frac{\mathbb{E}(X \mid| \mathbf{F}\cap\mathbf{W}|=t)}{\delta_n^2M(t)},
\label{eq:bad_to_X_day1}
\end{equation}
where $X=X(\mathbf{F},\mathbf{W})$ counts the number of  $F'\in\mathcal{M}(\mathbf{F},\mathbf{W})$ such that $F'\cap\mathbf{F}\in\mathcal{B}$. Fix $F\in\mathcal{F}_n$, let $\mathbf{W}'_F=\mathbf{W}'_F(t)$ be a uniformly random $(m-t)$-subset of ${[n]\choose 2}\setminus F$, and let $X'_F=X'_F(t)$ be the number of $F'\in\mathcal{M}(F,\mathbf{W}'_F)$ such that $F'\cap F\in \mathcal{B}$. We get
\begin{align*}
 \mathbb{E}(X \mid|\mathbf{F}\cap\mathbf{W}|=t) & =\sum_{F\in\mathcal{F}_n}
 \mathbb{E}(X\cdot \1_{\mathbf{F}=F}\mid |\mathbf{F}\cap\mathbf{W}|=t)
=\sum_{F\in\mathcal{F}_n}\frac{
 \mathbb{E}\left(X\cdot \1_{\mathbf{F}=F,\,|F\cap\mathbf{W}|=t}\right)}{\mathbb{P}(|\mathbf{F}\cap\mathbf{W}|=t)}.
 \end{align*}
Since all $F\in\mathcal{F}_n$ have the same number of edges, $\mathbb{P}(|F\cap\mathbf{W}|=t)$ does not depend on $F\in\mathcal{F}_n$. In particular $\mathbb{P}(|\mathbf{F}\cap\mathbf{W}|=t)=\mathbb{P}(|F\cap\mathbf{W}|=t)$ for every $F\in\mathcal{F}_n$. Therefore,
$$
 \mathbb{E}(X \mid|\mathbf{F}\cap\mathbf{W}|=t) 
 =\sum_{F\in\mathcal{F}_n}
 \mathbb{E}(X\cdot \1_{\mathbf{F}=F}\mid|F\cap\mathbf{W}|=t).
$$ 
Recall that $X(F,W)$ is the number of  $F'\in\mathcal{M}(F,W)$ such that $F'\cap F\in\mathcal{B}$. We get
\begin{align}
 \mathbb{E}(X \mid|\mathbf{F}\cap\mathbf{W}|=t) &=\sum_{F\in\mathcal{F}_n}
 \mathbb{E}(X(F,\mathbf{W})\cdot \1_{\mathbf{F}=F}\mid|F\cap\mathbf{W}|=t)\notag\\
 &=
 \sum_{F\in\mathcal{F}_n}\mathbb{E}(X(F,\mathbf{W})\mid|F\cap\mathbf{W}|=t)\cdot\mathbb{P}(\mathbf{F}=F)\\
 &=
 \sum_{F\in\mathcal{F}_n}\mathbb{E}X'_F\cdot\mathbb{P}(\mathbf{F}=F).
\label{eq:X_to_X'_day1}
\end{align}
Then it remains to prove that $\frac{\E X'_F}{M(t)}\leq\delta_n^3$ for all $F\in\mathcal{F}_n$ and all $t$. By the definition of $M(t)$,
\begin{align*}
\frac{\E X'_F}{M(t)}  
&=\sum_{\ell}|\mathcal{F}_n|\frac{\Pi^F_{\mathcal{B_{\ell}}}}{M(t)}
 {m-t\choose f-\ell}/{N-f\choose f-\ell}\\
 &=\sum_{\ell}\Pi^F_{\mathcal{B_{\ell}}}\frac{{m-t\choose f-\ell}/{N-f\choose f-\ell}}{{N-f\choose m-t}/{N\choose m+f-t}}\\
 &=
 \sum_{\ell}\Pi^F_{\mathcal{B_{\ell}}}\frac{{m-t\choose f-\ell}{N\choose m+f-t}}{{N-f\choose m-t}{N-f\choose f-\ell}}.
\end{align*} 
Applying Stirling's approximation, we get
 \begin{align*}
\frac{\E X'_F}{M(t)} 
 &\sim\sum_{\ell}\Pi^F_{\mathcal{B_{\ell}}}
\frac{(m-t)^{2(m-t)}(N-2f+\ell)^{N-2f+\ell}N^N}{(m-t-f+\ell)^{m-t-f+\ell}(m+f-t)^{m+f-t}(N-f)^{2N-2f}}\\
&=\sum_{\ell}\Pi^F_{\mathcal{B_{\ell}}}\left(\frac{N-2f+\ell}{m-t-f+\ell}\right)^{\ell}
\frac{\left(1+\frac{f-\ell}{m-t-f+\ell}\right)^{m-t-f}\left(1-\frac{f}{m-t+f}\right)^{m-t+f}}
{\left(1-\frac{f}{N}\right)^{N}\left(1+\frac{f-\ell}{N-2f+\ell}\right)^{N-2f}}.
\end{align*}
Therefore, using Taylor expansion, we get
 \begin{align*}
\frac{\E X'_F}{M(t)}
&\leq\sum_{\ell}\Pi^F_{\mathcal{B_{\ell}}}\left(\frac{N}{m}\left(1+\frac{t+f-\ell}{m-t-f+\ell}\right)\right)^{\ell}\\
&\quad\quad\quad\quad\quad\quad\quad\times
e^{f-\ell-\frac{(f-\ell)^2}{2m}+\frac{(f-\ell)^2\ell}{2(m-t-f+\ell)^2}+\frac{(f-\ell)^3}{3(m-t-f+\ell)^2}-f-\frac{f^2}{2m}+f-f+\ell+O(1)}\\
&\leq\sum_{\ell}\Pi^F_{\mathcal{B_{\ell}}}\left(\left(1+\frac{3f}{m}\right)\frac{N}{m}\right)^{\ell}e^{-f^2/m+f^3/(3m^2)+O(1)}\leq\delta_n^3=o(1),
\end{align*}
completing the proof.
\end{proof}

\section{Sharp thresholds}
\label{sc:theorem_sharp_proof}

We prove the two parts of Theorem~\ref{th:main2} separately in the next two sections.

\subsection{Better expansion of large subgraphs}
\label{sc:sharp1}

Here, we prove the following.

\begin{theorem}
Let $d\geq 3$, $\delta\in(0,1/d)$, and let $F=F(n)$ be a sequence of $d$-regular graphs on $[n]$, $n\in\mathbb{N}$, such that
\begin{itemize}
\item the number of automorphisms of $F$ is at most $e^{o(n^{1-\delta})}$;
\item for some $w=w(n)=\omega(1)$, every $\tilde F\subset F$ with $3\leq|V(\tilde F)|\leq n^{1-\delta}$ has $|\partial_e(\tilde F)|\geq d+1+\lfloor |V(\tilde F)|w/\log n\rfloor$ and every $\tilde F\subset F$ with $n^{1-\delta}<|V(\tilde F)|\leq n-3$ has $|\partial_e(\tilde F)|\geq d+1$.
\end{itemize}
Let $\varepsilon>0.$ If $p>(1+\varepsilon)\left(\frac{e}{n}\right)^{2/d}$, then, assuming that $dn$ is even, whp  $\mathbf{G}\sim G(n,p)$ contains an isomorphic copy of $F$.
\label{th:main2_1}
\end{theorem}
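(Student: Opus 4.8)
The plan is to establish the sharp upper bound in Theorem~\ref{th:main2_1} via the fragmentation machinery of Section~\ref{sc:planted}, iterated a bounded number of times. Write $p=(1+\varepsilon)(e/n)^{2/d}$ and split $p$ into $K+1$ independent sprinkling rounds, where $K=K(\delta)$ is a large constant to be chosen: a first bulk round at density roughly $(1+\varepsilon/2)(e/n)^{2/d}$, then $K$ refinement rounds each of density $\Theta(n^{-2/d}/\log\log n)$ (so that their union is still $o(p)$), and a final covering round of density $c\,n^{-2/d}$ for a small constant $c>0$. The initial family is $\mathcal{F}_n$, all $n!/|\mathrm{Aut}(F)|$ labelled copies of $F$, so $f=dn/2$. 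I will repeatedly apply Lemma~\ref{lm:not_bad} with carefully chosen ``bad'' properties $\mathcal{B}$ so that after all the rounds almost every surviving fragment has $o(\log n)$ edges, and then a union bound over the $o(\log n)$-edge fragments (using $\mathbb{E}X_F\to\infty$, which holds since $p\gg p_e(F)$) shows the final covering round contains a copy whp.

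The heart of the argument is to choose the right sequence of properties $\mathcal{B}$ so that hypothesis~\eqref{eq:main_lm} can be verified. At round $j$ I take $\mathcal{B}$ to be the property of containing a connected closed subgraph on at least $s_j$ vertices, where $s_0=n^{1-\delta}$ and $s_{j+1}$ is a fixed power of $s_j$ below $1$ (say $s_{j+1}=s_j^{1-\delta'}$ for a suitable $\delta'$), so that after $K=O(1/\delta)$ steps $s_K=o(\log n)$; I must also handle the union of surviving components, so really $\mathcal{B}_j$ says ``some component has more than $s_j$ edges after the bulk exposure has been removed.'' To bound $\Pi^F_{\mathcal{B}_\ell}=\Prob(F\cap\mathbf{F}\in\mathcal{B}_\ell)$ I use that $F\cap\mathbf{F}$, for $\mathbf{F}$ a uniform copy of $F$, is distributed as a uniformly chosen isomorphic copy of $F$ intersected with $F$; the number of subgraphs $H\subseteq F$ with $\ell$ edges, $x$ vertices, $c$ components is controlled by Claim~\ref{cl:main} (more precisely its coarse version Claim~\ref{cl:J_upper_bound} suffices after the first step, while the first step needs the sharp Claim~\ref{cl:main}), and the number of ways such an $H$ embeds back into a copy of $F$ is controlled by Claim~\ref{cl:simple_embeddings} respectively the sharp bound~\eqref{eq:embed_subgraphs_general}. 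Multiplying and dividing, $\Pi^F_{\mathcal{B}_\ell}$ is at most $e^{O(\ell)}/(n-x+c)!$ times the subgraph count times the extension count, and the factor $((1+3f/m)N/m)^\ell e^{-f^2/m+\ldots}$ in~\eqref{eq:main_lm} contributes $(N/m)^\ell e^{-dn/2}$-type terms that are designed to cancel the $n!/|\mathrm{Aut}(F)|=\mathbb{E}X_F/p^{dn/2}$ and the $(e/n)$-per-edge factors, leaving a sum over $\ell$ that is dominated by the contribution of closed-subgraph-heavy configurations.

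The decisive point — and the reason the improved edge-boundary hypothesis is needed — is exactly where the improved expansion $|\partial_e(\tilde F)|\geq d+1+\lfloor|V(\tilde F)|w/\log n\rfloor$ enters. In the excess formalism of Section~\ref{sc:spread}, a fragment with a connected piece on $x$ vertices and $\ell$ edges has excess $\sigma=\tfrac d2 x-(\ell+\tfrac\Delta2 c)$ measuring its distance from a union of closed subgraphs; a \emph{closed} piece on $x\geq s_j$ vertices has $\sigma=0$ but edge boundary only $\Delta$, which is precisely the obstruction that makes the naive bound blow up (as in Remark~\ref{rk:best_possible_upper_bounds}). The improved hypothesis says a closed subgraph on $x$ vertices in the regime $\log n/w\leq x\leq n^{1-\delta}$ actually has edge boundary $\geq d+1+\lfloor xw/\log n\rfloor\to\infty$, i.e.\ genuinely closed large subgraphs \emph{do not exist} in that size range; so in $\Pi^F_{\mathcal{B}_\ell}$ every component on $\geq s_j$ vertices has $\sigma\geq\tfrac12\lfloor xw/\log n\rfloor$, which is $\omega(1)$, and this super-constant excess is what provides the $\delta_n^3$ gap in~\eqref{eq:main_lm}. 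Concretely I will show that the $\ell$-sum in~\eqref{eq:main_lm} is bounded by $\sum_{x\geq s_j}\exp(O(x)-\Omega(xw/\log n)\cdot\log n)\leq\sum_{x\geq s_j}e^{-\Omega(xw)}=o(1)$, the $e^{O(x)}$ losses from Claim~\ref{cl:J_upper_bound}/Claim~\ref{cl:simple_embeddings} being swamped by the $e^{-\Omega(\sigma\log n)}$ saving once $w=\omega(1)$. The main obstacle I expect is bookkeeping: one must track simultaneously the number of vertices $x$, edges $\ell$, components $c$, and excess $\sigma$ of the fragment through each round, verify the automorphism factor $|\mathrm{Aut}(F)|\leq e^{o(n^{1-\delta})}$ is negligible against the savings at the first step (where $x$ can be as large as $n^{1-\delta}$), and confirm that the refinement densities $\Theta(n^{-2/d}/\log\log n)$ summed over $K=O(1/\delta)$ rounds plus the small covering density still total less than $(1+\varepsilon)(e/n)^{2/d}$; these are all routine but delicate, and the cleanest route is to isolate a single ``one-round'' lemma of the form ``if almost every fragment has a closed piece on $<s$ vertices then after one $\Theta(n^{-2/d}/\log\log n)$-sprinkle almost every fragment has all closed pieces on $<s^{1-\delta'}$ vertices'' and apply it $K$ times.
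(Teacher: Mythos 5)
Your high-level plan — iterated fragmentation via Lemma~\ref{lm:not_bad}, with the improved edge-boundary hypothesis providing a super-constant excess that creates the $\delta_n^3$ gap — is the right idea and matches the paper's strategy. You also correctly identify where the refined hypothesis enters: equation~\eqref{eq:sigma_lower} in the paper's proof is exactly the statement that every component with $\Omega(\log n)$ edges contributes $\Omega(\ell_i/\log n)$ to the excess, which converts into an $e^{C\ell}$ saving once multiplied by $n^{2\sigma/d}$. However, there is a genuine gap in how you set up the iteration.

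The bad property $\mathcal{B}_j$ you propose controls the size of the largest \emph{component} (or largest closed piece) of the fragment, not the fragment's \emph{total} number of edges. These are not interchangeable in this argument. Lemma~\ref{lm:not_bad} is applied with a family $\mathcal{F}_n$ whose members all have exactly $f$ edges, and the output fragment passed to the next round must itself have a controlled total edge count for that next application (with a smaller $f$) to make sense. More importantly, the final covering round requires the surviving fragment to have $O(\log n)$ edges \emph{in total}: in the paper's second-moment bound $\frac{\mathrm{Var}X}{(\mathbb{E}X)^2}\leq\sum_{\ell,c}O(\ell)\bigl(\frac{e\lfloor\ln n\rfloor}{cn^\delta}\bigr)^c\bigl(\frac{e^A}{\varepsilon e^C}\bigr)^\ell$, the factor $\lfloor\ln n\rfloor$ inside the binomial is the total fragment size; replace it by anything of order $n^{\Omega(1)}$ and the ratio blows up. Your scheme, as written, permits fragments that have all components of size $o(\log n)$ but total size still polynomial in $n$; such a fragment cannot be covered in the final $\Theta(n^{-2/d})$-density round by any reasonable second-moment argument (note also that this last step is a Chebyshev/second-moment argument, not a union bound). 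The paper instead takes $\mathcal{B}$ at round $i$ to be simply ``more than $n^{1-(i+1)\delta}$ total edges,'' and the excess bound~\eqref{eq:sigma_lower} supplies the required savings because it applies to the \emph{sum} of per-component excesses; you never need to single out large components explicitly.

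A second, smaller problem is the iteration rate. You set $s_0=n^{1-\delta}$ and $s_{j+1}=s_j^{1-\delta'}$, which gives $s_K=n^{(1-\delta)(1-\delta')^K}$; this is $n^{\Theta(1)}$, not $o(\log n)$, for any bounded $K$. The paper instead reduces the exponent \emph{additively}: $n^{1-\delta}\to n^{1-2\delta}\to\cdots$, reaching $\max\{n^{1-\lceil 1/\delta\rceil\delta},\ln n\}=\ln n$ in $\lceil 1/\delta\rceil$ rounds. This additive schedule is what each sprinkle of density $\Theta(n^{-2/d})$ actually delivers; a fixed-power reduction of the exponent does not close in finitely many rounds.

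Finally, a cosmetic point: framing everything in terms of ``closed subgraphs'' is unnecessary for this theorem and conflates it with Theorem~\ref{th:second_power}. Here the improved boundary condition applies to \emph{all} subgraphs in the relevant size range, so there are no non-trivial closed subgraphs to avoid; what you want is simply the excess lower bound for arbitrary components, which is what~\eqref{eq:sigma_lower} states.
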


We note that Theorem~\ref{th:main2_1} indeed implies the first part of Theorem~\ref{th:main2} since $p^e_F\geq(1-o(1))\left(\frac{e}{n}\right)^{2/d}$. Indeed, due to the first requirement, $|\mathrm{Aut}(F)|=e^{o(n)}$, implying 
\begin{equation}
\mathbb{E}X_F=\frac{n^n}{e^{n(1+o(1))}}p^{dn/2}=\Theta(1)\quad
\Rightarrow\quad p=(1-o(1))\left(\frac{e}{n}\right)^{2/d}.
\label{eq:from_part_Th2}
\end{equation} 

The rest of the section is devoted to the proof of Theorem~\ref{th:main2_1}. Fix a $d$-regular graph $F$ satisfying the requirements of the theorem.


\subsubsection{Fragmentation: $1/\delta$ iterations}


Consider $\lfloor 1/\delta\rfloor$ independent samples 
$$
\mathbf{W}_1\sim G(n,m),\quad m=\left\lfloor(1+\varepsilon)\left(\frac{e}{n}\right)^{2/d}N\right\rfloor,\quad\text{ and }
$$
$$
\mathbf{W}_2,\ldots,\mathbf{W}_{\lfloor 1/\delta\rfloor}\sim G(n,m_0),\quad m_0=\left\lfloor\varepsilon\cdot n^{-2/d}\cdot N\right\rfloor.
$$
Recall that we denote by $\mathcal{F}_n$ the family of all isomorphic copies of $F$ on $[n]$.

\begin{claim}
\label{cl:sharp_day_1}
Whp there exists $F'\subset F\cup\mathbf{W}_1$, $F'\in\mathcal{F}_n$, such that $|F\cap F'|\leq n^{1-\delta}$.
\end{claim}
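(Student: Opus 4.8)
The plan is to apply Lemma~\ref{lm:not_bad} with $\mathcal{B}$ taken to be the property of having more than $n^{1-\delta}$ edges. The key point is that, with this choice, a pair $(F,W)$ being $\mathcal{B}$-bad means that most fragments $F'\cap F$ with $F'\subset F\cup W$ are large, so \emph{not} being $\mathcal{B}$-bad gives the desired $F'$ with $|F\cap F'|\le n^{1-\delta}$ (in fact for all but a $\delta_n$-fraction of fragments, but one suffices). Actually a small adjustment is needed: we want a fragment of size \emph{at most} $n^{1-\delta}$, so I would set $\mathcal{B}$ to be the set of graphs with strictly more than $n^{1-\delta}$ edges, apply the lemma, and then observe that since $(\mathbf{F},\mathbf{W})$ is not $\mathcal B$-bad whp and $\mathcal M(\mathbf F,\mathbf W)$ is non-empty (it contains $\mathbf F$ itself), there must be some $F'\in\mathcal M(\mathbf F,\mathbf W)$ with $F'\cap\mathbf F\notin\mathcal B$. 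Since $\mathbf{F}$ is a uniformly random copy and the event in question is isomorphism-invariant, whp this holds for the fixed $F$ as well.

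So the work reduces to verifying the hypothesis~\eqref{eq:main_lm} of Lemma~\ref{lm:not_bad}, i.e.\ bounding
$$
\sum_{\ell>n^{1-\delta}}\Pi^F_{\mathcal{B}_{\ell}}\left(\left(1+\tfrac{3f}{m}\right)\tfrac{N}{m}\right)^{\ell}e^{-f^2/m+f^3/(3m^2)}\le\delta_n^3,
$$
where $f=dn/2$ and $m=\Theta(n^{1-2/d}\cdot n)=\Theta(n^{2-2/d})$, so $N/m=\Theta(n^{2/d})$ and the correction factors $1+3f/m$, $e^{-f^2/m+f^3/(3m^2)}$ are $e^{O(n^{4/d-1})}$ (negligible for $d\geq 3$, since $4/d-1<1/3$ when $d\ge 3$; for $d=3$ it is $e^{O(n^{1/3})}$ which still needs to be dominated). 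The crucial quantity is $\Pi^F_{\mathcal{B}_\ell}=\Prob(F\cap\mathbf F\in\mathcal B_\ell)$, the probability that a random copy of $F$ intersects the fixed $F$ in exactly $\ell$ edges; this must be bounded using spreadness of $\mathcal{F}_n$. Here is where the improved edge-boundary hypothesis enters: I would use Claim~\ref{cl:main} (or its coarse version, Claims~\ref{cl:J_upper_bound} and~\ref{cl:simple_embeddings}) to count, for each pair $(x,c)$ of vertex/component parameters compatible with $\ell$ edges, the number of subgraphs $H\subset F$ with those parameters times the number of ways to extend such $H$ to a copy of $F$, divided by $|\mathcal F_n|=n!/|\mathrm{Aut}(F)|$. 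The excess $\sigma=\tfrac d2 x-(\ell+\tfrac\Delta2 c)$ controls the discrepancy; for subgraphs on $x$ vertices the improved boundary condition forces $\sigma\gtrsim x\cdot w/\log n$ once $x$ is of polynomial size, which produces a factor like $n^{-\Omega(\ell w/\log n)}$ that beats the $(N/m)^\ell=n^{(2/d)\ell}$ growth for all $\ell>n^{1-\delta}$.

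\textbf{Main obstacle.} The delicate part is bookkeeping the competition between the geometric growth $(N/m)^\ell \approx (e\cdot n^{-1}\cdot N^{d/2}/m^{d/2})^{2\ell/d}$... more precisely $(N/m)^\ell\le e^{(2/d+o(1))\ell\ln n}$, against the spread bound on $\Pi^F_{\mathcal B_\ell}$. For \emph{connected} subgraphs on roughly $\ell$ edges, basic spreadness gives $\Pi\le (Cn^{-2/d})^\ell$ which exactly cancels the $(N/m)^\ell$ up to the $(1+\varepsilon)$-type constant; what saves us is (i) the constant: $p=(1+\varepsilon)(e/n)^{2/d}$ makes $N/m$ strictly smaller than $(n/e)^{2/d}$, giving a genuine exponential decay $e^{-c\varepsilon\ell}$ per edge, and (ii) for the range $\ell>n^{1-\delta}$ the improved boundary hypothesis gives extra decay. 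I would first handle the ``easy'' regime where the basic $\varepsilon$-gain already dominates all lower-order factors (this needs $\varepsilon\ell \gg$ the $O(n^{4/d-1})$ and $O(c+\sigma)$ corrections, which holds once $\ell\ge n^{1-\delta}$ and $d\ge3$), and separately control the contribution of fragments with many components or large excess using the $\max_{o\le(\Delta+2)\sigma}\binom{x}{o}$ and $e^{A\sigma}$ terms from Claim~\ref{cl:main}, absorbing them into the per-edge gain. The genuinely new input over~\cite{KNP,CHL} is that the $w/\log n$ improvement in the boundary is precisely tuned so that the sum converges even though we only iterate $1/\delta$ times rather than $\Theta(\log\log n)$ times; making this quantitative — i.e.\ choosing $\delta_n$ decaying slowly enough that $\delta_n^3$ still dominates the tail while $\delta_n\to 0$ — is the bookkeeping crux, but it is routine once the spread estimate is in hand.
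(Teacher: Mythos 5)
Your high-level plan matches the paper's: apply Lemma~\ref{lm:not_bad} with $\mathcal{B}$ being the property of having more than $n^{1-\delta}$ edges, reduce to verifying~\eqref{eq:main_lm}, and bound $\Pi^F_{\mathcal B_\ell}$ by summing over $(x,c)$ and invoking Claim~\ref{cl:main}. The symmetry reduction from random $\mathbf F$ to fixed $F$ is also correct. However, there are two genuine gaps.

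\emph{First, you misattribute the role of the improved edge-boundary hypothesis.} You assert that for $\ell > n^{1-\delta}$ the condition $|\partial_e(\tilde F)|\geq d+1+\lfloor|V(\tilde F)|w/\ln n\rfloor$ ``forces $\sigma\gtrsim xw/\log n$'' and that this beats $(N/m)^\ell$. But that hypothesis only constrains subgraphs with $3\leq|V(\tilde F)|\leq n^{1-\delta}$; when $\ell>n^{1-\delta}$, a fragment can consist of a single component on $x\approx 2\ell/d > n^{1-\delta}$ vertices, for which the improved boundary gives nothing beyond $\sigma\geq 0$. In fact, the paper's proof of Claim~\ref{cl:sharp_day_1} uses \emph{only} the $(1+\varepsilon)$ gain in $N/m$ coming from $m=\lfloor(1+\varepsilon)(e/n)^{2/d}N\rfloor$, plus the excess bookkeeping from Claim~\ref{cl:main}. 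The $w/\ln n$ improvement is used exclusively in the subsequent induction steps, where $\mathbf W_{i+1}\sim G(n,m_0)$ with $m_0=\lfloor\varepsilon n^{-2/d}N\rfloor$ makes $N/m_0\approx n^{2/d}/\varepsilon$ \emph{larger} than $(n/e)^{2/d}$, so the $(1+\varepsilon)$-per-edge gain is gone and the boundary improvement (giving $\sigma\gtrsim\ell/\ln n$) is what saves the day — and is why $1/\delta$ rounds suffice. Your statement ``what saves us is (i) \dots and (ii) \dots'' is wrong for this claim; only (i) is at work.

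\emph{Second, your bookkeeping of the correction factors is incomplete.} You identify the $O(n^{4/d-1})$ and $O(c+\sigma)$ corrections, but the dominant correction at large $x$ is the term $e^{(x-c)^2/n}$ from the Stirling bound on $(n-x+c)!/n!$ (cf.~\eqref{eq:factorials_fraction_sharp}). This is $e^{\Theta(n)}$ when $x$ is a positive fraction of $n$, i.e.\ when $\ell$ is a positive fraction of $dn/2$ — far larger than $e^{O(n^{4/d-1})}$. What saves the computation is that $e^{(x-c)^2/n}\leq e^x$ and $x\approx 2\ell/d$, so this is exactly cancelled by the $e^{-2\ell/d}$ hiding inside $(N/m)^\ell$; this is the delicate balance the paper treats explicitly as Case~2.2. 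Simply saying ``the $\varepsilon$-gain dominates the corrections once $\ell>n^{1-\delta}$'' misses this: for that range of $\ell$ the $\varepsilon$-gain $e^{-\Theta(\varepsilon\ell)}$ does \emph{not} beat $e^{(x-c)^2/n}$ on its own; you need the pairing with $(n/e)^{2/d}$. Your proof would need to be reorganized to exhibit this cancellation before the geometric decay can be extracted.
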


\begin{proof}
Let $f=\frac{dn}{2}$, and $\mathcal{B}$ be the property of graphs on $n$ vertices to have more than $n^{1-\delta}$ edges. Due to Lemma~\ref{lm:not_bad}, it is sufficient to prove~\eqref{eq:main_lm}. More precisely, due to symmetry, it is sufficient to prove
\begin{equation}
 \sum_{\ell=0}^f\Pi^F_{\mathcal{B}_{\ell}}\left(\left(1+\frac{3f}{m}\right)\frac{N}{m}\right)^{\ell}e^{-f^2/m+f^3/(3m^2)}=o(1),
\label{eq:sufficient_first_fragmentation}
\end{equation}
for the fixed $F\in\mathcal{F}_n$. Fix non-negative integers $c$ and $x$. As in Claim~\ref{cl:J_upper_bound}, we denote by $\mathcal{J}_{\ell,x,c}$ the set of all subgraphs $J\subset F$ with $\ell$ edges, $x$ non-isolated vertices, and $c$ connected components (excluding isolated vertices), and let $p(\ell,x,c):=\mathbb{P}(\mathbf{F}\cap F\in\mathcal{J}_{\ell,x,c})$. Due to Claim~\ref{cl:main},
$$
p(\ell,x,c)\leq\frac{{n\choose c}{x\choose c}e^{A_1 c+A_2\sigma} \max_{o\leq (\Delta+2)\sigma}{x\choose o}^2(n-x+c)!}{|\mathcal{F}_n|},
$$
for some constants $A_1,A_2>0$. Let $\ell>n^{1-\delta}$. Recalling that $x=\frac{2}{d}\ell+\frac{c\Delta}{d}+\frac{2}{d}\sigma$, 
 we get
\begin{align}
 \frac{(n-x+c)!}{|\mathcal{F}_n|}  \leq
 e^{o(n^{1-\delta})}\cdot\frac{(n-\frac{2\ell+c(\Delta-d)+2\sigma}{d})!}{n!} \leq e^{o(n^{1-\delta})}\cdot\frac{e^{(x-c)^2/n}}{n^{(2\ell+c+2\sigma)/d}}.
 \label{eq:factorials_fraction_sharp}
\end{align}

We first choose $0<\varepsilon'\ll\varepsilon''$ small enough. From the bound ${n\choose c}\leq\left(\frac{en}{c}\right)^c$, we get 
\begin{equation}
 p(\ell,x,c)\leq
 e^{o(n^{1-\delta})}\cdot\frac{\left(\frac{e^{A_1+1} n^{1-1/d}}{c}\right)^c{x\choose c}e^{A_2\sigma}{x\choose o}^2 e^{(x-c)^2/n}}{n^{(2\ell+2\sigma)/d}},
\label{eq:lm_proof_general_bound}
\end{equation}
where $o=o(x)\leq 8\sigma$ is chosen in such a way that ${x\choose o}$ achieves its maximum. We get that $
 \left(\frac{e^{A_1+1}n^{1-1/d}}{c}\right)^c \leq e^{A_1n^{1-1/d}}\leq e^{o(\ell)}.$


We further consider separately several different cases. 

{\bf 1.} If $\sigma>\varepsilon' x$, then 
$$
{x\choose c}{x\choose o}^2e^{A_2\sigma}e^{(x-c)^2/n}< 8^xe^{x}e^{A_2\sigma}<n^{2\sigma/d}.
$$
Therefore,
\begin{equation}
 p(\ell,x,c)\leq \left((1+\varepsilon)/n\right)^{2\ell/d}.
\label{eq:small_p_ell_x_c}
\end{equation}

{\bf 2.} Let $\sigma\leq\varepsilon' x$.

{\bf 2.1.} If $c<\varepsilon' x$ and $x<\varepsilon' n$, then
$$
  {x\choose c}{x\choose o}^2e^{A_2\sigma}e^{(x-c)^2/n}<
 e^{(\varepsilon''/d)\ell}
$$
implying (\ref{eq:small_p_ell_x_c}) as well. 

{\bf 2.2.} If $c<\varepsilon' x$ and $x\geq \varepsilon' n$, then $e^{A_2\sigma} {x\choose c}{x\choose o}^2<e^{(\varepsilon''/d)\ell}.$
Thus, (\ref{eq:lm_proof_general_bound}) implies
$$
 p(\ell,x,c)\leq \frac{e^{(x-c)^2/n}}{n^{2\ell/d}}e^{(\varepsilon''/d+o(1))\ell}\leq (e^{1+\varepsilon''}/n)^{2\ell/d},
$$
since $2\ell/d=x-c\Delta/d-2\sigma/d\geq x(1-5\varepsilon'/2)$. 

{\bf 2.3.} Finally, let $c\geq\varepsilon' x$. Since $x\geq\frac{2}{d}\ell$, we get that $x\gg n^{1-1/d}$. In particular, 
$$
\left(\frac{e^{A_1+1} n^{1-1/d}}{c}\right)^c\leq
e^{e^{A_1} n^{1-1/d}}\leq e^{-10x}.
$$
Since $e^{A_2\sigma}\leq n^{2\sigma/d}$, we get
$$
 p(\ell,x,c)\leq e^{-10x+o(\ell)}\frac{{x\choose c}{x\choose o}^2 e^{(x-c)^2/n}}{n^{2\ell/d}}
\leq e^{-10x+x+o(\ell)}8^x n^{-2\ell/d}\leq n^{-2\ell/d}.
$$
Summing up,
\begin{align*}
\sum_{\ell}\Pi^F_{\mathcal{B}_{\ell}}\left(\left(1+\frac{3f}{m}\right)\frac{N}{m}\right)^{\ell}e^{-f^2/m+f^3/(3m^2)}& \leq
\sum_{\ell>n^{1-\delta}}\sum_{x,c}p(\ell,x,c)\left(\frac{1+o(1)}{1+\varepsilon}\cdot\left(\frac{n}{e}\right)^{2/d}\right)^{\ell}\\
&\leq n^2\sum_{\ell>n^{1-\delta}}\left(\frac{e^{2\varepsilon''/d}+o(1)}{1+\varepsilon}\right)^{\ell}=o\left(\frac{1}{n^2}\right),
\end{align*}
completing the proof of the claim.
\end{proof}

For every $F\in\mathcal{F}_n$ (with a slight abuse of notation, for the sake of simplicity of presentation, we now denote by $F$ an arbitrary graph from $\mathcal{F}_n$) such that there exists $F'$ as in the statement of Claim~\ref{cl:sharp_day_1}, we choose one such $F'$ and put $F\cap F'$ into a multiset $\mathcal{F}^{(1)}_n$. Due to Claim~\ref{cl:sharp_day_1} and Markov's inequality, we get that whp $|\mathcal{F}^{(1)}_n|=(1-o(1))|\mathcal{F}_n|$. We then proceed by induction. Assume that, for $i\in\left[\lceil 1/\delta\rceil-1\right]$, we have a multiset $\mathcal{F}^{(i)}_n$ comprising a single $H\subset F$ from almost every $F\in\mathcal{F}_n$ such that the graph $H\cup\mathbf{W}_1\cup\ldots\cup\mathbf{W}_i$ contains some $F'\in\mathcal{F}_n$ and $|H|=\lfloor n^{1-i\delta}\rfloor$. Let $\mathbf{H}$ be a uniformly random element of $\mathcal{F}^{(i)}_n$.
\begin{claim}
Whp there exists $H'\subset \mathbf{H}\cup\mathbf{W}_{i+1}$, where $H'\in\mathcal{F}^{(i)}_n$, such that 
$|\mathbf{H}\cap H'|\leq \max\left\{n^{1-(i+1)\delta},\ln n\right\}$.
\end{claim}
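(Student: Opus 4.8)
The plan is to derive the claim from Lemma~\ref{lm:not_bad} in the same way that Claim~\ref{cl:sharp_day_1} was derived, but now taking the multiset ``$\mathcal{F}_n$'' of the lemma to be $\mathcal{F}^{(i)}_n$, the common edge count to be $f=\lfloor n^{1-i\delta}\rfloor$, the uniform random edge set to be $\mathbf{W}_{i+1}\sim G(n,m_0)$ with $m_0=\lfloor\varepsilon n^{-2/d}N\rfloor$, and $\mathcal{B}$ to be the property of having more than $L:=\max\{n^{1-(i+1)\delta},\ln n\}$ edges; here $\delta_n$ is any sufficiently slowly decaying $o(1)$. First I would condition on the event -- which holds whp by the inductive hypothesis -- that $|\mathcal{F}^{(i)}_n|=(1-o(1))|\mathcal{F}_n|$ and that every element of $\mathcal{F}^{(i)}_n$ is a subgraph of some copy of $F$ with exactly $f$ edges; since $\mathbf{W}_{i+1}$ is independent of $\mathbf{W}_1,\dots,\mathbf{W}_i$ (and hence of $\mathcal{F}^{(i)}_n$), Lemma~\ref{lm:not_bad} then applies. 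Observe also that if $(\mathbf{H},\mathbf{W}_{i+1})$ is not $\mathcal{B}$-bad then, because $\mathbf{H}\in\mathcal{M}_{\mathcal{B}}(\mathbf{H},\mathbf{W}_{i+1})\subseteq\mathcal{M}(\mathbf{H},\mathbf{W}_{i+1})$ and $|\mathcal{M}_{\mathcal{B}}|\le\delta_n|\mathcal{M}|$, necessarily $|\mathcal{M}|\ge1/\delta_n$ and $|\mathcal{M}\setminus\mathcal{M}_{\mathcal{B}}|\ge(1-\delta_n)/\delta_n\ge1$, so there is some $H'\in\mathcal{F}^{(i)}_n$ with $H'\subseteq\mathbf{H}\cup\mathbf{W}_{i+1}$ and $|\mathbf{H}\cap H'|\le L$. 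Since Lemma~\ref{lm:not_bad} yields ``not $\mathcal{B}$-bad'' with probability at least $1-2\delta_n\to1$, the claim follows once hypothesis~\eqref{eq:main_lm} is verified.

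So the work is the verification of \eqref{eq:main_lm}, i.e.\ that $\sum_{\ell}\Pi^H_{\mathcal{B}_\ell}\big((1+\tfrac{3f}{m})\tfrac{N}{m}\big)^\ell e^{-f^2/m+f^3/(3m^2)}\le\delta_n^3$ for every $H\in\mathcal{F}^{(i)}_n$. With $m=m_0$ one has $(1+\tfrac{3f}{m})\tfrac{N}{m}=(1+o(1))n^{2/d}/\varepsilon$, the exponent $-f^2/m+f^3/(3m^2)$ is eventually non-positive since $f/m\to0$, and $\mathcal{B}_\ell=\varnothing$ for $\ell\le L$, so it is enough to show $\sum_{\ell>L}\Pi^H_{\mathcal{B}_\ell}(n^{2/d}/\varepsilon)^\ell e^{O(\ell)}=o(1)$, uniformly in $H$. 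To bound $\Pi^H_{\mathcal{B}_\ell}=\Prob(|H\cap\mathbf{H}|=\ell)$ I would use that the fragmentation procedure attaches to each $H''\in\mathcal{F}^{(i)}_n$ a distinct ``source'' copy $F''\in\mathcal{F}_n$ with $H''\subseteq F''$; hence for every fixed $J\subseteq H$ the number of $H''$ with $H\cap H''=J$ is at most $\#\{F''\in\mathcal{F}_n:J\subseteq F''\}$. Grouping the $J\subseteq H$ with $\ell$ edges by $x=|V(J)|$ and $c=c(J)$ and applying Claim~\ref{cl:J_upper_bound} (at most $\binom fc(16d)^\ell$ such $J$) and Claim~\ref{cl:simple_embeddings} (at most $e^{O(\ell)}(n-x+c)!/|\mathrm{Aut}(F)|$ such $F''$), the automorphism factors cancel against $|\mathcal{F}^{(i)}_n|=(1-o(1))n!/|\mathrm{Aut}(F)|$; using $(n-x+c)!/n!\le n^{-(x-c)}e^{(x-c)^2/n}\le n^{-(x-c)}e^{O(\ell)}$ (as $x-c\le\ell\le n$) this gives the uniform bound $\Pi^H_{\mathcal{B}_\ell}\le e^{O(\ell)}\sum_{x,c}\binom fc\,n^{-(x-c)}$.

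It then remains to prove $\sum_{\ell>L}\sum_{x,c}\binom fc\,n^{-(x-c)}(n^{2/d}/\varepsilon)^\ell e^{O(\ell)}=o(1)$, and this is where the $d$-regularity and the edge-expansion hypothesis of Theorem~\ref{th:main2_1} must be used, through the excess-type inequalities for subgraphs of $F$. Every component $z$ of $J\subseteq F$ has $x_z-1\ge(2\ell_z+1)/d$, so $x-c\ge(2\ell+c)/d$ and $n^{-(x-c)}(n^{2/d})^\ell\le n^{-c/d}$; moreover, splitting the components of $J$ into ``small'' ones (fewer than $d^2\ln n/w$ edges) and the rest, the non-small components lie essentially in the improved range $3\le|V|\le n^{1-\delta}$ and satisfy $x_z-1\ge\tfrac{2\ell_z}{d}+\tfrac{2\ell_z w}{d^2\ln n}$, while if the small components carry at least half of the edges of $J$ then their number forces $c\gtrsim\ell w/(d^2\ln n)$. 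Coupling $\binom fc$ to the factor $n^{-c/d}$ via $\binom fc\,n^{-c/d}\le\big(ef/(cn^{1/d})\big)^c\le e^{fn^{-1/d}}=e^{n^{1-i\delta-1/d}}$ -- and using $L\ge n^{1-(i+1)\delta}\gg n^{1-i\delta-1/d}$, where $\delta<1/d$ enters -- a short case analysis shows that the $\ell$-th summand is at most $q^\ell$ for some $q=q(n)=o(1)$ independent of $\ell$: in the extreme ``matching-like'' case $c=\ell$ it equals $\big(en^{2/d-i\delta}/(\ell\varepsilon)\big)^\ell(1+o(1))^\ell$, whose base at $\ell=L$ is $(e/\varepsilon)n^{2/d+\delta-1}=o(1)$ since $d\ge3$ forces $2/d+\delta-1<0$, while in all other cases the expansion gains produce a genuine $e^{-\Omega(\ell w)}$ factor absorbing both $e^{O(\ell)}$ and the $\ell$-free factor $e^{n^{1-i\delta-1/d}}$; as there are $n^{O(1)}$ pairs $(x,c)$, the whole sum is $O(n^{O(1)}q^L)=o(1)$. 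The main obstacle is exactly this last step: the count $\binom fc$ of ways to seed the $c$ components of $J$ is not of the form $C^\ell$ (it reaches $f^\ell$ for a matching), so the $\ell$-sum cannot be collapsed to a bare geometric series; in Claim~\ref{cl:sharp_day_1} this was avoided because the relevant subgraphs automatically had $x\gg n^{1-1/d}$, whereas here the decay has to be extracted from the improved edge-expansion for subgraphs of size up to $\sim n^{1-\delta}$ (keeping $\binom fc$ tied to $n^{-(x-c)}$ rather than bounding them separately) together with the largeness of $L=n^{1-(i+1)\delta}$; the factor $e^{-f^2/m+f^3/(3m^2)}$ is used only through the trivial bound by $1$.
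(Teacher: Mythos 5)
Your proposal takes essentially the same route as the paper's proof: apply Lemma~\ref{lm:not_bad} with $f=\lfloor n^{1-i\delta}\rfloor$, $m=m_0$, and $\mathcal{B}$ the property of exceeding $L=\max\{n^{1-(i+1)\delta},\ln n\}$ edges; bound $\Pi^H_{\mathcal{B}_\ell}$ via Claims~\ref{cl:J_upper_bound} and~\ref{cl:simple_embeddings} (using that each $H''\in\mathcal{F}^{(i)}_n$ sits inside a distinct $F''\in\mathcal{F}_n$); and then exploit the improved edge-expansion together with $\delta<1/d$ to control the sum. The one stylistic difference is that you argue the key estimate by an explicit small-vs.-large-component case split with the matching case ($c=\ell$) singled out, whereas the paper compresses exactly this dichotomy into the single inequality~\eqref{eq:sigma_lower}, namely $\sigma>\frac{dC}{2\ln n}\ell-\frac{1-d\delta}{2}c$, which then combines with $\binom fc$ through $n^{-\delta c-\frac{C}{\ln n}\ell}$ to give the uniform bound $e^{n^{1-(i+1)\delta}}e^{-C\ell}$ and a clean geometric sum. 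Your version is correct but leaves the case analysis somewhat informal (the transition between the ``matching-like'' regime and the regime where the expansion gain $e^{-\Omega(\ell w)}$ kicks in needs the intermediate observation, as in~\eqref{eq:sigma_lower}, that either the large components already carry enough excess or the small components force $c\gtrsim \ell w/\ln n$, which in turn makes $\binom fc n^{-c/d}$ decay super-exponentially in $\ell$); the paper's formulation via the excess lower bound handles all regimes at once and is the cleaner way to finish.
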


\begin{proof}
Let $f=\lfloor n^{1-i\delta}\rfloor$, $m=m_0$, and $\mathcal{B}$ be the property of graphs on $n$ vertices to have more than $\max\{n^{1-(i+1)\delta},\ln n\}$ edges. Due to Lemma~\ref{lm:not_bad}, it is sufficient to prove~\eqref{eq:main_lm}.

Fix non-negative integers $c$ and $x$. Recall that, for $H\in \mathcal{F}^{(i)}_n$, we denote by $\mathcal{J}^H_{\ell,x,c}$ the set of all subgraphs $J\subset H$ with $\ell$ edges, $x$ non-isolated vertices, and $c$ connected components (excluding isolated vertices), and denote $p^H(\ell,x,c):=\mathbb{P}(\mathbf{H}\cap H\in\mathcal{J}^H_{\ell,x,c})$. 

Due to Claim~\ref{cl:J_upper_bound} and Claim~\ref{cl:simple_embeddings}, for some constant $A>0$ that does not depend on $\varepsilon$,
\begin{align*}
p^H(\ell,x,c) \leq\frac{{\lfloor n^{1-i\delta}\rfloor\choose c}e^{A\ell}(n-x+c)!/|\mathrm{Aut}(F)|}{|\mathcal{F}^{(i)}_n|}
=(1+o(1))\frac{{\lfloor n^{1-i\delta}\rfloor\choose c}e^{A\ell}(n-x+c)!}{n!}.
\end{align*}
Due to~\eqref{eq:factorials_fraction_sharp} and since $x\leq|V(H)|=o(n)$,
$$
 p^H(\ell,x,c)\leq \frac{\left(\frac{e n^{1-i\delta}}{c}\right)^c e^{(A+o(1))\ell}}{n^{2\ell/d+c/d+2\sigma/d}}. 
$$
Let $C>0$ be a large constant. Since every subgraph $J\subset F$ with $2\leq|V(H)|\leq n^{1-\delta}$ has $|\partial_e(J)|\geq d+1+\lfloor |V(J)|w/\log n\rfloor$, for a graph $J\in\mathcal{J}^H_{\ell,x,c}$ consisting of $c$ components that have $\ell_1,\ldots,\ell_c$ edges and $x_1,\ldots,x_c$ vertices, we have $\ell_i=\frac{d}{2}x_i-\frac{\Delta}{2}-\sigma_i$, where 
$$
\sigma_i\geq \frac{dC\ell_i}{2\ln n}\cdot I\left(\ell_i\geq\frac{\ln n(1-d\delta)}{dC}\right).
$$
  Therefore, for any admissible triple $(\ell,x,c)$, we get $\ell=\frac{d}{2}x-\frac{\Delta}{2}c-\sigma$, where 
\begin{align}
\sigma=\min_{J\in\mathcal{J}^H_{\ell,x,c}}\sum_{i=1}^c\sigma_i &\geq\min_{J\in\mathcal{J}^H_{\ell,x,c}}\sum_{i:\,\ell_i\geq\frac{\ln n(1-d\delta)}{dC}}\frac{dC\ell_i}{2\ln n}\notag\\
&>\left(\ell-c\cdot\frac{\ln n(1-d\delta)}{dC}\right)\frac{dC}{2\ln n}
=\frac{dC}{2\ln n}\cdot\ell-\frac{1-d\delta}{2}\cdot c.
 \label{eq:sigma_lower}
\end{align}
Therefore,
\begin{align*}
\sum_{\ell}\Pi^H_{\mathcal{B}_{\ell}}\left(\left(1+\frac{3f}{m}\right)\frac{N}{m}\right)^{\ell}& \leq
\sum_{\ell>\max\{n^{1-(i+1)\delta},\ln n\}}\sum_{x,c}p^H(\ell,x,c)\left(\frac{1+o(1)}{\varepsilon}\cdot n^{2/d}\right)^{\ell}\\
&\leq n^2\sum_{\ell>\max\{n^{1-(i+1)\delta},\ln n\}}\frac{\left(\frac{e n^{1-i\delta}}{c}\right)^c }{n^{\delta c+\frac{C}{\ln n}\cdot\ell}}\left(\frac{e^A+o(1)}{\varepsilon}\right)^{\ell}\\
&\leq n^2\sum_{\ell>\max\{n^{1-(i+1)\delta},\ln n\}}\frac{e^{n^{1-(i+1)\delta}}}{e^{C\ell}}\left(\frac{e^A+o(1)}{\varepsilon}\right)^{\ell}=o\left(\frac{1}{n^2}\right),
\end{align*}
whenever $C>2(A+\ln(1/\varepsilon))$, completing the proof.
\end{proof}
By induction, whp we get a multiset $\mathcal{F}^{(\lceil 1/\delta\rceil)}_n$ of size $(1-o(1))|\mathcal{F}_n|$ comprising a single $H\subset F$ from almost every $F\in\mathcal{F}_n$ such that the graph $H\cup\mathbf{W}_1\cup\ldots\cup\mathbf{W}_{\lceil 1/\delta\rceil}$ contains some $F'\in\mathcal{F}_n$ and $|H|=\lfloor\ln n\rfloor$.

\subsubsection{Proof of Theorem~\ref{th:main2_1}}


It is well known that increasing properties that hold whp in the uniform model hold whp in the respective binomial model as well, and vice versa (see, e.g.,~\cite[Corollary 1.16]{Janson}). Let
$$
 \mathbf{G}\sim G(n,p),\quad\text{ where }\quad p=(1+\lceil 1/\delta \rceil\varepsilon)(e/n)^{2/d}.
$$
By~\cite[Corollary 1.16]{Janson}, whp there exists a multiset $\mathcal{F}^{(\lceil 1/\delta\rceil)}_n=\mathcal{F}^{(\lceil 1/\delta\rceil)}_n(\mathbf{G})$ of graphs of size $\lfloor\ln n\rfloor$ comprising a single subgraph $H$ of almost every $F\in\mathcal{F}_n$ so that $H\cup\mathbf{G}$ contains some $F'\in\mathcal{F}_n$, since the existence of such a multiset is an increasing property.

Let $X$ be the number of $H\in\mathcal{F}^{(\lceil 1/\delta\rceil)}_n$ that are subgraphs of $\mathbf{G}'\sim G(n,p'=\varepsilon n^{-2/d})$, sampled independently of $\mathbf{G}$. We get 
$$
\mathbb{E}X=\left|\mathcal{F}_n^{(\lceil 1/\delta\rceil)}\right|\cdot p'^{\lfloor\ln n\rfloor }=(1-o(1))|\mathcal{F}_n|p'^{\lfloor\ln n\rfloor }=\omega(1).
$$
Let $\mathcal{B}$ be the set of non-empty graphs. Due to the definition~\eqref{eq:Pi_def} of $\Pi^H_{\mathcal{B}_{\ell}}=\Pi^H_{\mathcal{B}_{\ell}}(\mathcal{F}^{(\lceil 1/\delta\rceil)}_n)$, 
$$
 \frac{\mathrm{Var}X}{(\mathbb{E}X)^2} \leq\frac{\max_{H\in\mathcal{F}_n^{(\lceil 1/\delta\rceil)}}\sum_{\ell\geq 1}\left(\Pi^H_{\mathcal{B}_{\ell}}\cdot\left|\mathcal{F}_n^{(\lceil 1/\delta\rceil)}\right|\cdot p'^{\lfloor\ln n\rfloor-\ell}\right)}{\mathbb{E}X}=\max_{H\in\mathcal{F}_n^{(\lceil 1/\delta\rceil)}}\sum_{\ell\geq 1}\Pi^H_{\mathcal{B}_{\ell}}p'^{-\ell}.
$$
Therefore, due to Claim~\ref{cl:J_upper_bound}, Claim~\ref{cl:simple_embeddings}, estimates~\eqref{eq:sigma_lower}, and the inequality
\begin{equation} 
\frac{(n-x+c)!}{n!}  \leq\frac{e^{(x-c)^2/n}}{n^{(2\ell+c+2\sigma)/d}},
\label{eq:factorials_ratio_general}
\end{equation}
we get that, for some constant $A>0$, 
\begin{align*}
 \frac{\mathrm{Var}X}{(\mathbb{E}X)^2} &\leq
\sum_{\ell\geq 1}\sum_{x,c}(1-o(1))\frac{{\lfloor\ln n\rfloor\choose c}e^{A \ell}}{n^{2\ell/d+c/d+2\sigma/d}}\left(\frac{1}{\varepsilon}\cdot n^{2/d}\right)^{\ell}\\
&\leq \sum_{\ell\geq 1}\sum_{c\geq 1} O(\ell) \left(\frac{e\lfloor\ln n\rfloor}{cn^{\delta}}\right)^c\left(\frac{e^{A}}{\varepsilon\cdot e^C}\right)^{\ell}=O\left(\frac{\ln n}{n^{\delta}}\right),
\end{align*}
whenever $C>2(A+\ln(1/\varepsilon))$. This completes the proof of Theorem~\ref{th:main2_1}, due to Chebyshev's inequality.

\subsection{Better expansion of small subgraphs}
\label{sc:sharp2}

It remains to prove the second part of Theorem~\ref{th:main2}:

\begin{theorem}
Let $d\geq 4$. Let $\gamma\in(0,1)$ be a fixed constant, let $C=C(d,\gamma)$ be large enough, and let $D\geq 1$ be a fixed constant. Let $F=F(n)$ be a sequence of $d$-regular graphs on $[n]$, $n\in\mathbb{N}$, such that
\begin{itemize}
\item for every subgraph $\tilde F\subset F$ with $|E(\tilde F)|\leq C n^{2/d}$ and $|\partial_v(\tilde F)|\leq \gamma|V(\tilde F)|$, the number of automorphisms of $\tilde F$ that fix all vertices of $\partial_v(\tilde F)$ is at most $D$; 
\item for every subgraph $\tilde F\subseteq F$ with $|\partial_v(\tilde F)|\leq \gamma|V(\tilde F)|$, the number of automorphisms of $\tilde F$ that fix all vertices of $\partial_v(\tilde F)$ is at most $e^{o(|V(\tilde F)|)}$;
\item every $\tilde F\subset F$ with $3\leq|V(\tilde F)|\leq n-3$ has $|\partial_e(\tilde F)|\geq 2d$.
\end{itemize}
Let $\varepsilon>0.$ If $p>(1+\varepsilon)\left(\frac{e}{n}\right)^{2/d}$, then whp (assuming that $dn$ is even) $\mathbf{G}\sim G(n,p)$ contains an isomorphic copy of $F$. 
\label{th:main2_2}
\end{theorem}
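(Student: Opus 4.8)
Fix $F$ as in the statement; applying hypothesis~(b) with $\tilde F=F$ (whose vertex boundary is empty) gives $|\mathrm{Aut}(F)|=e^{o(n)}$, so~\eqref{eq:from_part_Th2} holds and only the ``$p>(1+\varepsilon)(e/n)^{2/d}$'' direction needs proof; combined with the trivial Markov bound for $p<(1-\varepsilon)(e/n)^{2/d}$ this also yields that $p_c(F)=(1+o(1))(e/n)^{2/d}$ is sharp. Following Section~\ref{sc:sharp1}, I work in the uniform model and transfer to $G(n,p)$ at the end via~\cite[Corollary 1.16]{Janson}; since the property is increasing and $\varepsilon>0$ is arbitrary, it suffices to find a copy of $F$ whp in $\mathbf W_1\cup\mathbf W_2\cup\mathbf W_3$, where $\mathbf W_1\sim G(n,m_1)$, $m_1=\lfloor(1+\varepsilon)(e/n)^{2/d}N\rfloor$, and $\mathbf W_2,\mathbf W_3\sim G(n,m_0)$, $m_0=\lfloor\varepsilon n^{-2/d}N\rfloor$, are independent (note $m_1+2m_0=(1+O(\varepsilon))(e/n)^{2/d}N$). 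The plan is two rounds of Lemma~\ref{lm:not_bad} followed by a covering step. In \emph{Round~1}, apply Lemma~\ref{lm:not_bad} with $f=dn/2$, $m=m_1$, and $\mathcal{B}=$ ``having more than $Cn^{2/d}$ edges'', where $C=C(d,\gamma,\varepsilon)$ is large and at least the constant of hypothesis~(a); this replaces almost every copy of $F$ by a subgraph $H\subset F$ with at most $Cn^{2/d}$ edges such that $H\cup\mathbf W_1$ still contains a copy of $F$, and after padding each such $H$ by further edges of $F$ to size exactly $\lfloor Cn^{2/d}\rfloor$ one obtains a multiset $\mathcal{F}_n^{(1)}$ with $|\mathcal{F}_n^{(1)}|=(1-o(1))|\mathcal{F}_n|$. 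In \emph{Round~2}, apply Lemma~\ref{lm:not_bad} to $\mathcal{F}_n^{(1)}$ with $f=\lfloor Cn^{2/d}\rfloor$, $m=m_0$, and $\mathcal{B}=$ ``having more than $K$ edges'' for a large constant $K=K(d,\varepsilon)$, pad again to exactly $K$ edges, and obtain $\mathcal{F}_n^{(2)}$ of size $(1-o(1))|\mathcal{F}_n|$ with $H\cup\mathbf W_1\cup\mathbf W_2\supseteq$ a copy of $F$ for every $H\in\mathcal{F}_n^{(2)}$. Finally, let $X$ count the $H\in\mathcal{F}_n^{(2)}$ with $H\subseteq\mathbf W_3$; if $X\ge1$ then $\mathbf W_1\cup\mathbf W_2\cup\mathbf W_3$ contains a copy of $F$, and I show $X\ge1$ whp by Chebyshev. (All order-of-sampling and conditioning subtleties are handled exactly as in Section~\ref{sc:sharp1}.)

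\textbf{Round~1.} This is the delicate step, since $m_1$ sits at the critical density $N/m_1=(1+o(1))(1+\varepsilon)^{-1}(n/e)^{2/d}$. By symmetry it suffices to bound, for the fixed $F$, the sum $S:=\sum_{\ell>Cn^{2/d}}\Pi^F_{\mathcal{B}_\ell}\big((1+3f/m_1)(N/m_1)\big)^\ell e^{-f^2/m_1+f^3/(3m_1^2)}$. Since $f^2/m_1=\Theta(n^{2/d})$ and $f^3/m_1^2=O(1)$, the density factor is at most $e^{O(n^{2/d})}\big((1+\varepsilon)^{-1}(n/e)^{2/d}\big)^\ell$, and the prefactor $e^{O(n^{2/d})}$ gets swallowed by the geometric decay $(1+\varepsilon)^{-\ell}$ on the range $\ell>Cn^{2/d}$ provided $C$ is large enough — this is the reason for that restriction. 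For $\Pi^F_{\mathcal{B}_\ell}=\sum_{x,c}p(\ell,x,c)$ I substitute the sharp estimates of Claim~\ref{cl:main}; writing $\sigma=\tfrac d2 x-\ell-\tfrac\Delta2 c$ (the excess), using $\tfrac{2\ell}{d}=x-\tfrac\Delta d c-\tfrac2d\sigma$ so that $(N/m_1)^\ell$ supplies the power $n^{x-(\Delta/d)c-(2/d)\sigma}$, and using $\binom nc\tfrac{(n-x+c)!}{n!}\le\tfrac1{c!}\,n^{-(x-2c)}e^{O(x^2/n)}$, each term $p(\ell,x,c)(N/m_1)^\ell$ is dominated by the product of (i) a power $n^{((2d-\Delta)/d)c-(2/d)\sigma}$, (ii) an entropy factor $\tfrac1{c!}\binom xc\big(\max_{o\le(\Delta+2)\sigma}\binom xo\big)^2$, (iii) the automorphism factor $\prod_i|\mathrm{Aut}(z_i)|$ of the components $z_1,\dots,z_c$, and (iv) $e^{-2\ell/d+O(x^2/n)}(1+\varepsilon)^{-\ell}$ (times a harmless $e^{o(n^{2/d})}$). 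Two hypotheses tame the two potentially divergent pieces, (i) and (iii). Hypothesis~(c) forces $\ell_i\le\tfrac d2 x_i-d$ — hence $\sigma_i\ge(d-2)/2>0$, using $d\ge4$ — for every component with at least three vertices, so each non-trivial component contributes a genuine negative power of $n$ in~(i). For~(iii): either $|\partial_v(z_i)|\le\gamma|V(z_i)|$, in which case hypotheses~(a) and~(b) bound the boundary-fixing automorphisms of $z_i$ by $D$ (when $|E(z_i)|\le Cn^{2/d}$) or by $e^{o(|V(z_i)|)}$ in general, so $|\mathrm{Aut}(z_i)|\le e^{o(|V(z_i)|)}o_i!$ with $o_i\le(\Delta+2)\sigma_i$; or $|\partial_v(z_i)|>\gamma|V(z_i)|$, which forces $\sigma_i\ge\tfrac\gamma2|V(z_i)|-O(1)$, so even the crude bound $|\mathrm{Aut}(z_i)|\le|V(z_i)|(d-1)^{|V(z_i)|}$ from~\cite{KLT} is absorbed by $e^{O(\sigma_i)}$. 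A short case split on the relative sizes of $\sigma,c,x,n$ — in the style of the proof of Claim~\ref{cl:sharp_day_1}, bounding $\binom xc$ and $\binom xo$ via binary entropy when $\sigma,c\ll x$ and exploiting the $1/c!$ together with $n^{2/d}/c<1/C$ when $c\gtrsim x$ — then makes every summand at most $e^{-\Omega(\ell)}$, so $S=o(1)$ and Round~1 goes through.

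\textbf{Round~2 and covering.} Round~2 is routine because $m_0$ is far above the fragment size: with $f=\lfloor Cn^{2/d}\rfloor$ one has $f^2/m_0=O(n^{6/d-2})=o(1)$, so~\eqref{eq:main_lm} reduces to $\sum_{\ell>K}\Pi^H_{\mathcal{B}_\ell}(n^{2/d}/\varepsilon)^\ell=o(1)$ for each $H\in\mathcal{F}_n^{(1)}$. Bound $\Pi^H_{\mathcal{B}_\ell}$ by the coarse Claims~\ref{cl:J_upper_bound} and~\ref{cl:simple_embeddings} (the automorphism factors cancel), getting $\Pi^H_{\mathcal{B}_\ell}\le(1+o(1))\sum_{x,c}\binom{Cn^{2/d}}{c}(16d)^\ell e^{O(\ell)}\tfrac{(n-x+c)!}{n!}$; it is essential to keep $\binom{Cn^{2/d}}{c}\le(Cen^{2/d}/c)^c$ (the $c^{-c}$ saving), after which~\eqref{eq:factorials_ratio_general} with $x-c\ge(2\ell+(\Delta-d)c+2\sigma)/d$ and $\ell\le Cn^{2/d}$ (so $e^{O(x^2/n)}=O(1)$) bound each summand by $\big(C'/(c\varepsilon)\big)^c e^{O(\ell)}\varepsilon^{-\ell}n^{-\Omega(1)}$, whose sum over $\ell>K$, $c\le\ell$ is $o(1)$ once $K$ is large. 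For the covering step, conditionally on $\mathcal{F}_n^{(2)}$ one has $\mathbb{E}X=|\mathcal{F}_n^{(2)}|(\varepsilon n^{-2/d})^K=(1-o(1))\tfrac{n!}{|\mathrm{Aut}(F)|}(\varepsilon n^{-2/d})^K=e^{n\ln n-n+o(n)}\to\infty$, while $\tfrac{\mathrm{Var}X}{(\mathbb{E}X)^2}\le\max_{H\in\mathcal{F}_n^{(2)}}\sum_{\ell\ge1}\Pi^H_{\mathcal{B}_\ell}(\varepsilon n^{-2/d})^{-\ell}$; since $H$ now has bounded size, the same coarse bounds give $\sum_{\ell\ge1}\Pi^H_{\mathcal{B}_\ell}(\varepsilon n^{-2/d})^{-\ell}\le\sum_{\ell=1}^K e^{O(\ell)}\varepsilon^{-\ell}n^{-(c(\Delta-d)+2\sigma)/d}\le e^{O(K)}\varepsilon^{-K}n^{-1/d}=o(1)$, where $\Delta-d\ge1$ (i.e.\ every small subgraph sends more than $d$ edges to its complement) is exactly what makes the exponent of $n$ strictly negative. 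Chebyshev then gives $X\ge1$ whp, which completes the proof, and hence also the second part of Theorem~\ref{th:main2}.

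\textbf{Main obstacle.} The crux is Round~1. Unlike Theorem~\ref{th:main1} and its $1/\delta$-iteration analogue in Section~\ref{sc:sharp1}, where one always has at least a polylogarithmic amount of slack, here $m_1$ is pinned at $(1+o(1))(e/n)^{2/d}N$, so the $e^{o(n)}$ ambiguity hidden in $|\mathrm{Aut}(F)|$, the overcounting factor $e^{O(x^2/n)}$ for large low-excess subgraphs (e.g.\ the width-$b$ ``strips'' of a $d=4$ toroidal grid, whose excess stays a bounded constant as $b$ grows), and all the entropy factors must be balanced term-by-term against the relatively modest geometric decay $(1+\varepsilon)^{-\ell}$ available only for $\ell>Cn^{2/d}$. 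Making this work requires the full strength of Claim~\ref{cl:main} — hence of Claims~\ref{cl:count_copies_vertex} and~\ref{cl:total_number_closed} on counting closed subgraphs — together with the interplay of all three hypotheses: (c) guarantees a strictly positive per-component excess and thus a power-of-$n$ gain per component; (a) and (b) keep the component automorphism groups small in exactly the regime where (c)'s gain is not yet overwhelming; and one must observe that whenever a component has a large vertex-boundary fraction its excess is automatically linear in its size. Lining up the constants in the case split so that every term is genuinely $e^{-\Omega(\ell)}$ is where the real work lies; Round~2, the covering step, and the transfer to $G(n,p)$ are by comparison routine.
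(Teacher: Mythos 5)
There is a genuine gap, and it sits exactly at the interface between your Round~1 and Round~2. In Round~1 you fragment with $\mathcal{B}=$ ``more than $Cn^{2/d}$ edges''. The paper's Claim~\ref{cl:sharp_2_day_1} deliberately uses a strictly stronger $\mathcal{B}$: the fragment must have at most $Cn^{2/d}$ edges \emph{and} at most $\sqrt{\ln n}$ non-isolated edges, i.e.\ it must be a union of $\Theta(n^{2/d})$ single edges plus a tiny amount of anything else. This structural refinement is not a decoration --- it is what makes Round~2 possible. With only your $\mathcal{B}$, a fragment $H\in\mathcal{F}_n^{(1)}$ can be (for all you know after Round~1) a single connected subgraph of $F$ on $\Theta(n^{2/d})$ edges with bounded excess. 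Take such an $H$, and take $J\subset H$ with $c=1$, $\sigma=O(1)$, and $\ell$ anywhere in $(K, Cn^{2/d}]$: condition~(c) only gives per-component excess $\sigma_i\geq(2d-\Delta)/2=\Theta(1)$, \emph{not} $\Omega(\ell_i)$, so $\sigma$ stays bounded even as $\ell$ grows. Your claimed summand bound $(C'/(c\varepsilon))^c e^{O(\ell)}\varepsilon^{-\ell}n^{-\Omega(1)}$ then has a fixed negative power of $n$ multiplied by $(e^{O(1)}/\varepsilon)^\ell$, and $\sum_{\ell>K}^{Cn^{2/d}}(e^{O(1)}/\varepsilon)^\ell n^{-\Omega(1)}$ diverges like $e^{\Theta(n^{2/d})}$. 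So Round~2 fails. The paper avoids this by guaranteeing after Round~1 that every $J\subset H$ satisfies $\ell\leq c+\sqrt{\ln n}$ (almost all of $J$'s edges are isolated), which lets the entropy factor $\binom{Cn^{2/d}}{c}\leq(Cen^{2/d}/c)^c$ and the factorial ratio produce the superexponential decay $(e^{1+A}C/(\ell-\sqrt{\ln n}))^{\ell-\sqrt{\ln n}}\varepsilon^{-\ell}$ that beats the geometric blow-up; this also explains why the paper reduces only to $\ln n$ edges (not a constant $K$), since the polynomial prefactor $n^2 e^{O(\sqrt{\ln n})}$ requires $\ell=\Omega(\ln n)$ to be absorbed.

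A secondary concern, separate from the main gap: Round~1 at the critical density also forces a careful treatment of the single-edge components, because the typical fragment has $\Theta(n^{2/d})$ of them and the contribution of a subgraph consisting of $c$ isolated edges gives a positive power $n^{((2d-\Delta)/d)c-(2/d)\sigma}=n^{2c/d}$ in your factor~(i); this must be cancelled, term by term, against the Poisson prefactor $e^{-f^2/m_1}$. The paper's proof of Claim~\ref{cl:sharp_2_day_1} therefore introduces the exact counting quantities $\varphi_1,\varphi_2$ for the single-edge part, whereas Claim~\ref{cl:main} carries unspecified constants $e^{A^1_\alpha c}$, $e^{A^1_\beta c}$ per component, and these can ruin the cancellation around $c\approx\tau=d^2n^2/(4m_1)$. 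You do not need this precision on the range $\ell>Cn^{2/d}$ (your Round~1 range) because there $c$ is forced well past $\tau$ and the Poisson tail is superexponential for large enough $C$; but you \emph{do} need it if you ever sum over $\ell\leq Cn^{2/d}$, which the paper does in its Case~1 (and which you would need if you added the non-isolated-edge constraint to $\mathcal{B}$). In short: the right $\mathcal{B}$ for Round~1 is the paper's, and proving it requires the isolated-edge refinement of Claim~\ref{cl:main}; with only the cruder Claim~\ref{cl:main} and the weaker $\mathcal{B}$, Round~2 is broken for the reason above.
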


We notice that Theorem~\ref{th:main2_2} indeed implies the second part of Theorem~\ref{th:main2} since $p^e_F\geq(1-o(1))\left(\frac{e}{n}\right)^{2/d}$. Indeed, due to the second requirement applied to $\tilde F=F$ with $\partial_v(\tilde F)=\varnothing$, we get $|\mathrm{Aut}(F)|=e^{o(n)}$, implying~\eqref{eq:from_part_Th2}. The rest of the section is devoted to the proof of Theorem~\ref{th:main2_2}. Fix a $d$-regular graph $F$ satisfying the requirements of the theorem.

\subsubsection{Fragmentation: two iterations}

Consider two independent samples 
$$
\mathbf{W}_1\sim G(n,m),\,\,\, m:=\left\lfloor(1+\varepsilon)\left(\frac{e}{n}\right)^{2/d}N\right\rfloor,
\quad\,\text{ and}
\quad\,
\mathbf{W}_2\sim G(n,m_0),\,\,\,
m_0:=\left\lfloor\varepsilon\cdot n^{-2/d}\cdot N\right\rfloor.
$$
Let $\mathcal{F}_n$ be the family of all isomorphic copies of $F$ on $[n]$.

\begin{claim}
\label{cl:sharp_2_day_1}
Whp there exists $F'\subset F\cup\mathbf{W}_1$ satisfying the following: 
$F'\in\mathcal{F}_n$, $F\cap F'$ has at most $\sqrt{\ln n}$ non-isolated edges, and $|F\cap F'|\leq C n^{2/d}$.
\end{claim}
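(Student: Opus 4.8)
The plan is to run the first fragmentation via Lemma~\ref{lm:not_bad}. I would take $f=\tfrac{dn}{2}$ (the number of edges of $F$), $m=\bigl\lfloor(1+\varepsilon)(e/n)^{2/d}N\bigr\rfloor$, and $\mathcal B=\mathcal B'\cup\mathcal B''$, where $\mathcal B'$ is the property of having more than $Cn^{2/d}$ edges and $\mathcal B''$ the property that more than $\sqrt{\ln n}$ edges lie in connected components containing at least two edges. If $(F,\mathbf W_1)$ is not $\mathcal B$-bad, then all but a $\delta_n$-fraction of $F'\in\mathcal M(F,\mathbf W_1)$ satisfy $F\cap F'\notin\mathcal B$, which is exactly the asserted conclusion; so, as after Claim~\ref{cl:sharp_day_1}, it suffices to verify~\eqref{eq:main_lm}, and by symmetry (all members of $\mathcal F_n$ are isomorphic, and $\Pi^F_{\mathcal B_\ell}$ is an isomorphism invariant) it is enough to bound, for the fixed $F$ and a suitable slowly decreasing $\delta_n=o(1)$,
\[
\sum_{\ell=0}^{f}\Pi^F_{\mathcal B_\ell}\Bigl(\bigl(1+\tfrac{3f}{m}\bigr)\tfrac Nm\Bigr)^{\ell}e^{-f^2/m+f^3/(3m^2)}=o(1).
\]
Here $\bigl(1+\tfrac{3f}m\bigr)\tfrac Nm=(1+o(1))(1+\varepsilon)^{-1}(n/e)^{2/d}$, so after it is multiplied against the $n^{-2\ell/d}$ that the bound for $\Pi^F_{\mathcal B_\ell}$ will produce one is left with a factor $\bigl((1+o(1))e^{-2/d}(1+\varepsilon)^{-1}\bigr)^{\ell}$, and $e^{-f^2/m}=e^{-\frac{d^2}{2(1+\varepsilon)e^{2/d}}n^{2/d}(1+o(1))}$; crucially, for $d\ge4$ (which 2.(c) forces) and $\varepsilon$ small --- and by monotonicity of the property we may assume $\varepsilon$ as small as we wish --- the constant $\frac{d^2}{2(1+\varepsilon)e^{2/d}}$ exceeds $1$.

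I would split $\Pi^F_{\mathcal B_\ell}\le\Pi^F_{\mathcal B'_\ell}+\Pi^F_{\mathcal B''_\ell}$ and bound the two sums separately. For both, I would decompose over the triple $(\ell,x,c)$ of edge/vertex/component counts of $\mathbf F\cap F$ and use $p(\ell,x,c)=\mathbb P(\mathbf F\cap F\in\mathcal J_{\ell,x,c})\le\alpha(\ell,x,c)\beta(\ell,x,c)/|\mathcal F_n|$ with Claim~\ref{cl:main}, taking the ``$1$''-branch of~\eqref{eq:embed_subgraphs_general} so that $\beta\le(n-x+c)!\,e^{O(c+\sigma)}\max_{o\le(\Delta+2)\sigma}{x\choose o}$ carries no factor $(d-1)^x$, and using $|\mathrm{Aut}(F)|=e^{o(n)}$ (condition 2.(b) with $\tilde F=F$) together with~\eqref{eq:factorials_ratio_general}. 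The structural input doing the work is condition 2.(c): since $2d>\Delta$, $F$ has \emph{no} closed subgraph on $3\le|V|\le n-3$ vertices, so every connected component $z$ of $\mathbf F\cap F$ with at least three vertices has excess $\sigma(z)\ge d-\Delta/2\ge1$, equivalently $|V(z)|-1\ge\tfrac2d|E(z)|+1$. Summing over components (single edges contributing $|V(z)|-1=1$) this gives $x-c\ge\tfrac{2\ell}{d}+\bigl(1-\tfrac2d\bigr)c$, hence ${n\choose c}n^{-(x-c)+2\ell/d}\le(en^{2/d}/c)^{c}\le e^{(1+o(1))n^{2/d}}$ --- the binomial counting factor is absorbed up to a loss $e^{(1+o(1))n^{2/d}}$, which in turn is beaten by $e^{-f^2/m}$ precisely because $\frac{d^2}{2(1+\varepsilon)e^{2/d}}>1$. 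Moreover a component of small excess has bounded vertex boundary, $|\partial_v(z)|\le\Delta+2\sigma(z)$, so the ${x\choose o}$-factors stay polynomial in $n$ in the dangerous low-excess regime, while the complementary large-excess regime is killed outright by the $n^{-2\sigma/d}$ of~\eqref{eq:factorials_ratio_general} against $\max{x\choose o}^2e^{O(\sigma)}$ via the entropy estimate ${x\choose o}^2n^{-2\sigma/d}\le e^{o(n^{2/d})}$, which is valid because $C$ is a \emph{fixed} constant.

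For $\mathcal B''$ one sums over $\ell\le Cn^{2/d}$ (larger $\ell$ being covered by $\mathcal B'$); the extra gain is that a component with at least two edges contributes $\Omega(1)$ to $\sigma$, so a fragment in $\mathcal B''_\ell$ has $\sigma=\Omega(1)$ coming from the non-matching part, which together with the estimates above makes $\sum_{\ell\le Cn^{2/d}}\Pi^F_{\mathcal B''_\ell}\bigl(\tfrac Nm\bigr)^{\ell}e^{-f^2/m}$ at most $e^{-\Theta(n^{2/d})}$; alternatively this part can be handled by a direct first-moment bound on the event that $\mathbf F\cap F$ contains a connected subgraph with $\ge(\ln n)^{1/4}$ edges or $\ge(\ln n)^{1/4}$ disjoint non-trivial components, using Claims~\ref{cl:J_upper_bound} and~\ref{cl:simple_embeddings} and the same ${n\choose c}n^{-(x-c)}\le1$ cancellation supplied by 2.(c). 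For $\mathcal B'$ one proceeds along the case division of the proof of Claim~\ref{cl:sharp_day_1} (according to whether $\sigma$, $c$, $x$ are small or comparable to $x$, $x$, $n$), obtaining $p(\ell,x,c)\bigl(\tfrac Nm\bigr)^{\ell}\le\bigl((1+o(1))e^{2\varepsilon''/d}(1+\varepsilon)^{-1}\bigr)^{\ell}$, a geometric term of ratio $<1$ once $\varepsilon''$ is chosen small, so that the sum over $\ell>Cn^{2/d}$ and the $n^{O(1)}$ choices of $(x,c)$ is $o(n^{-2})$ for $C=C(d,\gamma)$ large. Combining the two bounds yields~\eqref{eq:main_lm}.

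The step I expect to demand the most care is controlling the regime of \emph{dense components of small excess}: a component $z$ of the fragment with $|\partial_v(z)|\le\gamma|V(z)|$ may have excess $O(1)$ and yet $\Omega(n)$ vertices (for instance long strips of $4$-cycles in a toroidal grid), so that neither the excess nor the edge count gives a gain. There one uses that $|E(z)|\le Cn^{2/d}$ in the relevant range, so $z$ falls under condition 2.(a), and --- more importantly --- one uses 2.(a) (and 2.(b) when $z$ is large) to bound the number of boundary-fixing automorphisms of $z$, which is what keeps $\beta$, and hence $p(\ell,x,c)$, small enough that the final ratio stays below $(1+\varepsilon)^{-1}$ rather than merely bounded. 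This, together with keeping track that $C$, though large, is fixed (so the combinatorial losses stay sub-exponential in $n^{2/d}$) and that $d\ge4$ makes the $e^{-f^2/m}$-constant exceed $1$, is exactly what conditions 2.(a)--(c) are there to supply.
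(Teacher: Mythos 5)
Your proposal has the right architecture — apply Lemma~\ref{lm:not_bad} with $f=dn/2$, the mixed property $\mathcal B=\mathcal B'\cup\mathcal B''$, and the structural input from $2d>\Delta$ — but the central accounting step is not valid, and the gap is exactly the one that forces the paper to replace Claim~\ref{cl:main} with a bespoke isolated-edge analysis.

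You write that $\binom{n}{c}\,n^{-(x-c)+2\ell/d}\le(en^{2/d}/c)^c\le e^{(1+o(1))n^{2/d}}$ and that this is ``beaten by $e^{-f^2/m}$ because $\tfrac{d^2}{2(1+\varepsilon)e^{2/d}}>1$.'' This inequality only accounts for the bare $\binom{n}{c}$ against the factorial ratio; it silently drops the $e^{A^1_\alpha c+A^1_\beta c}$ and $\binom{x}{c}$ factors that come with $\alpha(\ell,x,c)\beta(\ell,x,c)$ in Claim~\ref{cl:main}. Writing those in, a fragment of $c$ isolated edges (so $\sigma=0$, $x=2c$, $\ell=c$) contributes roughly $\tfrac1{c}\bigl(\tfrac{A\,n^{2/d}}{c}\bigr)^c$ with $A$ a \emph{large} absolute constant (you pick up at least $(2d)^c$ from each of $\alpha$ and $\beta$ and $4^c$ from $\binom{2c}{c}$), and the maximum of this over $c$ is $e^{(A/e)n^{2/d}}$, which is \emph{not} beaten by $e^{-f^2/m}=e^{-\tau}$ with $\tau=\tfrac{d^2}{2(1+\varepsilon)e^{2/d}}n^{2/d}$. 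Indeed, $A/e$ dwarfs $d^2/(2e^{2/d})$ for every $d$. There is also a sanity check you have not run: $\sum_{\ell,x,c}p(\ell,x,c)(N/m)^\ell e^{-f^2/m}$ with \emph{no} restriction to $\mathcal B$ equals $(1+o(1))\cdot\Theta(1)$ (it is essentially $\mathbb{E}(N/m)^{|\mathbf F\cap F|}\cdot e^{-f^2/m}$, and $|\mathbf F\cap F|$ is approximately Poisson with mean $f^2/N$). Your chain of inequalities, if correct, would bound that unrestricted sum by $e^{-\Omega(n^{2/d})}$, a contradiction. So the conclusion is not just unsupported; it cannot be true with this bound.

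The paper avoids this precisely by not invoking Claim~\ref{cl:main} for the isolated-edge part. It replaces $(\ell,x,c)$ by $(\ell,x,c,c')$ where $c$ counts components that are single edges and $c'$ the rest, and it counts the $c$ isolated edges \emph{with constant-sharp precision} via $\varphi_1,\varphi_2$ (exact products $n(n-2)\cdots(n-2c+2)d^c$, and the matching division by $c!\,2^c$). Carried through, the $c$-dependent part becomes $\tfrac1{\sqrt c}(e\tau/c)^c e^{O(c^2/n)}$ with $\tau=f^2/m$ \emph{exactly}, so that the Gaussian concentration $\sum_c\tfrac1{\sqrt c}(e\tau/c)^c=(1+o(1))\cdot\mathrm{const}\cdot e^\tau$ cancels $e^{-f^2/m}$ to leave only a polynomial loss, which is then paid off by the $\mathcal B$-restriction (either $e^{-\Omega(\sqrt{\log n})}$ from the exponential decay in $x-2c$ in the $\mathcal B''$ part, or geometric decay in $\ell$ in the $\mathcal B'$ part). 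Your alternative of handling $\mathcal B''$ by ``a direct first-moment bound using Claims~\ref{cl:J_upper_bound} and~\ref{cl:simple_embeddings}'' fails for the same reason: those claims are even lossier than Claim~\ref{cl:main} on the matching part. The missing idea in your proposal is therefore the separation of single-edge components and the exact, constant-free Gaussian cancellation against $e^{-f^2/m}$; the automorphism conditions 2.(a)--(b) and the $\sigma$-bookkeeping you describe are needed and essentially correct, but they control a secondary error, not the dominant one.
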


\begin{proof}
Let $f=\frac{dn}{2}$, and let a graph $G$ on $n$ vertices has the property $\mathcal{B}$ if either $G$ has more than $C n^{2/d}$ edges or $G$ has more than $\sqrt{\ln n}$ non-isolated edges. Due to Lemma~\ref{lm:not_bad} and due to symmetry, it is sufficient to prove~\eqref{eq:sufficient_first_fragmentation}.

In contrast to the proof of Claims~\ref{cl:coarse_day_1}~and~\ref{cl:sharp_day_1}, here we will distinguish the contribution to $\Pi^F_{B_{\ell}}$ between components consisting of a single edge and all the others. Fix non-negative integers $c$, $c'$, and $x$. Denote by $\mathcal{J}_{\ell,x,c,c'}$ the set of all subgraphs $J\subset F$ with $\ell$ edges, $x$ non-isolated vertices, $c$ isolated edges, and $c'$ other connected components, excluding isolated vertices. Denote by $p(\ell,x,c,c')$ the probability that $\mathbf{F}\cap F\in\mathcal{J}_{\ell,x,c,c'}$. We now refer to the proof of Claim~\ref{cl:main} and consider separately connected components of some $J\in\mathcal{J}_{\ell,x,c,c'}$ consisting of single edges. The number of ways to embed $c$ disjoint edges into $F$ is at most 
\begin{align*}
n(n-2)\ldots(n-2(c-1)) \cdot d^c &=2^c\frac{(n/2)!}{(n/2-c)!}d^c\\
&\leq 2\sqrt{\frac{n}{n-2c+1}}\cdot \frac{(2d)^c(n/2)^c}{e^c}\left(1+\frac{c}{n/2-c}\right)^{n/2-c}\\
&\leq 2\sqrt{\frac{n}{n-2c+1}}\cdot (dn)^c=:\varphi_1.
\end{align*}
We also note that $J$ does not have closed components. After we identify the image $z'_i$ in $F$ of a non-trivial connected component $z_i$ of $J$ with $x_i$ vertices and $o_i$ boundary vertices, the number of ways to map the vertices of $z_i$ to the vertices of the image is bounded by $\left(D\cdot\1_{\ell_i\leq Cn^{2/d}}+e^{o(x_i)}\cdot\1_{\ell_i>Cn^{2/d}}\right)\cdot d^{o_i}$ whenever $o_i\leq \gamma x_i$ (since every automorphism of $z'_i$ acts bijectively on the vertex boundary). If $o_i> \gamma x_i$, then we may bound the number of automorphisms by 
$$
x_i(d-1)^{x_i}\leq  (o_i/\gamma)(d-1)^{o_i/\gamma}<(2d)^{o_i/\gamma},
$$
in the same way as in the proof of Claim~\ref{cl:main}.  

Let $\sigma:=\frac{d}{2}(x-2c)-(\ell-c+\frac{\Delta}{2}\cdot c')$ be the excess of the union of components of $J$ with more than 2 vertices. Following the proof of the second part of Claim~\ref{cl:main}, we get that there exist constants $A^1_{\beta},A^2_{\beta}>0$ such that the number of ways to embed $J$ into $F$ is at most 
$$
\beta(\ell,x,c,c')=\varphi_1\cdot(n-x+c')!e^{A^1_{\beta}c'+A^2_{\beta}\sigma}\left(1+e^{o(\ell)}\cdot\1_{\ell>Cn^{2/d}}\right)
\times\max_{o\leq (\Delta+2)\sigma}{x-2c\choose o}
(2d)^{o/\gamma}.
$$

Now, let us bound the number of ways to choose a subgraph $J\subset F$ such that $J\in\mathcal{J}_{\ell,x,c,c'}$. The number of ways to choose $c>0$ disjoint edges in $F$ is at most 
$$
\frac{n(n-2)\ldots(n-2(c-1))\cdot d^c}{c!\cdot 2^c}\leq\frac{\varphi_1}{\sqrt{c}\cdot(2c/e)^c}=:\varphi_2.
$$ 
According to the proof of the first part of Claim~\ref{cl:main}, there exist constants $A^1_{\alpha},A^2_{\alpha}>0$ such that the number of ways to choose the other $c'$ components is at most
$$
\alpha(\ell,x,c,c')={n-2c\choose c'}{x-2c\choose c'}e^{A^1_{\alpha} c'+A^2_{\alpha}\sigma} \max_{o\leq (\Delta+2)\sigma}{x-2c\choose o}.
$$
Letting $\varphi_2:=1$ when $c=0$, we conclude that, for all $c\geq 0$ and for some constants $A_1,A_2>0$,
\begin{multline}
\label{eq:th2_p_general_bound}
p(\ell,x,c,c') \leq\frac{\varphi_2\cdot\alpha(\ell,x,c,c')\beta(\ell,x,c,c')}{|\mathcal{F}_n|\cdot|\mathrm{Aut}(F)|}\\
\leq\frac{\varphi_1\varphi_2{n-2c\choose c'}{x-2c\choose c'}e^{A_1 c'+A_2\sigma}(n-x+c')!}{n!}\left(1+e^{o(\ell)}\cdot\1_{\ell>Cn^{2/d}}\right)\max_{o\leq (\Delta+2)\sigma}{x-2c\choose o}^2.
\end{multline}

Note that every connected $\tilde F\subset F$ with $3\leq|V(\tilde F)|\leq n-3$, has $2d+2|E(\tilde F)|\leq d|V(\tilde F)|$. Therefore, the $i$-th connected component of $J$ with $\ell_i\geq 2$ and $x_i\leq n-3$ has 
$$
\sigma_i\geq \frac{2d-\Delta}{2}\geq\frac{d-2}{2}.
$$
In particular, for $J$ consisting of such connected components, we get 
$$
\sigma=\sum_i\sigma_i\geq\frac{2d-\Delta}{2}\cdot c'.
$$
 Since
\begin{equation}
x-c'=2\ell/d+(2-2/d)c+2\sigma/d+\frac{\Delta-d}{d}c'\geq
2\ell/d+(2-2/d)c+c',
\label{eq:n-x+c'}
\end{equation}
in a similar way as in~\eqref{eq:factorials_fraction_sharp}, we get that
\begin{equation}
 \frac{(n-x+c')!}{n!}\leq\frac{e^{\frac{(x-c')^2}{n}
 }}{n^{x-c'}}\leq
 \frac{e^{\frac{(x-c')^2}{n}
 }}{n^{2\ell/d+(2-2/d)c+c'+2(\sigma-(d-\Delta/2)c')/d}}.
\label{eq:factorials_fraction_sharp_3}
\end{equation}
We then conclude that
\begin{align}
 \frac{\varphi_1\varphi_2(n-x+c')!}{n!} &\leq
 4c^{-1/2}\frac{n}{n-2c+1}\cdot\frac{\left(\frac{ed^2n^2}{2c}\right)^c}{n^{2\ell/d+(2-2/d)c+c'+2(\sigma-(d-\Delta/2)c')/d}}\cdot e^{\frac{(x-c')^2}{n}},&\quad\text{ if }c>0;\notag\\
 \label{eq:varphi1varphi2_c=0}
 \frac{\varphi_1\varphi_2(n-x+c')!}{n!} &\leq
 \frac{2}{n^{2\ell/d+c'+2(\sigma-(d-\Delta/2)c')/d}}\cdot e^{\frac{(x-c')^2}{n}},&\quad\text{ if }c=0.
\end{align}
In particular, when $c>0$, we get 
\begin{align}
\label{eq:varphi1varphi2_tight}
\frac{\varphi_1\varphi_2(n-x+c')!}{n!}e^{-\frac{(nd/2)^2}{m}}\left(\frac{N}{m}\right)^{c}
&\leq
 \frac{4n}{n-2c+1}\cdot\frac{c^{-1/2}\left(\frac{ed^2n^2n^{2/d}}{4cm}\right)^c}{n^{2\ell/d+c'+2(\sigma-(d-\Delta/2)c')/d}}\cdot e^{-\frac{(nd/2)^2}{m}+\frac{(x-c')^2}{n}}\\
&\leq
\frac{4n}{n-2c}\cdot\frac{c^{-1/2}}{n^{2(\ell-c)/d+c'+2(\sigma-(d-\Delta/2)c')/d}}\cdot e^{\frac{(x-c')^2}{n}}.
\label{eq:phiphi}
\end{align}
The last expression is $O(1/\sqrt{c})$ when $\ell=c=O(\sqrt{n})$ (and thus $\sigma=c'=0$ and $x=2c$), and, as we will see soon, it gives the main contribution to the sum in~\eqref{eq:sufficient_first_fragmentation}.

Let us recall that we have to prove that 
\begin{equation}
 \left(\sum_{\ell>Cn^{2/d}}\sum_{x,c,c'}+\sum_{\ell\leq Cn^{2/d}}\sum_{x-2c>\sqrt{\ln n}}\sum_{c'}\right)p(\ell,x,c,c')e^{-\frac{(nd/2)^2}{m}}\left(\frac{N}{m}\right)^{\ell}\left(1+\frac{3dn}{2m}\right)^{\ell}=o(1).
\label{eq:sharp_2_day_1_conclusion_0}
\end{equation}
We choose $0<\varepsilon'\ll\varepsilon''\ll\varepsilon$ small enough and consider separately three cases: $\ell\leq Cn^{2/d}$, $Cn^{2/d}<\ell\leq\varepsilon' n$, and $\ell>\varepsilon' n$. 


{\bf 1.} $\ell\leq Cn^{2/d}$. Since $d\geq 4$, the equalities $n\ell=O(m)$ and $(x-c')^2=O(n)$ hold. In particular, $\left(1+\frac{3dn}{2m}\right)^{\ell}e^{(x-c')^2/n}=O(1)$. Combining~\eqref{eq:th2_p_general_bound}, \eqref{eq:varphi1varphi2_c=0}, and~\eqref{eq:phiphi}, we get 
\begin{multline}
 p(\ell,x,c,c')e^{-\frac{(nd/2)^2}{m}}\left(\frac{N}{m}\right)^{\ell}\left(1+\frac{3dn}{2m}\right)^{\ell}\\
 =O\left(\frac{{n-2c\choose c'}{x-2c\choose c'}e^{A_1 c'+A_2\sigma}}{c'+2(\sigma-(d-\Delta/2)c')/d}\left(
 \frac{{x-2c\choose o}^2\left(\frac{N}{m}\right)^{\ell-c}}{\sqrt{c}\cdot n^{2(\ell-c)/d}}\cdot\1_{c>0}+
 \frac{{x\choose o}^2\left(\frac{N}{m}\right)^{\ell}}{n^{2\ell/d}}e^{-\frac{(nd/2)^2}{m}}\cdot\1_{c=0}\right)\right),
 \label{eq:sharp_2_day_1_conclusion_1}
\end{multline}
where $o\leq(\Delta+2)\sigma$ maximises the above expression. 

{\bf 1.1.} If $\sigma<\varepsilon'(x-2c)$, then, since $c'\leq\frac{2}{2d-\Delta}\sigma$ and $\ell\geq x-c-c'$, we get, for $c>0$,
\begin{align}
\frac{{n-2c\choose c'}{x-2c\choose c'}e^{A_1 c'+A_2\sigma}{x-2c\choose o}^2\left(\frac{N}{m}\right)^{\ell-c}}{\sqrt{c}\cdot n^{2(\ell-c)/d+c'}}&\leq
\frac{{n-2c\choose c'}e^{\varepsilon''\cdot (x-2c)}}{\sqrt{c}\cdot n^{c'}e^{2(\ell-c)/d}}\leq
\frac{\left(\frac{e^{1+2/d}}{c'}\right)^{c'}e^{\varepsilon''\cdot (x-2c)}}{\sqrt{c}\cdot e^{2(x-2c)/d}}\notag\\
&<\left(\frac{e^{1+2/d}}{c'}\right)^{c'}e^{-\frac{x-2c}{d}}\cdot c^{-1/2}.
\label{eq:case1.1-first_try}
\end{align}
This bound is not sufficient to prove that summation over all $x,c$ with the same value of $x-2c$ is small. Nevertheless, letting $\tau:=\frac{d^2n^2}{4m}$ and $c=:\tau+t$, we get
\begin{align}
\sum_{c=0}^{(1+\varepsilon')\tau}\left(\frac{ed^2n^{2}}{4c m}\right)^c&=
\sum_{t}\left(\frac{e\tau}{\tau+t}\right)^{\tau+t}=e^{\tau}\sum_{t}e^t\left(\frac{1}{1+t/\tau}\right)^{\tau+t}\notag\\
&\leq
e^{\tau}\sum_{t}e^{t-(\tau+t)t/\tau+(t/\tau)^2(\tau+t)/2}
\leq
e^{\tau}\sum_{t}e^{-(1-\varepsilon')t^2/(2\tau)}<3\sqrt{\tau}\cdot e^{\tau}.
\label{eq:integral_normal}
\end{align}
Moreover, for $c\notin[(1-\varepsilon')\tau,(1+\varepsilon')\tau]$, we get 
\begin{equation}
\label{eq:c-not_typical}
 \left(\frac{ed^2n^2}{4cm}\right)^ce^{-\frac{(nd/2)^2}{m}}=e^{-\Omega(n^{2/d})}.
\end{equation}

We are now ready to estimate the contribution of the considered values of parameters $\ell,x,c,c'$ to the left-hand side of~\eqref{eq:sharp_2_day_1_conclusion_0}. Let us again recall the inequality~\eqref{eq:th2_p_general_bound}.  In order to refine the bound~\eqref{eq:case1.1-first_try}, we will use the bound~\eqref{eq:varphi1varphi2_tight} instead of~\eqref{eq:phiphi}. Recalling that $c'\leq\frac{2}{2d-\Delta}\sigma$ and arguing in a similar way as in~\eqref{eq:case1.1-first_try}, we get that any summand in the left-hand side of~\eqref{eq:sharp_2_day_1_conclusion_0} that corresponds to $c>0$ is at most
\begin{multline*}
 \frac{\varphi_1\varphi_2{n-2c\choose c'}{x-2c\choose c'}e^{A_1 c'+A_2\sigma}(n-x+c')!}{n!}{x-2c\choose o}^2e^{-\frac{(nd/2)^2}{m}}\left(\frac{N}{m}\right)^{\ell}\left(1+\frac{3dn}{2m}\right)^{\ell}\\
 =O\left(\frac{c^{-1/2}e^{A_1 c'+A_2\sigma}}{n^{2(\ell-c)/d+c'}}\cdot \left(\frac{e(nd/2)^2}{cm}\right)^c e^{-\frac{(nd/2)^2}{m}} {n-2c\choose c'}{x-2c\choose c'}{x-2c\choose o}^2\left(\frac{N}{m}\right)^{\ell-c}\right)\\
 =O\left(\left(\frac{e^{1+2/d}}{c'}\right)^{c'}e^{-\frac{x-2c}{d}}\cdot c^{-1/2}\left(\frac{e(nd/2)^2}{cm}\right)^c\cdot e^{-\frac{(nd/2)^2}{m}}\right).
\end{multline*}
Then, for every fixed $c'$ and $x-2c$, from~\eqref{eq:integral_normal} we derive
$$
\sum_{c\in[(1-\varepsilon')\tau,(1+\varepsilon')\tau]} p(\ell,x,c,c')e^{-\frac{(nd/2)^2}{m}}\left(\frac{N}{m}\right)^{\ell}\left(1+\frac{3dn}{2m}\right)^{\ell}=O\left(\left(\frac{e^{1+2/d}}{c'}\right)^{c'}\cdot e^{-(x-2c)/d}\right).
$$
Due to~\eqref{eq:c-not_typical}, we get that $c\notin[(1-\varepsilon')\tau,(1+\varepsilon')\tau]$ contribute $e^{-\Omega(n^{2/d})}$ to the left-hand side of~\eqref{eq:sharp_2_day_1_conclusion_0}, implying
$$
\sum_{c\geq 0} p(\ell,x,c,c')e^{-\frac{(nd/2)^2}{m}}\left(\frac{N}{m}\right)^{\ell}\left(1+\frac{3dn}{2m}\right)^{\ell}=O\left(\left(\frac{e^{1+2/d}}{c'}\right)^{c'}\cdot e^{-(x-2c)/d}\right)+e^{-\Omega(n^{2/d})}.
$$
Since for fixed $c$ and fixed $x-2c$, the value of $\ell$ ranges between $c+(x-2c-c')$ and $c+(x-2c-c')\frac{d}{2}$, we get
\begin{align}
\sum_{\ell\leq Cn^{2/d}} \sum_c\sum_{x,c':\,\sigma<\varepsilon'(x-2c)} & p(\ell,x,c,c')e^{-\frac{(nd/2)^2}{m}}\left(\frac{N}{m}\right)^{\ell}\left(1+\frac{3dn}{2m}\right)^{\ell}\notag\\
&\leq \sum_{c'}\sum_{x-2c>\sqrt{\ln n}}\frac{d}{2}(x-2c)\left(O\left(\left(\frac{e^{1+2/d}}{c'}\right)^{c'} e^{-(x-2c)/d}\right)+e^{-\Omega(n^{2/d})}\right)\notag\\
&=e^{-\Omega(\sqrt{\log n})}.
\label{eq:sharp_2_day_1_conclusion_2}
\end{align}
 
{\bf 1.2.} If $\sigma\geq\varepsilon'(x-2c)$ and $c'\leq\frac{1}{2d-\Delta}\sigma$, then
\begin{align}
\frac{{n-2c\choose c'}{x-2c\choose c'}e^{A_1 c'+A_2\sigma}{x-2c\choose o}^2\left(\frac{N}{m}\right)^{\ell-c}}{n^{2(\ell-c)/d+c'+2(\sigma-(d-\Delta/2)c')/d}}&\leq
\frac{{n-2c\choose c'}e^{O(x-2c)}}{n^{c'+\sigma/d}}\notag\\
&\leq
\frac{\left(\frac{e}{c'}\right)^{c'}e^{O(x-2c)}}{n^{\Omega(x-2c)}}=n^{-\Omega(x-2c)}.
\label{eq:sharp_2_day_1_conclusion_3}
\end{align}

{\bf 1.3.} Finally, if $\sigma\geq\varepsilon'(x-2c)$ and $c'\geq\frac{1}{2d-\Delta}\sigma$, then
\begin{align*}
\frac{{n-2c\choose c'}{x-2c\choose c'}e^{A_1 c'+A_2\sigma}{x-2c\choose o}^2\left(\frac{N}{m}\right)^{\ell-c}}{n^{2(\ell-c)/d+c'+2(\sigma-(d-\Delta/2)c')/d}}&\leq
\frac{{n-2c\choose c'}e^{O(x-2c)}}{n^{c'}}\\
&\leq \left(\frac{e}{c'}\right)^{c'}e^{O(x-2c)}=(c')^{-c'/2}(x-2c)^{-\Omega(x-2c)}.
\end{align*}
Applying here exactly the same argument as in the case {\bf 1.1}, we get
\begin{equation}
\sum p(\ell,x,c,c')e^{-\frac{(nd/2)^2}{m}}\left(\frac{N}{m}\right)^{\ell}\left(1+\frac{3dn}{2m}\right)^{\ell}
\leq e^{-\Omega\left(\sqrt{\log n}\cdot \log\log n\right)},
\label{eq:sharp_2_day_1_conclusion_4}
\end{equation}
where the sum is over all $\ell\leq Cn^{2/d}$, $x$, $c$, and $c'$, such that $x-2c>\sqrt{\ln n}$, $\sigma\geq \varepsilon'(x-2c)$, and $c'\geq\frac{1}{2d-\Delta}\sigma$.

{\bf 2.} $Cn^{2/d}\leq \ell\leq\varepsilon' n$. From~\eqref{eq:th2_p_general_bound}~and~\eqref{eq:phiphi}, we get
\begin{multline*}
 p(\ell,x,c,c')e^{-\frac{(nd/2)^2}{m}}\left(\frac{N}{m}\right)^{\ell}\left(1+\frac{3dn}{2m}\right)^{\ell}\\
 \leq 
 e^{\varepsilon''\ell}\cdot\left(\frac{ed^2n^{2/d}}{2ce^{2/d}}\right)^c\cdot e^{-\frac{(nd/2)^2}{m}}\cdot\frac{{n-2c\choose c'}{x-2c\choose c'}e^{A_1 c'+A_2\sigma}{x-2c\choose o}^2\left(\frac{N}{m}\right)^{\ell-c}}{n^{2(\ell-c)/d+c'+2(\sigma-(d-\Delta/2)c')/d}},
\end{multline*}
where $o\leq(\Delta+2)\sigma$ maximises the above expression. If $c\leq\frac{1}{4}x$, then $x-2c>\sqrt{\ln n}$, meaning that the argument from the previous case can be applied here as well, implying exactly the same bound. If $c>\frac{1}{4}x$, then the latter expression is at most
\begin{equation}
 e^{-x\frac{\ln C}{10}}\cdot\frac{{n-2c\choose c'}{x-2c\choose c'}e^{A_1 c'+A_2\sigma}{x-2c\choose o}^2\left(\frac{N}{m}\right)^{\ell-c}}{n^{2(\ell-c)/d+c'+2(\sigma-(d-\Delta/2)c')/d}}
 <e^{-x\frac{\ln C}{10}}\cdot\frac{8^xe^{(A_1+A_2)x}}{n^{2(\ell-c)/d}\left(\frac{m}{N}\right)^{\ell-c}}\leq  e^{-x\frac{\ln C}{20}},
 \label{eq:sharp_2_day_1_conclusion_5}
\end{equation}
whenever $\ln C>20(A_1+A_2+3)$.

{\bf 3.} Finally, let $\ell>\varepsilon' n$.

{\bf 3.1.} We separately consider the case that either $x\geq n-2$, $c'=1$, and $c=0$, or $x'=n$, $c'=1$, and $c=1$. In this case, we cannot apply~\eqref{eq:n-x+c'}. Nevertheless, since $c$ and $c'$ are bounded, we have the following bound, due to~\eqref{eq:th2_p_general_bound}:
$$
 p(\ell,x,c,c')e^{-\frac{(nd/2)^2}{m}}\left(\frac{N}{m}\right)^{\ell}\left(1+\frac{3dn}{2m}\right)^{\ell}
 \leq 
 e^{\varepsilon''\ell}\cdot e^{\frac{(x-c')^2}{n}}\cdot\frac{e^{A_2\sigma}{x-2c\choose o}^2\left(\frac{N}{m}\right)^{\ell}}{n^{2\ell/d+2\sigma/d}}.
$$
The latter bound is $n^{-\Omega(n)}$ when $\sigma>\varepsilon' x$. On the other hand, if $\sigma\leq\varepsilon' x$, then $e^{A_2\sigma}{x-2c\choose o}^2<e^{\varepsilon''\ell}$ and $x=2\ell/d+2\sigma/d+O(1)$. Thus,
\begin{align}
 p(\ell,x,c,c')e^{-\frac{(nd/2)^2}{m}}\left(\frac{N}{m}\right)^{\ell}\left(1+\frac{3dn}{2m}\right)^{\ell}
 &\leq 
 e^{2\varepsilon''\ell}\cdot e^{x}\left(\frac{e^{-2/d}}{1+\varepsilon}\right)^{\ell}\notag\\
 &\leq 
 e^{3\varepsilon''\ell}\cdot e^{2\ell/d}\left(\frac{e^{-2/d}}{1+\varepsilon}\right)^{\ell}=
 \left(\frac{e^{3\varepsilon''}}{1+\varepsilon}\right)^{\ell}<e^{-\varepsilon\ell/2}.
\end{align}

{\bf 3.2.} Otherwise, we can apply~\eqref{eq:n-x+c'}.  
 Due to~\eqref{eq:th2_p_general_bound}~and~\eqref{eq:factorials_fraction_sharp_3}, 
\begin{multline}
 p(\ell,x,c,c')e^{-\frac{(nd/2)^2}{m}}\left(\frac{N}{m}\right)^{\ell}\left(1+\frac{3dn}{2m}\right)^{\ell}\\
 \leq 
 e^{\varepsilon''\ell}\cdot e^{\frac{(x-c')^2}{n}}\cdot\left(\frac{ed^2n^{2/d}}{2ce^{2/d}}\right)^c\cdot\frac{{n-2c\choose c'}{x-2c\choose c'}e^{A_1 c'+A_2\sigma}{x-2c\choose o}^2\left(\frac{N}{m}\right)^{\ell-c}}{n^{2(\ell-c)/d+c'+2(\sigma-(d-\Delta/2)c')/d}},
 \label{eq:sharp_2_first_day_last_case}
\end{multline}
where $o\leq(\Delta+2)\sigma$ maximises the above expression. If $c'>\varepsilon' x$, then $n^{c'}=n^{\Omega(n)}$ and $n^{2(\ell-c)/d}>\left(\frac{N}{m}\right)^{\ell-c}$, implying that the right-hand side of~\eqref{eq:sharp_2_first_day_last_case} is $n^{-\Omega(n)}$. If $c'\leq\varepsilon' x$ and $\sigma>(2d-\Delta)\varepsilon' x$, then $n^{2(\sigma-(d-\Delta/2)c')/d}=n^{\Omega(n)}$, implying the bound $n^{-\Omega(n)}$ as well. Finally, if $(d-\Delta/2)c'\leq \sigma\leq (2d-\Delta)\varepsilon' x$, then consider separately $c<x/\sqrt{\ln n}$ and $c\geq x/\sqrt{\ln n}$. In the latter case, $\left(\frac{ed^2n^{2/d}}{2ce^{2/d}}\right)^c=e^{-\Omega(n\sqrt{\ln n})}$, implying the bound $e^{-\Omega(n\sqrt{\ln n})}$ for the right-hand side of~\eqref{eq:sharp_2_first_day_last_case}. If $c<x/\sqrt{\ln n}$, then $\ell-c=(1-o(1))\ell$. Thus, recalling~\eqref{eq:n-x+c'} and that $\left(\frac{ed^2n^{2/d}}{2ce^{2/d}}\right)^c=e^{O(n^{2/d})}=e^{o(\ell)}$,
\begin{align}
 p(\ell,x,c,c')e^{-\frac{(nd/2)^2}{m}}\left(\frac{N}{m}\right)^{\ell}\left(1+\frac{3dn}{2m}\right)^{\ell}&\leq
 \frac{e^{2\varepsilon''\ell+2\ell/d+(2-2/d)c+2\sigma/d+\frac{\Delta-d}{d}c'}}{((1+\varepsilon)e^{2/d})^{\ell-c}}\notag\\
 &\leq
  \frac{e^{3\varepsilon''\ell+2\ell/d}}{((1+\varepsilon)e^{2/d})^{\ell}}=\frac{e^{3\varepsilon''\ell}}{(1+\varepsilon)^{\ell}}<e^{-\varepsilon\ell/2}.
  \label{eq:sharp_2_day_1_conclusion_6}
\end{align}

From~\eqref{eq:sharp_2_day_1_conclusion_1}--\eqref{eq:sharp_2_day_1_conclusion_6}, we get that the left-hand side in~\eqref{eq:sharp_2_day_1_conclusion_0} is at most
\begin{multline*}
 e^{-\Omega(\sqrt{\log n})}+\sum_{x-2c>\sqrt{\ln n}}\sum_{c',\ell} n^{-\Omega(x-2c)}+e^{-\Omega\left(\sqrt{\log n}\cdot \log\log n\right)}+\\
 +\sum_{Cn^{2/d}<\ell\leq \varepsilon'n}\sum_{x,c,c'}e^{-x\cdot(\ln C)/20}+\sum_{\ell>\varepsilon'n}\sum_{x,c,c'}\left(e^{-\varepsilon\ell/2}+e^{-\Omega(n\sqrt{\ln n})}\right)=o(1),
\end{multline*}
completing the proof.
\end{proof}

For every $F\in\mathcal{F}_n$ such that there exists $F'$ as in the statement of Claim~\ref{cl:sharp_2_day_1}, we choose one such $F'$ and put $F\cap F'$ into a multiset $\mathcal{F}^*_n$. Let $\mathbf{H}$ be a uniformly random element of $\mathcal{F}^*_n$.
\begin{claim}
Whp there exists $H'\subset \mathbf{H}\cup\mathbf{W}_2$, $H'\in\mathcal{F}^*_n$, such that $|\mathbf{H}\cap H'|\leq \ln n$.
\end{claim}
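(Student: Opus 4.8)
The plan is to run the fragmentation lemma one more time, now applied to the multiset $\mathcal{F}^*_n$ of fragments produced by the first round, with $\mathbf{W}\rightsquigarrow\mathbf{W}_2$, $m\rightsquigarrow m_0$, $f\rightsquigarrow\lfloor Cn^{2/d}\rfloor$, and $\mathcal{B}$ the property of having more than $\ln n$ edges. Throughout I would condition on the whp event of Claim~\ref{cl:sharp_2_day_1} that $|\mathcal{F}^*_n|=(1-o(1))|\mathcal{F}_n|=(1-o(1))n!/|\mathrm{Aut}(F)|$; as $\mathbf{W}_2$ is drawn independently of $\mathbf{W}_1$, it is independent of $\mathcal{F}^*_n$. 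Since the fragments in $\mathcal{F}^*_n$ need not all have the same number of edges, I would first split $\mathcal{F}^*_n$ by the edge count $g\le\lfloor Cn^{2/d}\rfloor$: the classes of size at most $|\mathcal{F}_n|/(n^{2/d}\ln n)$ jointly contain only an $o(1)$-fraction of $\mathcal{F}^*_n$ and may be discarded, while to each of the remaining (polynomially many) classes I would apply Lemma~\ref{lm:not_bad} with $\delta_n:=n^{-2}$, taking a union bound over the $\le Cn^{2/d}$ classes at the end --- a polynomially small $\delta_n$ being needed so that $n^{2/d}\sqrt{\delta_n}=o(1)$. Because $f^2/m_0=O(n^{6/d-2})=o(1)$ and $3f/m_0=o(1)$ for $d\ge 4$, the error factors in~\eqref{eq:main_lm} are $1+o(1)$ and $N/m_0=(1+o(1))n^{2/d}/\varepsilon$, so it would suffice to prove that for every surviving $H\in\mathcal{F}^*_n$,
\[
\sum_{\ell\ge\ln n}\Pi^H_{\mathcal{B}_\ell}\Big(\tfrac{n^{2/d}}{\varepsilon}\Big)^{\ell}=n^{-\omega(1)}.
\]

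To bound $\Pi^H_{\mathcal{B}_\ell}=\Prob(H\cap\mathbf{H}\in\mathcal{B}_\ell)$, with $\mathbf{H}$ uniform over the relevant class, I would sum over the possible values of $J:=H\cap\mathbf{H}$. The crucial observation is that $J\subseteq\mathbf{H}$, so $J$ inherits the defining feature of $\mathcal{F}^*_n$: it has at most $\sqrt{\ln n}$ \emph{non-isolated} edges (an edge lying in a component of $J$ with at least two edges lies in a component of $\mathbf{H}$ with at least two edges). Write $J$ as $c$ isolated edges together with $c'$ further components carrying the remaining $k\le\sqrt{\ln n}$ edges, so that $\ell=c+k$ and, since $\ell\ge\ln n$, $c\ge\ell-\sqrt{\ln n}\ge\tfrac{1}{2}\ell\ge\tfrac{1}{2}\ln n$. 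Claim~\ref{cl:J_upper_bound} bounds the number of such $J\subseteq H$ by $\binom{Cn^{2/d}}{c}2^{\sqrt{\ln n}}\le\big(eCn^{2/d}/c\big)^{c}n^{o(1)}$, while Claim~\ref{cl:simple_embeddings} together with the lower bound on the size of the surviving class yields $\Prob(J\subseteq\mathbf{H})\le n^{2/d+o(1)}e^{O(\ell)}(n-x+c_J)!/n!$, where $x$ is the number of non-isolated vertices of $J$ and $c_J=c+c'$. The hypothesis $|\partial_e(\tilde F)|\ge 2d$ forces $x_i\ge 2+2\ell_i/d$ for every non-trivial component $z_i$ of $J$ (because $d|V(z_i)|\ge 2|E(z_i)|+|\partial_e(z_i)|$), hence $x-c_J\ge c+c'+\tfrac{2}{d}k$; combining this with $(n-x+c_J)!/n!\le e^{O((x-c_J)^2/n)}n^{-(x-c_J)}$ (here $(x-c_J)^2/n=O(1)$) and with the factor $(n^{2/d}/\varepsilon)^{\ell}$, a short calculation shows that the $\ell$-th summand is at most $n^{O(1)}\big(e^{O(1)}C/(\varepsilon c)\big)^{c}$, which --- as $c\ge\tfrac{1}{2}\ln n$ --- is $n^{-\omega(1)}$. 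Summing over the at most $\sqrt{\ln n}+1$ splittings $\ell=c+k$, over $c'\le k$, and over all $\ell\ge\ln n$ preserves the bound $n^{-\omega(1)}$, giving the display.

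Once the claim holds, I would, as in the earlier rounds, choose for every $H\in\mathcal{F}^*_n$ admitting such an $H'$ one such $H'$ and place $\mathbf{H}\cap H'$ into a new multiset: whp it has size $(1-o(1))|\mathcal{F}_n|$, each of its elements has at most $\ln n$ edges, and --- being a subgraph of some $\mathbf{H}\in\mathcal{F}^*_n$ --- still has at most $\sqrt{\ln n}$ non-isolated edges. A direct second-moment computation with an independently sampled $G(n,\varepsilon n^{-2/d})$, exactly as at the end of Section~\ref{sc:sharp1} and again using $x_i\ge 2+2\ell_i/d$ for non-trivial components, then finishes the proof of Theorem~\ref{th:main2_2}.

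The main obstacle is precisely the displayed inequality: one must rule out that a typical fragment $H\cap\mathbf{H}$ contains a large, vertex-efficient (``closed-like'') subgraph. This is exactly what the first round buys --- the cap of $\sqrt{\ln n}$ non-isolated edges --- together with the expansion hypothesis $|\partial_e(\tilde F)|\ge 2d$, which makes every non-trivial component of $J$ cost a genuine power of $n$. Once $J$ is within $\sqrt{\ln n}$ edges of a matching, its matching part behaves like the $(1+o(1))\sqrt{e/n}$-spread family of partial matchings and the sum collapses; the only nuisance is the bookkeeping caused by the non-uniform edge counts in $\mathcal{F}^*_n$, which the split into size classes above handles.
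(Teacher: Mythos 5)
Your proof is correct and follows essentially the same route as the paper: apply Lemma~\ref{lm:not_bad} to $\mathcal{F}^*_n$ with $\mathcal{B}$ the set of graphs with more than $\ln n$ edges, observe that $J=H\cap\mathbf H$ inherits the cap of $\sqrt{\ln n}$ non-isolated edges (so $\ell\le c+\sqrt{\ln n}$ and the component count $c$ is at least $\ell-\sqrt{\ln n}$), and use the edge-boundary hypothesis $|\partial_e(\tilde F)|\ge 2d$ to get $x_i-1\ge 1+2\ell_i/d$ per non-trivial component, together with Claims~\ref{cl:J_upper_bound} and~\ref{cl:simple_embeddings} and the factorial estimate~\eqref{eq:factorials_ratio_general}. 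The one point where you depart from the text is the explicit splitting of $\mathcal{F}^*_n$ into edge-count classes before invoking Lemma~\ref{lm:not_bad}; the paper simply sets $f=\lfloor Cn^{2/d}\rfloor$ and implicitly assumes the fragments have been padded to a uniform edge count (a routine step, since one can add a matching from $F$ disjoint from the fragment without disturbing the ``at most $\sqrt{\ln n}$ non-isolated edges'' constraint), whereas your class-by-class union bound with $\delta_n=n^{-2}$ achieves the same end without padding. Both are valid, and the numerical core — $n^{O(1)}\big(e^{O(1)}C/(\varepsilon c)\big)^c$ with $c\gtrsim\tfrac12\ln n$ giving $n^{-\Omega(\ln\ln n)}$ — matches the paper's final display $n^2\sum_{\ell>\ln n}\big(e^{1+A}C/(\ell-\sqrt{\ln n})\big)^{\ell-\sqrt{\ln n}}e^{A\sqrt{\ln n}}\varepsilon^{-\ell}=o(n^{-2})$.
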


\begin{proof}
Let $f=\lfloor Cn^{2/d}\rfloor$, $m=m_0$, and $\mathcal{B}$ be the property of graphs on $n$ vertices to have more than $\ln n$ edges. Due to Lemma~\ref{lm:not_bad}, it is sufficient to prove~\eqref{eq:main_lm}.

Fix non-negative integers $c$ and $x$. Recall that, for $H\in \mathcal{F}^{*}_n$, we denote by $\mathcal{J}^H_{\ell,x,c}$ the set of all subgraphs $J\subset H$ with $\ell$ edges, $x$ non-isolated vertices, and $c$ connected components (excluding isolated vertices). Denote by $p^H(\ell,x,c)$ the probability that $\mathbf{H}\cap H\in\mathcal{J}^H_{\ell,x,c}$. 

Since $H$ consists of isolated edges and at most $\sqrt{\ln n}$ other edges, we get that $\ell\leq c+\sqrt{\ln n}$. Due to Claim~\ref{cl:J_upper_bound} and Claim~\ref{cl:simple_embeddings}, for some constant $A>0$ that does not depend on $\varepsilon$,
$$
p^H(\ell,x,c) \leq\frac{{\lfloor Cn^{2/d}\rfloor\choose c}e^{A\ell}(n-x+c)!/|\mathrm{Aut}(F)|}{|\mathcal{F}^{*}_n|}=(1+o(1))\frac{{\lfloor Cn^{2/d}\rfloor\choose c}e^{A\ell}(n-x+c)!}{n!}.
$$
Since $d\geq 4$ and since every non-trivial subgraph of $F$ has edge boundary of size at least $2d-2$,
$$
 p^H(\ell,x,c)\stackrel{\eqref{eq:factorials_ratio_general}}\leq \frac{\left(\frac{e Cn^{2/d}}{c}\right)^c e^{(A+o(1))\ell}}{n^{2\ell/d+(1-2/d)c}}\leq \frac{\left(\frac{e^{1+A+o(1)} C}{c}\right)^c e^{(A+o(1))\sqrt{\ln n}}}{n^{2\ell/d}}. 
$$
Therefore,
\begin{align*}
\sum_{\ell}\Pi^H_{\mathcal{B}_{\ell}}\left(\left(1+\frac{3f}{m}\right)\frac{N}{m}\right)^{\ell}& \leq
\sum_{\ell>\ln n}\sum_{x,c}p^H(\ell,x,c)\left(\frac{1+o(1)}{\varepsilon}\cdot n^{2/d}\right)^{\ell}\\
&\leq n^2\sum_{\ell>\ln n}\left(\frac{e^{1+A} C}{\ell-\sqrt{\ln n}}\right)^{\ell-\sqrt{\ln n}} e^{A\sqrt{\ln n}}\left(\frac{1+o(1)}{\varepsilon}\right)^{\ell}=o\left(\frac{1}{n^2}\right),
\end{align*}
completing the proof.
\end{proof}

\subsubsection{Proof of Theorem~\ref{th:main2_2}}


Let $\mathbf{G}\sim G(n,p)$, where $p=(1+2\varepsilon)(e/n)^{2/d}$. By~\cite[Corollary 1.16]{Janson}, whp there exists a multiset $\mathcal{F}^*_n=\mathcal{F}^*_n(\mathbf{G})$ of graphs of size $\lfloor\ln n\rfloor$ comprising a single subgraph $H$ of almost every $F\in\mathcal{F}_n$ so that $H\cup\mathbf{G}$ contains some $F'\in\mathcal{F}_n$ and $H$ has at most $\sqrt{\ln n}$ non-isolated edges. 



Let $X$ be the number of $H\in\mathcal{F}^*_n$ that are subgraphs of $\mathbf{G}'\sim G(n,p'=\varepsilon n^{-2/d})$, sampled independently of $\mathbf{G}$. We get 
$$
\mathbb{E}X=\left|\mathcal{F}_n^*\right|\cdot p'^{\lfloor\ln n\rfloor }=(1-o(1))|\mathcal{F}_n|\cdot p'^{\lfloor\ln n\rfloor }=\omega(1).
$$
Let $\mathcal{B}$ be the set of non-empty graphs. Due to the definition~\eqref{eq:Pi_def}, the fact that every $H\in\mathcal{F}^*_n$ has at most $\sqrt{\ln n}$ non-isolated edges,~Claim~\ref{cl:J_upper_bound}, Claim~\ref{cl:simple_embeddings}, and estimate~\eqref{eq:factorials_ratio_general}, for some constant $A>0$, 
\begin{align*}
 \frac{\mathrm{Var}X}{(\mathbb{E}X)^2} &\leq\frac{\max_{H\in\mathcal{F}_n^*}\sum_{\ell\geq 1}\left(\Pi^H_{\mathcal{B}_{\ell}}\cdot\left|\mathcal{F}_n^*\right|\cdot p'^{\lfloor\ln n\rfloor-\ell}\right)}{\mathbb{E}X}=\max_{H\in\mathcal{F}_n^*}\sum_{\ell\geq 1}\Pi^H_{\mathcal{B}_{\ell}}p'^{-\ell}\\
& \leq
\sum_{\ell\geq 1}\sum_{x,c}(1-o(1))\frac{{\lfloor\ln n\rfloor\choose c}e^{A \ell}}{n^{2\ell/d+(1-2/d)c}}\left(\frac{1}{\varepsilon}\cdot n^{2/d}\right)^{\ell}\\
&\leq \sum_{\ell\geq 1}\sum_{c\geq 1} O(\ell) \left(\frac{e^{A+1}\lfloor\ln n\rfloor}{\varepsilon c\sqrt{n}}\right)^c e^{A\sqrt{\ln n}}\left(\frac{1}{\varepsilon}\right)^{\sqrt{\ln n}}=n^{-1/2+o(1)}.
\end{align*}
This completes the proof of Theorem~\ref{th:main2_2}, due to Chebyshev's inequality.

\begin{remark}
As noted in Section~\ref{sc:intro_strategy} (and evident from the current proof), we have $\mathrm{Var}X_F=O(({\sf E}X_F)^2)$. Consequently, Theorem~\ref{th:main2_2} follows from Paley--Zygmund inequality and \cite[Theorem 2.3]{Friedgut2}. However, this holds only when there are very few subgraphs whose edge boundary is smaller than $2d$. In particular, for $d=3$, there are at least linearly many subgraphs that do not satisfy this condition. Specifically, any pair of incident edges forms a subgraph on three vertices with an edge boundary of at most five. This observation prevents the direct application of both Riordan's second moment argument~\cite{Riordan} and the fragmentation method in its current form, necessitating new ideas.  
\end{remark}

\section{Proof of Theorem~\ref{th:second_power}}
\label{sc:KNP_conjecture_resolution}

The aim of this section is to prove the following.

\begin{theorem}
For every $\varepsilon>0$, whp $\mathbf{G}\sim G\left(n,(1+\varepsilon)\sqrt{e/n}\right)$ contains the square of a Hamilton cycle.
\label{th:square}
\end{theorem}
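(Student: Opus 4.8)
The plan is to reduce a typical copy $F$ of the square of the $n$-cycle to a fragment of size $o(\log n)$ via $O(\log\log n)$ rounds of the fragmentation trick of Section~\ref{sc:planted}, each round exposing a fresh binomial random graph with edge probability $\frac{\varepsilon}{\log\log n}\,n^{-1/2}$, and then to cover one such fragment by a final independent sprinkle $G(n,\varepsilon n^{-1/2})$; since the total density added across all rounds is $O(\varepsilon)n^{-1/2}$, everything takes place inside $G(n,(1+O(\varepsilon))\sqrt{e/n})$, and the arbitrariness of $\varepsilon$ finishes. Here $d=4$, so $C^2_n$ is locally sparse with $\Delta=6$, the first-moment identity $\E X_F=\Theta(1)$ at $p=(1+o(1))\sqrt{e/n}$ shows that $\mathcal F_n$ is $(1+o(1))\sqrt{e/n}$-spread, and the sharp refined counts of Claim~\ref{cl:main} --- the number of subgraphs of $F$ with prescribed $(\ell,x,c)$ and the number of their extensions to a copy of $F$ --- are exactly what one plugs into hypothesis~\eqref{eq:main_lm} of Lemma~\ref{lm:not_bad}. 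The whole difficulty, and the reason $d+2=6<2d=8$ matters, is that a fragment may contain a \emph{closed} subgraph (a second power of a path); closed subgraphs have excess $\sigma=0$ and hence carry the heaviest subgraph count, so a fragment containing a closed subgraph with $\Omega(\log n)$ edges makes $\sum_{\ell\ge1}\Pi^{H}_{\mathcal B_\ell}(\varepsilon n^{-1/2})^{-\ell}$ fail to be $o(1)$ and the final covering step collapses. I expect typical fragments to contain long closed subgraphs, so better fragments must be manufactured.

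First I would restrict $\mathcal F_n$ to the sub-family of copies of $F$ that contain a prescribed collection of $\omega(\sqrt n/\log n)$ pairwise disjoint diamonds (closed four-vertex subgraphs) at fixed positions along the cycle; by symmetry and linearity of expectation, spreadness and covering estimates proved for this sub-family transfer back to all of $\mathcal F_n$. This plants in every fragment a large reservoir of disjoint small closed subgraphs. The engine of the reduction is the observation that if $P_1,P_2$ are disjoint closed subgraphs of a fragment $H\subset F$, then applying an arbitrary permutation to the union of their internal vertices and re-cutting at an arbitrary point produces a graph $H'$ that is a fragment of \emph{some} $F'\in\mathcal F_n$, has the same numbers of vertices, edges and components as $H$, and satisfies: $H'\cup\mathbf W$ contains a copy of $F$ whenever $H\cup\mathbf W$ does. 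Iterating over all inclusion-maximal closed subgraphs redistributes their internal vertices evenly over the reservoir of planted diamonds, so after each round every closed subgraph of the improved fragment is much shorter than the fragment itself --- short enough that the next fragmentation round goes through, and short enough after the last round to make the covering step work.

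The subtle point is that the ``improving'' map $F\mapsto H'$ need not be injective, so the multiset of improved fragments could be less spread than the multiset of typical fragments, and it is precisely this spreadness to which Lemma~\ref{lm:not_bad} is sensitive in the next round. To keep it under control I would choose the redistribution at random: encode which internal vertex is assigned to which closed subgraph by a perfect matching in a binomial random bipartite graph with a suitable edge probability, sampled independently of $\mathbf W\sim G(n,m)$, and prove that with positive probability the maximum cardinality of a fibre of the improving map exceeds that of the plain fragment-assignment map by a factor small enough to be absorbed into the slack of~\eqref{eq:main_lm}. Independence from $\mathbf W$ lets one first condition on a good redistribution rule and only then expose the random edges. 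Repeating this $O(\log\log n)$ times, with each round roughly square-rooting the fragment size, brings the fragment down to size $o(\log n)$.

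Two loose ends then remain. To launch the induction one needs the round-one fragments to have no closed subgraph of size $\ge\sqrt n/\ln n$; as I see no way to decouple the random fragment from $\mathbf W$, I would instead equip, for almost every pair $(F,W)$, the set of $(F,W)$-fragments with a (non-uniform) distribution whose atoms are fragments with \emph{disjoint} inclusion-maximal closed subgraphs of size $\ge\sqrt n/\ln n$, show that such a distribution exists for almost all $(F,W)$, and sample from it independently for each pair. Finally, covering a fixed fragment $H$ of size $o(\log n)$ by $G(n,\varepsilon n^{-1/2})$ whp is the same short second-moment computation as in the proofs of Theorems~\ref{th:main2_1} and~\ref{th:main2_2}: the expected number of covered copies tends to infinity, $\mathrm{Var}X/(\E X)^2\le\sum_{\ell\ge1}\Pi^H_{\mathcal B_\ell}(\varepsilon n^{-1/2})^{-\ell}$, and the latter is $o(1)$ exactly because $H$ contains no closed subgraph with $\Omega(\log n)$ edges, so Chebyshev finishes. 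The main obstacle throughout is the spreadness bookkeeping: one must verify that neither the diamond-planting restriction, nor the $O(\log\log n)$-fold random redistribution of internal vertices, nor the passage to the atomic distribution on $(F,W)$-fragments degrades the constant $(1+o(1))\sqrt{e/n}$ by more than a $1+o(1)$ factor --- which is why the \emph{sharp} form of Claim~\ref{cl:main}, resting on the ``linearly many closed subgraphs'' estimates of Section~\ref{sc:properties}, is indispensable.
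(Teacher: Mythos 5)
Your plan reproduces the paper's argument in all essential respects: the diamond-planting restriction of $\mathcal F_n$, the surgery that cuts large closed subgraphs and redistributes their internal vertices into the planted reservoir, the random bipartite matching rule sampled independently of $\mathbf W$ to control the fibres of the improving map, the $O(\log\log n)$ rounds of fragmentation followed by a final second-moment covering step, and the non-uniform distribution $\mathcal Q_{F,W}$ over fragments with disjoint inclusion-maximal closed subgraphs. The only quibbles are cosmetic: the paper shaves the fragment-size exponent by a factor $2/3$ per round rather than $1/2$, and the separating property supplied by $\mathcal Q_{F,W}$ is used to sharpen the reconstruction bound that feeds the final covering calculation (not quite ``to launch the induction'' as you phrase it) --- but neither changes the shape of the argument.
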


Theorem~\ref{th:square} immediately implies Theorem~\ref{th:second_power}.


Everywhere in this section $F$ is the square of a cycle on $[n]$ and $\mathcal{F}_n$ is the set of all isomorphic copies of $F$ on $[n]$.  Let us recall that we call a subgraph $H\subset F\in\mathcal{F}_n$ {\it closed} if it is the square of a path.

The proof of Theorem~\ref{th:square} is organised as follows. In Section~\ref{sc:day_0}, we define a subfamily of $\mathcal{F}_n$ whose typical fragments contain sufficiently many disjoint powers of paths, and thus is amenable to improving fragments: we fix $\omega(\sqrt{n}/\ln n)$ {\it diamonds} (4-vertex graphs obtained by removing a single edge from a 4-clique) and include into the family all $F\in\mathcal{F}_n$ that contain these diamonds at equal distances. Then we prove that typical\footnote{In the measure induced by $\mathbf{G}$.} graphs from this subfamily have fragments of size $O(\sqrt{n})$. In Section~\ref{sc:improvement}, we improve fragments, chosen from every typical $F\in\mathcal{F}_n$ according to certain distribution that is defined later in Section~\ref{sc:distribution_of_fragments}, by moving large closed subgraphs `inside' the diamonds. In the same section, we describe a probabilistic approach to assign closed subgraphs to the diamonds. It is designed in a way so that the obtained multiset of improved fragments have ``small spreads'' --- see Claim~\ref{cl:pre-images-size}. We state and prove this {\it reconstruction claim} in Section~\ref{sc:smoothed_reconstruction}. One of the main ingredients of this proof is the novel strategy of selecting a fragment, which is presented in Section~\ref{sc:distribution_of_fragments}. 

Due to Claim~\ref{cl:pre-images-size}, in the second fragmentation round presented in Section~\ref{sc:day_1} we get fragments of size $O(n^{0.3})$. In Section~\ref{sc:day_2}, we show that the usual fragmentation technique that relies on typical fragments, equipped with the reconstruction claim, allows to reduce the size of fragments to $o(\log n)$ in $O(\log\log n)$ rounds, where in each round we expose $O(pn^2/\log\log n)$ edges. Finally, in Section~\ref{sc:day_3}, we complete the proof: we show that sprinkling $\varepsilon p {n\choose 2}$ extra edges is enough to cover at least one remaining fragment. In this last step, the reconstruction claim is one of the main ingredients, as well. Crucially, the second part of the reconstruction claim --- the inequality~\eqref{eq:S_bound_small_fragments} --- is used to cover small fragments in this last step of the proof, and the proof of~\eqref{eq:S_bound_small_fragments} essentially relies on the distribution of a random fragment, presented in Section~\ref{sc:distribution_of_fragments}.

 

\subsection{Day 0: getting fragments of size $O(\sqrt{n})$}
\label{sc:day_0}

Let 
$$
\mathbf{W}\sim G(n,m),\quad\text{ where }\quad m=\left\lfloor(1+\varepsilon)\sqrt{e/n}\cdot N\right\rfloor.
$$
In what follows, for the sake of convenience, $\mathcal{F}_n$ is the family of all rooted oriented squares of cycles on $[n]$, that is, each cycle $F\in\mathcal{F}_n$ has a specified vertex --- its root --- and is oriented, starting from the root, in one of the two directions. Clearly, $|\mathcal{F}_n|=n!$ and each $F\in\mathcal{F}_n$ induces a linear order on $[n]$; we denote the respective permutation acting on $[n]$ by $\pi_F$.

 Let $w=w(n)=\omega(1)$ be a slowly increasing function. Let 
\begin{equation}
\label{eq:d-def}
\chi:=\left\lfloor\frac{w \sqrt{n}}{\ln n}\right\rfloor
\end{equation}
and let $n=(n_1+4)+\ldots+(n_{\chi}+4)$, where $n_1,\ldots,n_{\chi}$ are positive integers that differ by at most 1. Fix a tuple $\overrightarrow{D}=(D_1,\ldots,D_{\chi})$ of disjoint diamonds ($K_4$ minus an edge) $D_1,\ldots,D_{\chi}$ on $[n]$. We then consider the family $\mathcal{F}_n(\overrightarrow{D})\subset\mathcal{F}_n$ of all $F$ such that, 
\begin{itemize}
\item for every $i\in[\chi]$, $D_i\subset F$,
\item the order of the diamonds as well as the order of vertices in the diamonds respects $\pi_F$ and the first vertex of $D_1$ coincides with the root of $F$, 
\item for every $i\in[\chi]$,  the number of vertices between $D_i$ and $D_{i+1}$ in $F$ equals $n_i$ (in the cyclic order, i.e. we here set $D_{\chi+1}:=D_1$). 
\end{itemize}

Fix $F\in\mathcal{F}_n(\overrightarrow{D})$ and $W\in{{[n]\choose 2}\choose m}$. {\it An $(F,W)$-fragment} is a subgraph $H$ of some $F'\in\mathcal{F}_n(\overrightarrow{D})$, such that $F'\subset F\cup W$ and $H=F\cap F'$. We will show that whp there exists an $(F,\mathbf{W})$-fragment $H$ of size $O(\sqrt{n})$. In particular, this fragment has to contain all diamonds $D_1,\ldots,D_{\chi}$. This will allow us to refine the fragment $H$ by spreading evenly the edges of large closed subgraphs in $H$ among the diamonds and, thus, to get rid of all the closed subgraphs of size $\Omega(\ln n)$.

We first expose the edges of $\mathbf{W}$ that belong to $D_1\cup\ldots\cup D_{\chi}$ and treat them as fixed. Since $|D_1\cup\ldots\cup D_{\chi}|=5\chi=o(\sqrt{n})$, whp $\mathbf{W}\cap(D_1\cup\ldots\cup D_{\chi})$ is empty. So, we may further condition on this event, or, in other words, assume that $\mathbf{W}$ is a uniformly random $m$-element subset of ${[n]\choose 2}\setminus(D_1\cup\ldots\cup D_{\chi})$. We also let $\mathcal{F}^*_n(\overrightarrow{D})$ be the set of all $F\setminus(D_1\cup\ldots\cup D_{\chi})$, $F\in\mathcal{F}_n(\overrightarrow{D})$.

For $F\in\mathcal{F}^*_n(\overrightarrow{D})$ and $W\subset{[n]\choose 2}\setminus(D_1\cup\ldots\cup D_{\chi})$, let $\mathcal{M}(F,W)$ be the set of all $F'\in\mathcal{F}^*_n(\overrightarrow{D})$ such that $F'\subset F\cup W$. In particular, $F\in\mathcal{M}(F,W)$. An easy counting argument shows that typically $\mathcal{M}(F,\mathbf{W})$ is large.  Let 
$$
 M:=M(t)=|\mathcal{F}^*_n(\overrightarrow{D})|{N-2n\choose m-t}/{N-5\chi\choose m+(2n-5\chi)-t}
$$
be the expected number of $F'\in\mathcal{F}^*_n(\overrightarrow{D})$ such that $F'$ belongs to a uniformly random subset of ${[n]\choose 2}\setminus(D_1\cup\ldots\cup D_{\chi})$ of size $m+(2n-5\chi)-t$. Notice that $M(t)$ is actually large for every $t$:
\begin{align*}
 M & =
 \frac{n!}{e^{O(\sqrt{n}w)}}\frac{(m-t+(2n-5\chi))^{m-t+(2n-5\chi)}\left(N-2n\right)^{N-2n}}{(m-t)^{m-t}{(N-5\chi)}^{N-5\chi}} \\
 & =
 \frac{n^n}{e^{n+O(\sqrt{n}w)}}\left(\frac{m-t}{N-5\chi}\right)^{2n-5\chi}
 \left(1-\frac{2n-5\chi}{m+(2n-5\chi)-t}\right)^{-m-(2n-5\chi)+t}\left(1+\frac{2n-5\chi}{N-2n}\right)^{-N+2n}\\
  &=
 \frac{(1+\varepsilon)^{2n}}{e^{O(\sqrt{n}w)}}\left(\frac{m-t}{m}\cdot\frac{N}{N-5d}\right)^{2n}e^{\Omega(\sqrt{n})}=(1+\varepsilon)^{(2-o(1))n}.
\end{align*}

We claim that $|\mathcal{M}(F,\mathbf{W})|=\Omega_p(M)$ and that typically most $(F,\mathbf{W})$-fragments have size $O(\sqrt{n})$. Although the conclusion of Lemma~\ref{lm:not_bad} is sufficient for our goals, below we state its stronger version which holds for symmetric families $\mathcal{F}_n$ (i.e. when all graphs in the family are isomorphic). In this version, we replace a random element of $\mathcal{F}^*_n(\overrightarrow{D})$ with any fixed element. Since the proofs are almost identical, we postpone them to Appendix~\ref{sc:appendix_claims}.

\begin{claim}
\label{cl:many_planted_copies}
Let $F\in\mathcal{F}^*_n(\overrightarrow{D})$ and let $\mathbf{W}$ be a uniformly random $m$-element subset of ${[n]\choose 2}\setminus(D_1\cup\ldots\cup D_{\chi})$. Let $\delta(n)\to 0$ as $n\to\infty$. Then, for every $t\in\{0,1,\ldots,2n-5\chi\}$,
$$
 \mathbb{P}\left(|\mathcal{M}(F,\mathbf{W})|<\delta(n)M \mid | F\cap\mathbf{W}|=t\right)\leq\delta(n).
$$ 
\end{claim}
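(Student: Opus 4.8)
\textbf{Proof plan for Claim~\ref{cl:many_planted_copies}.}

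The plan is to follow the same counting argument used inside the proof of Lemma~\ref{lm:not_bad}, specialised to the symmetric family $\mathcal{F}^*_n(\overrightarrow{D})$ with a \emph{fixed} root graph $F$ rather than a random one. Let $q:=|E(F')|=2n-5\chi$ be the common number of edges of every element of $\mathcal{F}^*_n(\overrightarrow{D})$, and fix $t\in\{0,1,\dots,q\}$. The key observation is the same double-counting bound as in the lemma: any pair $(F',W)$ with $F'\in\mathcal{F}^*_n(\overrightarrow{D})$, $W$ an $m$-subset of $\binom{[n]}{2}\setminus(D_1\cup\dots\cup D_\chi)$, $|F'\cap W|=t$, and $|\mathcal{M}(F',W)|<\delta(n)M(t)$ can be produced by first choosing a set $A\subseteq\binom{[n]}{2}\setminus(D_1\cup\dots\cup D_\chi)$ of size $m+q-t$ (to play the role of $F'\cup W$), then choosing a copy $F''\subseteq A$ with $F''\in\mathcal{F}^*_n(\overrightarrow{D})$ in fewer than $\delta(n)M(t)$ ways (since $|\mathcal{M}(F'',A\setminus F'')|<\delta(n)M(t)$, but $F'\in\mathcal{M}(F'',\cdot)$ — here $F''$ plays the role of the ``$F$'' in the definition of $\mathcal M$), and finally choosing the intersection $F''\cap W$ of size $t$ in $\binom{q}{t}$ ways. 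This gives at most $\binom{N-5\chi}{m+q-t}\cdot\delta(n)M(t)\cdot\binom{q}{t}$ such pairs, where $N-5\chi=|\binom{[n]}{2}\setminus(D_1\cup\dots\cup D_\chi)|$.

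First I would set up the conditional probability precisely. Conditioning on $|F\cap\mathbf{W}|=t$, the number of pairs $(F,W)$ with $|F\cap W|=t$ is $\binom{N-2n}{m-t}\binom{q}{t}$ up to the obvious reindexing (choose which $t$ of the $q$ edges of $F$ land in $W$, then choose the remaining $m-t$ edges of $W$ from the $N-5\chi-q=N-2n$ non-edges of $F$ in the ground set). Dividing the bound from the previous paragraph by this count yields
\[
\mathbb{P}\bigl(|\mathcal{M}(F,\mathbf{W})|<\delta(n)M(t)\mid |F\cap\mathbf{W}|=t\bigr)\le \frac{\delta(n)M(t)\binom{N-5\chi}{m+q-t}\binom{q}{t}}{\binom{N-2n}{m-t}\binom{q}{t}}=\delta(n),
\]
where the last equality is exactly the definition of $M(t)$: $M(t)=|\mathcal{F}^*_n(\overrightarrow{D})|\binom{N-2n}{m-t}/\binom{N-5\chi}{m+q-t}$, and the factor $|\mathcal{F}^*_n(\overrightarrow{D})|$ cancels because in the counting step the choice of $F''$ ranges over at most $\delta(n)M(t)$ options \emph{independent of} the cardinality $|\mathcal{F}^*_n(\overrightarrow{D})|$ — in fact one checks that the right normalisation makes the $|\mathcal{F}^*_n(\overrightarrow{D})|$ appear symmetrically. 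I would write out this cancellation carefully, mirroring the display in the proof of Lemma~\ref{lm:not_bad} where the analogous simplification $\frac{\delta_n M(t)\binom{N}{m+f-t}\binom{f}{t}}{|\mathcal{F}_n|\binom{N-f}{m-t}\binom{f}{t}}=\delta_n$ occurs.

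The only real subtlety — and the step I would treat most carefully — is bookkeeping the ground set: everything lives in $\binom{[n]}{2}\setminus(D_1\cup\dots\cup D_\chi)$ rather than in $\binom{[n]}{2}$, because we have already conditioned on $\mathbf{W}$ avoiding the fixed diamonds, and the edges of the diamonds are shared by \emph{all} members of $\mathcal{F}^*_n(\overrightarrow{D})$ (indeed removed from them). So $N$ must be consistently replaced by $N-5\chi$, the common edge count by $q=2n-5\chi$, and the non-edge count of a fixed $F\in\mathcal{F}^*_n(\overrightarrow{D})$ by $N-5\chi-q=N-2n$. Once this substitution is done uniformly the combinatorial identity is formally identical to the one in Lemma~\ref{lm:not_bad}, and no asymptotic estimate is needed — the bound is an exact equality $=\delta(n)$, giving in particular the stated inequality $\le\delta(n)$. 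I expect no genuine obstacle beyond this indexing care, which is why the excerpt defers the proof to the appendix as ``almost identical''.
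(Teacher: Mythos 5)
Your overall plan — the same double-counting over pairs $(F',W)$ as in the proof of Lemma~\ref{lm:not_bad} — is indeed the argument the paper uses. However, the final displayed bound is off by a factor of $|\mathcal{F}^*_n(\overrightarrow{D})|$, and the reason traces to a step your proposal glosses over.

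The double count in your first paragraph bounds the number of \emph{pairs} $(F',W)$ — with $F'$ ranging over all of $\mathcal{F}^*_n(\overrightarrow{D})$ — that are bad with intersection $t$. But the probability you then compute is over $\mathbf{W}$ only, for a single fixed $F$; its denominator is the number of admissible $W$, namely $\binom{q}{t}\binom{N-2n}{m-t}$, with no $|\mathcal{F}^*_n(\overrightarrow{D})|$. Plugging in $M(t)=|\mathcal{F}^*_n(\overrightarrow{D})|\binom{N-2n}{m-t}/\binom{N-5\chi}{m+q-t}$, your display evaluates to $\delta(n)\,|\mathcal{F}^*_n(\overrightarrow{D})|$, not $\delta(n)$. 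The sentence explaining why $|\mathcal{F}^*_n(\overrightarrow{D})|$ ``cancels'' is not coherent as stated: $\delta(n)M(t)$ is emphatically \emph{not} independent of $|\mathcal{F}^*_n(\overrightarrow{D})|$ — the factor $|\mathcal{F}^*_n(\overrightarrow{D})|$ is built into the definition of $M(t)$.

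The missing ingredient is the symmetrization step the paper carries out explicitly. Introduce a uniformly random $\mathbf{F}\in\mathcal{F}^*_n(\overrightarrow{D})$ independent of $\mathbf{W}$, and observe that by symmetry (every $F\in\mathcal{F}^*_n(\overrightarrow{D})$ looks identical once the shared diamonds are removed) the conditional distribution of $\mathbf{F}$ given $|\mathbf{F}\cap\mathbf{W}|=t$ is uniform, and $\mathbb{P}\left(|\mathcal{M}(F,\mathbf{W})|<\delta(n)M\mid|F\cap\mathbf{W}|=t\right)$ is the same for every $F$. Hence the fixed-$F$ probability equals
$\mathbb{P}\left(|\mathcal{M}(\mathbf{F},\mathbf{W})|<\delta(n)M\mid|\mathbf{F}\cap\mathbf{W}|=t\right)$,
and \emph{this} averaged probability is bounded by your double count, because now the sample space has $|\mathcal{F}^*_n(\overrightarrow{D})|\binom{q}{t}\binom{N-2n}{m-t}$ equally likely outcomes, which supplies exactly the factor of $|\mathcal{F}^*_n(\overrightarrow{D})|$ needed in the denominator for the cancellation with $M(t)$ to yield $\delta(n)$. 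Without stating this exchange of a fixed $F$ for a random $\mathbf{F}$, the computation does not close.
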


Let $C>0$ be large enough. Set $\ell_0=\lfloor C\sqrt{n}\rfloor$. Let $\mathcal{B}$ be the property of graphs on $n$ vertices to have more than $\ell_0$ edges. We claim that~\eqref{eq:main_lm} holds.
\begin{claim}Sample a uniformly random $\mathbf{F}\in\mathcal{F}^*_n(\overrightarrow{D})$. For $\ell\in\mathbb{N}$, let 
\begin{equation}
\Pi_{\ell}:=\Prob(F\cap \mathbf{F}\text { has }\ell\text{ edges}).
\label{eq:pi_ell_definition}
\end{equation}
Then
\begin{equation}
\sum_{\ell\geq \ell_0+1}\Pi_{\ell}\cdot e^{-\frac{4n^2}{m}}\left((1+o(1))\frac{N}{m}\right)^{\ell}=\exp(-\Omega(\sqrt{n})).
\label{eq:lm}
\end{equation}
\label{cl:lm:main}
\end{claim}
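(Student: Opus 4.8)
The plan is to verify the hypothesis of Lemma~\ref{lm:not_bad}: the inequality \eqref{eq:lm} is precisely its condition \eqref{eq:main_lm} for the planted family $\mathcal{F}^*_n(\overrightarrow D)$ (with $f=2n-5\chi$, so that $f^2/m=(1+o(1))4n^2/m$ and $(1+3f/m)\tfrac Nm=(1+o(1))\tfrac Nm$, and we aim for the stronger bound $o(1/n)$ rather than $\delta_n^3$). By the symmetry of $\mathcal{F}^*_n(\overrightarrow D)$ the quantity $\Pi_\ell$ does not depend on the fixed $F$, so I fix one and write $\Pi_\ell=\sum_{x,c}p(\ell,x,c)$, where $p(\ell,x,c)$ is the probability that $F\cap\mathbf F$ has $\ell$ edges, $x$ non-isolated vertices and $c$ components. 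Since $F\cap\mathbf F$ is a subgraph of $F\setminus(D_1\cup\ldots\cup D_\chi)\subseteq F$, and the square of an $n$-cycle is $4$-regular and locally sparse (every non-trivial subgraph has edge boundary at least $6=\Delta$), the number of candidate subgraphs with these parameters is at most the bound of Claim~\ref{cl:main}, namely $\binom nc\binom xc e^{O(c+\sigma)}\max_{o\le(\Delta+2)\sigma}\binom xo$, where $\sigma=2x-\ell-3c\ge 0$ is the excess.

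For the second ingredient --- a bound on $\Prob(J\subseteq\mathbf F)$ for a fixed such $J$ --- the key structural fact is that every closed subgraph of the square of a cycle is a second power of a path (or a triangle, or a single edge): an arc of the cyclic order. Hence in any $F'\in\mathcal{F}^*_n(\overrightarrow D)$ every component of $J$ occurs only as such an arc (with at most a few prescribed ``defects'' where it meets a diamond), its internal layout being determined up to reversal. Counting $F'\supseteq J$ directly \emph{inside} $\mathcal{F}^*_n(\overrightarrow D)$ --- this is essential, since passing to the unrestricted copy-family $\mathcal{F}_n$ would inflate the probabilities by $n!/(n-4\chi)!=e^{\Theta(w\sqrt n)}$ --- by distributing the components of $J$ and the remaining free vertices among the $\chi$ prescribed gaps (and exploring each non-closed component as in the proof of the second part of Claim~\ref{cl:main}), and dividing by $|\mathcal{F}^*_n(\overrightarrow D)|=(n-4\chi)!$, one gets, with $x-c=(\ell+c+\sigma)/2$ and $\chi=o(n)$ used to absorb the $n-4\chi$ versus $n$ discrepancy into the $e^{O(\chi)}$ factor,
$$\Prob(J\subseteq\mathbf F)\le\binom xo\,e^{O(c+\sigma)+O(\chi)+O((x-c)^2/n)}\,n^{-(x-c)},$$
valid provided no component of $J$ spans a whole gap between consecutive diamonds (the complementary case is discussed in the last paragraph).

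Combining the two estimates, substituting $N/m=(1+o(1))\sqrt{n/e}/(1+\varepsilon)$ and $4n^2/m=\Theta(\sqrt n)$, and collecting powers of $n$ via $\binom nc\le n^c/c!$ and $n^c\cdot n^{-(x-c)}\cdot(\sqrt n)^\ell=n^{(c-\sigma)/2}$, I obtain
$$p(\ell,x,c)\,e^{-\tfrac{4n^2}{m}}\left((1+o(1))\tfrac Nm\right)^\ell\le\frac{(C'\sqrt n)^c}{c!}\,n^{-\sigma/2}\,\binom xc\binom xo^2\,e^{O(\sigma)+O((x-c)^2/n)+o(\sqrt n)}\left(\frac{1+o(1)}{(1+\varepsilon)\sqrt e}\right)^\ell$$
for an absolute constant $C'$, where $o\le(\Delta+2)\sigma$ is the maximiser and the factor $e^{-4n^2/m}=e^{-\Theta(\sqrt n)}$ only helps. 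Now I would run the routine case split by the relative sizes of $\sigma$, $c$ and $x$, exactly as in the proof of Claim~\ref{cl:sharp_day_1}: $n^{-\sigma/2}$ absorbs all $\sigma$-dependent terms when $\sigma\ge\varepsilon'x$; the binomials and $e^{O((x-c)^2/n)}$ are only $e^{o(\ell)}$ when $\sigma,c<\varepsilon'x$ and $x<\varepsilon'n$; the factor $(C'\sqrt n)^c/c!$ (equivalently $n^{-(x-c)}$ with $x-c=\Theta(x)$) wins when $c\ge\varepsilon'x$; and the range $\ell$ close to $2n$ (where $e^{O((x-c)^2/n)}$ may be as large as $e^{\Theta(n)}$) is crushed by $1/c!$, equivalently by $n^{-(x-c)}$ with $x-c=\Theta(n)$. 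In each case the bound takes the form $e^{o(\sqrt n)}\rho^\ell$ with $\rho=\rho(\varepsilon)<1$; summing over $x,c$ and over $\ell\ge\ell_0=\lfloor C\sqrt n\rfloor$, and choosing the constant $C$ large enough that $C\log(1/\rho)$ beats the accumulated $o(\sqrt n)$-terms, yields $e^{-\Omega(\sqrt n)}=o(1/n)$, which is \eqref{eq:lm}.

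The one genuinely delicate point is the bookkeeping forced by the planted diamonds. One must carry out the extension count inside $\mathcal{F}^*_n(\overrightarrow D)$ itself, and one must control the contribution of the at most $4\chi$ components of $J$ that meet the diamonds. When these components are ``cheap'' (bounded arcs hanging off the diamonds) they only add an $e^{O(\chi)}=e^{o(\sqrt n)}$ slack, which is harmless because $\chi=o(\sqrt n)$: the constraint $\ell\ge\ell_0$ already forces the diamond-free part of $J$ to carry at least $(1-o(1))C\sqrt n$ edges, and the resulting decay $e^{-\Omega(C\sqrt n)}$ swallows the slack. When such a component is ``expensive'' --- spanning whole gaps, which would blow the slack up to $n^{O(\chi)}=e^{O(w\sqrt n)}$ --- it necessarily carries $\Omega(\sqrt n\log n/w)$ extra edges (each gap has $\approx\sqrt n\log n/w$ vertices), so the $e^{-\ell/2}$ factor dominates it provided $w$ grows slowly enough that $w^2=o(\log n)$ (e.g. $w=\log\log n$). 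Once these two observations are in place the planted structure plays no further role, and the remainder is the standard subgraph-counting of Section~\ref{sc:spread}.
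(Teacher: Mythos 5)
You have correctly identified the overall strategy: fix $F\in\mathcal{F}^*_n(\overrightarrow D)$ by symmetry, split $\Pi_\ell$ by the number $x$ of vertices and $c$ of components, bound the subgraph count by a Claim~\ref{cl:main}-type estimate, bound the embedding count \emph{inside} $\mathcal{F}^*_n(\overrightarrow D)$ (you rightly flag that passing to $\mathcal{F}_n$ would ruin the bound), assemble the pieces, and run the same case split as Claim~\ref{cl:sharp_day_1}. This is the route the paper takes; the paper packages the two counting estimates into Claim~\ref{cl:main_square}, a diamond-aware variant of Claim~\ref{cl:main}, and then works with $(\mathbf F\cap F)\cup(D_1\cup\ldots\cup D_\chi)$ rather than $\mathbf F\cap F$ directly, but this is a cosmetic difference.

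There is, however, a concrete error in your treatment of the diamond bookkeeping. You split the diamond-adjacent components of $J$ into ``cheap'' (bounded, contributing $e^{O(\chi)}$) and ``expensive'' (gap-spanning, with claimed slack $n^{O(\chi)}$), and deduce that the expensive case forces $w^2=o(\log n)$. Two things go wrong here. First, the dichotomy is not exhaustive: a diamond-adjacent component of intermediate size (say $\Theta(\log^{10}n)$ vertices) is neither bounded nor gap-spanning, and its positional slack is $x_i$, not $O(1)$. Second, the slack for the expensive case is $n^{O(k)}$ where $k$ is the number of gap-spanning components, not $n^{O(\chi)}$; since each such component costs $\Omega(n/\chi)=\Omega(\sqrt n\log n/w)$ edges, the slack per unit edge is $O(\log n)/(n/\chi)=o(1)$, and the geometric decay in $\ell$ wins for any $w\ll\sqrt n$. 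The resulting condition $w^2=o(\log n)$ is not only unnecessary, it is incompatible with the later requirement $\frac{\log n}{\log\log\log n}\ll w\ll\log n$ (imposed in Section~\ref{sc:day_2} to obtain \eqref{eq:sigma_lower_2d-power_refinement_lnlnln}), so if this restriction were real the proof of Theorem~\ref{th:second_power} would collapse. The paper sidesteps the case split entirely: the positions of the $\chi$ diamond components contribute $\prod_i x_i\le(x/\chi)^\chi$ by the AM--GM inequality (see the proof of Claim~\ref{cl:main_square}), and this is $e^{o(x)}=e^{o(\ell)}$ uniformly in $x$ as soon as $x/\chi=\Omega(\ln n/w)\to\infty$, i.e.\ for every $w=o(\log n)$. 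You should replace the cheap/expensive dichotomy with this single estimate.
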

The proof of Claim~\ref{cl:lm:main} relies on estimations similar to those provided in Claim~\ref{cl:main} (see Claim~\ref{cl:main_square} in Section~\ref{sc:lm:main_proof}). However, we cannot apply Claim~\ref{cl:main} here directly because of the fixed diamonds, that slightly affect our calculations. The proof occupies  Section~\ref{sc:lm:main_proof}. Despite obvious similarities and repetitions, we decided to keep the entire proof of Claim~\ref{cl:main_square} to make the proof of Theorem~\ref{th:square} self-contained.

For $\ell\in\mathbb{N}$, let $\mathcal{M}_{\ell}(F,W)$ be the set of all $F'\in\mathcal{M}(F,W)$ such that $|F'\cap F|>\ell$. Let us say that $(F,W)$ is {\it $\ell$-bad}, if $|\mathcal{M}_\ell(F,W)|>\frac{1}{\sqrt{n}}|\mathcal{M}(F,W)|$. In a similar way as in the proof of Lemma~\ref{lm:not_bad}, we derive from Claim~\ref{cl:lm:main} that $(F,\mathbf{W})$ is not $\ell_0$-bad whp. 
\begin{claim}
Let $F\in\mathcal{F}^*_n(\overrightarrow{D})$ and let $\mathbf{W}\subset{[n]\choose 2}\setminus(D_1\cup\ldots\cup D_{\chi})$ be a uniformly random $m$-subset. Then $(F,\mathbf{W})$ is not $\ell_0$-bad with probability at least $1-\exp(-\Omega(\sqrt{n}))$.
\label{cl:not_bad}
\end{claim}
That means that whp we may replace most of $F\in\mathcal{F}_n(\overrightarrow{D})$ with fragments of sizes at most $\ell_0$ (and each fragment contains all $D_i(F)$). The proof of Claim~\ref{cl:not_bad} is presented in Appendix~\ref{sc:appendix_claims}.

Let $Y_{\overrightarrow{D}}$ be the number of $F\in\mathcal{F}_n(\overrightarrow{D})$ that have an $(F,\mathbf{W})$-fragment of size at most $\ell_0+5\chi$. From Claim~\ref{cl:many_planted_copies} and Claim~\ref{cl:not_bad}, we get that $\E Y_{\overrightarrow{D}}=|\mathcal{F}_n(\overrightarrow{D})|(1-o(1))$. By Markov's inequality, for every $\delta>0$,
\begin{align}
 \Prob\left(Y_{\overrightarrow{D}}<(1-\delta)|\mathcal{F}_n(\overrightarrow{D})|\right) &=\Prob\left(|\mathcal{F}_n(\overrightarrow{D})|-Y_{\overrightarrow{D}}>\delta|\mathcal{F}_n(\overrightarrow{D})|\right)\leq\frac{\mathbb{E}\left(|\mathcal{F}_n(\overrightarrow{D})|-Y_{\overrightarrow{D}}\right)}{\delta|\mathcal{F}_n(\overrightarrow{D})|}=o(1).
\label{eq:expect_Y_partial}
\end{align}
Therefore, whp $Y_{\overrightarrow{D}}\geq(1-o(1))|\mathcal{F}_n(\overrightarrow{D})|$.

\subsection{Improving fragments}
\label{sc:improvement}

For a {\it typical} $F\in\mathcal{F}_n(\overrightarrow{D})$, let $H=H(F)=\mathbf{F}'\cap F$ be an $(F,\mathbf{W})$-fragment, where $\mathbf{F}'$ is sampled according to a certain distribution $\mathcal{Q}_{F,\mathbf{W}}$ over $\mathcal{F}_n(\overrightarrow{D})$ that will be defined later in Section~\ref{sc:distribution_of_fragments}. We will need this specific distribution, rather than the uniform distribution over all fragments, in order to prove that, on the last day, at least one fragment can be covered whp (see Section~\ref{sc:day_3}). In particular, this distribution has the following property: whp $|\mathbf{F'}\cap F|\leq \ell_0+5\chi$, where $\mathbf{F}'\sim\mathcal{Q}_{F,\mathbf{W}}$.
   In this section, we only need this particular property and do not need any other details about the distribution $\mathcal{Q}_{F,W}$. The definition of $\mathcal{Q}_{F,W}$ is quite cumbersome, thus we delay it, so that we do not interrupt the flow of this section.  
 Therefore, we may assume that $H$ has size at most $\ell_0+5\chi$, for almost all pairs $(F,W)$.

Recall that all graphs from $\mathcal{F}_n(\overrightarrow{D})$ contain every $D_i$, $i\in[\chi]$. Since $H=F\cap F'$ for some $F'\in\mathcal{F}_n(\overrightarrow{D})$, we get that every $D_i$ is a subgraph of $H$ as well. Moreover, since the distance in $F$ between any two $D_i$ is bigger than $2C\sqrt{n}$, we get that the connected components of $H$ that contain different $D_i$ are disjoint. Consider all inclusion-maximal closed subgraphs in $H$ with at least $10C\ln n/w$ vertices. 
   For each such square of a path $v_1\ldots v_h$, we ``cut'' it into pieces of length $\mu:=\left\lfloor10 C\ln n/w\right\rfloor$ in the following way: $v_1v_2$; $v_3\ldots v_{2+\mu}$; $v_{3+\mu}\ldots v_{2+2\mu}$; $\ldots$. We let the last piece $v_{3+j\mu}v_{4+j\mu}\ldots$ to be of length at least 2 and at most $\mu+1$. 
  From every such inclusion-maximal closed subgraph, remove all pieces (i.e. all the listed vertices as well as all the edges that contain at least one of them) but the first and the last one and glue the first and the last so that they make up the square of a path $(v_1v_2 v_{3+j\mu}v_{4+j\mu}\ldots)$. Let $\mathcal{P}_H=\{P_1,\ldots,P_s\}$ be the set of all pieces of length $\mu$ that we have removed ($P_1,\ldots,P_s$ respect the order $\pi_F$ induced by $F$). Clearly, the number of pieces $s$ is less than $2\ell_0/\mu<\chi/3$, where $\chi$ is defined in~\eqref{eq:d-def}. 

  We then choose different $j_1,\ldots,j_s\in[\chi]$, according to some rule, that will be explained later. In particular, any $D_{j_i}$ and any removed piece do not belong to the same connected component of the fragment. 
   For every $i\in[s]$, we insert the piece $P_i=(v^i_1\ldots v^i_{\mu})$ between the first two vertices $u_1^{j_i},u_2^{j_i}$ and the last two vertices $u_3^{j_i},u_4^{j_i}$ of the diamond $D_{j_i}=(u_1^{j_i} u_2^{j_i} u_3^{j_i} u_4^{j_i})$ making up the square of a path $(u_1^{j_i} u_2^{j_i} v^i_1 \ldots v^i_{\mu} u_3^{j_i} u_4^{j_i})$. We get a {\it smoothed fragment} $H'$ with exactly the same number of edges as $H$ has (see Figure~\ref{fig:smooth}). The crucial observation is that $H'\cup\mathbf{W}$ still has a subgraph from $\mathcal{F}_n$. Moreover, this  smoothed fragment satisfies the important `smoothness' requirement: all powers of paths that it contains have length $o(\ln n)$. 
   
\begin{figure}[h]
	\begin{center}
		\includegraphics[scale=.75]{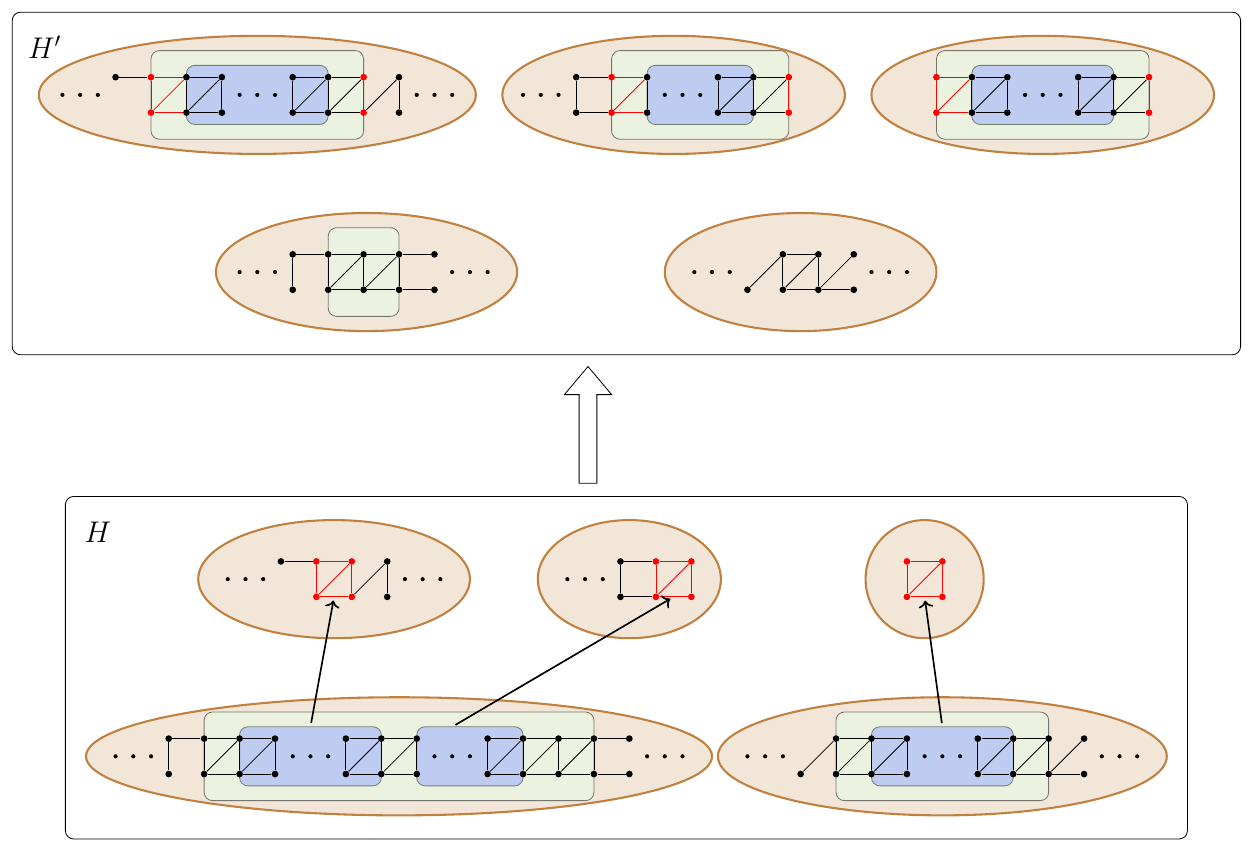}
	\end{center}
\caption{Construction of a smoothed fragment $H'$: Connected components of $H$ and $H'$ are represented by brown ovals; green squares indicate closed subgraphs with at least 6 vertices; blue squares represent pieces that were moved ``inside'' red diamonds.}\label{fig:smooth}
\end{figure}

 So, this modification of fragments allows to avoid problematic large closed subgraphs. However, it may reduce the total number of {\it different} fragments, that will eventually affect moments calculation. Let $\mathcal{H}_n=\left\{H(F), \, F\in\mathcal{F}_n(\overrightarrow{D})\right\}$ be the ($\mathbf{W}$-random) multiset of original $(F,\mathbf{W})$-fragments (we choose one fragment per almost every $F$)\footnote{More precisely, the distribution of this multiset is induced by the product measure $\times_{F}\mathcal{Q}_{F,\mathbf{W}}$.}, and let $\mathcal{H}'_n$ be the set of smoothed $(F,\mathbf{W})$-fragments. Although each $H\in\mathcal{H}_n$ maps to its smoothed version $H'\in\mathcal{H}'_n$, this correspondence $\varphi$ is not necessarily injective. Luckily, every pre-image is not large if we choose the rule of how $P_i$ are matched to $D_{j_i}$ {\it randomly}. We now explain how this matching is chosen. Consider a binomial random bipartite graph $\mathbf{B}$ with parts $V=[\chi]$ and $U=[n]$ and with edge probability $\beta:=n^{-1/3}/w$. 
  Let $V'\subset V$ be the set of all indices of diamonds that do not belong to a component of $H$ that contains some $P_i$. We know that $|V'|\geq\frac{2}{3}|V|$. Then, we insert every $P_i=(v^i_1\ldots v^i_{\mu})$ into some $D_{j_i}$ so that
\begin{itemize}
\item $j_i\in V'$, i.e. $D_{j_i}$ does not belong to a connected component that also contains a removed piece,
\item there exists an edge between $j_i\in V$ and $\pi_F(v^i_1)\in U$ in $\mathbf{B}$ (recall that $v^i_1$ is the first vertex of $P_i$).
\end{itemize}
Clearly, to make it possible for all $P_i$ simultaneously, we need the graph $\mathbf{B}[V'\cup\{\pi_F(v^i_1),\,i\in[s]\}]$ to have a matching that covers all vertices $\pi_F(v^i_1),\,i\in[s]$ (recall that $s<2\chi/3\leq|V'|$). This is typically possible due to the following claim.
 
\begin{claim}
Let $(\mathbf{F},\mathbf{W},\mathbf{F}')$ be a random vector independent of $\mathbf{B}$, where 
\begin{itemize}
\item $\mathbf{F}$ is a uniformly random element of $\mathcal{F}_n(\overrightarrow{D})$, 
\item $\mathbf{W}$ is a uniformly random graph from ${{[n]\choose 2}\setminus(D_1\cup\ldots\cup D_{\chi})\choose m}$ chosen independently of $\mathbf{F}$, and 
\item $\mathbf{F}'\sim\mathcal{Q}_{\mathbf{F},\mathbf{W}}$ is a random subgraph of $\mathbf{F}\cup\mathbf{W}$ that belongs to $\mathcal{F}_n(\overrightarrow{D})$.
\end{itemize}
Consider the fragment $H:=\mathbf{F}\cap\mathbf{F}'$, the respective pieces $P_i=(v^i_1\ldots v^i_{\mu})$, $i\in[s]$, and the set $V'\subset V$, defined above. Then whp the graph $\mathbf{B}[V'\cup\{\pi_{\mathbf{F}}(v^i_1),\,i\in[s]\}]$ has a matching that covers all vertices $\pi_{\mathbf{F}}(v^i_1),\,i\in[s]$.
\label{cl:random-bipartite}
\end{claim}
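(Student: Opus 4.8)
The plan is to condition on the triple $(\mathbf{F},\mathbf{W},\mathbf{F}')$ and then exploit that $\mathbf{B}$ is sampled independently of it. Conditioning fixes the number of pieces $s$, the set $S:=\{\pi_{\mathbf{F}}(v^i_1):i\in[s]\}\subseteq U$, and the set $V'\subseteq V$; here $|S|=s$ because the pieces $P_i$ are pairwise vertex-disjoint and $\pi_{\mathbf{F}}$ is a bijection, and on the whp event from~\eqref{eq:expect_Y_partial} that $\mathbf{F}$ has an $(\mathbf{F},\mathbf{W})$-fragment of size at most $\ell_0+5\chi$ one has, as recorded above, $s<2\ell_0/\mu<\chi/3$ and $|V'|\ge\tfrac{2}{3}|V|=\tfrac{2\chi}{3}$. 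Since $\mathbf{B}$ is independent of $(\mathbf{F},\mathbf{W},\mathbf{F}')$, it therefore suffices to prove that for \emph{any} disjoint $S\subseteq U$ and $V'\subseteq V$ with $1\le|S|<\chi/3$ and $|V'|\ge 2\chi/3$, the probability that the bipartite graph induced by $\mathbf{B}$ on $S\cup V'$ (a random bipartite graph with parts $S,V'$ and edge probability $\beta$) has no matching saturating $S$ is $o(1)$, uniformly over such $S,V'$ (the case $s=0$ being trivial). Taking the expectation over $(\mathbf{F},\mathbf{W},\mathbf{F}')$ and adding the $o(1)$ probability of the bad event will then give the claim.

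For the core estimate I would use Hall's theorem together with a union bound. Each of the $|S|\cdot|V'|$ potential edges is present independently with probability $\beta=n^{-1/3}/w$; a matching saturating $S$ fails to exist precisely when some $T\subseteq S$ has fewer than $|T|$ neighbours in $V'$, so, bounding the number of choices of $T$ of size $k$ and of a $(k-1)$-element superset of $N(T)\cap V'$ inside $V'$,
\[
\Prob(\text{no matching saturating }S)\ \le\ \sum_{k=1}^{|S|}\binom{|S|}{k}\binom{|V'|}{k-1}(1-\beta)^{k(|V'|-k+1)}.
\]
Using $|S|,|V'|\le\chi$ and, crucially, $|V'|-k+1\ge\tfrac{2\chi}{3}-\tfrac{\chi}{3}=\tfrac{\chi}{3}$ for every $k\le|S|<\chi/3$, every summand is at most $\chi^{2k}e^{-\beta k\chi/3}=\bigl(\chi^2e^{-\beta\chi/3}\bigr)^{k}$, so the whole sum is at most $\sum_{k\ge1}\bigl(\chi^2e^{-\beta\chi/3}\bigr)^{k}$.

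It then remains to check that $q:=\chi^2e^{-\beta\chi/3}=o(1)$ with room to spare, so that $\sum_{k\ge1}q^k\le 2q=o(1)$. This is where the choice of $\beta$ pays off: $\chi\le n$ gives $\chi^2\le n^2$, while $\beta\chi=\tfrac{n^{-1/3}}{w}\bigl\lfloor\tfrac{w\sqrt n}{\ln n}\bigr\rfloor\ge\tfrac12\cdot\tfrac{n^{1/6}}{\ln n}$ for all large $n$ (note that the factors $w$ cancel), so $q\le n^2\exp\!\bigl(-\tfrac16 n^{1/6}/\ln n\bigr)=n^{-\omega(1)}$. I do not expect a real obstacle here: the argument is a routine Hall/union-bound computation once independence is used to decouple $\mathbf{B}$ from the fragment. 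The only points that need a little care are (i) the uniformity over the admissible pairs $(S,V')$, which is automatic since $\mathbf{B}$ is independent of $(\mathbf{F},\mathbf{W},\mathbf{F}')$ and the bound above depends on $S,V'$ only through their sizes, and (ii) verifying that $\beta\chi$ grows like a fixed positive power of $n$ in spite of the $w$ and $\ln n$ factors, which is exactly what makes the geometric sum collapse to $o(1)$.
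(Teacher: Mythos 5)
Your proposal is correct and follows essentially the same route as the paper: condition on $(\mathbf{F},\mathbf{W},\mathbf{F}')$, use its independence from $\mathbf{B}$ to reduce to a deterministic pair $(S,V')$ of sizes $s<\chi/3$ and $\geq 2\chi/3$, and then show that whp $\mathbf{B}$ restricted to $S\cup V'$ has a matching saturating $S$. The only difference is in the last step: the paper embeds $S$ into a set $U'$ of size $|V'|$ and invokes the known theorem on perfect matchings in balanced random bipartite graphs (\cite[Theorem~4.1]{Janson}), whereas you give a self-contained Hall-plus-union-bound computation; both yield the same $n^{-\omega(1)}$ failure probability, with $\beta\chi=\Theta(n^{1/6}/\ln n)$ being the crucial quantity in either case.
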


\begin{proof}
Let $\Gamma$ be the set of all $(F,W,F')$ such that 
\begin{enumerate}
\item $\mathbb{P}(\mathbf{F}=F,\mathbf{W}=W,\mathbf{F}'=F')>0$;
\item $|F\cap F'|\leq\ell_0+5\chi$. 
\end{enumerate}
For $(F,W,F')\in\Gamma$ and a bipartite graph $B$  on $V\times U$, let us call the tuple $(B,F,W,F')$ {\it well-mixed}, if $B[V'\cup\{\pi_F(v^i_1),\,i\in[s]\}]$ has a matching that covers all vertices $\pi_F(v^i_1),\,i\in[s]$, where $v^i_1$ are the first vertices of the pieces of $H:=F\cap F'$. For a fixed set $U'\subset U$ of size exactly $|V'|$, whp there exists a perfect matching between $V'$ and $U'$ in $\mathbf{B}[V'\cup U']$ (see, e.g.,~\cite[Theorem 4.1]{Janson}). Therefore, recalling that the bound $|F\cap F'|\leq\ell_0+5\chi$ implies that the number of pieces that we moved in $H$ is less than $2\chi/3\leq |V'|$, we get that, for any {\it fixed} $(F,W,F')\in\Gamma$, whp $(\mathbf{B},F,W,F')$ is well-mixed. Also, due to Claim~\ref{cl:many_planted_copies}, Claim~\ref{cl:not_bad}, and the described property of the distribution $\mathcal{Q}_{F,W}$, whp $(\mathbf{F},\mathbf{W},\mathbf{F}')\in\Gamma$. Then,
\begin{align*}
 \mathbb{P} & ((\mathbf{B},\mathbf{F},\mathbf{W},\mathbf{F}')\text{ is well-mixed})\\
 &\geq\sum_{(F,W,F')\in\Gamma}
 \mathbb{P}((\mathbf{B},F,W,F')\text{ is well-mixed}\mid\mathbf{F}=F,\mathbf{W}=W,\mathbf{F}'=F')\\
 &\quad\quad\quad\quad\quad\quad\quad\quad\quad\quad\quad\quad
 \quad\quad\quad\quad\quad\quad
 \times\mathbb{P}(\mathbf{F}=F,\mathbf{W}=W,\mathbf{F}'=F')\\
 &=
 \sum_{(F,W,F')\in\Gamma}
 \mathbb{P}((\mathbf{B},F,W,F')\text{ is well-mixed})\times\mathbb{P}(\mathbf{F}=F,\mathbf{W}=W,\mathbf{F}'=F')\\
 &=(1-o(1))\mathbb{P}((\mathbf{F},\mathbf{W},\mathbf{F}')\in\Gamma)=1-o(1),
\end{align*}
completing the proof. 
 \end{proof}
From Claim~\ref{cl:random-bipartite}, we immediately get that there exists a bipartite graph $B$ on $V\times U$ so that every vertex in $V$ has degree $(1+o(1))n^{2/3}/w$ (due to the Chernoff bound, say) and that whp $B[V'\cup\{\pi_{\mathbf{F}}(v^i_1),\,i\in[s]\}]$ has a matching that covers all vertices $\pi_{\mathbf{F}}(v^i_1),\,i\in[s]$, that correspond to $H=\mathbf{F}\cap\mathbf{F}'$, where $(\mathbf{F},\mathbf{F}')$ is defined in the statement of the claim. {\it Fix such a bipartite graph~$B$.}
 
Having this rule of matching the diamonds with the pieces of closed subgraphs, we will prove in Section~\ref{sc:smoothed_reconstruction} that $\varphi$ is close enough to an injection. In what follows, we will consider only those $(F,W)$ that generate a fragment that satisfies the matching rule. Formally, for $F\in\mathcal{F}_n(\overrightarrow{D})$ and $W\in{{[n]\choose 2}\setminus(D_1\cup\ldots\cup D_{\chi})\choose m}$, we say that $F'\in\mathcal{F}_n(\overrightarrow{D})$ is {\it $(F,W)$-nice}, if
\begin{itemize}
\item $F'\subset F\cup W$,
\item $|F'\cap F|\leq\ell_0+5\chi$,
\item the graph $B[V\cup\{\pi_F(v^i_1),\,i\in[s]\}]$ has a matching that covers all vertices $\pi_F(v^i_1),\,i\in[s]$, defined by the fragment $H=F\cap F'$.
\end{itemize}
Let $\Sigma$ be the (random) set of all pairs $\left(F\in\mathcal{F}_n(\overrightarrow{D}),W\in{{[n]\choose 2}\setminus(D_1\cup\ldots\cup D_{\chi})\choose m}\right)$ such that the random fragment $\mathbf{F}'\sim\mathcal{Q}_{F,W}$ is $(F,W)$-nice, where all fragments are chosen independently. Due to the choice of $B$ and due to Claims~\ref{cl:many_planted_copies},~\ref{cl:not_bad},~and~\ref{cl:random-bipartite}, whp $(\mathbf{F},\mathbf{W})\in\Sigma$ (in the $(\mathbf{F},\mathbf{W},\mathbf{F}')$-measure). Indeed,
\begin{align}
\mathbb{P}((\mathbf{F},\mathbf{W})\in\Sigma)  =
\mathbb{P}(\mathbf{F}'\text{ is $(\mathbf{F},\mathbf{W})$-nice})=1-o(1),
\label{eq:most_pairs_are_nice}
\end{align}
where $(\mathbf{F},\mathbf{W},\mathbf{F}')$ is defined in the statement of Claim~\ref{cl:random-bipartite}.

We now notice that the there exists a natural bijection between {\it disjoint} families $\mathcal{F}_n(\overrightarrow{D})$, $\mathcal{F}_n(\overrightarrow{D'})$ for different tuples of diamonds $\overrightarrow{D}$ and $\overrightarrow{D'}$. In particular, for $Y:=\sum_{\overrightarrow{D}}Y_{\overrightarrow{D}}$\footnote{Recall that the random variable $Y_{\overrightarrow{D}}$ is defined in Section~\ref{sc:day_0} as the number of $F\in\mathcal{F}_n(\overrightarrow{D})$ that have an $(F,\mathbf{W})$-fragment of size at most $\ell_0+5\chi$.} we get that 
$$
 \mathbb{E}Y=\sum_{\overrightarrow{D}}\E Y_{\overrightarrow{D}}=(1-o(1))\sum_{\overrightarrow{D}}|\mathcal{F}_n(\overrightarrow{D})|=(1-o(1))|\mathcal{F}_n|.
$$
In the same way as in~\eqref{eq:expect_Y_partial}, we get that whp  $Y\geq(1-o(1))|\mathcal{F}_n|$. Since any $F\in\mathcal{F}_n$ belongs to $\mathcal{F}_n(\overrightarrow{D})$ for some uniquely defined $\overrightarrow{D}$, we get the definition of $\Sigma$ naturally extends to pairs $(F,W)$ for an arbitrary $F\in\mathcal{F}_n$. In particular, we require $(F,W)\in\Sigma$ to satisfy $W\cap(D_1\cup\ldots\cup D_{\chi})=\varnothing$, where $\overrightarrow{D}=\overrightarrow{D}(F)=(D_1,\ldots,D_{\chi})$ is defined by $F$. Let a uniformly random element $\mathbf{F}$ of $\mathcal{F}_n$,  uniformly random elements $\mathbf{F}_{\overrightarrow{D}}\in\mathcal{F}_n(\overrightarrow{D})$, a uniformly random $m$-subset $\mathbf{W}\in{{[n]\choose 2}\choose m}$, and uniformly random $m$-subsets $\mathbf{W}_{\overrightarrow{D}}\in{{[n]\choose 2}\setminus(D_1\cup\ldots\cup D_{\chi})\choose m}$  be chosen independently.  Due to~\eqref{eq:most_pairs_are_nice},
\begin{align*}
 \mathbb{P}((\mathbf{F},\mathbf{W})\in\Sigma) 
 &=\sum_{\overrightarrow{D}}\mathbb{P}((\mathbf{F},\mathbf{W})\in\Sigma\mid\mathbf{F}\in\mathcal{F}_n(\overrightarrow{D}))\cdot\mathbb{P}(\mathbf{F}\in\mathcal{F}_n(\overrightarrow{D}))\\
 &=\sum_{\overrightarrow{D}}\mathbb{P}((\mathbf{F}_{\overrightarrow{D}},\mathbf{W})\in\Sigma)\cdot\mathbb{P}(\mathbf{F}\in\mathcal{F}_n(\overrightarrow{D})).
\end{align*} 
Due to symmetry, for a fixed $\overrightarrow{D}$, we get
 \begin{align}
 \label{eq:Sigma_W}
 \mathbb{P}((\mathbf{F},\mathbf{W})\in\Sigma) 
 &=\mathbb{P}((\mathbf{F}_{\overrightarrow{D}},\mathbf{W})\in\Sigma,\mathbf{W}\cap(D_1\cup\ldots\cup D_{\chi})=\varnothing)\notag\\
    &=(1-o(1))\cdot\mathbb{P}((\mathbf{F}_{\overrightarrow{D}},\mathbf{W})\in\Sigma\mid\mathbf{W}\cap(D_1\cup\ldots\cup D_{\chi})=\varnothing)\notag\\
      &=(1-o(1))\cdot\mathbb{P}((\mathbf{F}_{\overrightarrow{D}},\mathbf{W}_{\overrightarrow{D}})\in\Sigma)\stackrel{\eqref{eq:most_pairs_are_nice}}=1-o(1).
\end{align}
In Section~\ref{sc:smoothed_reconstruction}, we will show that we may control the number of ``pre-images'' of any subgraph of a smoothed $(F,W)$-fragment for a $(F,W)\in\Sigma$ whp, in the product measure $\mathcal{Q}:=\times\mathcal{Q}_{F,W}$, where the product is over $F\in\mathcal{F}_n$ and $W\in{{[n]\choose 2}\choose m}$. In order to make it work, we first define the measure $\mathcal{Q}_{F,W}$ in Section~\ref{sc:distribution_of_fragments}.

\subsection{Distribution of fragments}
\label{sc:distribution_of_fragments}

Let $\delta<1/2$ and let $\chi'\leq n^{\delta}$ be a non-negative integer. Fix disjoint diamonds $D_1,\ldots,D_{\chi+\chi'}$ on $[n]$. Let $\overrightarrow{D}'=(D_1,\ldots,D_{\chi+\chi'})$ be the concatenation of $\overrightarrow{D}=(D_1,\ldots,D_{\chi})$ with $(D_{\chi+1},\ldots,D_{\chi+\chi'})$, where $\overrightarrow{D}$ is as in Section~\ref{sc:day_0}. We also fix some specific different positions $m_1,\ldots,m_{\chi'}\in[n]$ and denote $\overrightarrow{m}:=(m_1,\ldots,m_{\chi'})$. We then consider the family $\mathcal{F}^{m_1,\ldots,m_{\chi'}}_n(\overrightarrow{D}')\subset\mathcal{F}_n(\overrightarrow{D})$ of all $F\in\mathcal{F}_n(\overrightarrow{D})$ such that, for every $i\in[\chi']$,
\begin{enumerate}
\item the leftmost vertex of the diamond $D_{\chi+i}$ occupies the position $m_i$ in the ordering~$\pi_F$~of~$[n]$;
\item the diamond $D_{\chi+i}$ belongs to $F$ and the order of its vertices respects $\pi_F$.
\end{enumerate}
 For $W\in{{[n]\choose 2}\choose m}$, we call {\it a $(\overrightarrow{D}',\overrightarrow{m})$-grounded $(F,W)$-fragment} any graph $F'\cap F$ such that $F'\in\mathcal{F}^{m_1,\ldots,m_{\chi'}}_n(\overrightarrow{D})$ and $F'\subset F\cup W$. The proof of the following claim is identical to the proof of Claim~\ref{cl:not_bad}; therefore, we omit it.

\begin{claim}
Let $F\in\mathcal{F}^{m_1,\ldots,m_{\chi'}}_n(\overrightarrow{D}')$ and let $\mathbf{W}\subset{[n]\choose 2}$ be a uniformly random $m$-subset. Then, with probability at least $1-\exp(-\Omega(\sqrt{n}))$ in the $\mathbf{W}$-measure, we have that $|\mathbf{F}'\cap F|\leq\ell_0+5\chi$ whp in the $\mathbf{F}'$-measure, where $\mathbf{F}'$ is a uniformly random $(\overrightarrow{D}',\overrightarrow{m})$-grounded $(F,\mathbf{W})$-fragment.
\label{cl:grounded_fragments}
\end{claim}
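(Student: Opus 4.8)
The plan is to follow the strategy of Claim~\ref{cl:not_bad} verbatim, so the proof is essentially a pointer to earlier work combined with a symmetrisation argument that passes from the ``random $F$'' version of Lemma~\ref{lm:not_bad} to the ``fixed $F$'' version, exactly as was done for the family $\mathcal{F}^*_n(\overrightarrow{D})$. Concretely, fix $F\in\mathcal{F}^{m_1,\ldots,m_{\chi'}}_n(\overrightarrow{D}')$ and let $\mathbf{W}$ be a uniformly random $m$-subset of ${[n]\choose 2}$ (having first conditioned, as in Section~\ref{sc:day_0}, on $\mathbf{W}$ avoiding the $5(\chi+\chi')=o(\sqrt n)$ edges of the fixed diamonds, which happens whp). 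Set $f=2n-5(\chi+\chi')$, let $\mathcal{B}$ be the property of having more than $\ell_0=\lfloor C\sqrt n\rfloor$ edges, and let $\mathcal{M}(F,W)$ be the set of $(\overrightarrow{D}',\overrightarrow{m})$-grounded $(F,W)$-fragments. I would first check, by the same short counting computation as for $M(t)$ in Section~\ref{sc:day_0}, that the relevant normalising quantity $M(t)$ remains $(1+\varepsilon)^{(2-o(1))n}$: the only change is that $\chi'\le n^\delta$ extra diamonds and $\chi'$ extra position constraints reduce $|\mathcal{F}^{m_1,\dots,m_{\chi'}}_n(\overrightarrow{D}')|$ by a factor $e^{O(n^{1/2}w)}\cdot n^{O(n^\delta)}=e^{o(n)}$, which is absorbed into the error terms; hence the analogue of Claim~\ref{cl:many_planted_copies} holds for this family.

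The heart of the argument is verifying hypothesis~\eqref{eq:main_lm} of Lemma~\ref{lm:not_bad}, i.e.\ that
\[
\sum_{\ell\ge \ell_0+1}\Pi^F_{\mathcal{B}_\ell}\left(\left(1+\tfrac{3f}{m}\right)\tfrac{N}{m}\right)^{\ell}e^{-f^2/m+f^3/(3m^2)}=o(n^{-1/2}),
\]
where $\Pi^F_{\mathcal{B}_\ell}=\Prob(F\cap\mathbf{F}\text{ has }\ell\text{ edges})$ for a uniformly random $\mathbf{F}\in\mathcal{F}^{m_1,\dots,m_{\chi'}}_n(\overrightarrow{D}')$. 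This is precisely the estimate proved for $\mathcal{F}^*_n(\overrightarrow{D})$ in Claim~\ref{cl:lm:main} (via Claim~\ref{cl:main_square} / the subgraph-counting bounds of Section~\ref{sc:lm:main_proof}). The key point is that fixing $\chi'=O(n^\delta)$ further diamonds on prescribed positions changes the counting of subgraphs of $F$ and the number of ways to extend them to a grounded copy only by a sub-exponential factor $e^{o(n)}$ (more precisely $e^{O(\sqrt n\,w)}n^{O(n^\delta)}$), which does not affect the geometric decay in $\ell$ that drives the sum to $o(1)$; indeed the sum in Claim~\ref{cl:lm:main} is $o(1/n)$, so the same estimates give $o(1/n)=o(n^{-1/2})$ here. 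I would just remark that the ``excess'' bookkeeping from Section~\ref{sc:spread} applies unchanged since $F$ is still a second power of a cycle and the forced diamonds are genuine closed subgraphs of size $4$, contributing nonnegatively to $\sigma$.

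Once~\eqref{eq:main_lm} is in force, Lemma~\ref{lm:not_bad} gives that the pair $(F,\mathbf{W})$ is not $\mathcal{B}$-bad with probability $1-2\delta_n=1-o(1)$, and on that event $|\mathcal{M}_{\mathcal{B}}(F,\mathbf{W})|\le\delta_n|\mathcal{M}(F,\mathbf{W})|$; combined with the (fixed-$F$) analogue of Claim~\ref{cl:many_planted_copies}, which guarantees $|\mathcal{M}(F,\mathbf{W})|=\Omega(M)>0$ whp, we conclude that whp $\mathcal{M}(F,\mathbf{W})\setminus\mathcal{M}_{\mathcal{B}}(F,\mathbf{W})\ne\varnothing$, i.e.\ there exists a $(\overrightarrow{D}',\overrightarrow{m})$-grounded $(F,\mathbf{W})$-fragment with at most $\ell_0$ non-diamond edges, hence of total size at most $\ell_0+5(\chi+\chi')\le\ell_0+5\chi+o(\sqrt n)=(C+o(1))\sqrt n\le\ell_0+5\chi$ after adjusting the constant $C$. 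A final Markov/symmetrisation step, identical to the display~\eqref{eq:expect_Y_partial} and the passage around~\eqref{eq:Sigma_W}, upgrades ``with probability $1-O(n^{-1/2})$ in $\mathbf{W}$, whp in $\mathbf{F}'$'' to the stated form: letting $\mathbf{F}'$ be a uniformly random grounded fragment and $Z$ the number of grounded fragments violating the size bound, we have $\E Z=o(|\mathcal{M}|)$ whp, so $\Prob(\mathbf{F}'\text{ too large}\mid\mathbf{W})=o(1)$ whp in $\mathbf{W}$.

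The main obstacle is purely notational rather than mathematical: one must double-check that none of the $e^{o(n)}$-type losses incurred by the $\chi'=O(n^\delta)$ additional constraints accumulate to spoil the geometric decay in~\eqref{eq:main_lm}; this is routine given the explicit slack in Claim~\ref{cl:lm:main}, where the sum is bounded by $o(1/n)$ with room to spare, but it is the only thing that genuinely needs checking. Everything else is a transcription of the arguments already carried out for $\mathcal{F}^*_n(\overrightarrow{D})$ in Section~\ref{sc:day_0} and Appendix~\ref{sc:appendix_claims}, which is exactly why the paper says ``the proof is identical to the proof of Claim~\ref{cl:not_bad}; therefore, we omit it.''
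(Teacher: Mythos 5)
Your proposal is correct and takes exactly the route the paper intends: the paper itself declares the proof ``identical to the proof of Claim~\ref{cl:not_bad}'' and omits it, and your outline faithfully reproduces that strategy (the analogue of Claim~\ref{cl:many_planted_copies}, the analogue of Claim~\ref{cl:lm:main}, and the Markov step), together with the correct observation that the $\chi'=O(n^{\delta})$ extra constraints only cost a sub-exponential factor that is absorbed by the geometric decay.

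Two small points worth tightening. First, the stated probability is $1-O(n^{-1/2})$, which is stronger than the ``$o(1)$'' your write-up produces when you invoke Lemma~\ref{lm:not_bad} with a generic $\delta_n\to 0$. To get $O(n^{-1/2})$ one takes $\delta_n\asymp n^{-1/2}$, which is legitimate only because the sum in the hypothesis~\eqref{eq:main_lm} is not merely $o(1/n)$ but in fact $e^{-\Theta(\sqrt n)}$ (a geometric tail starting at $\ell_0\approx C\sqrt n$), hence $\ll\delta_n^3=n^{-3/2}$; the quoted ``$o(1/n)=o(n^{-1/2})$'' does not on its own justify that choice of $\delta_n$. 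Second, conditioning on $\mathbf W$ avoiding the $5(\chi+\chi')$ diamond edges is not needed and, as an event, only has probability $1-O(w/\log n)$, short of $1-O(n^{-1/2})$; instead one should observe that since every $F'\in\mathcal F^{m_1,\dots,m_{\chi'}}_n(\overrightarrow D')$ and $F$ itself already contain all the diamonds, the condition $F'\subset F\cup\mathbf W$ depends only on $\mathbf W$ off the diamonds, so the diamond edges of $\mathbf W$ may simply be ignored. Neither point changes the argument in substance.
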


We now fix $F\in\mathcal{F}_n(\overrightarrow{D})$, $W\in{{[n]\choose 2}\choose m}$, and define explicitly a random fragment $\mathbf{F}^*=\mathbf{F}^*(F,W)$ that has the desired distribution $\mathcal{Q}_{F,W}$. Let $\mathbf{F}'$ be a uniformly random subgraph of $F\cup W$ from $\mathcal{F}_n(\overrightarrow{D})$ satisfying $|\mathbf{F}'\cap F|\leq\ell_0+5\chi$. If such a subgraph does not exist, we set $\mathbf{F}^*=F$. Otherwise, if $\mathbf{F}'\cap F$ does not have closed subgraphs with at least $\sqrt{n}/\ln n$ vertices, we let $\mathbf{F}^*=\mathbf{F}'$. If neither of the above two conditions is satisfied, set $\mathbf{F}^{(1)}:=\mathbf{F}'$. 


We then proceed by induction. At step $1\leq j\leq n^{\delta}/\ln^2 n$, we have that either $\mathbf{F}^*$ has been defined, or graphs $\mathbf{F}^{(1)},\ldots,\mathbf{F}^{(j)}$ have been sampled. If $j+1>n^{\delta}/\ln^2 n$, then halt and let $\mathbf{F}^*$ be chosen uniformly at random from $\{\mathbf{F}^{(1)},\ldots,\mathbf{F}^{(j)}\}$. Otherwise, let $C_1,\ldots,C_{\chi'}$ be all the inclusion-maximal closed subgraphs of length at least $\sqrt{n}/\ln n$ that belong to one of the fragments $F\cap\mathbf{F}^{(1)},\ldots,F\cap\mathbf{F}^{(j)}$. 
 For every $i\in[\chi']$, let $D_{\chi+i}:=(u^i_1u^i_2u^i_3u^i_4)$ be the diamond in $F$ that crosses $C_i$ in its leftmost vertices $u^i_3,u^i_4$ and let $m_i$ be the position of $u^i_1$  in the ordering $\pi_F$ of $[n]$.  Resample $\mathbf{F}'$ uniformly at random from $\mathcal{F}^{m_1,\ldots,m_{\chi'}}_n(\overrightarrow{D}')$ under the condition that $|\mathbf{F}'\cap F|\leq\ell_0+5\chi$. If such a graph does not exist, we set $\mathbf{F}^*=F$. Otherwise, if $\mathbf{F}'\cap F$ does not have closed subgraphs with at least $\sqrt{n}/\ln n$ vertices, we let $\mathbf{F}^*=\mathbf{F}'$. If neither of the above two conditions is satisfied, set $\mathbf{F}^{(j+1)}:=\mathbf{F}'$.

\begin{figure}[h]
	\begin{center}
		\includegraphics[scale=.85]{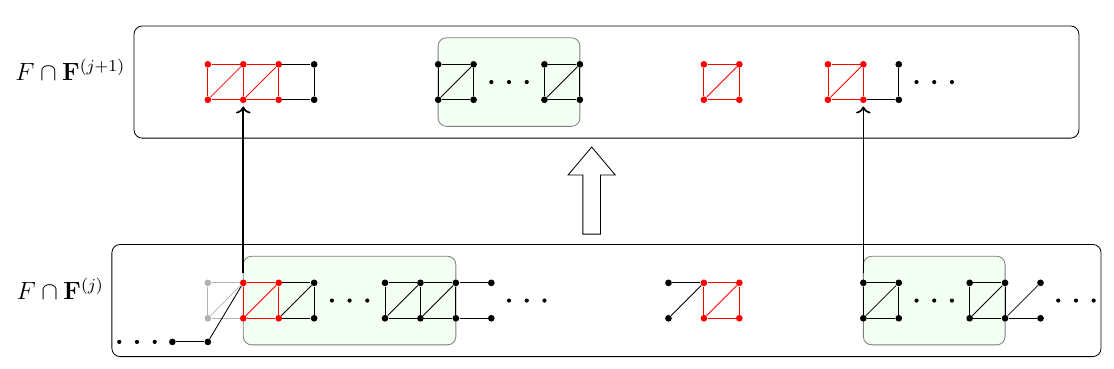}
	\end{center}
\caption{Induction step in the process of construction of $\mathbf{F}^*$: Green squares represent inclusion-maximal closed subgraphs of length at least $\sqrt{n}/\ln n$; their leftmost vertices become rightmost vertices of the diamonds planted in $\mathbf{F}^{(j+1)}$. The order of vertices in the planted diamonds respects $\pi_F$ --- in particular, grey edges of $F$ do not coincide with black edges of $\mathbf{F}^{(j)}$; the former grey edges are coloured red in $\mathbf{F}^{(j+1)}$.}\label{fig:...}
\end{figure}

This completes the construction of $\mathbf{F}^*$. Let us now prove that, in particular, its distribution satisfies the required property that we used in Section~\ref{sc:improvement} --- whp $|F\cap\mathbf{F}^*|\leq\ell_0+5\chi$. Let $\mathbf{W}\subset{[n]\choose 2}$ be a uniformly random $m$-subset. Due to Claim~\ref{cl:not_bad}, with probability at least $1-\exp(-\Omega(\sqrt{n}))$, in the $\mathbf{W}$-measure, after the first step $j=1$, we either proceed to the next step or halt and output $\mathbf{F}^*$ that satisfies 
\begin{itemize}
\item[P1] $|\mathbf{F}^*\cap F|\leq\ell_0+5\chi$,
\item[P2] $\mathbf{F}^*$ does not have closed subgraphs with at least $\sqrt{n}/\ln n$ vertices.
\end{itemize}
Moreover, if we proceed to step $j=2$, then with probability at least $1-\exp(-\Omega(\sqrt{n}))$, in the $\mathbf{W}$-measure, the algorithm samples $\mathbf{F}^{(1)}$ that has $|\mathbf{F}^{(1)}\cap F|\leq\ell_0+5\chi$ (deterministically) and, therefore, the fragment $\mathbf{F}^{(1)}\cap F$ has less than $\ln^2 n$ inclusion-maximal closed subgraphs of length at least $\sqrt{n}/\ln n$.

Then, for an arbitrary $1\leq j\leq n^{\delta}/\ln^2 n$, assume that with probability at least $1-\exp(-\Omega(\sqrt{n}))$ in the $\mathbf{W}$-measure, at step $j$ either the algorithm halts and outputs $\mathbf{F}^*$ that satisfies P1 and P2, or it samples $\mathbf{F}^{(1)},\ldots,\mathbf{F}^{(j)}$ such that,
\begin{itemize}
\item for every $i\in[j]$, $\mathbf{F}^{(i)}$ satisfies P1 and, therefore, $|F\cap \mathbf{F}^{(i)}|$ has less than $\ln^2 n$ inclusion-maximal closed subgraphs of length at least $\sqrt{n}/\ln n$ and,
\item crucially, the $j$ sets of all inclusion-maximal closed subgraphs of length at least $\sqrt{n}/\ln n$ in $F\cap \mathbf{F}^{(1)},\ldots, F\cap \mathbf{F}^{(j)}$ do not overlap.
\end{itemize}
These sets identify the diamonds $C_1,\ldots,C_{\chi'}$, where $\chi'<j\ln^2 n<n^{\delta}$ whp. Due to Claim~\ref{cl:grounded_fragments} and the union bound over the choice of $\chi'$ diamonds, with probability at least $1-\exp(-\Omega(\sqrt{n}))$ in the $\mathbf{W}$-measure, we get that either the algorithm halts at step $j+1$ and outputs $\mathbf{F}^*$ that satisfies P1 and P2, or it samples at this step $\mathbf{F}^{(j+1)}$ that satisfies $|\mathbf{F}^{(j+1)}\cap F|\leq\ell_0+5\chi$ and proceeds to the next step.

We conclude that whp, in the $\mathbf{W}$-measure, the algorithm outputs $\mathbf{F}^*\sim\mathcal{Q}_{F,\mathbf{W}}$ that satisfies property P1, as required. 
 Moreover, the algorithm is designed in such a way that whp, in the $\mathbf{W}$-measure, 
\begin{itemize}
\item[P3]
 for every set $\mathcal{X}$ of disjoint closed subgraphs of $F$ on at least $\sqrt{n}/\ln n$ vertices, the $\mathcal{Q}_{F,\mathbf{W}}$-probability that the set of inclusion-maximal closed subgraphs of $\mathbf{F}^*\cap F$ on at least $\sqrt{n}/\ln n$ vertices contains $\mathcal{X}$ is at most $\left\lfloor\frac{\ln^2 n}{n^{\delta}}\right\rfloor$.
\end{itemize}
  We call a pair $(F,W)$ with $\mathcal{Q}_{F,W}$ satisfying P3 {\it separating}. The property of being separating will be applied in Section~\ref{sc:day_3} to show that we may cover at least one fragment of size $o(\log n)$ whp.

\subsection{Reconstruction from a smoothed fragment}
\label{sc:smoothed_reconstruction}


For every pair $\left(F\in\mathcal{F}_n,W\in{{[n]\choose 2}\choose m}\right)$,  let $\mathbf{F}'\sim\mathcal{Q}_{F,W}$ be generated independently (mutually over all such pairs $(F,W)$). Recall the definition of $\Sigma$ from Section~\ref{sc:improvement}. For $W\in{{[n]\choose 2}\choose m}$, let $\Sigma_W$ be the set of all $F\in\mathcal{F}_n$ such that $(F,W)\in\Sigma$ is separating. Due to~\eqref{eq:Sigma_W} and the conclusion of Section~\ref{sc:distribution_of_fragments}, $(\mathbf{W}\times\mathcal{Q})$-whp, for (asymptotically) almost all $F$, we have that $F\in\Sigma_{\mathbf{W}}$.



 We now fix $W\in{{[n]\choose 2}\choose m}$ and, for every $F\in\mathcal{F}_n$, generate $\mathbf{F}'\sim\mathcal{Q}_{F,W}$. If $F\in\Sigma_W$, we let 
  $H(F):=F\cap\mathbf{F}'$, and let $H'(F)$ be the smoothed version of $H(F)$ (with respect to the matching rule $B$). For brevity, we denote $\ell_{\chi}:=\ell_0+5\chi.$
 
\begin{claim}
\label{cl:pre-images-size}
 For any graph $S$, let $\mathcal{S}(S)$ be the (random) set of all graphs $F\in\Sigma_W$ such that $H'(F)$ contains $S$ as a subgraph. Let
$c:=c(S)$, $\ell:=\ell(S)$, $x:=x(S)$, and $\sigma:=\sigma(S)=2x(S)-3c(S)-\ell(S)$. Then
\begin{equation}
 |\mathcal{S}(S)|\leq 64^{\ell}(n-x+c)!\max\left\{n^{c/6}{\ell_{\chi}\choose \min\{\sigma,\lfloor\ell_{\chi}/2\rfloor\}},{\ell_{\chi}\choose \min\{c+\sigma,\lfloor\ell_{\chi}/2\rfloor\}}\right\} \quad\text{ almost surely}.
\label{eq:S_bound_large_fragments}
\end{equation}
Moreover, the bound
\begin{equation}
 |\mathcal{S}(S)|\leq 64^{\ell}(n-x+c)!\frac{(2\ell_{\chi})^{c+\sigma}}{\min\left\{(\ln n)^{c+\sigma},n^{1/4}\right\}}
\label{eq:S_bound_small_fragments}
\end{equation}
holds with $\mathcal{Q}$-probability $1-e^{-\Omega(n^{1/6})}$ (recall that $\mathcal{Q}=\times\mathcal{Q}_{F,W}$).
\end{claim}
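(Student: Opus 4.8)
The plan is to bound $|\mathcal S(S)|$ by reconstructing each $F\in\mathcal S(S)$ from the data of (i) the smoothed fragment $S$, together with (ii) a bounded amount of extra combinatorial information describing where the removed pieces of large closed subgraphs were originally located and how the diamonds were used. I will first handle the ``large'' bound~\eqref{eq:S_bound_large_fragments}, which holds deterministically, and then upgrade it to~\eqref{eq:S_bound_small_fragments}, which exploits randomness of the matching graph $B$.

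\medskip
\noindent\emph{Step 1: counting embeddings of the smoothed fragment.} Fix $S$ with $x=x(S)$ vertices, $\ell=\ell(S)$ edges, $c=c(S)$ components, and excess $\sigma=\sigma(S)=2x-3c-\ell$ (here $d=4$, $\Delta=6$, so $\sigma=\tfrac d2 x-(\ell+\tfrac\Delta2 c)=2x-3c-\ell$, matching the general definition). For $F\in\mathcal S(S)$ the graph $H'(F)$ contains $S$ as a subgraph, and $H'(F)$ is itself a subgraph of some $F^{(1)}\in\mathcal F_n(\overrightarrow D)$; hence $S\subset F^{(1)}$ and $S$ is a union of (fragments of) squares of paths with controlled excess. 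Using the counting strategy of Claim~\ref{cl:main} / Claim~\ref{cl:J_upper_bound} and the embedding strategy of Claim~\ref{cl:simple_embeddings}, but now \emph{embedding} $S$ rather than choosing it inside a fixed $F$: the number of ways to map $S$ into an (unlabelled, then relabelled) copy of $F$ on $[n]$, component by component, is at most $64^{\ell}(n-x+c)!\cdot(\text{contribution of closed components})\cdot(\text{contribution of }\sigma)$, where the factor $(n-x+c)!$ accounts for the positions of the $n-x+c$ connected components (isolated vertices included) and $64^{\ell}$ absorbs all edge-by-edge exploration choices and the automorphism bound $|\mathrm{Aut}(z_i)|\le x_i(d-1)^{x_i}$. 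Crucially, the closed components of $S$ of length $\ge 10C\ln n/w$ are exactly the ``glued'' pieces created during smoothing; each such piece came from a genuine closed subgraph of $F$, so by Claim~\ref{cl:count_copies_vertex} there are only $2d=O(1)$ ways to extend each fixed first edge of such a component into a closed subgraph of $F$, giving a total $O(1)^{c}$ contribution rather than a polynomial one. The $\sigma$-contribution is the usual $\max_{o\le(\Delta+2)\sigma}\binom{x}{o}\cdot 2^{O(\sigma)}\le e^{O(\sigma)}\binom{\ell_{\chi}}{c+\sigma}$-type factor (using $x\le\ell_\chi$).

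\medskip
\noindent\emph{Step 2: accounting for the re-insertion of pieces.} Given a copy of $S$ inside a candidate $F$, to recover $F$ itself we must undo the smoothing: we need to know which closed subgraphs of $F$ of length $\ge n/\ln n$ were cut, into which diamonds $D_{j_i}$ the removed middle pieces $P_i$ were reinserted, and where in $F$ the original long path ran. Since a fragment has at most $\ell_0+5\chi=O(\sqrt n)$ edges and each cut piece has $\mu=\lfloor 10C\ln n/w\rfloor$ vertices, the number $s$ of pieces satisfies $s<\chi/3$; moreover each $P_i$ was matched to a diamond index $j_i$ which, by the matching rule, is a neighbour of $\pi_F(v_1^i)$ in $B$, and every vertex of $V=[\chi]$ has $B$-degree $(1+o(1))n^{2/3}/w$. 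Thus, knowing the $B$-image of the relevant vertices of $S$ (which $S$ already encodes, once $S$ is embedded), there are at most $\big((1+o(1))n^{2/3}/w\big)^{s}$ ways to recover the assignment $i\mapsto j_i$, and at most $\binom{\chi'}{s}\le 2^{n^\delta}$ ways to recover which diamonds among $D_{\chi+1},\dots,D_{\chi+\chi'}$ were introduced by the grounding procedure — but in fact, since the grounded diamonds are \emph{determined by $F^{(1)},\dots,F^{(j)}$} and ultimately by the long closed subgraphs of the fragments, which $S$ carries as its long closed components, this information is already recorded. The dominant new factor is therefore $\big(O(n^{2/3}/w)\big)^{s}$. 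Since $s=O(\ell/\mu)=O(\ell w/\ln n)$, this is $\exp\!\big(O(s\log n)\big)=\exp(O(\ell w))$, which is too large as written; the fix is to observe that $s\le c$ (each reinserted piece sits inside its own diamond component) and that $n^{2/3}/w=n^{2/3+o(1)}$, so this factor is at most $n^{2s/3}\le n^{2c/3}$. This is still slightly worse than the claimed $n^{c/6}$, so the refinement is to note that a piece is only created when a \emph{long} closed component is present, and by Claim~\ref{cl:not_bad} / the separating property at most $O(\log n)$ long closed subgraphs of length $\ge\sqrt n/\ln n$ occur, so $s=O(\log n)$ and $(n^{2/3})^{s}$ gets absorbed into the $64^{\ell(S)}$ and the $\max\{n^{c/6},\cdot\}$ term after redoing the bookkeeping with the correct exponents ($\chi=\Theta(w\sqrt n/\ln n)$, $\mu=\Theta(\ln n/w)$, $\beta=n^{-1/3}/w$). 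I expect this exponent-chasing — reconciling $s$, $c$, $\chi$, $\mu$, $\beta$ with the target exponents $c/6$ and $1/4$ — to be the main technical obstacle, and the place where the specific choices $\chi'\le n^\delta$, $\delta<1/2$, and the step bound $n^\delta/\ln^2 n$ from Section~\ref{sc:distribution_of_fragments} are used.

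\medskip
\noindent\emph{Step 3: the improved bound~\eqref{eq:S_bound_small_fragments}.} For the second inequality we exploit that $B$ is a \emph{random} bipartite graph with edge probability $\beta=n^{-1/3}/w$, sampled independently of $\mathbf W$ and of the $\mathcal Q_{F,W}$'s. The point is that if $S$ has $c+\sigma$ ``special'' vertices (the boundary/branch vertices governing the $\sigma$- and $c$-factors), then for the reconstruction to succeed these vertices must hit the neighbourhoods in $B$ of the $s$ chosen diamond indices; the expected number of such coincidences is $(c+\sigma)\cdot s\cdot\beta\cdot(\text{something})$, and a union bound over all $S$ (there are at most $(n)^{O(x)}\le e^{O(\ell\log n)}$ of them, crushed by a Chernoff bound at scale $n^{1/6}$ because $\beta\cdot\chi\cdot n=n^{2/3+o(1)}\gg n^{1/6}$) shows that, with probability $1-e^{-\Omega(n^{1/6})}$ over $B$, for \emph{every} $S$ the number of valid piece-assignments compatible with a given embedding of $S$ is smaller by a factor $\min\{(\ln n)^{c+\sigma},n^{1/4}\}$ than the deterministic count. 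Concretely: each of the $\le\binom{\ell_\chi}{c+\sigma}$ ways the $\sigma$- and $c$-data can be realized must additionally survive the independent $B$-filter, which passes with probability $\approx\beta$ per special incidence, and a second-moment / Chernoff argument shows the surviving count concentrates, after which dividing by $\min\{(\ln n)^{c+\sigma},n^{1/4}\}$ is exactly the loss incurred. This is the step that ``essentially relies on the distribution of a random fragment'' and on the independence of $B$ from everything else; combining it with Steps~1--2 and summing the per-embedding count over the $(n-x+c)!$ component placements yields~\eqref{eq:S_bound_small_fragments}. I anticipate the bookkeeping of exactly which incidences in $S$ are ``filtered'' by $B$ — and verifying the relevant product is genuinely independent of $\mathbf W$ (so that the union bound over $S$ is legitimate even though $S$ is itself $\mathbf W$-random) — to be the subtle point; it is handled by first fixing $W$ (as in the statement, $W$ is fixed throughout), noting $\Sigma_W$ and all the $\mathcal Q_{F,W}$ are then deterministic functions of $B$-independent data, and only then invoking randomness of $B$.
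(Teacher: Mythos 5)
Your high-level plan --- reconstruct each $F\in\mathcal S(S)$ from $S$ together with a bounded amount of extra data, then count the tuples of extra data --- is the right one, and your Step~1 (edge-by-edge exploration giving $64^{\ell}(n-x+c)!$ up to corrections) matches the paper's strategy. However, Steps~2 and~3 both contain genuine gaps.

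In Step~2 you correctly see that recovering $F$ from $S$ requires choosing, for each of the $a\le c$ reinserted pieces, a diamond index ($\chi$ choices) and a $B$-neighbour of that index ($\approx n^{2/3}/w$ choices), and you correctly conclude that the naive product $(\chi n^{2/3}/w)^{a}\approx n^{7a/6}$ is far too large; you then admit you don't see how to close the gap to $n^{c/6}$ and speculate that $s=O(\log n)$ should help. The speculation is off-target: $s$ can be as large as $\Theta(\chi)$. The missing idea is a \emph{credit} you are not collecting: once the $a$ pieces are glued back into $F$, the permutation that places the remaining components of $H$ into $[n]$ only has to order $n-x+c-a$ objects, not $n-x+c$. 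That is, the relevant factor is $(n-x+c-a)!$ rather than $(n-x+c)!$, which buys back roughly $(n-x+c)^{a}\approx n^{a}$. Combined with $\chi\,n^{2/3}\ln n/w\approx n^{7/6}$ one gets $\bigl(n^{7/6}/n\bigr)^{a}=n^{a/6}\le n^{c/6}$, which is exactly the claimed exponent. Without this cancellation the deterministic bound~\eqref{eq:S_bound_large_fragments} simply does not come out.

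In Step~3 you attribute the improvement in~\eqref{eq:S_bound_small_fragments} to the randomness of $B$ via a Chernoff/union-bound argument over embeddings of $S$. This is the wrong source of randomness: the $B$-constraint is already fully spent in the deterministic bound (that's where $n^{2/3}/w$ per piece came from), so there is nothing left to gain from $B$, and invoking it again would be double-counting. The improvement actually comes from the randomness of the fragment distribution $\mathcal Q_{F,W}$ together with the separating property. Concretely, the expensive factor $\binom{\ell_\chi}{c+\sigma}$ in the first bound arises from choosing the partition $f$ of $\ell_\chi$ encoding the lengths $(q_1,\dots,q_{a'})$ of the cut closed subgraphs; if one instead reconstructs $F$ \emph{up to} these lengths, then each assignment of the lengths exceeding $\sqrt n/\ln n$ determines an $F$, and the requirement $F\in\Sigma_W$ forces $(F,W)$ to be separating, which caps the $\mathcal Q_{F,W}$-probability of producing exactly that set of long closed subgraphs by $n^{-1/3}$. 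The number of surviving assignments is then stochastically dominated by $\mathrm{Bin}\bigl((\ell_\chi/2)^{a''},n^{-1/3}\bigr)$, and the failure probability $e^{-\Omega(n^{1/6})}$ is a Chernoff bound on this binomial (using $(\ell_\chi/2)^{a''}>\sqrt n$), not a union bound against $B$. Your worry at the end of Step~3 about whether the relevant products are independent of $\mathbf W$ is moot once one realizes the randomness being exploited is that of $\mathcal Q_{F,W}$, with $W$ fixed throughout as the claim states.
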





\begin{proof}
  Let $T_1,\ldots,T_{c}$ be the connected components of $S$. We fix an arbitrary root in every connected component of $S$ and order edges in every connected component in a way such that edges that are closer to the root appear earlier. Let 
\begin{itemize}
\item $\alpha:E(S)\to[4]$ be an arbitrary function;
\item $A\subset V(S)$ be a set such that each $T_i$ has at most two vertices in common with $A$;
\item $A'\subset V(S)$ be an arbitrary set. 
\end{itemize}
The tuple $(\alpha,A,A')$ identifies two non-negative integers $a,a'$ such that $a\leq\min\{c,\chi\}$ and $a+a'\leq c+\sigma$ that will be explicitly defined later. We also let
\begin{itemize}
\item $\tau_1:[a]\to[\lfloor 2n^{2/3}/w\rfloor]$, $\tau_2:[a]\to[\lfloor \ln n/4\rfloor]$ 
  be arbitrary functions;
\item $\rho:[a]\to[\chi]$ be an injection;
\item $f$ be a decomposition of $\ell_{\chi}$ into $a'$ positive integers;
\item $\pi:[n-x+c-a]\to[n-x+c-a]$ be a bijection.
\end{itemize}
Let us show that the tuple $\mathbf{x}:=(A,A',\alpha,\tau_1,\tau_2,f,\rho,\pi)$ identifies a graph $F\in \Sigma_W$ such that $S\subset H'(F)$. As we will see, the latter implies the desired assertion. Below, we describe the algorithm of reconstruction of~$F$ from $\mathbf{x}$.

\begin{enumerate}

\item Reconstruct the order of vertices (and their positions in the improved fragment $H'(F)$, subject to the positions of roots) in every component of $S$ according to $\alpha$: For every $i\in[c]$, start from the root of $T_i$ and follow the edges according to the fixed order; at every step, one of the ends of the current edge $e$ is already placed, so there are at most four possibilities to place this edge, and then $\alpha(e)$ defines its position. With slight abuse of notation, in what follows we denote by $\alpha$ the reconstructed order of vertices in every component of $S$.

\item Choose the pieces in $S$ that were moved in $H(F)$: For every connected component $T_i\subset S$, if 
 $|A\cap T_i|\leq 1$, then $T_i$ does not contain vertices of a piece that was moved. Otherwise, let $A\cap V(T_i)=\{v_i,u_i\}$, where $v_i<_{\alpha}u_i$, according to the linear order $\alpha$ of vertices in $T_i$, identified at the previous step. Let the unique ``piece'' $P_i\subset T_i$ have all edges of $T_i$ induced by the set of vertices that are $\alpha$-between $v_i$ and $u_i$.
 
 \item Choose pairs of edges in $S$ that were glued in $H(F)$: For every connected component $T_i\subset S$, let $A'\cap T_i=\{v^i_1,\ldots,v^i_{t_i}\}$, where $v^i_1<_{\alpha}\ldots<_{\alpha}v^i_{t_i}$. We also assume that $v^i_{t_i}$ does not equal to the $\alpha$-biggest vertex in $T_i$. We treat each vertex $v^i_j$ as the biggest vertex of $S$ that belongs to the smaller edge of the $j$-th pair of edges that was glued. For $j\in[t_i+1]$, let $P^i_j$ be the subgraph of $T_i$ induced by the set of all vertices that are $\alpha$-bigger than $v^i_{j-1}$ and do not $\alpha$-exceed $v^i_j$. Here, $v^i_0=0$ and $v^i_{t_i+1}$ is the $\alpha$-biggest vertex of $T_i$. 
 

\item Let $\mathcal{I}$ be the set of all $i\in[c]$ such that $P_i$ is defined and $P_i\neq T_i$. We also let $a=|\mathcal{I}|$. For every $i\in\mathcal{I}$, we choose the index $\eta(i)$ of the respective diamond in $F$ according to the injection $\rho$.  Then, we move pieces $P_i$,  $i\in\mathcal{I}$, to their original places in $F$. For every $i\in\mathcal{I}$, the position $\eta(i)$ of the respective diamond in $[\chi]$ is identified. Therefore, the position of the first vertex of the entire piece in $F$ containing $P_i$ is adjacent to $\eta(i)$ in $B$, where $\eta(i)$ has degree at most $(1+o(1))n^{2/3}/w$. Then $v_i$ is at most $(0.1\cdot\ln n)$-far from this vertex in $F$ and we may identify its position in $F$ according to $(\tau_1(i),\tau_2(i))$.

\item Let $\mathcal{I}'$ be the set of all $i\in[c]$ such that $A'\cap T_i$ is non-empty and $v^i_{t_i}$ is not the $\alpha$-biggest vertex of $T_i$. We also let $a'=\sum_{i\in\mathcal{I}'}|A'\cap T_i|$, i.e. $a'=\sum_{i\in\mathcal{I}'}t_i$. We note that $a+a'\leq c+\sigma$ since 1) the moved pieces and the glued edges do not belong to the same connected component in the smoothed fragment $H'(F)$, according to its construction, 2) different pairs of glued edges do not belong to the same inclusion-maximal closed subgraph of the fragment $H(F)$. For every $i\in\mathcal{I}'$ and every $j\in[t_i]$, we choose the distance between the biggest vertex in $P^i_j$ and the smallest vertex in $P^i_{j+1}$, according to $f$, that identifies the positions of all $P^i_2,\ldots,P^i_{t_i+1}$, $i\in\mathcal{I}'$, as soon as the positions of $P^i_1$ are fixed.  



\item Since there is no component in $S$ that contains deleted pieces and glued pairs of edges simultaneously, we have that $\mathcal{I}\cap\mathcal{I}'=\varnothing.$ Therefore, it remains to order all the remaining $n-x+c-a$ connected components of $S$ (including the remaining singletons) according to the permutation $\pi$ and complete the reconstruction of $F$.

\end{enumerate}

Thus, $|\mathcal{S}(S)|$ is at most the number of all possible tuples $\mathbf{x}$ which, in turn, does not exceed
\begin{align*}
 \max_{a\leq\min\{c,\chi\},\,a+a'\leq c+\sigma} & 4^{\ell}  2^{2x}\left(\frac{n^{2/3} \ln n }{ 2w}\right)^a\frac{\chi!}{(\chi-a)!}{\ell_{\chi}\choose a'}(n-x+c-a)! \\
 &\leq 64^{\ell}(n-x+c)!\max_{a\leq c, \, a+a'\leq c+\sigma}\left(\frac{\chi n^{2/3} \ln n/(2w)}{n-x+c-a}\right)^{a}{\ell_{\chi}\choose a'}.
\end{align*} 
Due to~\eqref{eq:d-def},
$$
 \left(\frac{\chi n^{2/3} \ln n/(2w)}{n-x+c-a}\right)^{a}\leq 
 \left(\frac{\chi n^{2/3} \ln n /(2w)}{n-x}\right)^{a}\leq \left(\frac{n^{7/6}}{2(n-2\ell_0-4\chi)}\right)^{a}\leq n^{a/6}.
$$
For a fixed $a+a'=:a''$, the function $n^{a/6}{\ell_{\chi}\choose a''-a}$ is convex in $a$, implying that
$$
 \max_{a\leq c} n^{a/6}{\ell_{\chi}\choose a''-a}=\max\left\{n^{c/6}{\ell_{\chi}\choose a''-c},{\ell_{\chi}\choose a''}\right\}.
$$
Therefore, 
\begin{align*} 
|\mathcal{S}(S)|\leq 64^{\ell}(n-x+c)!\max\left\{n^{c/6}{\ell_{\chi}\choose \min\{\sigma,\lfloor\ell_{\chi}/2\rfloor\}},{\ell_{\chi}\choose \min\{c+\sigma,\lfloor\ell_{\chi}/2\rfloor\}}\right\}.
\end{align*}
The proof of~\eqref{eq:S_bound_large_fragments} is completed.

It remains to prove that with sufficiently large probability the factor ${\ell_{\chi}\choose a'}$ can be replaced with $\frac{(2\ell_{\chi})^{a'}}{\min\{(\ln n)^{a'},n^{1/4}\}}$. Indeed, if this is the case, then, since, for a fixed $a+a'=:a''$, the function $n^{a/6}\frac{\ell_{\chi}^{a''-a}}{\min\{(\ln n)^{a''-a},n^{1/4}\}}$ decreases in $a$, we get
$$
 \max_{a\leq c} n^{a/6}\frac{(2\ell_{\chi})^{a''-a}}{\min\{(\ln n)^{a''-a},n^{1/4}\}}=\frac{(2\ell_{\chi})^{a''}}{\min\{(\ln n)^{a''},n^{1/4}\}}\leq
 \frac{(2\ell_{\chi})^{c+\sigma}}{\min\{(\ln n)^{c+\sigma},n^{1/4}\}},
$$
as required in~\eqref{eq:S_bound_small_fragments}.

  This refinement makes a significant difference when $c+\sigma=o(\log n/\log\log n)$. Let us recall that the factor ${\ell_{\chi}\choose c+\sigma}$ appears due to the choice of the partition $f$ in step (5) of the reconstruction procedure. If we skip this particular step, then, after implementing all the other steps, we get that $F$ is reconstructed, up to the choice of the tuple $(q_1,\ldots,q_{a'})$ of lengths (i.e., the number of vertices) of closed subgraphs that were cut from $H(F)$. For each $q_i$, we first decide, whether $q_i\leq\sqrt{n}/\ln n$. There are at most $2^{c(S)+\sigma(S)}$ ways to make this choice. For each $q_i$ that should not exceed $\sqrt{n}/\ln n$, we simply choose its value in at most $\sqrt{n}/\ln n<\ell_{\chi}/\ln n$ ways. Without loss of generality, let $q_1,\ldots,q_{a''}$ be the unspecified lengths exceeding the threshold $\sqrt{n}/\ln n$. Each evaluation of these lengths identifies an $F\in\mathcal{F}_n$. We only count evaluations such that $F\in\Sigma_W$. In particular, it means that $(F,W)$ is separating (i.e. $\mathcal{Q}_{F,W}$ has property P3), and thus, for any $F\in\Sigma_W$, the probability that $H(F)$ contains a specific subset of inclusion-maximal closed subgraphs --- defined by the specified $q_1,\ldots,q_{a''}$ --- is at most $n^{-1/3}$. The random variable $\eta$ that counts the number of evaluations of $q_1,\ldots,q_{a''}$ such that $H(F=F(q_1,\ldots,q_{a''}))$ contains the respective subset of inclusion-maximal closed subgraphs is stochastically dominated by $\mathrm{Bin}\left(\ell_{\chi}^{a''},n^{-1/3}\right)$, since $\mathcal{Q}$ is a product measure. Since $\ell_{\chi}^{a''}>\sqrt{n}$, we get
$$
\mathbb{P}\left(\eta>\frac{\ell_{\chi}^{a''}}{n^{1/4}}\right)=e^{-\Omega(n^{1/6})}.
$$
Recalling that $\eta$ is exactly the number of possible choices of $(q_1,\ldots,q_{a''})$, we get the desired bound and complete the proof.
\end{proof}

\subsection{Day 1: getting fragments of size at most $n^{0.3}$}
\label{sc:day_1}

We are now ready for the second fragmentation round. 
  Let us recall that we sampled $\mathbf{F}'\sim\mathcal{Q}_{F,W}$ and got a fragment $H(F,W)=F\cap\mathbf{F}'$ for every pair $(F,W)$.  Then we defined a smoothed version $H'(F,W)$ of $H(F,W)$ for every $W$ and every $F\in\Sigma_W$. Due to Claim~\ref{cl:pre-images-size}, \eqref{eq:S_bound_large_fragments} holds for all $S$ and~\eqref{eq:S_bound_small_fragments} holds for all $S$ with $\ell(S)\leq n^{0.1}$, $\mathcal{Q}$-whp. The latter inequality is due to the union bound over all such $S$ and the probability bound from~\eqref{eq:S_bound_small_fragments}. Therefore, we can evaluate all $\mathbf{F}'=F'$ and treat them as deterministic graphs such that 
  \begin{itemize}
  \item $|\Sigma_{\mathbf{W}}|=(1-o(1))n!$ whp;
  \item \eqref{eq:S_bound_large_fragments} holds for all $S$;
  \item  \eqref{eq:S_bound_small_fragments} holds for all $S$ with $\ell(S)\leq n^{0.1}$ deterministically.  
  \end{itemize}
Let us consider the (random) multiset $\mathcal{H}^{(1)}$ of all $H'(F)$ over $F\in\Sigma_{\mathbf{W}}$. Then 
$$
|\mathcal{H}^{(1)}|=|\Sigma_{\mathbf{W}}|=(1-o(1))n!
$$ 
whp. Clearly, we may assume that every member of this family has size exactly $\ell_0+5\chi$.

Next, we expose $\mathbf{W}^{(1)}\sim G(n,m')$ with $m'=\left\lfloor\varepsilon{n\choose 2}/\sqrt{n}\right\rfloor$. For $H\in\mathcal{H}^{(1)}$ and an $m'$-set $W\subset{[n]\choose 2}$, let $\mathcal{M}^*(H,W)$ be the set of all $H'\in\mathcal{H}^{(1)}$ such that $H'\subset H\cup W$. For $\ell\in\mathbb{N}$, let $\mathcal{M}^*_{\ell}(H,W)$ be the set of all $H'\in\mathcal{M}^*(F,W)$ such that $|H'\cap H|>\ell$. Let $\delta(n)=o(1)$ be a slowly decreasing function. Let us call $(H,W)$ {\it $\ell$-bad}, if $|\mathcal{M}^*_\ell(H,W)|>\delta_n|\mathcal{M}^*(H,W)|$. Let $\mathbf{H}$ be a uniformly random element of $\mathcal{H}^{(1)}$ sampled independently of $\mathbf{W}^{(1)}$. We shall prove that $(\mathbf{H},\mathbf{W}^{(1)})$ is not $\lceil n^{0.3}\rceil$-bad whp. That would mean that whp we may replace most of $H\in\mathcal{H}^{(1)}$ with fragments of sizes at most $\lceil n^{0.3}\rceil$.

\begin{claim}
\label{cl:not_bad_2}
Whp $(\mathbf{H},\mathbf{W}^{(1)})$ is not $\lceil n^{0.3}\rceil$-bad.
\end{claim}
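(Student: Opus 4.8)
The plan is to apply Lemma~\ref{lm:not_bad} to the multiset $\mathcal{H}^{(1)}$ with parameters $f:=\ell_0+5\chi=\Theta(\sqrt n)$, $m:=m'=\lfloor\varepsilon N/\sqrt n\rfloor$, and $\mathcal{B}$ the property of having more than $\lceil n^{0.3}\rceil$ edges. Since the graphs in $\mathcal{H}^{(1)}$ are all of the same size, condition~\eqref{eq:main_lm} simplifies (as in Claim~\ref{cl:sharp_day_1}) to verifying, for every fixed $H\in\mathcal{H}^{(1)}$,
\begin{equation*}
\sum_{\ell>n^{0.3}}\Pi^H_{\mathcal{B}_{\ell}}\left(\left(1+\tfrac{3f}{m'}\right)\tfrac{N}{m'}\right)^{\ell}e^{-f^2/m'+f^3/(3m'^2)}=o(1),
\end{equation*}
where $\Pi^H_{\mathcal{B}_\ell}=\Prob(H\cap\mathbf{H}$ has $\ell$ edges$)$ and $\mathbf{H}$ is uniform on $\mathcal{H}^{(1)}$. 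Note $N/m'=\Theta(\sqrt n/\varepsilon)$ and $f^2/m'=O(1)$, so the factor $e^{-f^2/m'+f^3/(3m'^2)}=\Theta(1)$ and the per-$\ell$ weight is $\Theta((\sqrt n/\varepsilon)^{\ell})$.

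The key step is to bound $\Pi^H_{\mathcal{B}_\ell}$, i.e. to count, for fixed $\ell,x,c,\sigma$, the number of subgraphs $J\subset H$ with $\ell$ edges, $x$ non-isolated vertices and $c$ components, weighted by the number of $H'\in\mathcal{H}^{(1)}$ extending a copy of $J$. For the count of subgraphs of $H$ I would use the coarse bound of Claim~\ref{cl:J_upper_bound}, giving at most ${\ell_\chi\choose c}(16d)^\ell$ choices. For the number of extensions of $J$ to a member of $\mathcal{H}^{(1)}$ I invoke Claim~\ref{cl:pre-images-size}: every $H'\in\mathcal{H}^{(1)}$ containing $J$ lies in $\mathcal{S}(J)$, so there are at most $64^{\ell}(n-x+c)!\max\{n^{c/6},{\ell_\chi\choose c+\sigma}\}$ of them (with $d=4$, $\Delta=6$, $\sigma=2x-3c-\ell$). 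Dividing by $|\mathcal{H}^{(1)}|=(1-o(1))n!$ and using $\frac{(n-x+c)!}{n!}\le e^{(x-c)^2/n}/n^{x-c}=e^{o(1)}/n^{(\ell+3c+2\sigma)/2}$ (since $2x=\ell+3c+2\sigma$, $x-c\le 2f=o(\sqrt n)$), I obtain
\begin{equation*}
\Pi^H_{\mathcal{B}_\ell}\le e^{O(\ell)}\,\frac{{\ell_\chi\choose c}\max\{n^{c/6},{\ell_\chi\choose c+\sigma}\}}{n^{(\ell+3c+2\sigma)/2}}.
\end{equation*}
Multiplying by $(\sqrt n/\varepsilon)^{\ell}\cdot\Theta(1)$, the power of $n$ becomes $n^{-3c/2-\sigma}$ (times the $c/6$ or combinatorial factor), while the combinatorial factors ${\ell_\chi\choose c}$, ${\ell_\chi\choose c+\sigma}$, $\max\{n^{c/6},\cdot\}$ are easily absorbed: $n^{c/6-3c/2}=n^{-4c/3}$, and $\binom{\ell_\chi}{c+\sigma}\le \ell_\chi^{c+\sigma}\le n^{(c+\sigma)/2+o(1)}$ against $n^{-3c/2-\sigma}$ still leaves $n^{-c-\sigma/2+o(1)}$. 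Since for $H\in\mathcal{H}^{(1)}$ every nontrivial component (being a smoothed piece of the square of a path) forces $\sigma_i\ge 1$ when $\ell_i\ge 2$, one has enough decay in $\ell$: for each fixed $\ell>n^{0.3}$ the whole inner sum over $x,c,\sigma$ is at most, say, $n^{-c}(e^{O(1)}/\sqrt n)^{\sigma}$ summed, which is $o(n^{-\omega(1)})$ because $\ell\to\infty$; summing the geometric-type tail over $\ell>n^{0.3}$ gives $o(1)$ (in fact $e^{-\Omega(n^{0.3})}$).

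The main obstacle is the bookkeeping in the previous paragraph: making sure the powers of $n$ genuinely dominate all the binomial and factorial factors uniformly over the admissible range of $(x,c,\sigma)$, in particular handling the regime where $c$ is comparable to $\ell$ (many short components, so $\sigma$ small) versus the regime where $\sigma$ is large. This is exactly the type of case analysis carried out in Claims~\ref{cl:sharp_day_1} and~\ref{cl:sharp_2_day_1}, so I would split into $\sigma\ge\varepsilon'\ell$ versus $\sigma<\varepsilon'\ell$: in the first case the factor $n^{-\sigma}$ wins outright against everything; in the second case $c+\sigma=O(\varepsilon'\ell)+c$, the combinatorial terms are $e^{O(\varepsilon'\ell)}$, and the surviving $n^{-(\ell-c-\sigma)/2}\ge n^{-(1-o(1))\ell/2}$ against $n^{\ell/2}$ still leaves an $e^{-\Omega(\ell)}$ factor once $\varepsilon'$ is small relative to $\varepsilon$. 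One technical point worth flagging: Claim~\ref{cl:pre-images-size} was established for the fixed evaluation $F'=F'(F,W)$ chosen at the start of this subsection, so I must phrase the argument in terms of that fixed choice of $\mathcal{H}^{(1)}$ rather than reintroducing randomness in the fragment selection; this is fine since $\mathbf{W}^{(1)}$ is sampled independently of $\mathbf{W}$ and of the evaluation. Concluding, $(\mathbf{H},\mathbf{W}^{(1)})$ is not $\lceil n^{0.3}\rceil$-bad whp, as claimed.
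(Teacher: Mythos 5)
Your overall strategy is the same as the paper's: apply Lemma~\ref{lm:not_bad} to $\mathcal{H}^{(1)}$ with $f=\ell_0+5\chi$ and $m=m'$, bound the number of subgraphs $J\subset H$ via Claim~\ref{cl:J_upper_bound}, bound the number of extensions via Claim~\ref{cl:pre-images-size}, and convert $(n-x+c)!/n!$ into a negative power of $n$. However, there is a genuine gap at the heart of the argument: the crucial step that makes the $(1/\varepsilon)^{\ell}$ factor die is missing.

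You assert that for a smoothed fragment $H$, ``every nontrivial component forces $\sigma_i\ge 1$ when $\ell_i\ge 2$.'' This is false: a triangle is $P_3^2$, a closed subgraph, with $x_i=3$, $\ell_i=3$, and $\sigma_i=2\cdot3-3-3=0$. More importantly, even the corrected version --- $\sigma_i\ge1$ whenever the component is too large to be a closed subgraph of $H$ --- is far too weak for what you need. After multiplying the correct factorial bound $(n-x+c)!/n!=O(n^{-(\ell+c+\sigma)/2})$ (note: your exponent $(\ell+3c+2\sigma)/2$ equals $x$, not $x-c$; the correct value is $(\ell+c+\sigma)/2$) by $(N/m')^{\ell}=\Theta((\sqrt n/\varepsilon)^{\ell})$, the $n^{-\ell/2}$ cancels and what survives is $n^{-(c+\sigma)/2}\cdot(e^{O(1)}/\varepsilon)^{\ell}$ times binomial factors. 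For this to be summably small uniformly over $\ell$, you need $c+\sigma=\Omega(\ell/\ln n)$, not merely $\sigma\ge\text{(number of nontrivial components)}$ --- in the regime of a few large non-closed components, $c$ and $\sigma$ can both be $O(1)$ while $\ell$ is large, which your bookkeeping (including the $\sigma\ge\varepsilon'\ell$ vs.\ $\sigma<\varepsilon'\ell$ split in your second paragraph) does not rule out.

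The paper supplies exactly this missing quantitative input. Because the smoothed fragment $H$ has no closed subgraph on more than $4\mu=O(\ln n/w)$ vertices, any connected $J_i\subset H$ with $\ell_i\ge\ln n/(100C)$ edges accumulates missing edges at rate proportional to its size, giving $\sigma_i\ge 4C\ell_i/\ln n$ --- see~\eqref{eq:sigma_lower_2d-power}. Summing yields $\sigma\ge\frac{4C}{\ln n}\ell-\frac{c}{25}$, hence $c+\sigma\ge\frac{4C}{\ln n}\ell$, so that $n^{-(c+\sigma)/2}\le e^{-2C\ell}$ overpowers $(2^{12}/\varepsilon)^{\ell}$ once $C$ is chosen large relative to $\varepsilon$. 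Without this linear-in-$\ell/\ln n$ lower bound on the excess --- which is the entire payoff of the smoothing construction --- the sum over $\ell>\lceil n^{0.3}\rceil$ cannot be controlled, and the argument does not close.
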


\begin{proof}
For $\ell\in\mathbb{N}$ and $H\in\mathcal{H}^{(1)}$, let  $\Pi^H_{\ell}:=\Prob(H\cap \mathbf{H}\text { has }\ell\text{ edges})$. We let $f=\ell_0+5\chi$ and $\mathcal{B}$ be the set of all graphs on $n$ vertices with more than $\lceil n^{0.3}\rceil$ edges. We then apply Lemma~\ref{lm:not_bad} with these $f$ and $\mathcal{B}$. We get that, in order to prove Claim~\ref{cl:not_bad_2}, it is sufficient to show that, uniformly in $H\in\mathcal{H}^{(1)}$,
\begin{equation}
 \sum_{\ell\geq \lceil n^{0.3}\rceil+1}\Pi^H_{\ell}\left((1+o(1))\frac{N}{m'}\right)^{\ell}=o\left(\frac{1}{\ln n}\right).
\label{eq:second_power_fragmentation_step}
\end{equation}
Fix non-negative integers $c$ and $x$. Let us denote by $\mathcal{J}^H_{\ell,x,c}$ the set of all subgraphs $J\subset H$ with $\ell$ edges, $x$ non-isolated vertices, and $c$ connected components (excluding isolated vertices). Denote by $p^H(\ell,x,c)$ the probability that $\mathbf{H}\cap H\in\mathcal{J}^H_{\ell,x,c}$. Due to Claim~\ref{cl:pre-images-size}, letting $\ell_{\chi}:=\ell_0+5\chi$ and $\sigma:=2x-3c-\ell$, we get
$$
p^H(\ell,x,c)\leq\frac{|\mathcal{J}^H_{\ell,x,c}|\cdot 64^{\ell}(n-x+c)!\max\left\{n^{c/6}{\ell_{\chi}\choose \min\{\lfloor\ell_{\chi}/2\rfloor,\sigma\}},{\ell_{\chi}\choose \min\{\lfloor\ell_{\chi}/2\rfloor,c+\sigma\}}\right\}}{|\mathcal{H}^{(1)}|}.
$$
Moreover, due to Claim~\ref{cl:J_upper_bound}, $|\mathcal{J}^H_{\ell,x,c}|\leq {\ell_{\chi}\choose c} 2^{6\ell}.$ Therefore,
\begin{align*}
 \Pi^H_{\ell}&=\sum_{c=1}^{\ell}\sum_{\ell/2+3c/2\leq x\leq\ell+c}p_{\ell,x,c}\\
 &\leq \sum_{c,x}\frac{{\ell_{\chi}\choose c}2^{12\ell}\left(n-x+c\right)!\max\left\{n^{c/6}{\ell_{\chi}\choose \min\{\lfloor\ell_{\chi}/2\rfloor,\sigma\}},{\ell_{\chi}\choose \min\{\lfloor\ell_{\chi}/2\rfloor,c+\sigma\}}\right\}}{(1-o(1))n!}.
\end{align*}
In the usual way, we get
\begin{align}
\frac{(n-x+c)!}{n!}&\leq\frac{1}{(n-x+c)^{x-c}}=\frac{1}{n^{x-c}(1-\frac{x-c}{n})^{x-c}}
&=O\left(n^{-x+c}\right)
=O\left(n^{-\ell/2-c/2-\sigma/2}\right).
\label{eq:n-x+c}
\end{align}
Therefore,
\begin{align}
\label{eq:day1_pi^H_ell}
 \Pi^H_{\ell}=O\left(\sum_{c,x}\frac{{\ell_{\chi}\choose c}2^{12\ell}\max\left\{n^{c/6}{\ell_{\chi}\choose \min\{\lfloor\ell_{\chi}/2\rfloor,\sigma\}},{\ell_{\chi}\choose \min\{\lfloor\ell_{\chi}/2\rfloor,c+\sigma\}}\right\}}{n^{\ell/2+c/2+\sigma/2}}\right).
\end{align}

Now, notice that, for a graph $J\in\mathcal{J}^H_{\ell,x,c}$ consisting of $c$ components that have sizes $\ell_1,\ldots,\ell_c$, we have $\ell_i=2x_i-3-\sigma_i$, where 
$$
\sigma_i\geq \frac{4C\ell_i}{\ln n}\cdot I\left(\ell_i\geq\frac{\ln n}{100C}\right)
$$
and $C$ is a large positive constant. Indeed, since every closed subgraph has size at most $4\mu$, we get that every set of $x'\geq \frac{\ln n}{200C}$ vertices contributes at most $2x'-4$ edges. Therefore, for any admissible triple $(\ell,x,c)$, we get
\begin{equation}
\sigma=\min_{J\in\mathcal{J}^H_{\ell,x,c}}\sum_{i=1}^c\sigma_i\geq\min_{J\in\mathcal{J}^H_{\ell,x,c}}\sum_{i:\,\ell_i\geq\frac{\ln n}{100C}}\frac{4C\ell_i}{\ln n}
>\left(\ell-c\cdot\frac{\ln n}{100C}\right)\frac{4C}{\ln n}=\frac{4C}{\ln n}\cdot\ell-\frac{c}{25}.
 \label{eq:sigma_lower_2d-power}
\end{equation}

Assume first 
$$
n^{c/6}{\ell_{\chi}\choose \min\{\lfloor\ell_{\chi}/2\rfloor,\sigma\}}\geq{\ell_{\chi}\choose \min\{\lfloor\ell_{\chi}/2\rfloor,c+\sigma\}}.
$$
If, also, $\lfloor\ell_{\chi}/2\rfloor\leq\sigma$, then, due to~\eqref{eq:sigma_lower_2d-power} and \eqref{eq:day1_pi^H_ell},
\begin{align}
\Pi^H_{\ell}\left((1+o(1))\frac{N}{m'}\right)^{\ell} &\leq
\frac{{\ell_{\chi}\choose c}2^{12\ell}n^{c/6}2^{2\sigma}}{n^{\ell/2+c/2+\sigma/2}}\left((1+o(1))\frac{N}{m'}\right)^{\ell}\\
&\leq\frac{\left(\frac{e\ell_{\chi}}{c}\right)^c (2^{12}/\varepsilon+o(1))^{\ell} 2^{2\sigma} }{n^{c/3}n^{\sigma/2}}\notag\\
&\leq\left(\frac{3Cn^{1/6+1/50}}{c}\right)^c \left(\frac{2^{24}}{\varepsilon}+o(1)\right)^{\ell}n^{-2C\ell/\ln n}<
\left(\frac{n^{0.2}}{c}\right)^c e^{-C\ell}
\label{eq:day1_case_large_c-sigma}
\end{align}
if $C=C(\varepsilon)$ is large enough. If $\lfloor\ell_{\chi}/2\rfloor>\sigma$, then consider separately two cases: $c<50C\ell/\ln n$ and $c\geq 50C\ell/\ln n$. In the first case, we have that $\sigma\geq 2C\ell/\ln n$ due to~\eqref{eq:sigma_lower_2d-power}. Therefore, for large enough $C$, due to~\eqref{eq:day1_pi^H_ell},
\begin{align*}
\Pi^H_{\ell}\left((1+o(1))\frac{N}{m'}\right)^{\ell} &\leq
\frac{{\ell_{\chi}\choose c}2^{12\ell}n^{c/6}(e\ell_{\chi}/\sigma)^{\sigma}}{n^{\ell/2+c/2+\sigma/2}}\left((1+o(1))\frac{N}{m'}\right)^{\ell}\\
&\leq\left(\frac{3Cn^{1/6}}{c}\right)^c\left(\frac{3C}{\sigma}\right)^{\sigma} \left(\frac{2^{12}}{\varepsilon}+o(1)\right)^{\ell}
<\left(\frac{n^{0.2}}{c}\right)^c e^{-C\ell/2}.
\end{align*}
In the latter case, since $c=\Omega(\ell/\ln n)$ is large, we get the bound
\begin{align*}
\frac{{\ell_{\chi}\choose c}2^{12\ell}n^{c/6}(e\ell_{\chi}/\sigma)^{\sigma}}{n^{\ell/2+c/2+\sigma/2}}\left((1+o(1))\frac{N}{m'}\right)^{\ell}
&\leq\left(\frac{3Cn^{1/6}}{c}\right)^c\left(\frac{3C}{\sigma}\right)^{\sigma} \left(\frac{2^{12}}{\varepsilon}+o(1)\right)^{\ell}\\
&<n^{-0.1c} \left(\frac{2^{12}}{\varepsilon}+o(1)\right)^{\ell}<e^{-C\ell}.
\end{align*}

Finally, let $n^{c/6}{\ell_{\chi}\choose \min\{\lfloor\ell_{\chi}/2\rfloor,\sigma\}}<{\ell_{\chi}\choose \min\{\lfloor\ell_{\chi}/2\rfloor,c+\sigma\}}$. If $\lfloor\ell_{\chi}/2\rfloor\leq c+\sigma$, we get exactly the same bound as in~\eqref{eq:day1_case_large_c-sigma}. We then assume that $\lfloor\ell_{\chi}/2\rfloor> c+\sigma$.
 With this assumption, we get that ${\ell_{\chi}\choose c}\leq{\ell_{\chi}\choose c+\sigma}$. Therefore, 
\begin{align*}
\Pi^H_{\ell}\left((1+o(1))\frac{N}{m'}\right)^{\ell} &\leq
\frac{{\ell_{\chi}\choose c}2^{12\ell}{\ell_{\chi}\choose c+\sigma}}{n^{\ell/2+c/2+\sigma/2}}\left((1+o(1))\frac{N}{m'}\right)^{\ell}
\leq\frac{\left(\frac{e\ell_{\chi}}{c+\sigma}\right)^{2(c+\sigma)} (2^{12}/\varepsilon+o(1))^{\ell} }{n^{c/2+\sigma/2}}\\
&\stackrel{\eqref{eq:sigma_lower_2d-power}}\leq\left(\frac{9C^2n^{1/2}}{(c+\sigma)^2}\right)^{c+\sigma} \left(\frac{2^{12}}{\varepsilon}+o(1)\right)^{\frac{\sigma\ln n}{4C}+\frac{c\ln n}{100C}}<
\left(\frac{n^{0.55}}{(c+\sigma)^2}\right)^{c+\sigma}.
\end{align*}
Due to~\eqref{eq:sigma_lower_2d-power}, $c+\sigma\geq \frac{4C}{\ln n}\cdot\ell$. We finally get
\begin{align*}
\sum_{\ell\geq \lceil n^{0.3}\rceil+1}\Pi^H_{\ell}\left((1+o(1))\frac{N}{m'}\right)^{\ell}
&\leq\sum_{c,\ell,x}\left[\left(\frac{n^{0.2}}{c}\right)^c e^{-C\ell/2}+\left(\frac{n^{0.55}}{(c+\sigma)^2}\right)^{c+\sigma}\right]\\
&\leq\sum_{\ell\geq n^{0.3}}\left(e^{-\ell}+\sum_x\left(\sum_{c\leq n^{0.25}}n^{0.2c} e^{-C\ell/2}+\sum_{c>n^{0.25}}n^{-0.05c}\right)\right)\\
&\leq n^3 e^{-\Omega(n^{0.3})}+n^3 e^{-\Omega(n^{0.25})}=e^{-\Omega(n^{0.25})}.
\end{align*}
This completes the proof. 
\end{proof}

Let $Y^{(1)}$ be the number of $H\in\mathcal{H}^{(1)}$ (with multiplicities) such that $(H,\mathbf{W}^{(1)})$ is not $\lceil n^{0.3}\rceil$-bad. Due to Markov's inequality, $Y^{(1)}=(1-o(1))n!$ whp. 
  For every such $H$ we choose $H'\subset H\cup\mathbf{W}^{(1)}$ such that $H'\in\mathcal{H}^{(1)}$ and the fragment $H\cap H'$ has size at most $\lceil n^{0.3}\rceil$ and put this fragment into the new multiset $\mathcal{H}^{(2)}$. Without loss of generality, we may assume that each graph in this multiset has exactly $\lceil n^{0.3}\rceil$ edges.
  
\subsection{Day 2: getting fragments of size $o(\log n)$}
\label{sc:day_2}

Here, we consider sufficiently sparser random graphs. Let 
$$
 m'=\left\lceil\frac{\varepsilon}{(\ln\ln n)\sqrt{n}}\cdot N\right\rceil,\quad R:=\left\lceil\log_{3/2}(\ln n/\ln\ln n)\right\rceil+1.
$$
Expose $R-1$ independent copies $\mathbf{W}^{(2)},\ldots,\mathbf{W}^{(R)}\sim G(n,m')$. We make $R-1$ fragmentation steps and argue by induction. Assume that, for $i\in\{2,\ldots,R\}$, we have a multiset $\mathcal{H}^{(i)}$ of $(1-o(1))n!$ fragments of size 
$$
f_i:=\left\lceil n^{0.3\cdot (2/3)^{i-2}}\right\rceil,
$$
each is a subgraph of a smoothed fragment of some $F$ --- one fragment per $F$. We also assume that the union of each fragment with $\mathbf{W}\cup\mathbf{W}^{(1)}\cup\ldots\cup\mathbf{W}^{(i-1)}$ contains some $F\in\mathcal{F}_n$. Let $\mathbf{H}$ be a uniformly random element of $\mathcal{H}^{(i)}$. Observe that $(\ln n)^{0.2}<f_{R+1}\leq (\ln n)^{0.3}$ and $f_i>f_{R+1}$, for every $i\in\{2,\ldots,R\}$.
\begin{claim}
\label{cl:day_2}
With probability $1-o((\ln n)^{-1/3})$, there exists $H'\subset \mathbf{H}\cup\mathbf{W}^{(i)}$, where $H'\in\mathcal{H}^{(i)}$, such that $|\mathbf{H}\cap H'|\leq f_{i+1}$.
\end{claim}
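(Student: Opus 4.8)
The plan is to apply Lemma~\ref{lm:not_bad} to the family $\mathcal{H}^{(i)}$, with $f=f_i$, $m=m'$, $\mathcal{B}$ the property of having more than $f_{i+1}$ edges, and $\delta_n$ chosen so that $2\delta_n=o((\ln n)^{-1/3})$ (say $\delta_n=(\ln n)^{-1/2}$). Since $\mathbf{H}$ belongs to $\mathcal{M}(\mathbf{H},\mathbf{W}^{(i)})$ but, having $f_i>f_{i+1}$ edges, also to $\mathcal{M}_{\mathcal{B}}(\mathbf{H},\mathbf{W}^{(i)})$, the conclusion ``$(\mathbf{H},\mathbf{W}^{(i)})$ is not $\mathcal{B}$-bad'' yields $\mathcal{M}(\mathbf{H},\mathbf{W}^{(i)})\setminus\mathcal{M}_{\mathcal{B}}(\mathbf{H},\mathbf{W}^{(i)})\neq\varnothing$, which is precisely the existence of the required $H'$. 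Because $f_i\le n^{0.3}\ll m'=\Theta(n^{3/2}/\ln\ln n)$, the factors $1+\tfrac{3f}{m'}$ and $e^{-f^2/m'+f^3/(3m'^2)}$ in~\eqref{eq:main_lm} are $1+o(1)$, so it remains to prove, uniformly over $H\in\mathcal{H}^{(i)}$, that $\sum_{\ell>f_{i+1}}\Pi^H_{\ell}\big((1+o(1))N/m'\big)^{\ell}=o(1/\ln n)$, where $\Pi^H_\ell=\Prob(H\cap\mathbf{H}\text{ has }\ell\text{ edges})$ and $N/m'=(1+o(1))(\ln\ln n)\sqrt n/\varepsilon$. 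This is the bound~\eqref{eq:second_power_fragmentation_step} of Claim~\ref{cl:not_bad_2}, except that the per-edge weight now carries an additional $\Theta(\ln\ln n)$ — which cannot be removed, since the total sprinkling budget $O(\varepsilon N/\sqrt n)$ is distributed over $R-1=\Theta(\log\log n)$ rounds — and this factor is the source of all the difficulty.

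To estimate $\Pi^H_\ell$ I would proceed exactly as in Claim~\ref{cl:not_bad_2}. Fix $c,x$, put $\sigma=2x-3c-\ell$, and observe that every $J\in\mathcal{J}^H_{\ell,x,c}$ is a subgraph of the smoothed fragment $H'(F)$ indexing $H$; as $\mathcal{H}^{(i)}$ contains one graph per $F$ with $|\mathcal{H}^{(i)}|=(1-o(1))n!$, the number of graphs of $\mathcal{H}^{(i)}$ containing a fixed $J$ is at most $|\mathcal{S}(J)|$ from Claim~\ref{cl:pre-images-size}. Together with Claim~\ref{cl:J_upper_bound} ($|\mathcal{J}^H_{\ell,x,c}|\le\binom{f_i}{c}64^{\ell}$, as $d=4$), with~\eqref{eq:S_bound_large_fragments}, and with $\tfrac{(n-x+c)!}{n!}=(1+o(1))n^{-(\ell+c+\sigma)/2}$ (using $x-c=\tfrac{\ell+c+\sigma}{2}$ and $x=O(f_i)=o(\sqrt n)$), the $\ell$-th summand is dominated by $\sum_{c,\sigma}\binom{f_i}{c}\big(\Theta(\ln\ln n)\big)^{\ell}n^{-(c+\sigma)/2}\max\{n^{c/6},\binom{\ell_\chi}{c+\sigma}\}$, where $\ell_\chi=\ell_0+5\chi=O(\sqrt n)$.

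The proof then turns on the excess gain of~\eqref{eq:sigma_lower_2d-power}: since $H$ is a subgraph of a smoothed fragment it has no closed subgraph on more than $O(\ln n/w)$ vertices, so the counting argument behind~\eqref{eq:sigma_lower_2d-power}, run with this genuine bound instead of the coarser $\tfrac{\ln n}{200C}$, yields $c+\sigma\gtrsim \tfrac{w\ell}{\ln n}$; hence $n^{-(c+\sigma)/2}\le e^{-\Omega(w\ell)}$, which, as $w\to\infty$ (and growing fast enough, e.g.\ $w=\ln\ln n$), absorbs $64^{2\ell}$ as well as the new $(\ln\ln n)^{\ell}$. The binomial $\binom{f_i}{c}\le n^{0.3c}$ is swallowed by splitting $n^{-c/6}$ off the exponent in Case A ($\max=n^{c/6}$) and by $n^{-(c+\sigma)/2}\binom{\ell_\chi}{c+\sigma}\le\big(e\ell_\chi/((c+\sigma)\sqrt n)\big)^{c+\sigma}=\big(O(1)/(c+\sigma)\big)^{c+\sigma}$ (superexponentially small in $c+\sigma$) in Case B, just as in Claim~\ref{cl:not_bad_2}; feeding $\ell>f_{i+1}>(\ln n)^{0.2}$ into the geometric series then gives $e^{-\Omega((\ln n)^{0.2})}=o(1/\ln n)$, so~\eqref{eq:main_lm} holds and Lemma~\ref{lm:not_bad} finishes. \textbf{The main obstacle} — the one place where the argument of Claim~\ref{cl:not_bad_2} does not transfer verbatim — is the regime where $\ell$ is only polylogarithmic and $c+\sigma=O(1)$, so that neither $e^{-\Omega(w\ell)}$ nor $\binom{\ell_\chi}{c+\sigma}$ can overcome $(\ln\ln n)^{\ell}$; this forces one to use the refined bound~\eqref{eq:S_bound_small_fragments} (prepared on Day~1 to hold for all $S$ with $\ell(S)\le n^{0.1}$, which covers all $\ell\le f_i$ once $f_i<n^{0.1}$), whose extra factor $\min\{(\ln n)^{c+\sigma},n^{1/4}\}^{-1}$, combined with $c+\sigma\gtrsim w\ell/\ln n$ and $w$ growing sufficiently quickly, finally beats the polylog loss; for the earlier, larger values of $i$ one has $\ell>f_{i+1}=n^{\Omega(1)}$ and hence $c+\sigma=n^{\Omega(1)}$, so $n^{-(c+\sigma)/2}=e^{-n^{\Omega(1)}}$ already dominates everything and~\eqref{eq:S_bound_large_fragments} suffices there.
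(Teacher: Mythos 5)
Your high-level plan matches the paper: apply Lemma~\ref{lm:not_bad} with $f=f_i$ and $\mathcal{B}=\{$graphs with $>f_{i+1}$ edges$\}$, bound $\Pi^H_\ell$ via Claims~\ref{cl:J_upper_bound} and~\ref{cl:pre-images-size}, and use the absence of long closed subgraphs in a smoothed fragment to force $c+\sigma$ to be proportional to $\ell$, which must absorb the per-edge penalty $\Theta(\ln\ln n)$. You correctly diagnose where the difficulty comes from. However, there is a genuine gap in how you propose to close it.

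\textbf{The choice of $w$ is wrong, and it matters.} You propose $w=\ln\ln n$. With $c+\sigma\gtrsim w\ell/\ln n$, that gives only $c+\sigma\gtrsim \ell\ln\ln n/\ln n$, which, for $\ell$ near $f_{R+1}\approx(\ln n)^{0.2}$, is $o(1)$ — i.e.\ the bound is vacuous, so $c+\sigma$ really can be bounded (even equal to $1$). You notice this yourself (``the regime where $\ell$ is only polylogarithmic and $c+\sigma=O(1)$''), but the fix you propose — invoking the refined estimate~\eqref{eq:S_bound_small_fragments} — does not repair it. Even after replacing $\binom{\ell_\chi}{c+\sigma}$ by $\ell_\chi^{c+\sigma}/(\ln n)^{c+\sigma}$, the factor left over after using $n^{-(c+\sigma)/2}$ is of order $(1/\ln n)^{c+\sigma}=e^{-(c+\sigma)\ln\ln n}$, which must beat $(\ln\ln n)^\ell=e^{\ell\ln\ln\ln n}$; with $c+\sigma\gtrsim\ell\ln\ln n/\ln n$, the exponent is $\ell\big(\ln\ln\ln n-\Theta((\ln\ln n)^2/\ln n)\big)\to+\infty$, so the product grows rather than shrinks. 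Bound~\eqref{eq:S_bound_small_fragments} is used on Day~3 (where the fragment size is the fixed $\lceil(\ln n)^{0.3}\rceil$), not on Day~2.

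\textbf{What the paper actually does}: it takes $w$ nearly as large as allowed, namely $\frac{\ln n}{\ln\ln\ln n}\ll w\ll\ln n$ (a degree of freedom that is deliberately left open until this point). This turns the sharpened excess bound into $\ell\le(c+\sigma)\ln\ln\ln n$, so for $\ell>f_{i+1}>(\ln n)^{0.2}$ the quantity $c+\sigma$ is automatically $\gg 1$, your ``obstacle'' regime does not exist, and~\eqref{eq:S_bound_large_fragments} alone suffices. Then $(\ln\ln n)^\ell\le e^{(c+\sigma)(\ln\ln\ln n)^2}$, $\binom{f_i}{c}\le\binom{f_i}{c+\sigma}$, and $n^{-(c+\sigma)/2}\binom{\ell_\chi}{c+\sigma}\le\big(O(1)/(c+\sigma)\big)^{c+\sigma}$ combine into $\big(f_i e^{(\ln\ln\ln n)^3}/(c+\sigma)^2\big)^{c+\sigma}$, whose base is $<1/2$ because $c+\sigma\ge\ell/\ln\ln\ln n\ge f_i^{2/3}/\ln\ln\ln n$ and $e^{(\ln\ln\ln n)^3}<\sqrt{\ln n}<f_i^{1/3}/(2(\ln\ln\ln n)^2)$.

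One smaller point: you use $n^{-(c+\sigma)/2}$ twice — once to ``absorb $(\ln\ln n)^\ell$'' and once to turn $\binom{\ell_\chi}{c+\sigma}$ into $\big(O(1)/(c+\sigma)\big)^{c+\sigma}$. Those are the same factor; the single use against $\binom{\ell_\chi}{c+\sigma}$ consumes it entirely, so the $(\ln\ln n)^\ell$ control must come from the $c+\sigma$ lower bound, which is precisely why $w$ has to be taken so large.
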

\begin{proof}
For $\ell\in\mathbb{N}$ and $H\in\mathcal{H}^{(i)}$, let  $\Pi^H_{\ell}:=\Prob(H\cap \mathbf{H}\text { has }\ell\text{ edges})$. We let $f=f_i$ and $\mathcal{B}$ be the set of all graphs on $n$ vertices with more than $f_{i+1}$ edges. By Lemma~\ref{lm:not_bad}, it is sufficient to show~\eqref{eq:second_power_fragmentation_step} with $\lceil n^{0.3}\rceil$ replaced by $f_{i+1}$. For non-negative integers $c$ and $x$, we denote, as above, the probability that $\mathbf{H}\cap H\text{ has }\ell\text{ edges, }x\text{ non-isolated vertices, and }c\text{ components}$ by $p^H(\ell,x,c)$. Here,  letting $\sigma:=2x-3c-\ell$ and $\ell_{\chi}:=\ell_0+5\chi$, we have that $c+\sigma=2(x-c)\ell\leq\ell<\ell_{\chi}/2$ and also 
\begin{align*}
n^{c/6}{\ell_{\chi}\choose\sigma}
=n^{c/6}\frac{(c+\sigma)!(\ell_{\chi}-c-\sigma)!}{\sigma!(\ell_{\chi}-\sigma)!}{\ell_{\chi}\choose c+\sigma}
&<\left(n^{1/6}\cdot\frac{c+\sigma}{\ell_{\chi}-c-\sigma}\right)^c{\ell_{\chi}\choose c+\sigma}\\
&<\left(\frac{n^{1/6}\ell}{\ell_{\chi}-\ell}\right)^c{\ell_{\chi}\choose c+\sigma}
<{\ell_{\chi}\choose c+\sigma}.
\end{align*}
Therefore, due to Claims~\ref{cl:J_upper_bound},~\ref{cl:pre-images-size}, we get
\begin{equation}
p^H(\ell,x,c)\leq\frac{{f_i\choose c} 2^{12\ell} (n-x+c)!{\ell_{\chi}\choose c+\sigma}}{(1-o(1))n!}.
\label{eq:p^H_second_power_fragmentation_step}
\end{equation}
Note that we have not yet specified the value of $w$. The only restriction that we currently have is $1\ll w\ll \log n$. We, therefore, may further assume that $\frac{\log n}{\log\log\log n}\ll w\ll\log n$. In particular, it implies the following refinement of the inequality~\eqref{eq:sigma_lower_2d-power}:
\begin{equation}
 \ell\leq (c+\sigma)\ln\ln\ln n.
\label{eq:sigma_lower_2d-power_refinement_lnlnln}
\end{equation}
Thus, applying~\eqref{eq:n-x+c},~\eqref{eq:p^H_second_power_fragmentation_step}, and the inequality $c+\sigma=2(x-c)-\ell\leq\ell$, we get that, for sufficiently large $C=C(\varepsilon)$,
\begin{align*}
 \Pi^H_{\ell}\left((1+o(1))\frac{N}{m'}\right)^{\ell}&
 \leq \sum_{c,x}\frac{{f_i\choose c} 2^{12\ell} {\ell_{\chi}\choose c+\sigma}((1/\varepsilon+o(1))\ln\ln n)^{\ell}}{n^{c/2+\sigma/2}}\\
 &\leq \sum_{c,x}\frac{{f_i\choose c}C^{c+\sigma}\left(\frac{C\ln\ln n}{\varepsilon}\right)^{\ell}}{(c+\sigma)^{c+\sigma}}\\
 &\stackrel{(*)}\leq \sum_{c,x}\frac{\left(\frac{e f_i}{c+\sigma}\right)^{c+\sigma}\left(C^3\ln\ln n\right)^{\ell}}{(c+\sigma)^{c+\sigma}}
 \leq
 \sum_{c,x}\left(\frac{f_i\cdot e^{(\ln\ln\ln n)^3}}{(c+\sigma)^2}\right)^{c+\sigma},
\end{align*} 
where the inequality (*) holds within the assumption that $c+\sigma\leq f_i/2$, and then ${f_i\choose c}\leq{f_i\choose c+\sigma}$. We can indeed make such an assumption since otherwise we get the bound $f_i^{-\Omega(f_i)}$ immediately.
Since
$$
e^{(\ln\ln\ln n)^3}<(\ln n)^{0.1}<\frac{f_i^{1/3}}{2(\ln\ln\ln n)^2}\quad\text{ and }\quad f_{i+1}+1\geq (f_i-1)^{2/3}+1\geq f_i^{2/3},
$$ 
and due to~\eqref{eq:sigma_lower_2d-power_refinement_lnlnln}, we finally get
\begin{align*}
\sum_{\ell\geq f_{i+1}+1}\Pi^H_{\ell}\left((1+o(1))\frac{N}{m'}\right)^{\ell}
&\leq\sum_{\ell\geq f_{i+1}+1}\sum_{c,x}\left(\frac{(f_i)^{4/3}/(\ln\ln\ln n)^2}{2(\ell/\ln\ln\ln n)^2}\right)^{\ell/\ln\ln\ln n}\\
&\leq\sum_{\ell\geq f_{i+1}}\sum_{c,x} 2^{-\ell/\ln\ln\ln n}\leq \sum_{\ell\geq f_{i+1}}\ell^2 2^{-\ell/\ln\ln\ln n}\\
&=2^{-\Omega((\ln n)^{0.2}/\ln\ln\ln n)},
\end{align*}
completing the proof.
\end{proof}

Due to Markov's inequality and the union bound, after the last step, we get a multiset $\mathcal{H}^{(R+1)}$ of $(1-o(1))n!$ fragments of size at most $\left\lceil(\ln n)^{0.3}\right\rceil$. Without loss of generality, we may assume that each graph in this multiset has exactly $\left\lceil(\ln n)^{0.3}\right\rceil$ edges.

\subsection{Day 3: covering a fragment}
\label{sc:day_3}


Next, we expose 
$$
\mathbf{G}'\sim G(n,p') ,\quad \text{ where}\quad p'=\varepsilon \cdot n^{-1/2}.
$$
Actually it remains to prove that whp there exists $H\in\mathcal{H}^{(R+1)}$ such that $H\subset\mathbf{G}'$. Let $Y^{(R+1)}$ be the number of such $H$. We get
$$
 \mathbb{E}Y^{(R+1)}=(p')^{\lceil (\ln n)^{0.3}\rceil}\cdot|\mathcal{H}^{(R+1)}|=(1-o(1))(p')^{\lceil (\ln n)^{0.3}\rceil}\cdot n!=\omega(1).
$$
On the other hand, recalling that $\mathcal{J}^{H}_{\ell,x,c}$ is defined in Section~\ref{sc:day_1}, due to the second part of Claim~\ref{cl:pre-images-size} (recall that in the beginning of Section~\ref{sc:day_1} we chose the fragments satisfying the bound~\eqref{eq:S_bound_small_fragments} deterministically), letting $\ell_{\chi}:=\ell_0+5\chi$ and $\sigma:=2x-3c-\ell$, we get
$$
 \mathrm{Var}Y^{(R+1)}\leq|\mathcal{H}^{(R+1)}|\sum_{\ell,x,c}\max_{H\in\mathcal{H}^{(R+1)}}
 |\mathcal{J}^{H}_{\ell,x,c}| 64^{\ell}(n-x+c)!\frac{(2\ell_\chi)^{c+\sigma}}{(\ln n)^{c+\sigma}}(p')^{2\lceil (\ln n)^{0.3}\rceil-\ell}-(\mathbb{E}Y^{(R+1)})^2.
$$
This implies
$$
 \frac{\mathrm{Var}Y^{(R+1)}}{(\mathbb{E}Y^{(R+1)})^2}\leq\sum_{\ell,x,c}\max_{H\in\mathcal{H}^{(R+1)}}
 \frac{|\mathcal{J}^{H}_{\ell,x,c}|\cdot 64^{\ell}(n-x+c)! (2\ell_\chi)^{c+\sigma}}{(1-o(1))n! p'^{\ell}  (\ln n)^{c+\sigma}} -1.
$$
 Due to Claim~\ref{cl:J_upper_bound}, estimates~\eqref{eq:n-x+c},~\eqref{eq:sigma_lower_2d-power_refinement_lnlnln}, and recalling that $c+\sigma\leq\ell$, we get
\begin{align*}
 \frac{\mathrm{Var}Y^{(R+1)}}{(\mathbb{E}Y^{(R+1)})^2} &\leq \sum_{\ell\geq 0,c,x}\frac{{\lceil (\ln n)^{0.3}\rceil\choose c}2^{12\ell}(4C\sqrt{n})^{c+\sigma}}{n^{\ell/2+c/2+\sigma/2}(\varepsilon\cdot n^{-1/2})^{\ell}(\ln n)^{c+\sigma}}-1\\
&\leq\sum_{\ell\geq 1,c,x}\frac{(e\lceil (\ln n)^{0.3}\rceil)^{c} \left(2^{14}C/\varepsilon\right)^{\ell} }{(\ln n)^{c+\sigma}}
\leq\sum_{\ell\geq 1,c,x}(\ln n)^{-(c+\sigma)/2}\\
&\leq\sum_{\ell\geq 1,c,x}(\ln n)^{-\ell/(2\ln\ln\ln n)}\leq\sum_{\ell\geq 1}\ell^2(\ln n)^{-\ell/(2\ln\ln\ln n)}=e^{-\Omega(\ln\ln n/\ln\ln\ln n)},
\end{align*}
completing the proof that an $H\subset\mathbf{G}'$ whp exists, due to Chebyshev's inequality.


Now, let $\mathbf{W}'\sim G(n,m')$ , where $m'=\lceil p'N\rceil$. Then whp there exists $H\in\mathcal{H}^{(R+1)}$ such that $H\subset\mathbf{W}'$. We conclude that whp $\mathbf{W}\cup\mathbf{W}^{(1)}\cup\ldots\cup\mathbf{W}^{(R)}\cup\mathbf{W}'$ contains the square of a Hamilton cycle. Then, we apply ~\cite[Corollary 1.16]{Janson} in the opposite direction and get that whp $\mathbf{G}\sim G(n,(1+10\varepsilon)\sqrt{e/n})$ contains the square of a Hamilton cycle, as well.

\subsection{Proof of Claim~\ref{cl:lm:main}}
\label{sc:lm:main_proof}

Let $\ell\geq\ell_0+1$ be an integer. We first note that the square of a path with $\ell$ edges has $\frac{\ell+3}{2}$ vertices. Therefore, a disjoint union of $c$ squares on $\ell$ edges has $\frac{\ell+3c}{2}$ vertices. We then fix integers $c\in[\ell]$ and $x\in\left[\frac{\ell}{2}+\frac{3}{2}c,\ell+c\right]$. For a subgraph $H\subset F\in\mathcal{F}_n$ with $x$ vertices, $\ell$ edges, and $c$ components, 
\begin{equation}
 \sigma:=2x-\ell-3c
\label{eq:sigma-definition_squareH}
\end{equation} 
is the difference between the number of edges of a disjoint union of squares of a path with $x$ vertices and $c$ components and the number of edges of $H$.

Now, let $F\in\mathcal{F}^*_n(\overrightarrow{D})$ and let $\mathbf{F}$ be a uniformly random element of $\mathcal{F}^*_n(\overrightarrow{D})$. Let $p(\ell,x,c)$ be the probability that $(\mathbf{F}\cap F)\cup(D_1\cup\ldots\cup D_{\chi})$ is a graph on $x+4\chi$ vertices with $\ell+5\chi$ edges, and $c+\chi$ connected components (as usual, we think about graphs as about sets of their edges, so there are no isolated vertices in $(\mathbf{F}\cap F)\cup(D_1\cup\ldots\cup D_{\chi})$). 
 The following claim immediately implies that, for sufficiently large $A_1,A_2>0$,
\begin{equation}
 p(\ell,x,c)\leq\frac{{n\choose c}{x\choose c+\chi}\left(\frac{4x+2\chi}{\chi}\right)^{\chi} e^{A_1c+A_2\sigma}\left(\max_{o\leq 8\sigma}{x\choose o}\right)^2(n-x-4\chi+c)!}{|\mathcal{F}_n(\overrightarrow{D})|}.
\label{eq:p_ell_x_c_bound}
\end{equation}

\begin{claim}
\label{cl:main_square}
There exist $A^1_{\alpha},A^2_{\alpha}>0$ such that the number of subgraphs $H\subset F$ such that $H\cup(D_1\cup\ldots\cup D_{\chi})$ is a graph without isolated vertices with $x+4\chi$ vertices, $\ell+5\chi$ edges, and $c+\chi$ components is
\begin{equation}
\alpha(\ell,x,c)\leq{n\choose c}{x\choose c+\chi}\left(\frac{4x+2\chi}{\chi}\right)^{\chi}e^{A^1_{\alpha} c+A^2_{\alpha}\sigma} \max_{o\leq 8\sigma}{x\choose o}.
\label{eq:count_subgraphs_general}
\end{equation}
There exist $A^1_{\beta},A^2_{\beta}>0$ such that, given $H\subset F$ such that $H\cup(D_1\cup\ldots\cup D_{\chi})$ has $x+4\chi$ vertices, $\ell+5\chi$ edges, and $c+\chi$ components, the number of ways to extend $H$ to a graph from $\mathcal{F}^*_n(\overrightarrow{D})$ is at most
\begin{equation}
 \beta(\ell,x,c)=(n-x-4\chi+c)!e^{A^1_{\beta}c+A^2_{\beta}\sigma}\max_{o\leq 8\sigma}{x\choose o}.
\label{eq:embed_subgraphs_general}
\end{equation}
\end{claim}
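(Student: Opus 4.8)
The plan is to adapt the double-counting argument from the proof of Claim~\ref{cl:main} to the setting where a fixed tuple of diamonds $\overrightarrow{D}=(D_1,\ldots,D_\chi)$ must be contained in every copy. I would prove \eqref{eq:count_subgraphs_general} and \eqref{eq:embed_subgraphs_general} separately, mirroring the two halves of the proof of Claim~\ref{cl:main}, treating the $\chi$ diamond-components differently from the remaining $c$ ``free'' components.

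For the counting bound \eqref{eq:count_subgraphs_general}: I would first split the $c+\chi$ components of $H$ into the $\chi$ components containing the diamonds $D_1,\ldots,D_\chi$ and the $c$ other components. The $\chi$ diamond-components contain the fixed diamonds, but they may be \emph{larger} than diamonds --- specifically, each diamond component is a closed subgraph (second power of a path) containing $D_i$. Since $F$ is (locally sparse in the sense of the cycle square) I would use Claim~\ref{cl:count_copies_vertex} / Claim~\ref{cl:total_number_closed} to bound: having fixed a vertex of $D_i$, there are at most $O(1)$ closed subgraphs of a given size through it; summing the sizes of the diamond components over $i\in[\chi]$ to a total of $v_{\mathrm{diam}}$ vertices, the number of choices is at most $\binom{x}{c+\chi}$ (to apportion vertices among all components) times a factor accounting for which closed subgraph of each given size is chosen through each fixed diamond edge, which contributes at most $\left(\frac{2x}{\chi}\right)^\chi$ by an AM–GM / convexity estimate on $\prod_i (\text{size}_i) \le (\frac{2x}{\chi})^\chi$ after bounding each count by a constant times its size. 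For the $c$ free components I copy the argument of Claim~\ref{cl:main} verbatim: choose which closed subgraphs correspond to $\sigma_i=0$ (at most $\frac{2dn}{3}$ each, here $d=4$), and for non-closed components run the activation/exploration procedure, producing the factors $e^{A_\alpha^1 c + A_\alpha^2\sigma}$ and $\max_{o\le 8\sigma}\binom{x}{o}$, together with the $\binom{n}{c}$ for the roots and $\binom{\sigma+c}{c}\le 2^{O(\sigma)}$ for apportioning edges.

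For the extension bound \eqref{eq:embed_subgraphs_general}: given such an $H$, I count monomorphisms $H\to F'$ for a fixed $F'\in\mathcal{F}_n(\overrightarrow{D})$ and divide by $|\mathrm{Aut}(F')|$. The key simplification relative to Claim~\ref{cl:main} is that, because the diamonds are \emph{fixed} in $\mathcal{F}_n(\overrightarrow{D})$ (their positions and the root are pinned), the $\chi$ diamond-components have essentially no freedom of placement beyond the extension of their closed ``tails'' --- so they contribute a factor absorbed into $e^{A_\beta^1 c}$ rather than a new power of $n$ or $x/\chi$. I would order the $n-x-4\chi+c$ components of $H$ (including singletons, after adding missing isolated vertices; note the diamond-components are not re-ordered, they sit in their prescribed slots), giving $(n-x-4\chi+c)!$, and embed them greedily by minimum uncovered vertex exactly as in the proof of Claim~\ref{cl:main}, using Claim~\ref{cl:count_copies_vertex} to bound placements of closed free components by $O(|\mathrm{Aut}(z_i)|)$ and $|\mathrm{Aut}(z_i)|\le |V(z_i)|(d-1)^{|V(z_i)|}$, and the activation procedure plus $d^{o_i}$ automorphism-extension bound for non-closed free components. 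Since $F$ is the square of a cycle it has $|\mathrm{Aut}(F')|=O(1)$, so the $\min\{\cdot,1\}$-type factor from Claim~\ref{cl:main} is trivially $1$ and the $(d-1)^x$, $(x/c)^c$ terms disappear, leaving exactly $(n-x-4\chi+c)!\, e^{A_\beta^1 c + A_\beta^2 \sigma}\max_{o\le 8\sigma}\binom{x}{o}$. Finally, \eqref{eq:p_ell_x_c_bound} follows by multiplying: $p(\ell,x,c)\le \alpha(\ell,x,c)\cdot\beta(\ell,x,c)/|\mathcal{F}_n(\overrightarrow{D})|$, since every subgraph of some $F\in\mathcal{F}_n(\overrightarrow{D})$ with the prescribed parameters arises as a subgraph (in at most $\alpha$ ways) extended to a copy (in at most $\beta$ ways), and $\mathbf{F}$ is uniform over the $|\mathcal{F}_n(\overrightarrow{D})|$ copies; the square of $\max_{o\le 8\sigma}\binom{x}{o}$ in \eqref{eq:p_ell_x_c_bound} comes from it appearing in both $\alpha$ and $\beta$.

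The main obstacle I anticipate is correctly bounding the contribution of the $\chi$ diamond-components in the counting step: one must show that allowing these components to be \emph{arbitrary closed subgraphs through a fixed diamond} (rather than just the diamond itself) only costs the extra factor $\left(\frac{2x}{\chi}\right)^\chi$ and not something larger. This requires combining Claim~\ref{cl:count_copies_vertex} (constantly many closed subgraphs of each size through a fixed edge) with a convexity bound $\prod_{i=1}^\chi (\text{const}\cdot \text{size}_i) \le \text{const}^\chi \prod \text{size}_i \le \text{const}^\chi\left(\frac{\sum \text{size}_i}{\chi}\right)^\chi \le \left(\frac{2x}{\chi}\right)^\chi$ (for $\chi$ large, absorbing the constant), together with care that $\binom{x}{c+\chi}$ already pays for choosing the multiset of sizes. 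A secondary subtlety is verifying that a ``piece'' removed during smoothing, or more precisely a general subgraph $H$ here, genuinely decomposes with $\sigma = \sum_i \sigma_i$ and that the $\sigma_i\ge 0$ constraint (each free component being the square of a path has $\sigma_i\ge 0$, i.e. at least $\frac{d}{2}x_i - \frac{\Delta}{2}$ edges) holds --- this is where local sparsity of the square of the cycle enters, via $\Delta = d+2 = 6$ for $d=4$.
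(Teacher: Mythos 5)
Your overall plan is the right one and mirrors the paper's proof, and your recognition that the diamond components sit in prescribed slots (so the factorial is $(n-x-4\chi+c)!$ rather than $(n-x-3\chi+c)!$) and your AM--GM bound $\prod_i x_i \le (x/\chi)^\chi$ are both exactly what the paper does. But there is a concrete gap in how you treat the $\chi$ diamond components.

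You assert that ``each diamond component is a closed subgraph (second power of a path) containing $D_i$.'' That is not true: a connected component of $H\subset F$ containing $D_i$ is an arbitrary connected subgraph of the square of the cycle that happens to contain those four vertices, and it can certainly fail to be a square of a path (e.g.\ a square of a path with a single extra edge hanging off). Consequently, your counting argument for \eqref{eq:count_subgraphs_general} only covers the subclass of $H$ whose diamond components are closed, and silently drops the rest. The paper handles this by running the same activation/exploration procedure on the diamond components as on the free ones, with the sole modification that the starting vertex $w$ is constrained to satisfy $0\le \pi_F(u_{i-c})-\pi_F(w)\le x_i-1$; that positional constraint is where the $\le x_i$ factor per diamond component actually comes from, not from Claim~\ref{cl:count_copies_vertex} (which gives a constant and does not directly apply, since you are fixing a whole diamond, not one vertex with an excluded neighbour). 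Your hedge ``bounding each count by a constant times its size'' backs into the right answer but the justification you give for it does not produce it. The same structural misassumption affects your sketch of \eqref{eq:embed_subgraphs_general}: you say the diamond components have no freedom ``beyond the extension of their closed tails'' and that their contribution is absorbed into $e^{A^1_\beta c}$. There are $\chi$ diamond components, not $c$, so a constant-per-component factor would be $\mathrm{const}^\chi$, which cannot be absorbed into $e^{A^1_\beta c}$; what saves the paper is that a \emph{closed} diamond component has exactly one embedding (since $D_i$ is pinned, the interval and the bijection are forced), while a \emph{non-closed} one contributes $\sigma_i>0$ and its $6^{o_i}$ factor is absorbed into $e^{A^2_\beta\sigma}$. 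So you need both the closed/non-closed dichotomy for the diamond components and the correct absorption variable. Fixing these two points would bring the proposal in line with the paper's argument.
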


\begin{proof}
There are at most ${x\choose c+\chi}{\sigma+c+\chi\choose c+\chi}$ ways to choose positive integers $\ell_1,\ldots,\ell_{c+\chi}$ as well as $x_1,\ldots,x_{c+\chi}$  such that 
\begin{itemize}
\item $\frac{\ell_i}{2}+\frac{3}{2}\leq x_i\leq\ell_i+1$ for all $i\in[c]$ and $\frac{\ell_i}{2}\leq x_i\leq\ell_i$ for all $i\in\{c+1,\ldots,c+\chi\}$,
\item $\sum_{i=1}^{c+\chi}\ell_i=\ell$, and $\sum_{i=1}^{c+\chi} x_i=x$.
\end{itemize} 
Fix such $x_i$ and $\ell_i$, $i\in[c+\chi]$. For $i\in[c]$, set $\sigma_i=2x_i-\ell_i-3$, while, for $i\in[c+\chi]\setminus[c]$, set $\sigma_i=2x_i-\ell_i$.

Let us compute the number of ways to choose connected vertex-disjoint subgraphs $R_1,\ldots,R_{c+\chi}$ from $F$ such that, for every $i\in[c]$, $R_i$ has $\ell_i$ edges and $x_i$ vertices, and, for every $i\in[c+\chi]\setminus[c]$, $R_i\cup D_i$ is a connected graph with $\ell_i+5$ edges and $x_i+4$ vertices. We first choose closed subgraphs of $F$ that correspond to $\sigma_i=0$ one by one. The number of ways to choose the first closed subgraph is at most $2n$ (the heaviest case is $x_i=2$ that corresponds to an edge). In the same way, if some set of $\tilde n$ vertices is already included in the subgraph under construction, then the next closed component can be chosen in at most $2(n-\tilde n)$ ways. On the other hand, if the $i$-th component contains one of the diamonds (i.e. $i\geq c+1$), then the number of ways to choose it is at most $x_i+1$.  

We now switch to not closed components. Let us call a vertex {\it free} in a subgraph of $F$, if its degree is less than 4. Note that the $i$-th component has $o_i\leq 6+2\sigma_i$ free vertices. Assume that $\tilde n$ vertices have been already included in the subgraph and we now describe the procedure of choosing the $i$-th component:
\begin{enumerate}
\item choose the number of free vertices $o_i\leq 6+2\sigma_i\leq 8\sigma_i\leq 8^{\sigma_i}$ (recall that $\sigma_i\geq 1$);
\item choose a set $\mathcal{O}\in{[x_i]\choose o_i}$ that comprises the labels of free vertices in the $i$-th component; 
\item if $i\leq c$, choose a vertex $w$ outside of the set of the remaining $n-\tilde n$ vertices and, if $i\geq c+1$, choose $w$ such that $0\leq \pi_F(u_{i-c})-\pi_F(w)\leq 2x_i$, where $u_{i-c}$ is the minimum vertex of the diamond $D_{i-c}$ that this component has to contain;  {\it activate} $w$ --- we treat this vertex as the $\pi_F$-minimum vertex in the component under construction;
\item at every step $j\geq 1$, consider the $\pi_F$-minimum vertex $v_j$ among active vertices:
\begin{itemize}
\item if $j\in\mathcal{O}$ (i.e. $v_j$ should be free), then add to the component some of the neighbours of $v_j$ (in at most $2^4$ ways), deactivate $v_j$, and {\it activate} all its chosen neighbours,
\item if $j\notin\mathcal{O}$, then add to the component all the neighbours of $v_j$, deactivate $v_j$, and {\it activate} all its neighbours.
\end{itemize}
\end{enumerate}
We get that, if $i\leq c$, then the number of ways to choose the component is at most $(n-\tilde n)8^{\sigma_i}\max\limits_{o_i\leq 8\sigma_i}{x_i\choose o_i}2^{4o_i}$, and at most $(2x_i+1)8^{\sigma_i}\max\limits_{o_i\leq 8\sigma_i}{x_i\choose o_i}2^{4o_i}$, otherwise.

Eventually we get that the number of {\it ordered} choices of components with parameters $\ell_i,x_i$, $i\in[c+\chi]$, in $F$ is at most 
\begin{align*}
 c!{n\choose c}2^{c}\left(\frac{2x+\chi}{\chi}\right)^{\chi}\prod_{i=1}^{c+\chi}8^{\sigma_i}\max_{o_i\leq 8\sigma_i}{x_i\choose o_i}2^{4o_i} \leq c!{n\choose c}\left(\frac{2x+\chi}{\chi}\right)^{\chi}2^{c+35\sigma} \max_{o\leq 8\sigma}{x\choose o}.
\end{align*} 
Note that this bound does not depend on the order of the choice of components that do not contain diamonds, thus
\begin{align*}
 \alpha(\ell,x,c) & \leq{n\choose c}{x\choose c+\chi}{\sigma+c+\chi\choose c+\chi}\left(\frac{2x+\chi}{\chi}\right)^{\chi}2^{c+35\sigma} \max_{o\leq 8\sigma}{x\choose o}\\
 &\leq{n\choose c}{x\choose c+\chi}\left(\frac{4x+2\chi}{\chi}\right)^{\chi}2^{2c+36\sigma} \max_{o\leq 8\sigma}{x\choose o}
\end{align*}
as needed.

Let us now fix $H\subset F$ such that $H\cup(D_1\cup\ldots\cup D_{\chi})$ has $x+4\chi$ vertices, $\ell+5\chi$ edges, and $c+\chi$ components. Let us bound the number of ways to extend $H$ to an $F'\in\mathcal{F}^*_n(\overrightarrow{D})$. We construct such an extension in the following way.

We forget the labels of the vertices from $V(H)\setminus V(D_1\cup\ldots\cup D_{\chi})$ and assume, without loss of generality, that the desired $F'\in\mathcal{F}^*_n(\overrightarrow{D})$ is such that $F'\cup D_1\cup\ldots\cup D_{\chi}$ is the square of the cycle $(12\ldots n)$. Let $\mathcal{Z}$ be the union of the set of the $c$ connected components of $H$ that do not contain diamonds with the set of remaining $n-x-4\chi$ isolated vertices. Let $\mathcal{Z}^*$ be the union of components of $H$ that contain some $D_i$.  Then the desired number of extensions is exactly the number of ways to embed the elements of $\mathcal{Z}\sqcup\mathcal{Z}^*$ in $F'$ disjointly, where in each component in $\mathcal{Z}^*$ only the vertices of diamonds are labelled.

Let $z_1,\ldots,z_{n-x-4\chi+c}$ be an ordering of $\mathcal{Z}$ (there are $(n-x-4\chi+c)!$ ways to order the elements of $\mathcal{Z}$). We embed sequentially each $z_i$ in $F'$ in a way such that all vertices of $z_i$ are bigger than the minimum vertices of $z_1,\ldots,z_{i-1}$.  At every step $i=1,\ldots,n-x-4\chi+c$, consider the minimum vertex $\kappa_i$ of $F'$ such that none of the embedded elements of $\mathcal{Z}$ in $F'$ contain this vertex. If $z_i$ is a single vertex, then we assign $\kappa_i$ with $z_i$ and proceed to the next step. Otherwise, we distinguish between the following cases. 

First, we assume that $z_i$ is closed. If $|V(z_i)|=2$, then there are $2$ ways to choose this edge (two edges to smaller vertices are prohibited) and at most two ways to place it (two rotations). Thus there are 4 ways to embed $z_i$. If $|V(z_i)|=3$, then there are 6 ways to choose a (labelled) copy of $z_i$ in $F'$ with the minimum vertex $\kappa_i$. If $|V(z_i)|=4$, then the number of ways equals 4. If $|V(z_i)|>4$, then there are 2 ways to choose a copy of $z_i$ in $F'$ with the minimum vertex $\kappa_i$.

Second, let $z_i$ be not closed with $o_i$ free vertices. Choose a set $\mathcal{O}_i$ from ${[|V(z_i)|]\choose o_i}$ that comprises the labels of free vertices in the embedding of $z_i$ in $F'$ (we forget the isomorphism class of $z_i$ at this point and estimate the number of ways to expose a connected subgraph of $F'$ with $|V(z_i)|$ vertices, $o_i$ free vertices, starting from $\kappa_i$). {\it Activate} $\kappa_i$. At every step $j\geq 1$, choose the $\pi_{F'}$-minimum active vertex $v_j$ in $F'$ and
\begin{itemize}
\item if $j\in\mathcal{O}_i$, then add to the image of $z_i$ under construction some neighbours of $v_j$ (in at most $2^3$ ways), deactivate $v_j$, and {\it activate} all its added neighbours,
\item if $j\notin\mathcal{O}_i$, then add all the neighbours of $v_j$, that have not been added yet, deactivate the vertex, and {\it activate} all its added neighbours.
\end{itemize}
The image is constructed. However, we have not yet mapped the vertices of $z_i$ to the vertices of the image. Clearly, $z_i$ is union of a `path' $P$ of inclusion-maximum 2-connected graphs, joined by (usual) paths, with `fringe' trees, growing from $P$. There are at most 4 automorphisms of $P$, unless $P$ is a cycle. Since the closed 1-neighbourhood of every vertex in a square of a cycle does not have an independent set of size 3, every non-root vertex of a rooted `fringe' tree $T$ has degree at most 3 --- so it is free. In particular, $T$ has at most $2^{|V(T)|-1}$ automorphisms since it is a subtree of a perfect binary tree. 
 We conclude that, in total, there are at most $4\cdot 2^{o_i}=2^{o_i+2}\leq 2^{2o_i}$ automorphisms of $z_i$ and this bound holds true even when $P$ is a cycle. We get that the number of ways to construct the image of $z_i$ is at most ${|V(z_i)|\choose o_i}2^{5o_i}$.

We finally switch to embedding components from $\mathcal{Z}^*$, which is an easier task, since they are ``rooted'' in advance. Indeed, it is easy to see that, for a component of $H$ that contains some $D_i$ and has $o_i$ free vertices, there are at most $6^{o_i}$ ways to embed it in $F'$. Recalling the bound $o_i\leq 6+2\sigma_i\leq 8\sigma_i\1_{\sigma_i\geq 1}$, we conclude that there are at most 
$$
(n-x-4\chi+c)!6^{c}\times\prod_{i\in[c]:\,\sigma_i\geq 1}{x_i\choose o_i}2^{5o_i}\times\prod_{i\in[c+\chi]\setminus[c]:\,\sigma_i\geq 1}6^{o_i}\leq
(n-x-4\chi+c)!6^{c}2^{40\sigma}\max_{o\leq 8\sigma}{x\choose o}
$$
ways to expose $F'$ as needed.
\end{proof}

Let us now finish the proof of Claim~\ref{cl:lm:main}. Recalling that $|\mathcal{F}_n(\overrightarrow{D})|=(n-4\chi)!$ and that~\eqref{eq:sigma-definition_squareH} implies
$$
x-c=((x+4\chi)-(c+\chi))-3\chi=((\ell+5\chi)/2+(c+\chi)/2+\sigma/2)-3\chi=\ell/2+c/2+\sigma/2,
$$
we get
\begin{align*}
 \frac{(n-x-4\chi+c)!}{|\mathcal{F}_n(\overrightarrow{D})|}  =
 \frac{(n-\frac{\ell+c+\sigma}{2}-4\chi)!}{(n-4\chi)!}
 &\leq \sqrt{2\pi n}\cdot\frac{e^{\frac{\ell+c+\sigma}{2}}}{(n-4\chi)^{\frac{\ell+c+\sigma}{2}}}\left(\frac{n-\frac{\ell+c+\sigma}{2}-4\chi}{n-4\chi}\right)^{n-4\chi-\frac{\ell+c+\sigma}{2}}\\
 &\leq \sqrt{2\pi n}\cdot\frac{e^{(\ell+c+\sigma)^2/(4(n-4\chi))}}{(n-4\chi)^{(\ell+c+\sigma)/2}}
 =e^{o(\ell)}\cdot\frac{e^{(x-c)^2/n}}{n^{(\ell+c+\sigma)/2}}.
\end{align*}
We also notice that $\ln(x/\chi)=o(x/\chi)$ since $x/\chi=\Omega(\ln n/w)=\omega(1)$. Therefore, 
$$
 \left(\frac{4x+2\chi}{\chi}\right)^{\chi}=e^{\chi\ln \left(\frac{x}{\chi}\cdot\frac{4x+2\chi}{x}\right)}=e^{\chi\ln(x/\chi)+\chi\ln(4+o(1))}=e^{o(x)}=e^{o(\ell)}.
$$

We are now ready to derive~\eqref{eq:lm}. Let us choose $0<\varepsilon'\ll\delta$ small enough. From (\ref{eq:p_ell_x_c_bound}) and the bound ${n\choose c}\leq\left(\frac{en}{c}\right)^c$, we get 
\begin{equation}
 p(\ell,x,c)\leq
 e^{o(\ell)}\cdot\frac{\left(\frac{e^{A_1+1}\sqrt{n}}{c}\right)^c{x\choose c+\chi} e^{A_2\sigma}{x\choose o}^2 e^{(x-c)^2/n}}{n^{\ell/2+\sigma/2}},
\label{eq:lm_proof_general_bound_2d-power}
\end{equation}
where $o=o(x)\leq 8\sigma$ is chosen in such a way that ${x\choose o}$ achieves its maximum. Since $\left(\frac{e^{A_1+1}\sqrt{n}}{c}\right)^c\leq e^{e^{A_1}\sqrt{n}}$, we get that
$$
 \left(\frac{e^{A_1+1}\sqrt{n}}{c}\right)^c \leq e^{e^{A_1}\sqrt{n}}\leq e^{(e^{A_1}/C)\ell}.
$$
Let us recall that $\ell>\ell_0$, and thus $x>\ell_0/2$. Here $\ell_0=\lfloor C\sqrt{n}\rfloor$, and $C\gg\max\{\frac{1}{\varepsilon'},e^{A_1}/\delta\}$ can be chosen arbitrarily large.

We further consider separately several different cases. 

{\bf 1.} If $\sigma>\varepsilon' x$, then 
$$
{x\choose c+\chi}{x\choose o}^2e^{A_2\sigma}e^{(x-c)^2/n}< 8^xe^{x}e^{A_2\sigma}<n^{\sigma/2}.
$$
Therefore,
\begin{equation}
 p(\ell,x,c)\leq e^{\delta\ell}/n^{\ell/2}.
\label{eq:small_p_ell_x_c_2d-power}
\end{equation}

{\bf 2.} Let $\sigma\leq\varepsilon' x$. 

{\bf 2.1.} If $c<\varepsilon' x$ and $x<\varepsilon' n$, then
$$
  {x\choose c+\chi}{x\choose o}^2e^{A_2\sigma}e^{(x-c)^2/n}<e^{(\delta/2)\ell}
$$
implying (\ref{eq:small_p_ell_x_c_2d-power}) as well. 

{\bf 2.2.} If $c<\varepsilon' x$ and $x\geq \varepsilon' n$, then 
$$
 \left(\frac{e^{A_1+1}\sqrt{n}}{c}\right)^c e^{A_2\sigma} {x\choose c+\chi}{x\choose o}^2<e^{(\delta/2)\ell}.
$$
Thus, (\ref{eq:lm_proof_general_bound_2d-power}) implies
$$
 p(\ell,x,c)\leq \frac{e^{(x-c)^2/n}}{n^{\ell/2}}e^{(\delta/2+o(1))\ell}\leq (e^{1+\delta}/n)^{\ell/2}
$$
since $\ell/2=x-3c/2-\sigma/2\geq x(1-2\varepsilon')$. 

{\bf 2.3.} Finally, let $c\geq\varepsilon' x$. Let us choose $C$ so large that $\left(\frac{e^{A_1+1}\sqrt{n}}{c}\right)^c\leq e^{-10x}$ (taking any $C>10e^{A_1+1}e^{1/\varepsilon'}/\varepsilon'$ is enough for that). Since $e^{A_2\sigma}\leq n^{\sigma/2}$, we get
$$
 p(\ell,x,c)\leq e^{-10x+o(\ell)}\frac{{x\choose c+\chi}{x\choose o}^2 e^{(x-c)^2/n}}{n^{\ell/2}}
\leq e^{-10x+x+o(\ell)}8^x n^{-\ell/2}\leq n^{-\ell/2}.
$$

Summing up,
\begin{align*}
\sum_{\ell\geq \ell_0+1}\Pi_{\ell}\left((1+o(1))\frac{N}{m}\right)^{\ell}& \leq
\sum_{\ell\geq \ell_0+1}\sum_{x,c}p(\ell,x,c)\left(\frac{1+o(1)}{1+\varepsilon}\cdot\sqrt{\frac{n}{e}}\right)^{\ell}\\
&\leq n^2\sum_{\ell\geq \ell_0+1}\left(\frac{e^{\delta/2}+o(1)}{1+\varepsilon}\right)^{\ell}=e^{-\Omega(\sqrt{n})},
\end{align*}
completing the proof of Claim~\ref{cl:lm:main}.

\section{Generalisation: proof of Theorem~\ref{th:second_power_generalisation}}
\label{sc:generalisation_square_sketch}

Let $\mathcal{F}_n$ be the family of all isomorphic copies of $F$ on $[n]$ rooted in the first $r$ vertices. Let $\lambda$ be the number of edges in $F_0:=F[[r]]$. We have that $|\mathcal{F}_n|=n!$ and each graph from $\mathcal{F}_n$ induces a linear order on $[n]$. 

We then proceed in the same way as in the proof of Theorem~\ref{th:second_power}. Here, $w=o(\ln n/r)$ and $\chi=\left\lfloor\frac{w n^{(2d-\Delta)/d}}{\ln n}\right\rfloor$. We replace diamonds with $(2r)$-subgraphs isomorphic to a union of two consecutive isomorphic copies of $F_0$, that is, $F_0^*\cong F[[2r]]$. Note that $F_0^*$ has $\lambda^*\geq2\lambda+\Delta/2$ edges. In the same way as in Section~\ref{sc:KNP_conjecture_resolution}, we define the family $\mathcal{F}_n(\overrightarrow{D})\subset\mathcal{F}_n$, for a $d$-tuple $\overrightarrow{D}$ of disjoint copies of $F_0^*$. We then fix $C>0$ large enough and sample an $m$-subset $\mathbf{W}\subset{[n]\choose 2}\setminus(D_1\cup\ldots\cup D_{\chi})$ uniformly at random, where $m=\lfloor(1+\varepsilon)(e/n)^{2/d}\cdot N\rfloor$. We also let $\mathcal{F}^*_n(\overrightarrow{D})$ be the set of all $F\setminus(D_1\cup\ldots\cup D_{\chi})$, $F\in\mathcal{F}_n(\overrightarrow{D})$. In the same way, for $F\in\mathcal{F}^*_n(\overrightarrow{D})$ and an $m$-set $W\subset{[n]\choose 2}\setminus(D_1\cup\ldots\cup D_{\chi})$, we say that the pair $(F,W)$ is $\ell$-bad, if at least $n^{-1/2}$-fraction of $F'\subset F\cup W$ that belong to $\mathcal{F}^*_n(\overrightarrow{D})$ have $|F\cap F'|>\ell$. We have that Claim~\ref{cl:not_bad} holds for these $\overrightarrow{D}$, $\mathcal{F}^*_n(\overrightarrow{D})$, $m$, and $\ell_0=\lfloor Cn^{(2d-\Delta)/d}\rfloor$, as well. Indeed, the proof of Claim~\ref{cl:many_planted_copies} remains unchanged. Claim~\ref{cl:not_bad} follows then from 1) the fact that, for every $t\in\{0,1,\ldots,dn/2-\lambda^*\chi\}$,
$$
\frac{\E X'}{M(t)}  
\leq\sum_{\ell\geq \ell_0+1}\Pi_{\ell}e^{-\frac{(dn/2)^2}{m}(1-o(1))}\left((1+o(1))\frac{N }{m}\right)^{\ell},
$$ 
where 
\begin{itemize}
\item $\Pi_{\ell}=\Prob(F\cap \mathbf{F}\text { has }\ell\text{ edges})$ and $\mathbf{F}\in\mathcal{F}^*_n(\overrightarrow{D})$ is uniformly random;
\item $X'$ is the number of $F'\subset F\cup \mathbf{W}'$, $F'\in\mathcal{F}^*_n(\overrightarrow{D})$, such that $|F'\cap F|\geq\ell_0+1$, and $\mathbf{W}'$ is a uniformly random $(m-t)$-subset of ${[n]\choose 2}\setminus (F\cup D_1\cup\ldots\cup D_{\chi})$;
\item $M(t)$ is the expected number of $F'\in\mathcal{F}^*_n(\overrightarrow{D})$ such that $F'$ belongs to a uniformly random subset of ${[n]\choose 2}\setminus(D_1\cup\ldots\cup D_{\chi})$ of size $m+(dn/2-\lambda^*\chi)-t$.
\end{itemize}
and 2) the following analogue of Claim~\ref{cl:lm:main}:
\begin{claim} We have that
$$
\sum_{\ell\geq \ell_0+1}\Pi_{\ell}e^{-\frac{(dn/2)^2}{m}(1-o(1))}\left((1+o(1))\frac{N }{m}\right)^{\ell}=\exp(-\Omega(n^{2/d})).
$$
\end{claim}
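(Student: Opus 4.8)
\emph{Plan.} The statement is the last ingredient needed for the analogue of Claim~\ref{cl:not_bad} in the cyclic setting, and it is proved by transcribing the argument of Claim~\ref{cl:lm:main} from Section~\ref{sc:lm:main_proof}, with the square of a cycle replaced by the general cyclic $F$, squares of paths replaced by sub-intervals $F[[a,b]]$, diamonds replaced by the fixed copies $D_i\cong F_0^*\cong F[[2r]]$, and the exponent $1/2$ replaced by $2/d$ throughout. First I would record that $e^{-\frac{(dn/2)^2}{m}(1-o(1))}\le 1$ and that the $-\lambda^*\chi$ corrections to $f=dn/2-\lambda^*\chi$ and to the vertex/component counts are negligible since $\lambda^*\chi=o(n)$; then decompose $\Pi_\ell=\sum_{x,c}p(\ell,x,c)$, where $p(\ell,x,c)$ is the probability that $(\mathbf F\cap F)\cup(D_1\cup\dots\cup D_\chi)$ has the vertex/edge/component counts corresponding to a fragment with $x$ non-isolated vertices, $\ell$ edges, $c$ components, and set the excess $\sigma:=\tfrac d2 x-\ell-\tfrac\Delta2 c$.

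The crux is to re-establish, in the cyclic setting, the analogue of Claim~\ref{cl:main_square} (i.e.\ Claim~\ref{cl:main} with the fixed copies of $F_0^*$): $p(\ell,x,c)\le\alpha(\ell,x,c)\beta(\ell,x,c)/|\mathcal F^*_n(\overrightarrow D)|$, with $\alpha$ the number of subgraphs of $F$ with the given parameters and $\beta$ the number of extensions of one of them to a graph in $\mathcal F_n(\overrightarrow D)$, both bounded exactly as there. For this I would reprove the structural facts of Section~\ref{sc:properties} from the hypotheses of Theorem~\ref{th:second_power_generalisation}. Since vertices at cyclic distance $>r$ are non-adjacent and every subgraph on at most $r$ vertices has edge boundary at least $d+1$, a connected subgraph whose cyclic span is long must, unless its excess is large, coincide with its span up to $O(1)$ vertices, and its edge boundary splits into two short-range contributions near the two ends, each pinned down up to $O(r)=O(1)$ by the period-$r$ automorphism $v\mapsto v+r$. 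Hence: there are $O(n)$ ``closed'' (near-interval) subgraphs of each length, and at most $O(r)=O(1)$ of them contain a given vertex but omit a given neighbour — the analogues of Claims~\ref{cl:total_number_closed} and~\ref{cl:count_copies_vertex}, in fact easier to prove here. Together with the hypothesis that no automorphism of $F$ fixes $[r]$ — which gives $|\mathrm{Aut}(F)|=O(n)$, hence $|\mathcal F^*_n(\overrightarrow D)|=e^{o(n)}(n-2r\chi)!$, and also bounds the number of automorphisms of a component fixing its boundary by $e^{o(x_i)}$ (indeed $O(1)$ for small components) — the activate/deactivate edge-exploration counting of Claim~\ref{cl:main_square} then goes through essentially verbatim, with each $D_i$ treated as a diamond was (contributing $O(x_i)$ placements and an overall $e^{o(\ell)}$ via $(2x/\chi)^{\chi}=e^{o(\ell)}$, using $\chi=\lfloor wn^{(2d-\Delta)/d}/\ln n\rfloor$ and $w\ll\ln n$).

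With this in hand, the computation follows Section~\ref{sc:lm:main_proof}. Using $\binom nc\le(en/c)^c$, the estimate $\dfrac{(n-x-2r\chi+c)!}{(n-2r\chi)!}\le e^{o(\ell)}\dfrac{e^{(x-c)^2/n}}{n^{x-c}}$, and the identity $x-c=\tfrac{2(\ell+\sigma)+(\Delta-d)c}{d}$, one gets
$$
p(\ell,x,c)\le \left(\frac{e\,n^{(2d-\Delta)/d}}{c}\right)^{c} n^{-2(\ell+\sigma)/d}\cdot e^{o(\ell)}\,e^{A\sigma}\binom{x}{c+\chi}\Bigl(\max_{o\le 8\sigma}\binom xo\Bigr)^{2}\!.
$$
The factor $\bigl(e\,n^{(2d-\Delta)/d}/c\bigr)^{c}\le e^{n^{(2d-\Delta)/d}}\le e^{\ell/C}$ because $\ell>\ell_0=\lfloor C n^{(2d-\Delta)/d}\rfloor$. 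Then the same three-way case split — whether $\sigma>\varepsilon'x$, whether $c\lessgtr\varepsilon'x$, and whether $x\lessgtr\varepsilon'n$ — absorbs the remaining combinatorial factors into $e^{o(\ell)}$ or into $n^{-2\sigma/d}$, yielding in every case $p(\ell,x,c)\bigl((1+o(1))N/m\bigr)^{\ell}\le\bigl(e^{o(1)}/(1+\varepsilon)\bigr)^{\ell}$, since $N/m=(1+o(1))(n/e)^{2/d}/(1+\varepsilon)$. Summing the geometric series over $x,c$ and over $\ell>\ell_0$ and using $\ell_0=\Theta(n^{(2d-\Delta)/d})\to\infty$ gives $n^{O(1)}e^{-\Omega(\ell_0)}=o(1/n)$, as required.

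\emph{Main obstacle.} Unlike in Theorem~\ref{th:main2} or Section~\ref{sc:KNP_conjecture_resolution}, here $F$ is \emph{not} assumed globally locally sparse — only subgraphs on at most $r$ vertices are controlled — so Section~\ref{sc:properties} cannot be invoked directly. The real work is to show that the cyclic symmetry plus short-range adjacency nevertheless force low-excess subgraphs to be intervals up to $O(1)$ modifications, so that the ``linearly many closed subgraphs'' and ``boundedly many through a vertex'' phenomena persist and the clean counting bounds of Claim~\ref{cl:main_square} survive. Once these structural facts are secured, the reduction to the geometric-series estimate above is a routine, if lengthy, transcription of Section~\ref{sc:lm:main_proof}.
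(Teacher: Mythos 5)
The paper explicitly omits the proof of this claim, remarking only that it ``follows the same reasoning as the proof of Claim~\ref{cl:lm:main},'' and recording (i) the adapted form of the bound~\eqref{eq:p_ell_x_c_bound}, namely
$p(\ell,x,c)\le{n\choose c}{x\choose c+\chi}\left(\frac{dx/2}{\chi}\right)^{\chi} e^{A_1c+A_2\sigma}\big(\max_{o\leq (d+4)\sigma}{x\choose o}\big)^2(n-x-2r\chi+c)!/|\mathcal{F}_n(\overrightarrow{D})|$,
(ii) that the extension count carries over because no subgraph of $F$ containing $D_1$ has a nontrivial automorphism fixing $D_1$ and its vertex boundary, and (iii) that the rest of the computation is unchanged. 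Your plan traces exactly this route: re-derive the $\alpha$/$\beta$ bounds of Claim~\ref{cl:main_square} for the cyclic $F$, insert the Stirling estimate on $(n-x-2r\chi+c)!/(n-2r\chi)!$, and run the same three-way case split over $\sigma$, $c$, $x$, concluding with a geometric series in $\ell>\ell_0=\Theta(n^{(2d-\Delta)/d})$.

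You also correctly put your finger on the genuinely non-routine step, which the paper silently suppresses: because $F$ is assumed locally sparse only for subgraphs on at most $r$ vertices, Claims~\ref{cl:count_copies_vertex} and~\ref{cl:total_number_closed} cannot be invoked as stated, and the $O(n)$-per-length and $O(1)$-through-a-fixed-vertex counts of closed components must instead be extracted from the cyclic hypotheses (the period-$r$ automorphism $v\mapsto v+r$, the range-$r$ adjacency, and rigidity from ``no automorphism fixes $[r]$''), which pin a low-excess connected subgraph to an $O(1)$-perturbation of an interval $F[[a,b]]$. Your sketch of this is the intended argument, and you are honest that it is a sketch. The remaining discrepancies are cosmetic: $\max_{o\le 8\sigma}$ should be $\max_{o\le(d+4)\sigma}$ for general $d$ (they coincide at $d=4$, and either is $O(\sigma)$), and the $e^{A_1 c}$ factor belongs inside the $(e^{A_1+1}n^{(2d-\Delta)/d}/c)^{c}$ term before taking $\max_c\le e^{O(n^{(2d-\Delta)/d})}=e^{O(\ell/C)}$; both are absorbed exactly as you claim. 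So this is the same approach the paper intends; to make it a full proof one would carry out the structural lemma in detail rather than leaving it as ``the real work.''
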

The proof of the latter claim follows the same reasoning as the proof of Claim~\ref{cl:lm:main}; therefore, we omit it for brevity and to avoid repetitions. We only note that the bound~\eqref{eq:p_ell_x_c_bound} becomes
$$
 p(\ell,x,c)\leq\frac{{n\choose c}{x\choose c+\chi}\left(\frac{dx+2\chi}{\chi}\right)^{\chi} e^{A_1c+A_2\sigma}\left(\max_{o\leq (d+4)\sigma}{x\choose o}\right)^2(n-x-2r\chi+c)!}{|\mathcal{F}_n(\overrightarrow{D})|}.
$$
In particular, the bound on the number of extensions of a subgraph $H\subset F$ carries over to this setting, as every subgraph $H\subset F$ that contains $D_1$ has no non-trivial automorphisms that fix the vertices of $D_1$ as well as the boundary vertices of $H$. Since $|\mathcal{F}_n(\overrightarrow{D})|=(n-2r\chi)!$ and $x-c=\frac{2\ell}{d}+\frac{c(\Delta-d)}{d}+\frac{2\sigma}{d}$, the rest of the proof of Claim~\ref{cl:lm:main} remains unchanged.

In the same way as in Section~\ref{sc:distribution_of_fragments}, for every pair $(F\in\mathcal{F}_n,W\in{{[n]\choose 2}\choose m})$, we define the distribution $\mathcal{Q}_{F,W}$ over the set of $F'\in\mathcal{F}_n(\overrightarrow{D}(F))$ such that $F'\subset F\cup W$. We fix any positive $\delta<1/d$ and call the pair $(F,W)$ {\it separating}, if for every set $\mathcal{X}$ of disjoint closed subgraphs of $F$ on at least $n^{(2d-\Delta)/d}/\ln n$ vertices, the probability that the set of inclusion-maximal closed subgraphs of $\mathbf{F}'\cap F$, where $\mathbf{F}'\sim\mathcal{Q}_{F,W}$, contains $\mathcal{X}$ is at most $\left\lfloor\frac{\ln^2 n}{n^{\delta}}\right\rfloor$. We then generate $\mathbf{F}'(F,W)\sim\mathcal{Q}_{F,W}$ for every pair $(F,W)$ and get fragments $F\cap\mathbf{F}'(F,W)$.

The key novel ingredient in the proof of Theorem~\ref{th:second_power} --- the improvement of fragments, described in Section~\ref{sc:improvement} --- applies here as well. The main difference is that diamonds are replaced here with isomorphic copies of $F^*_0$. Let $\mu:=\left\lfloor\frac{10C\ln n}{r w}\right\rfloor$. We also consider all inclusion-maximal closed subgraphs of a fragment $H=H(F,W)$ of size at most $\ell_0+\lambda^*\chi$ with at least $r\mu$ vertices, for $F\in\mathcal{F}_n(\overrightarrow{D})$. Here we cannot cut a closed subgraph in an arbitrary place since we have to keep pieces of size $r$ entirely. For each considered closed subgraph $P_i=(v^i_1 v^i_2\ldots)$, we find the minimum $r'$ such that $H[\{v^i_{r'+1}\ldots v^i_{r'+r}\}]\cong F_0$, the bijection preserving the order of vertices is an isomorphism, and $v^i_{r'+1},\ldots, v^i_{r'+r}\notin\partial_v(H)$. Then we remove from $H$ all the consecutive pieces $(v^i_{r'+1}\ldots v^i_{r'+r\mu})$, $(v^i_{r'+r\mu+1}\ldots v^i_{r'+2r\mu})$, $\ldots$, of size $r\mu$. We also have to make sure that all vertices of the last removed piece are not boundary. Finally, we glue the two remaining pieces of every $P_i$, and insert each removed piece between the two isomorphic copies of $F_0$ of the {\it respective} $D_i\cong F_0^*$. 

As in the proof of Theorem~\ref{th:second_power}, we match every piece $P_i$ with $D_{j_i}$ randomly: Consider a binomial random bipartite graph $\mathbf{B}$ with parts $V=[\chi]$ and $U=[n]$ and edge probability $\beta:=n^{(\Delta-d)/(3d)-(2d-\Delta)/d}/w$; insert every $P_i$ into some $D_{j_i}$ so that 
\begin{itemize}
\item there exists an edge in $\mathbf{B}$ between $j_i\in V$ and the position of the first vertex of $P_i$ in $F$,
\item the component containing $D_{j_i}$ does not have closed subgraphs of size at least $r\mu$, i.e. $j_i\in V'$.
\end{itemize}
This is typically possible due to Claim~\ref{cl:random-bipartite}, that holds in this setting as well. For a bipartite graph $B$ on $V\times U$ with typical degrees, we consider the (random) set $\Sigma=\Sigma(B)$ of all separating pairs $(F\in\mathcal{F}_n,W\in{{[n]\choose 2}\choose m})$ such that $\mathbf{F}'=\mathbf{F}'(F,W)\sim\mathcal{Q}_{F,W}$ satisfies the following
\begin{itemize}
\item[(1)] $\mathbf{F}'\subset F\cup W$,
\item[(2)] $|\mathbf{F}'\cap F|\leq\ell_0+\lambda^*\chi$,
\item[(3)] $B[V'\times U]$ has a matching that covers all vertices in $U$ that represent positions of first vertices of the removed pieces of the fragment $H=F\cap\mathbf{F}'$.
\end{itemize}
Due to symmetry and linearity of expectation, we get that there exists $B$ such that whp the uniformly random vector $(\mathbf{F},\mathbf{W})$ belongs to $\Sigma$, and fix such $B$. 
 In order to complete analysis of smoothed fragments, we state the following analogue of Claim~\ref{cl:pre-images-size}. We fix $W$, let $H(F):=F\cap \mathbf{F}'$, and let $H'(F)$ be the smoothed version of $H(F)$, according to $B$.
\begin{claim}
\label{cl:improvement_general}
 For any graph $S$, let $\mathcal{S}(S)$ be the set of all graphs $F$ such that $(F,W)\in\Sigma$ and $H'(F)$ contains $S$ as a subgraph. Let $c:=c(S)$, $\ell:=\ell(S)$, $x:=x(S)$, $\sigma:=\sigma(S)=\frac{d}{2}x(S)-\frac{\Delta}{2}c(S)-\ell(S)$, and $\ell_{\chi}:=\ell_0+\lambda^*\chi$. Then 
$$
 |\mathcal{S}(S)|\leq r^{c}(16d)^{\ell}(n-x+c)!\max\left\{n^{c(\Delta-d)/(3d)}{\ell_{\chi}\choose \min\{\sigma,\lfloor\ell_{\chi}/2\rfloor\}},{\ell_{\chi}\choose \min\{c+\sigma,\lfloor\ell_{\chi}/2\rfloor\}}\right\}
$$
almost surely. Moreover, the bound
$$ 
|\mathcal{S}(S)|\leq r^{c}(16d)^{\ell}(n-x+c)!\frac{(2\ell_{\chi})^{c+\sigma}}{\min\left\{(\ln n)^{c+\sigma},n^{1/(2d)}\right\}}
$$
holds with probability $1-e^{-\Omega(n^{1/(2d)})}$.
\end{claim}
We omit the proof since it is almost identical with the proof of Claim~\ref{cl:pre-images-size}.
Actually, the only difference is that when we perform the step (1) and reconstruct the order of vertices in every connected component of $S$, we choose a vertex of $F_0$ that corresponds to the root of the connected component ($r$ choices for every component).

Then, as in Section~\ref{sc:day_1}, we  perform one fragmentation step and reduce the size of a fragment by the factor of $\Omega\left(n^{\frac{\Delta-d}{2d}-\delta}\right)$, for any $\delta>0$. Indeed, let $\mathcal{H}^{(1)}$ be the set of smoothed fragments, let $\mathbf{H}$ be a uniformly random element of $\mathcal{H}^{(1)}$, and let $\mathbf{W}^{(1)}\sim G(n,m')$, $m'=\lfloor\varepsilon n^{-2/d}\cdot N\rfloor$, be sampled independently.
\begin{claim}
Let $\delta>0$. Whp there exists $H\subset \mathbf{H}\cup\mathbf{W}^{(1)}$ such that $H\in\mathcal{H}^{(1)}$ and 
$$
|\mathbf{H}\cap H|\leq n^{\frac{2d-\Delta}{d}-\frac{\Delta-d}{2d}+\delta}.
$$
\end{claim}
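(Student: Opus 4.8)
The plan is to apply the fragmentation machinery of Lemma~\ref{lm:not_bad} in exactly the form used in Claim~\ref{cl:not_bad_2}, with the parameters $f=\ell_0+\lambda^*\chi$ (the size of the smoothed fragments), $m=m'=\lfloor\varepsilon n^{-2/d}N\rfloor$, and $\mathcal{B}$ the property of having more than $n^{\frac{2d-\Delta}{d}-\min\{\frac{2\Delta-2d}{3d},\frac{2d-\Delta}{2d}\}+\delta}$ edges. By Lemma~\ref{lm:not_bad} it suffices to establish, uniformly over $H\in\mathcal{H}^{(1)}$, that
\begin{equation*}
\sum_{\ell\geq f_{\mathcal{B}}+1}\Pi^H_{\ell}\left((1+o(1))\frac{N}{m'}\right)^{\ell}=o\!\left(\frac{1}{\ln n}\right),
\end{equation*}
where $f_{\mathcal{B}}$ is the threshold defining $\mathcal{B}$ and $\Pi^H_\ell=\Prob(H\cap\mathbf{H}\text{ has }\ell\text{ edges})$. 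First I would fix $c,x$ and decompose $\Pi^H_\ell=\sum_{x,c}p^H(\ell,x,c)$ exactly as in Claim~\ref{cl:not_bad_2}, bounding $p^H(\ell,x,c)$ via $|\mathcal{J}^H_{\ell,x,c}|\cdot|\mathcal{S}(\cdot)|/|\mathcal{H}^{(1)}|$, where now $\mathcal{S}$ is controlled by Claim~\ref{cl:improvement_general} rather than Claim~\ref{cl:pre-images-size}. Concretely, Claim~\ref{cl:J_upper_bound} gives $|\mathcal{J}^H_{\ell,x,c}|\leq\binom{f}{c}(16d)^\ell$, and Claim~\ref{cl:improvement_general} gives $|\mathcal{S}(S)|\leq r^c(16d)^\ell(n-x+c)!\max\{n^{c(\Delta-d)/(3d)},\binom{f}{c+\sigma}\}$ with $\sigma=\frac{d}{2}x-\frac{\Delta}{2}c-\ell$.

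Next I would feed in the two crucial structural inputs. The first is the factorial ratio: since $|\mathcal{H}^{(1)}|=(1-o(1))n!$ and $x-c=\frac{2\ell}{d}+\frac{c(\Delta-d)}{d}+\frac{2\sigma}{d}$, the usual Stirling estimate (as in~\eqref{eq:n-x+c} / \eqref{eq:factorials_ratio_general}) gives $\frac{(n-x+c)!}{n!}\leq e^{O((x-c)^2/n)}n^{-(x-c)}=O(n^{-2\ell/d-c(\Delta-d)/d-2\sigma/d})$; combined with $(N/m')^\ell=\Theta((n^{2/d}/\varepsilon)^\ell)$ this kills the $n^{-2\ell/d}$ against the $(N/m')^\ell$ and leaves a surplus of $n^{-c(\Delta-d)/d-2\sigma/d}$ to absorb everything else. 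The second input is the improved excess bound coming from the hypothesis that small subgraphs have edge boundary $\geq d+1$: for each non-trivial component of size $\ell_i$ with $\ell_i\geq\frac{\ln n}{100C}$ one has $\sigma_i\geq\frac{cst\cdot C\ell_i}{\ln n}$ (because closed subgraphs of the smoothed fragment have size $\leq O(r\mu)=O(\ln n/w)$, so a component with $\Omega(\ln n)$ edges must accumulate excess), yielding the analogue of~\eqref{eq:sigma_lower_2d-power}–\eqref{eq:sigma_lower_2d-power_refinement_lnlnln}:
\begin{equation*}
\sigma\geq\frac{cst\cdot C}{\ln n}\cdot\ell-O(c),\qquad\text{hence}\qquad c+\sigma\geq\Omega\!\left(\frac{\ell}{\ln n}\right).
\end{equation*}
With these in hand, split into the two cases of the $\max$ in Claim~\ref{cl:improvement_general}. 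When $n^{c(\Delta-d)/(3d)}$ dominates, the surviving factor is roughly $\left(\frac{f\cdot n^{(\Delta-d)/(3d)-(\Delta-d)/d}}{c}\right)^c e^{-\Omega(C\ell/\ln n)}=\left(\frac{f\cdot n^{-2(\Delta-d)/(3d)}}{c}\right)^c e^{-\Omega(C\ell/\ln n)}$; since $f=O(n^{(2d-\Delta)/d}w/\ln n)$ and the new fragment-size threshold subtracts off $\min\{\frac{2\Delta-2d}{3d},\frac{2d-\Delta}{2d}\}-\delta$ in the exponent, this is summable to $e^{-\Omega(n^{cst})}$. When $\binom{f}{c+\sigma}$ dominates one may assume $c+\sigma\leq f/2$ (otherwise the bound is already $n^{-\Omega(\sqrt n)}$), so $\binom{f}{c}\leq\binom{f}{c+\sigma}\leq(ef/(c+\sigma))^{c+\sigma}$, and the surviving factor becomes $\left(\frac{f^2\cdot n^{-(2d-\Delta)/d}\cdot\text{poly}}{(c+\sigma)^2}\right)^{c+\sigma}$, which after plugging $f^2=O(n^{2(2d-\Delta)/d}w^2/\ln^2 n)$ and using $c+\sigma=\Omega(\ell/\ln n)\geq\Omega(f_{\mathcal{B}}/\ln n)$ is again summable.

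The main obstacle, as in the parallel steps for the second power of a cycle, is verifying that the smoothed fragments in $\mathcal{H}^{(1)}$ genuinely have no large closed subgraphs — i.e. that after the "smoothing" operation of Section~\ref{sc:generalisation_square_sketch} every inclusion-maximal closed subgraph has $O(r\mu)=O(\ln n/w)$ vertices — so that the excess lower bound $\sigma\geq\Omega(C\ell/\ln n)$ is actually available for every admissible $(\ell,x,c)$ appearing in $\mathcal{H}^{(1)}$. This is guaranteed by construction: the smoothing cuts each closed subgraph of length $\geq r\mu$ into pieces of length $r\mu$ and redistributes all but the first and last, so every closed subgraph surviving in a member of $\mathcal{H}^{(1)}$ has $\leq 2r\mu+O(r)$ vertices; but one must be careful that the reconstruction bound of Claim~\ref{cl:improvement_general} (which is the only place the redistributed pieces re-enter the count) is applied with the correct parameters, and that the contribution of components containing the inserted pieces $D_i\cong F_0^*$ is subsumed into the $r^c$ and $(16d)^\ell$ factors. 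A secondary technical point is the choice of $w$: as in the remark before~\eqref{eq:sigma_lower_2d-power_refinement_lnlnln}, one wants $w$ large enough (e.g. $\tfrac{\ln n}{\ln\ln\ln n}\ll w\ll\tfrac{\ln n}{r}$, which is compatible since $r=o(\log\log n)$) so that $f=O(n^{(2d-\Delta)/d}w/\ln n)$ is not too large and the exponent arithmetic closes with room to spare. Given all this, the final sum is $o(1/\ln n)$, Lemma~\ref{lm:not_bad} applies, Markov's inequality then produces a multiset $\mathcal{H}^{(2)}$ of $(1-o(1))n!$ fragments of the claimed smaller size, and iterating (as on "Day 2" of Section~\ref{sc:day_2}) drives the fragment size down to $o(\log n)$, after which the covering argument of Section~\ref{sc:day_3} finishes Theorem~\ref{th:second_power_generalisation}.
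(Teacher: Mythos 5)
Your proposal mirrors the paper's own (sketched) proof: apply Lemma~\ref{lm:not_bad} with $f=\ell_0+\lambda^*\chi$, $m=m'$, and $\mathcal{B}$ the set of graphs exceeding $f^{(1)}$ edges; decompose $\Pi^H_\ell$ over $(\ell,x,c)$; invoke Claims~\ref{cl:J_upper_bound} and~\ref{cl:improvement_general} together with the Stirling estimate for $(n-x+c)!/n!$ (which, as the paper notes, now carries the extra factor $r^c e^{(1-o(1))(x-c)^2/n}\leq r^c e^{\ell}$ with $r^c=n^{o(c)}$) and the analogue of the excess bound~\eqref{eq:sigma_lower_2d-power}; and split according to which term of the $\max$ in Claim~\ref{cl:improvement_general} dominates. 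One calculational slip worth fixing: the factor coming from $n^{-2\sigma/d}$ with $\sigma=\Omega(C'\ell/\ln n)$ is $n^{-\Omega(C'\ell/\ln n)}=e^{-\Omega(C'\ell)}$, not $e^{-\Omega(C'\ell/\ln n)}$ as you wrote — the stronger decay is precisely what lets a large constant $C'$ absorb the $(16d)^{O(\ell)}$ and $(1/\varepsilon)^{\ell}$ factors, whereas $e^{-\Omega(C'\ell/\ln n)}$ would not.
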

In order to prove this claim, we apply  Lemma~\ref{lm:not_bad} with $f=\ell_0+\lambda^*\chi$ and $\mathcal{B}$ being the set of all graphs on $n$ vertices with more than $f^{(1)}:=n^{\frac{(2d-\Delta)}{d}-\frac{\Delta-d}{2d}+\delta}$ edges. It is then sufficient to show that, uniformly in $H\in\mathcal{H}^{(1)}$,
$$
 \sum_{\ell\geq f^{(1)}+1}\Pi^H_{\ell}\left((1+o(1))\frac{N}{m'}\right)^{\ell}=o\left(\frac{1}{\log n}\right),
$$
where $\Pi^H_{\ell}=\Prob(H\cap \mathbf{H}\text { has }\ell\text{ edges})$. The rest of the proof is similar to the proof of Claim~\ref{cl:not_bad_2} and follows from Claim~\ref{cl:improvement_general}. Therefore, we only outline the differences. We first notice that the bound~\eqref{eq:n-x+c} here becomes
$$
\frac{(n-x+c)!}{n!}\leq\frac{1}{n^{x-c}(1-\frac{x-c}{n})^{x-c}}=n^{-x+c}\cdot e^{(1-o(1))\frac{(x-c)^2}{n}}.
$$
Therefore, the upper bound on $\Pi^H_{\ell}$ contains an extra factor $r^c e^{(1-o(1))\frac{(x-c)^2}{n}}\leq r^c e^{\ell}$. We choose $C'$ large enough and use the bound~\eqref{eq:sigma_lower_2d-power} that holds here as well. Since $r^c=n^{o(c)}$, we finally get the required bound
\begin{align*}
\sum_{\ell\geq f^{(1)}+1}\Pi^H_{\ell}\left((1+o(1))\frac{N}{m'}\right)^{\ell} 
&\leq \sum_{c,\ell,x}\left[\frac{{\ell_0+\lambda^*\chi\choose c}n^{(\Delta-d)c/(3d)}}{n^{(\Delta-d)c/d-\delta c}} e^{-C'\ell}+\left(\frac{(\ell_0+\lambda^*\chi)^2n^{\delta/2}}{n^{(\Delta-d)/d}(c+\sigma)^2}\right)^{c+\sigma}\right]\\
&=e^{-\Omega(f^{(1)})}.
\end{align*}

We get a multiset of fragments $\mathcal{H}^{(2)}$. Next, as in Section~\ref{sc:day_2}, we make $R-1=O(\log\log n)$ fragmentation steps by sampling $\mathbf{W}^{(2)},\ldots,\mathbf{W}^{(R)}\sim G(n,m')$, where $m'=\left\lceil \frac{\varepsilon}{n^{2/d}\ln\ln n}N\right\rceil$. An argument identical to the proof of Claim~\ref{cl:day_2} implies that, after the last fragmentation round, we get a family $\mathcal{H}^{(R+1)}$ of fragments of size $\lfloor (\ln n)^{1-\Theta(1)}\rfloor$ whp. It remains to cover at least one fragment of size $\lfloor (\ln n)^{1-\Theta(1)}\rfloor$ from the multiset $\mathcal{H}^{(R+1)}$ by the last sample $\mathbf{G}'\sim G(n,p)$, $p=\varepsilon \cdot n^{-2/d}$, whp. The proof follows from Claim~\ref{cl:improvement_general}, Chebyshev's inequality, and is verbatim as the argument in Section~\ref{sc:day_3}. Therefore, we omit it.

\section{Remaining challenges}
\label{sc:challenges}

Although the expectation threshold conjecture of Kahn and Kalai has been resolved~\cite{PP_KK}, the asymptotics of $p_c(\mathcal{Q})$ remain unknown --- even up to a constant factor --- for an arbitrary increasing property $\mathcal{Q}$, given $p_e(\mathcal{Q})$. In particular, it would be very interesting to determine the order of growth of $p_c(\mathcal{Q})$ when $\mathcal{Q}=\mathcal{Q}_F$ is generated by all isomorphic copies of any fixed $d$-regular graph $F$ on $[n]$. Theorem~\ref{th:main1} gives only a partial answer and the problem becomes significantly more challenging when considering graphs $F$ that contain many subgraphs with smaller edge boundaries.

We suspect that the requirements in Theorem~\ref{th:main2} are far from optimal. In particular, Theorem~\ref{th:second_power_generalisation} asserts that~\eqref{eq:sharp} is true for a wide class of $d$-regular graphs $F$ that is not covered by Theorem~\ref{th:main2}. We actually believe that the following is true.
\begin{conjecture}
Let $d\geq 3$ and let $F=F(n)$ be a sequence of $d$-regular graphs on $[n]$, $n\in\mathbb{N}$, such that $|\partial_e(\tilde F)|\geq d+1$, for every $\tilde F\subset F$ with $3\leq|V(\tilde F)|\leq n-3$.
 Then $p_c(F)=(1+o(1))p_e(F)$. 
\end{conjecture}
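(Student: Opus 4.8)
\emph{Reduction and the easy direction.} Since $p_e(F)\le p_c(F)$ always, it suffices to exhibit an explicit $U=U(n)$ with $p_c(F)\le(1+o(1))U$ and $p_e(F)\ge(1-o(1))U$. The lower bound on $p_e$ is the soft part: a locally sparse $d$-regular graph is connected, and the spread estimates of Claim~\ref{cl:main} (which hold verbatim for every locally sparse $d$-regular $F$, with the $|\mathrm{Aut}(F)|$-factors and the crude bound $|\mathrm{Aut}(z_i)|\le |V(z_i)|(d-1)^{|V(z_i)|}$ kept throughout, since now nothing is assumed about automorphisms) show that no fragment family beats, asymptotically, the one produced by the scheme below, so the family of $o(\log n)$-fragments from the last round certifies $p_e(F)\ge(1-o(1))U$ with $U$ incorporating both $|\mathrm{Aut}(F)|$ and the automorphisms of the closed subgraphs of $F$. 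Hence the entire difficulty is the upper bound on $p_c$. Its first fragmentation round runs exactly as in Claims~\ref{cl:sharp_day_1}--\ref{cl:lm:main}: after exposing $\mathbf W\sim G(n,m)$ with $m=(1+\varepsilon)U N$, almost every $F\in\mathcal F_n$ acquires a fragment $F\cap F'$ of size $O(n^{2/d})$ with controlled component number $c$ and excess $\sigma$. The sole obstruction to iterating is, as for the square of a Hamilton cycle, that typical fragments may contain closed subgraphs with $\Omega(\log n)$ edges, which no sparse sprinkle can cover.

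\emph{A structural dichotomy for closed subgraphs.} The heart of the plan is to replace the ``second power of a path'' structure used for Theorem~\ref{th:second_power} by a structural dichotomy valid for \emph{all} locally sparse $d$-regular graphs. I would prove that every closed subgraph $C\subset F$ decomposes, after deleting $O_d(1)$-size vertex cuts, into \emph{robust} pieces — whose every subgraph on $s\in[\log n/w,\,n^{1-\delta}]$ vertices has edge boundary $\gg s\,w/\log n$, exactly the regime handled by the first part of Theorem~\ref{th:main2} and hence never modified — and \emph{chain} pieces, which are strings $B_1-B_2-\cdots-B_t$ of bounded-size bags joined along bounded interfaces. The dichotomy should follow from the internal-boundary inequality $|\partial^C_e(\tilde F)|\ge\Delta$ for deeply internal $\tilde F\subset C$ (a consequence of local sparsity together with $C$ having edge boundary exactly $\Delta$): a portion of $C$ that expands internally is robust, one that does not must be chain-like. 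On a chain I would define the analogue of the smoothing of Section~\ref{sc:improvement}: cut a long chain into bounded segments, keep its two end segments (which carry all attachments to the rest of $F$), and redistribute the interior segments. The lemma to establish is that \emph{any permutation of interior segments respecting their interfaces produces a graph isomorphic to $F$}; I expect this to reduce to a rigidity statement — a bounded segment is pinned down, up to $O_d(1)$ choices, by its two interfaces and local sparsity, so either a chain is essentially periodic, and its interior segments are freely interchangeable, or it is ``aperiodic'' and then so information-rich that the relevant pre-image counts are already small without any smoothing.

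\emph{Finishing.} With the smoothing operation in hand, the remainder follows the architecture of Sections~\ref{sc:improvement}--\ref{sc:day_3}: a binomial random bipartite graph selects the redistribution rule so that the reconstruction bounds of Claim~\ref{cl:pre-images-size} go through with the $(\log n)^{c+\sigma}$-saving; $O(\log\log n)$ further fragmentation rounds at sprinkling density $\varepsilon/(\log\log n\cdot n^{2/d})$ bring fragments down to size $o(\log n)$; a final sprinkle of density $\varepsilon n^{-2/d}$ covers one such fragment by a second-moment argument as in Section~\ref{sc:day_3}; and transferring to $G(n,p)$ via~\cite[Corollary 1.16]{Janson} yields $p_c(F)\le(1+o(1))U$, whence $p_c(F)=(1+o(1))p_e(F)$, and sharpness follows from Friedgut's criterion as in the discussion after Theorem~\ref{th:main1}.

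\emph{Main obstacle.} The hard step is the \emph{evacuation} problem inside the second paragraph when $F$ contains a growing chain-like closed subgraph but \emph{no} bounded closed subgraph into which interior segments can be moved — possible when every internal cut of the chain has size strictly above $\Delta$. Merely permuting segments within a single chain does not shrink it, and a growing closed subgraph cannot be covered by a sparse sprinkle, so a fresh target for the segments is required. I expect the resolution to be one of: (a) showing such configurations are forced to carry the global ``cyclic'' structure already covered by Theorem~\ref{th:second_power_generalisation}, so that the $v\mapsto v+r$ automorphism supplies the docking sites; or (b) a genuinely new covering idea for growing chain-like closed subgraphs — for instance routing the interior segments of one chain through the $O_d(1)$-size attachments that glue chains together inside $F$. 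This is the point where I anticipate a new idea is needed, consistent with the remark in Section~\ref{sc:challenges}.
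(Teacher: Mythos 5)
The statement you were asked to prove is not proved in the paper at all: it appears as a \emph{Conjecture} in Section~\ref{sc:challenges}, explicitly framed by the remark that ``significant improvements of conditions in Theorem~\ref{th:main2} require new ideas.'' So there is no ``paper's own proof'' to compare against, and your write-up is, by your own honest account, a plan rather than a proof.

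The genuine gap is precisely the one you flag in your final paragraph, and it is not a small one. The entire machinery of Sections~\ref{sc:improvement}--\ref{sc:day_3} rests on two features that are specific to the graphs handled by Theorems~\ref{th:second_power} and~\ref{th:second_power_generalisation}: (i) a canonical family of bounded closed subgraphs (the diamonds, or the copies of $F_0^*$) that can be pre-seeded at controlled density $\chi=\omega(n^{(2d-\Delta)/d}/\log n)$, and (ii) a translation automorphism $v\mapsto v+r$ which guarantees that slicing a long closed subgraph into bounded pieces and re-inserting them at a diamond again produces a subgraph of \emph{some} member of $\mathcal F_n$. Under the conjecture's sole hypothesis $|\partial_e(\tilde F)|\geq d+1$, neither of these is available: a locally sparse $d$-regular $F$ need not contain \emph{any} bounded closed subgraph (and if it contains none, there are no diamonds to fix and no docking sites to send interior segments to), and it need not admit any automorphism at all, so ``permuting segments along interfaces'' cannot be justified by rigidity alone without a new structural theorem about chains inside locally sparse graphs. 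Your dichotomy into ``robust'' versus ``chain'' pieces is plausible, but the claim that a bounded chain segment is pinned down up to $O_d(1)$ by its interfaces, and that interchangeable segments therefore exist, is exactly the missing lemma; nothing in Claims~\ref{cl:count_copies_vertex}--\ref{cl:main} supplies it. You name this as the ``evacuation problem,'' and I agree it is the obstruction, but identifying the obstacle is not the same as proving the conjecture.

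There are also two softer spots worth flagging. First, your first fragmentation round cites Claims~\ref{cl:sharp_day_1}--\ref{cl:lm:main}, but those are proved under strictly stronger boundary hypotheses (the growing lower bound $d+1+\lfloor|V(\tilde F)|w/\log n\rfloor$, or $2d$); under the bare assumption $|\partial_e|\geq d+1$, what you may legitimately invoke is only the coarse analysis of Theorem~\ref{th:main1} in Appendix~\ref{sc:theorem_coarse_proof}, which produces fragments of size $O(n^{1-1/d})$ but with no control on closed-subgraph lengths. Second, the assertion that ``the lower bound on $p_e$ is the soft part'' is only clean when one already knows the target value $U$. In Theorems~\ref{th:main2_1} and~\ref{th:main2_2} the automorphism hypotheses force $U=(e/n)^{2/d}$ so that the first-moment computation~\eqref{eq:from_part_Th2} settles the lower bound at once; in the conjecture no automorphism restriction is imposed, so $U$ may depend on $|\mathrm{Aut}(F)|$ in a way your sketch does not pin down, and the claim that the fragment family produced by your (incomplete) scheme certifies $p_e\geq(1-o(1))U$ is itself contingent on the scheme working.

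In short: your analysis correctly isolates why the paper leaves this as a conjecture, and your proposed decomposition is a reasonable starting point, but the key smoothing/evacuation lemma for general locally sparse $F$ is missing and you yourself acknowledge that resolving it would require a new idea. As it stands this is a research plan, not a proof of the conjecture.
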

A notable special case of graphs $F$ covered by this conjecture is a hexagonal lattice. Although the hexagonal lattice is not a regular graph, it can be extended to a 3-regular graph by adding missing edges between the boundary vertices of degree 2. Since the boundary has $O(\sqrt{n})$ vertices, it does not contribute to the threshold asymptotically. This specific $F$ was mentioned by Riordan in~\cite{Riordan}, where he considered it as a special case to which his methods do not apply. It is worth noting that Fernandez de la Vega and Manoussakis~\cite{FdVM} proved that $p_c(F)=O((\log n/n)^{2/3})$ and that Theorem~\ref{th:main1}, as well as~\cite[Corollary 1.5]{CHL}, implies~\eqref{eq:p_c=p_e} for such $F$. 



It is also interesting to establish stronger versions of Theorems~\ref{th:main2},~\ref{th:second_power},~and~\ref{th:second_power_generalisation} for $d$-regular graphs $F$ that they cover. Although it is plausible to refine the asymptotics of $p_c(F)$ to some extent using similar methods, achieving results as precise as those for Hamilton cycles seems very challenging. Let us recall that, for an $n$-cycle $F$, $p_c(F)=\frac{\ln n+\ln\ln n+O(1)}{n}$~\cite{Ham1} and that the following hitting time result is known~\cite{AKS,Ham3}: in the random graph process $\mathbf{W}=\mathbf{W}_m$, $m=0,1,\ldots,N$, the hitting time for $\mathcal{Q}_F$ coincides with the hitting time for $\delta(\mathbf{W})\geq 2$ whp. It is clear that for denser regular graphs $F$ hitting time versions of Theorems~\ref{th:main2},~\ref{th:second_power},~and~\ref{th:second_power_generalisation} do not hold: for example, it is possible to show that, for any $k=(1-\Theta(1))n$, whp the hitting time for containing the square of a Hamilton cycle is strictly greater than the hitting time for the property that every vertex belongs to the square of a path of length at least $k$. On the other hand, our proof method suggests that the following question may have a positive answer: Is there a (small) constant $k\in\mathbb{N}$ such that $p_c(F)\leq p_e(F)+p_c(\mathcal{Q}_k)$, where $F$ is the square of a cycle on $[n]$ and $\mathcal{Q}_k$ denotes the property that every vertex belongs to the square of a path of length $k$? In particular, is it true for $k=2$, i.e. when $\mathcal{Q}_k$ is the property that there are no isolated vertices?


\section*{Acknowledgements}

This work originated during the author's visit to Tel Aviv University in the summer of 2022. The author is grateful to Wojciech Samotij for his warm hospitality and for helpful discussions throughout the visit. The author would like to thank Asaf Cohen Antonir, Sahar Diskin, Ilay Hoshen, and Michael Krivelevich  for valuable feedback on the presentation of the paper. The author would also like to thank J\'{o}zsef Balogh and Robert A. Krueger for very useful comments on last chapters of the paper and for helpful discussions at the Banff International Research Station during the workshop ``Bootstrap Percolation and its Applications'', as well as the organisers of this event. 

An early version of this paper, uploaded to arXiv in January 2023, contained a crucial error that invalidated the proof of Theorem~\ref{th:second_power}. The author sincerely thanks Ashwin Sah and Mehtaab Sawhney for identifying the mistake and for subsequent discussions and valuable comments.

\appendix

\section{Coarse bound: proof of Theorem~\ref{th:main1}}
\label{sc:theorem_coarse_proof}

Let $F$ be as in the statement of Theorem~\ref{th:main1}. Let $B>0$ be a large enough constant. Consider $d$ independent samples 
$$
\mathbf{W}_1,\mathbf{W}_2,\ldots,\mathbf{W}_d\sim G(n,m'),\quad\text{ where }
\quad m'=\lfloor B\cdot n^{-2/d}\cdot N\rfloor.
$$
Let $\mathcal{F}_n$ be the family of all isomorphic copies of $F$ on $[n]$. 

\begin{claim}
\label{cl:coarse_day_1}
Whp there exists $F'\subset F\cup\mathbf{W}_1$, $F'\in\mathcal{F}_n$, such that $|F\cap F'|\leq n^{1-1/d}$.
\end{claim}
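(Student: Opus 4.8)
\noindent\emph{Approach.} The plan is to apply the fragmentation Lemma~\ref{lm:not_bad}. Take $f=dn/2$ (the common number of edges of the graphs in $\mathcal{F}_n$), $p:=B\,n^{-2/d}$, so that $m:=\lfloor pN\rfloor=m'$ as in the statement and $p=\Theta(n^{-2/d})$, $f=O(n)$; let $\mathcal{B}$ be the property of graphs on $n$ vertices of having more than $n^{1-1/d}$ edges. Because $\mathcal{F}_n$ is symmetric, the left-hand side of~\eqref{eq:main_lm} is the same for every $F\in\mathcal{F}_n$, and since $f/(3m')=\Theta(n^{-1+2/d})=o(1)$ for $d\geq3$ the factor $e^{-f^2/m'+f^3/(3m'^2)}$ is at most $1$; hence it is enough to prove, for one fixed $F$,
\[
\sum_{\ell>n^{1-1/d}}\Pi^F_{\mathcal{B}_\ell}\left(\left(1+o(1)\right)\tfrac{N}{m'}\right)^{\ell}=o(1),\qquad\tfrac{N}{m'}=\Theta\!\left(\tfrac{n^{2/d}}{B}\right).
\]
Lemma~\ref{lm:not_bad} then gives that whp $(\mathbf{F},\mathbf{W}_1)$ is not $\mathcal{B}$-bad; as $F\in\mathcal{M}(F,\mathbf{W}_1)\neq\varnothing$ and on that event $|\mathcal{M}_{\mathcal{B}}(F,\mathbf{W}_1)|<|\mathcal{M}(F,\mathbf{W}_1)|$, some $F'\in\mathcal{F}_n$ satisfies $F'\subset F\cup\mathbf{W}_1$ and $|F\cap F'|\leq n^{1-1/d}$, and the statement follows by symmetry.

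\noindent\emph{Estimating $\Pi^F_{\mathcal{B}_\ell}$.} Decompose the intersection $F\cap\mathbf{F}$ by its number $x$ of non-isolated vertices and number $c$ of components. By Claim~\ref{cl:J_upper_bound}, $|\mathcal{J}^F_{\ell,x,c}|\leq\binom{n}{c}(16d)^{\ell}$, and by Claim~\ref{cl:simple_embeddings} together with $|\mathcal{F}_n|=n!/|\mathrm{Aut}(F)|$ and~\eqref{eq:factorials_ratio_general}, a fixed subgraph in $\mathcal{J}^F_{\ell,x,c}$ is contained in $\mathbf{F}$ with probability at most $e^{O(\ell)}\frac{(n-x+c)!}{n!}\leq e^{O(\ell)}\,e^{(x-c)^2/n}\,n^{-(x-c)}$. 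Since the sharp counting estimate of Claim~\ref{cl:main} is unavailable here (it needs local sparsity at all scales), the key extra input is the elementary bound
\[
x-c\geq\frac{2\ell}{d}-\frac{\ell}{\varepsilon\ln n-1}\qquad\text{for every subgraph of }F\text{ with }\ell>n^{1-1/d}\text{ edges},
\]
proved by separating the components into those on at most $\varepsilon\ln n$ vertices — where the hypothesis $|\partial_e(\tilde F)|\geq d+1$ (for size $\geq3$) and $d$-regularity (for size $2$) yield $x_i-1\geq\frac{2\ell_i}{d}+\frac1d$ — and those on more than $\varepsilon\ln n$ vertices, of which there are at most $\ell/(\varepsilon\ln n-1)$ and each only loses an additive $O(1)$. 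Consequently $n^{2\ell/d-(x-c)}\leq n^{\ell/(\varepsilon\ln n-1)}=e^{O(\ell)}$, and
\[
\Pi^F_{\mathcal{B}_\ell}\left(\left(1+o(1)\right)\tfrac{N}{m'}\right)^{\ell}\leq\sum_{x,c}\binom{n}{c}\,e^{O(\ell)}\,B^{-\ell}\,e^{(x-c)^2/n}\,n^{-\left(x-c-2\ell/d\right)},
\]
with the residual power $n^{-(x-c-2\ell/d)}\leq1$ giving an extra polynomial-in-$n$ saving whenever $F\cap\mathbf{F}$ is sparser than a union of closed pieces.

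\noindent\emph{Closing the sum, and the main obstacle.} It remains to absorb $\binom{n}{c}$ and $e^{(x-c)^2/n}$. The structural inequality forces $c=O(\ell/\ln n)$ — hence $\binom{n}{c}=e^{O(\ell)}$ — unless $x-c$ exceeds $\tfrac{2\ell}{d}$ by a definite amount, in which case the saving $n^{-(x-c-2\ell/d)}$ is polynomially large and dominates $\binom{n}{c}$; likewise $e^{(x-c)^2/n}$ is harmless unless $x-c$ is a constant fraction of $n$ (equivalently $\ell=\Theta(n)$), and then $x-c=\Theta(n)$ too, so $n^{-(x-c)}=n^{-\Theta(n)}$ beats $e^{(x-c)^2/n}=e^{O(n)}$ since $(x-c)\ln n\gg(x-c)^2/n$. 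Carrying this out via the same case split as in the proof of Claim~\ref{cl:sharp_day_1} (on whether $x$ is $o(n)$ or $\Theta(n)$, and whether $c$ is small or comparable to $x$), each summand is at most $e^{-\Omega(\ell)}$ once $B=B(d)$ is chosen large, so summing over the $O(n^2)$ pairs $(x,c)$ and over $\ell>n^{1-1/d}$ yields $o(1)$, indeed $e^{-\Omega(n^{1-1/d})}$. The genuinely delicate regime is $\ell=\Theta(n)$, where $F\cap\mathbf{F}$ spans a constant fraction of $[n]$: there both the entropy term $\binom{n}{c}$ and the Stirling correction $e^{(x-c)^2/n}$ are exponential in $n$, and one must be careful to use the power $n^{-(x-c)}$ for two purposes at once — cancelling $n^{2\ell/d}$ and defeating the Stirling term — so that no single crude estimate closes the bound and the case analysis is essential.
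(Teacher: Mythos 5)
Your approach is the right one and matches the paper's: apply Lemma~\ref{lm:not_bad} with $f=dn/2$, $\mathcal{B}$ the property of having more than $n^{1-1/d}$ edges, use Claims~\ref{cl:J_upper_bound} and~\ref{cl:simple_embeddings} together with the factorial estimate, and exploit the structural fact that the $d+1$ edge-boundary condition controls $\sigma$ for small components. The paper's Appendix~\ref{sc:theorem_coarse_proof} does exactly this.

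However, your ``Closing the sum'' paragraph is where the argument is loosest, and the looseness matters. The precise identity is $x-c-\tfrac{2\ell}{d}=\tfrac{(\Delta-d)c+2\sigma}{d}$, and the hypothesis $\Delta-d\geq1$ yields $x-c-\tfrac{2\ell}{d}\geq\tfrac{c}{d}-O(\ell/\ln n)$ \emph{with the exact coefficient} $1/d$. The paper uses this directly to write
$\binom{n}{c}\,n^{-(x-c-2\ell/d)}\leq\bigl(en^{1-1/d}/c\bigr)^{c}\,e^{O(\ell)}\leq e^{n^{1-1/d}}\,e^{O(\ell)}\leq e^{O(\ell)}$
for $\ell>n^{1-1/d}$ --- a single uniform bound, no case split at all. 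Your stated simplified inequality $x-c\geq\tfrac{2\ell}{d}-\tfrac{\ell}{\varepsilon\ln n-1}$ drops the $+c/d$ entirely, and your subsequent dichotomy (``$c=O(\ell/\ln n)$, or else the saving dominates $\binom{n}{c}$'') does not work if the exponent is loosened by even a constant factor: e.g.\ with only $n^{-c/(2d)}$ the product $\binom{n}{c}\,n^{-c/(2d)}$ is maximised at $c\asymp n^{1-1/(2d)}$ at size $e^{n^{1-1/(2d)}}$, which is \emph{not} $e^{O(\ell)}$ for $\ell$ only slightly above $n^{1-1/d}$. You actually do derive the $+1/d$ per small component in passing (``$x_i-1\geq\tfrac{2\ell_i}{d}+\tfrac{1}{d}$''), so you have the needed ingredient; you just need to keep it and track the exact $c/d$ into the final estimate rather than discard it and then argue informally. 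Relatedly, your closing remark that ``the case analysis is essential'' and that $\ell=\Theta(n)$ is ``genuinely delicate'' overcomplicates things: in the coarse claim the Stirling correction $e^{(x-c)^2/n}\leq e^{\ell^2/n}\leq e^{\ell}$ is always absorbed into the $e^{O(\ell)}$, and the single bound above closes every regime at once; the multi-case analysis is needed in the sharp Claim~\ref{cl:sharp_day_1}, not here.
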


\begin{proof}
Let $f=\frac{dn}{2}$, $m=m'$, and $\mathcal{B}$ be the property of graphs on $n$ vertices to have more than $n^{1-1/d}$ edges. Due to Lemma~\ref{lm:not_bad} and due to symmetry, it is sufficient to prove~\eqref{eq:sufficient_first_fragmentation}. 

Fix non-negative integers $c$ and $x$. Let us denote by $\mathcal{J}_{\ell,x,c}$ the set of all subgraphs $J\subset F$ with $\ell$ edges, $x$ non-isolated vertices, and $c$ connected components (excluding isolated vertices). Denote by $p(\ell,x,c)$ the probability that $\mathbf{F}\cap F\in\mathcal{J}_{\ell,x,c}$. Due to Claim~\ref{cl:J_upper_bound} and Claim~\ref{cl:simple_embeddings}, 
 for some constant $A>0$,
$$
p(\ell,x,c)\leq\frac{{n\choose c}e^{A\ell}(n-x+c)!/|\mathrm{Aut}(F)|}{|\mathcal{F}_n|}.
$$

Let $\ell>n^{1-1/d}$. Recalling that $x=\frac{2}{d}\ell+\frac{c\Delta}{d}+\frac{2}{d}\sigma$, assuming $n-x+c\geq 1$, we get
\begin{align}
 \frac{(n-x+c)!/|\mathrm{Aut}(F)|}{|\mathcal{F}_n|} & =
 \frac{(n-\frac{2\ell+c(\Delta-d)+2\sigma}{d})!}{n!}\notag\\
 &\leq (1+o(1))\frac{e^{\frac{2\ell+c(\Delta-d)+2\sigma}{d}}}{n^{\frac{2\ell+c(\Delta-d)+2\sigma}{d}}}\left(\frac{n-\frac{2\ell+c(\Delta-d)+2\sigma}{d}}{n}\right)^{n-\frac{2\ell+c(\Delta-d)+2\sigma}{d}}\notag\\
 &\leq (1+o(1))\frac{e^{\frac{2\ell+c(\Delta-d)+2\sigma)^2}{d^2n}}}{n^{\frac{2\ell+c(\Delta-d)+2\sigma}{d}}}
 \leq (1+o(1))\cdot\frac{e^{\frac{(x-c)^2}{n}}}{n^{\frac{2\ell+c+2\sigma}{d}}}.
 \label{eq:factorials_fraction}
\end{align}
Clearly, the same bound holds when $n-x+c=0$ as well. Therefore, since $x-c\leq \min\{\ell,n\}$,
$$
p(\ell,x,c)\leq\frac{\left(\frac{en^{1-1/d}}{c}\right)^c e^{(A+2)\ell}}{n^{\frac{2\ell+2\sigma}{d}}}\leq\frac{e^{n^{1-1/d}} e^{(A+2)\ell}}{n^{\frac{2\ell+2\sigma}{d}}}.
$$
The requirement on the edge boundary from the statement of Theorem~\ref{th:main1} implies $\sigma\geq -\frac{2d\ell}{\varepsilon\ln n}$. Indeed, if a component of $J\in\mathcal{J}_{\ell,x,c}$ has size $x_i\leq\varepsilon\ln n$, then the respective $\sigma_i:=\frac{d}{2}x_i-\ell_i-\frac{\Delta}{2}$ is non-negative. If $x_i>\varepsilon\ln n$, then its number of edges $\ell_i$ satisfies $2\ell_i\leq dx_i$. Thus, recalling that $x_i\leq 2\ell_i$, we get
$$
\sigma_i=\frac{d}{2}x_i-\ell_i-\frac{\Delta}{2}\geq-\frac{\Delta}{2}\geq -\frac{\Delta x_i}{2 \varepsilon\ln n}\geq -\frac{d x_i}{\varepsilon\ln n}\geq -\frac{2d \ell_i}{\varepsilon\ln n}.
$$
So,
\begin{align*}
\sum_{\ell}\Pi^F_{\mathcal{B}_{\ell}}\left(\left(1+\frac{3f}{m}\right)\frac{N}{m}\right)^{\ell}e^{-f^2/m+f^3/(3m^2)}& \leq
\sum_{\ell>n^{1-1/d}}\sum_{x,c}p(\ell,x,c)\left(\frac{1+o(1)}{B}\cdot n^{2/d}\right)^{\ell}\\
&\leq n^2\sum_{\ell>n^{1-1/d}}e^{n^{1-1/d}} \left(\frac{e^{A+2+4/\varepsilon}+o(1)}{B}\right)^{\ell}\\
&=o\left(\frac{1}{n^2}\right),
\end{align*}
completing the proof of the claim.
\end{proof}

For every $F\in\mathcal{F}_n$ such that there exists $F'$ as in the statement of Claim~\ref{cl:coarse_day_1}, we choose one such $F'$ and put $F\cap F'$ into a multiset $\mathcal{F}^{(1)}_n$. Due to Claim~\ref{cl:coarse_day_1} and Markov's inequality, whp $|\mathcal{F}^{(1)}_n|=(1-o(1))|\mathcal{F}_n|$. We then proceed by induction. Assume that, for $i\in[d-1]$, whp we have a multiset $\mathcal{F}^{(i)}_n$ of graphs with exactly $\lfloor n^{1-i/d}\rfloor$ edges that is obtained by adding a single $H\subset F$ from almost every $F\in\mathcal{F}_n$ such that the graph $H\cup\mathbf{W}_1\cup\ldots\cup\mathbf{W}_i$ contains some $F'\in\mathcal{F}_n$. We have that $|\mathcal{F}^{(i)}_n|=(1-o(1))|\mathcal{F}_n|$ whp. Let $\mathbf{H}$ be a uniformly random element of $\mathcal{F}^{(i)}_n$.

\begin{claim}
\label{cl:coarse_induction}
Whp there exists $H'\subset \mathbf{H}\cup\mathbf{W}_{i+1}$ such that $H'\in\mathcal{F}^{(i)}_n$ and 
$$
|\mathbf{H}\cap H'|\leq \max\left\{n^{1-(i+1)/d},\ln n\right\}.
$$
\end{claim}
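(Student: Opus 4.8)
The plan is to run the same fragmentation argument as in Claim~\ref{cl:coarse_day_1}, now invoking Lemma~\ref{lm:not_bad} with $f=\lfloor n^{1-i/d}\rfloor$, $m=m'$, and $\mathcal{B}$ the property of having more than $\max\{n^{1-(i+1)/d},\ln n\}$ edges. Since $\mathcal{F}^{(i)}_n$ is a multiset of fragments rather than a symmetric family, I cannot appeal to symmetry; instead I will verify the hypothesis~\eqref{eq:main_lm} \emph{uniformly} over $H\in\mathcal{F}^{(i)}_n$, i.e. show
\[
\sum_{\ell>\max\{n^{1-(i+1)/d},\ln n\}}\Pi^H_{\mathcal{B}_{\ell}}\Bigl(\bigl(1+\tfrac{3f}{m}\bigr)\tfrac{N}{m}\Bigr)^{\ell}e^{-f^2/m+f^3/(3m^2)}=o(1).
\]
This is more than enough: any slowly decreasing $\delta_n$ (say $\delta_n=1/\ln n$) then satisfies $\delta_n^3\geq$ the left-hand side, and Lemma~\ref{lm:not_bad} yields that whp $(\mathbf{H},\mathbf{W}_{i+1})$ is not $\mathcal{B}$-bad. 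The claim itself follows because $\mathbf{H}\in\mathcal{M}(\mathbf{H},\mathbf{W}_{i+1})$, so this multiset is nonempty and, not being $\mathcal{B}$-bad, must contain an $H'$ with $\mathbf{H}\cap H'\notin\mathcal{B}$, that is, with $|\mathbf{H}\cap H'|\leq\max\{n^{1-(i+1)/d},\ln n\}$.

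To bound $\Pi^H_{\mathcal{B}_\ell}=\sum_{x,c}p^H(\ell,x,c)$, where $p^H(\ell,x,c)=\Prob(\mathbf{H}\cap H\in\mathcal{J}^H_{\ell,x,c})$ and $\mathbf{H}$ is uniform in $\mathcal{F}^{(i)}_n$, I use that each member of $\mathcal{F}^{(i)}_n$ lies inside a distinct copy of $F$: hence, for a fixed $J\subset H$ with $x$ vertices, $\ell$ edges and $c$ components, the number of $H'\in\mathcal{F}^{(i)}_n$ with $J\subseteq H'$ is at most the number of copies of $F$ containing $J$, which by Claim~\ref{cl:simple_embeddings} is $e^{O(\ell)}(n-x+c)!/|\mathrm{Aut}(F)|$. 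Combining this with Claim~\ref{cl:J_upper_bound} and the (whp) identity $|\mathcal{F}^{(i)}_n|=(1-o(1))n!/|\mathrm{Aut}(F)|$ gives, for a constant $A>0$,
\[
p^H(\ell,x,c)\leq(1+o(1))\,\binom{f}{c}e^{A\ell}\,\frac{(n-x+c)!}{n!}\leq(1+o(1))\,\binom{f}{c}e^{A\ell}\,\frac{e^{(x-c)^2/n}}{n^{(2\ell+c+2\sigma)/d}},
\]
using~\eqref{eq:factorials_fraction} together with $x=\tfrac{2}{d}\ell+\tfrac{\Delta}{d}c+\tfrac{2}{d}\sigma$. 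Three estimates now make the geometric series collapse. First, exactly as in Claim~\ref{cl:coarse_day_1}, the hypothesis $|\partial_e(\tilde F)|\geq d+1$ for $3\leq|V(\tilde F)|\leq\varepsilon\ln n$ forces $\sigma\geq-\tfrac{2d\ell}{\varepsilon\ln n}$ (components of size at most $\varepsilon\ln n$ have nonnegative excess since they send at least $\Delta$ boundary edges into $F$, while larger components have $\sigma_i\geq-\tfrac{\Delta}{2}\geq-\tfrac{2d\ell_i}{\varepsilon\ln n}$), so $n^{-2\sigma/d}\leq e^{4\ell/\varepsilon}$. Second, the entropy factor is absorbed by the $n^{-c/d}$ coming from the factorials:
\[
\binom{f}{c}n^{-c/d}\leq\Bigl(\frac{ef}{c\,n^{1/d}}\Bigr)^{c}=\Bigl(\frac{e\,n^{1-(i+1)/d}}{c}\Bigr)^{c}\leq e^{n^{1-(i+1)/d}}\leq e^{\ell},
\]
the last step because $\ell>n^{1-(i+1)/d}$ throughout the sum. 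Third, $(x-c)^2/n\leq\ell^2/n\leq n^{1-2i/d}\leq n^{1-(i+1)/d}\leq\ell$ since $i\geq1$, so $e^{(x-c)^2/n}\leq e^{\ell}$.

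Feeding these in yields $p^H(\ell,x,c)\leq(1+o(1))e^{(A+2+4/\varepsilon)\ell}n^{-2\ell/d}$; since $N/m=(1+o(1))n^{2/d}/B$, $(1+3f/m)=1+o(1)$, $e^{-f^2/m+f^3/(3m^2)}=O(1)$, and there are only $O(\ell^2)$ pairs $(x,c)$ for each $\ell$, the sum is at most
\[
O(1)\sum_{\ell>\max\{n^{1-(i+1)/d},\ln n\}}\ell^2\Bigl(\tfrac{e^{A+2+4/\varepsilon}+o(1)}{B}\Bigr)^{\ell}=o(1)
\]
once $B$ is a large enough constant, verifying~\eqref{eq:main_lm}. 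The one step that needs genuine care — rather than bookkeeping copied from Claim~\ref{cl:coarse_day_1} — is the second estimate above: because $f=n^{1-i/d}$ can be a polynomial factor larger than the summation variable $\ell$, the bare binomial $\binom{f}{c}$ is far too big to be bounded by $e^{O(\ell)}$ on its own, and it is essential that precisely the $n^{-c/d}$ gain from the ratio $(n-x+c)!/n!$ is spent against it, collapsing $f$ to $n^{1-(i+1)/d}=O(\ell)$. Everything else parallels the base case, and the inductive step then preserves the property that $H\cap H'$ together with $\mathbf{W}_1\cup\cdots\cup\mathbf{W}_{i+1}$ still contains a copy of $F$.
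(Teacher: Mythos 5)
Your proof is correct and follows essentially the same route as the paper's: same choice of $f$, $m$, and $\mathcal{B}$ in Lemma~\ref{lm:not_bad}, same bound on $p^H(\ell,x,c)$ via Claims~\ref{cl:J_upper_bound} and~\ref{cl:simple_embeddings} plus the factorial-ratio estimate~\eqref{eq:factorials_fraction}, and the same mechanism of cancelling $\binom{f}{c}$ against $n^{-c/d}$ and absorbing the remaining $e^{n^{1-(i+1)/d}}$ factor by $\ell>n^{1-(i+1)/d}$. The extra detail you spell out (why $\mathcal{F}^{(i)}_n$ inherits the extension bound from $\mathcal{F}_n$, and the chain $\ell^2/n\leq n^{1-2i/d}\leq\ell$) is implicit in the paper but correct.
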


\begin{proof}
Let $f=\lfloor n^{1-i/d}\rfloor$, $m=m'$, and $\mathcal{B}$ be the property of graphs on $n$ vertices to have more than $n^{1-(i+1)/d}$ edges. Due to Lemma~\ref{lm:not_bad}, it is sufficient to prove~\eqref{eq:main_lm}.

Fix non-negative integers $c$ and $x$. For $H\in \mathcal{F}^{(i)}_n$, let us recall that $\mathcal{J}^H_{\ell,x,c}$ is the set of all subgraphs $J\subset H$ with $\ell$ edges, $x$ non-isolated vertices, and $c$ connected components (excluding isolated vertices). Denote by $p^H(\ell,x,c)$ the probability that $\mathbf{H}\cap H\in\mathcal{J}^H_{\ell,x,c}$. 
 Due to Claim~\ref{cl:J_upper_bound}, Claim~\ref{cl:simple_embeddings}, and estimate~\eqref{eq:factorials_fraction}, for some constant $A>0$,
\begin{align*}
p^H(\ell,x,c) & \leq\frac{{\lfloor n^{1-i/d}\rfloor\choose c}e^{A\ell}(n-x+c)!/|\mathrm{Aut}(F)|}{|\mathcal{F}^{(i)}_n|}\\
&\leq(1-o(1))\frac{\left(\frac{en^{1-(i+1)/d}}{c}\right)^{c}e^{(A+2)\ell}}{n^{2(\ell+\sigma)/d}}\leq(1-o(1))\frac{e^{n^{1-(i+1)/d}}e^{(A+2)\ell}}{n^{2(\ell+\sigma)/d}}.
\end{align*}
Recalling that $\sigma\geq-\frac{C\ell}{2\ln n}$, for every $H\in\mathcal{F}_n^{(i)}$,
\begin{align*}
\sum_{\ell}\Pi^H_{\mathcal{B}_{\ell}} & \left(\left(1+\frac{3f}{m}\right)\frac{N}{m}\right)^{\ell}e^{-f^2/m+f^3/(3m^2)}\\
& \leq
\sum_{\ell>\max\{n^{1-(i+1)/d},\ln n\}}\sum_{x,c}p^H(\ell,x,c)\left(\frac{1+o(1)}{B}\cdot n^{2/d}\right)^{\ell}\\
&\leq n^2\sum_{\ell>\max\{n^{1-(i+1)/d},\ln n\}}e^{n^{1-(i+1)/d}}\left(\frac{e^{A+2+4/\varepsilon }+o(1)}{B}\right)^{\ell}=o\left(\frac{1}{n^2}\right),
\end{align*}
as desired.
\end{proof}

By induction, whp we get a multiset $\mathcal{F}^{(d)}_n=\mathcal{F}^{(d)}_n(\mathbf{W}_1\cup\ldots\cup\mathbf{W}_d)$ of graphs of size $\lfloor \ln n\rfloor$ comprising a single $H\subset F$ from almost every $F\in\mathcal{F}_n$ such that the graph $H\cup\mathbf{W}_1\cup\ldots\cup\mathbf{W}_d$ contains some $F'\in\mathcal{F}_n$. 
  Let
$$
 \mathbf{G}\sim G(n,p),\quad\text{ where }\quad p=(1+\varepsilon)dBn^{-2/d}.
$$
By~\cite[Corollary 1.16]{Janson}, whp there exists a multiset $\mathcal{F}^{(d)}_n=\mathcal{F}^{(d)}_n(\mathbf{G})$ of graphs of size $\lfloor\ln n\rfloor$ comprising a single subgraph $H$ of almost every $F\in\mathcal{F}_n$ so that $H\cup\mathbf{G}$ contains some $F'\in\mathcal{F}_n$. 

Let $X$ be the number of $H\in\mathcal{F}^{(d)}_n$ that belong to $\mathbf{G}'\sim G(n,p'=Bn^{-2/d})$, sampled independently of $\mathbf{G}$. We get 
$$
\mathbb{E}X=|\mathcal{F}_n^{(d)}|p'^{\lfloor\ln n\rfloor }=(1-o(1))|\mathcal{F}_n|p'^{\lfloor\ln n\rfloor }=\omega(1).
$$
Let $\mathcal{B}$ be the set of non-empty graphs. Due to the definition~\eqref{eq:Pi_def} of $\Pi^H_{\mathcal{B}_{\ell}}=\Pi^H_{\mathcal{B}_{\ell}}(\mathcal{F}^{(d)}_n)$,
$$
 \frac{\mathrm{Var}X}{(\mathbb{E}X)^2} \leq\frac{\max_{H\in\mathcal{F}_n^{(d)}}\sum_{\ell\geq 1}\Pi^H_{\mathcal{B}_{\ell}}|\mathcal{F}_n^{(d)}|p'^{\lfloor\ln n\rfloor-\ell}}{\mathbb{E}X}=\max_{H\in\mathcal{F}_n^{(d)}}\sum_{\ell\geq 1}\Pi^H_{\mathcal{B}_{\ell}}p'^{-\ell}.
$$
Therefore, due to Claim~\ref{cl:J_upper_bound}, Claim~\ref{cl:simple_embeddings}, and estimate~\eqref{eq:factorials_fraction}, for some constant $A>0$, 
\begin{align*}
 \frac{\mathrm{Var}X}{(\mathbb{E}X)^2} & \leq
\sum_{\ell\geq 1}\sum_{x,c}\frac{{\lfloor\ln n\rfloor\choose c}e^{(A+2)\ell}}{n^{2\ell/d+c/d+2\sigma/d}}\left(\frac{1}{B}\cdot n^{2/d}\right)^{\ell}\\
&\leq \sum_{\ell\geq 1}\sum_{c\geq 1} O(\ell) \left(\frac{e\ln n}{cn^{1/d}}\right)^c\left(\frac{e^{A+2+4/\varepsilon}}{B}\right)^{\ell}=O\left(\frac{\ln n}{n^{1/d}}\right),
\end{align*}
completing the proof of Theorem~\ref{th:main1}, due to Chebyshev's inequality.

\section{Proofs of Claims~\ref{cl:many_planted_copies},~\ref{cl:not_bad}}
\label{sc:appendix_claims}

\begin{claim}
Let $F\in\mathcal{F}^*_n(\overrightarrow{D})$ and let $\mathbf{W}$ be a uniformly random $m$-element subset of ${[n]\choose 2}\setminus(D_1\cup\ldots\cup D_d)$. Let $\delta(n)\to 0$ as $n\to\infty$. Then, for every $t\in\{0,1,\ldots,2n-5d\}$,
$$
 \mathbb{P}\left(|\mathcal{M}(F,\mathbf{W})|<\delta(n)M \mid | F\cap\mathbf{W}|=t\right)\leq\delta(n).
$$ 
\end{claim}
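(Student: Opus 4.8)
The plan is to reduce to the ``averaged'' statement — in which the fixed $F$ is replaced by a uniformly random element of $\mathcal{F}^*_n(\overrightarrow{D})$ — and then run the counting from the proof of Lemma~\ref{lm:not_bad} essentially verbatim. Write $\mathcal{E}:=\binom{[n]}{2}\setminus(D_1\cup\ldots\cup D_\chi)$, so that $|\mathcal{E}|=N-5\chi$ and every $F\in\mathcal{F}^*_n(\overrightarrow{D})$ satisfies $F\subseteq\mathcal{E}$ and $|F|=2n-5\chi$ (edges). Conditionally on $|F\cap\mathbf{W}|=t$, the set $\mathbf{W}\setminus F$ is a uniformly random $(m-t)$-subset of $\mathcal{E}\setminus F$, and $|\mathcal{M}(F,\mathbf{W})|=g(F\cup\mathbf{W})$, where $g(A):=|\{F'\in\mathcal{F}^*_n(\overrightarrow{D}):F'\subseteq A\}|$ depends on $\mathbf{W}$ only through $A:=F\cup\mathbf{W}$; in particular $M=M(t)>0$, so the event $\{|\mathcal{M}(F,\mathbf{W})|<\delta(n)M\}$ is meaningful.

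First I would show that $q(F):=\mathbb{P}\bigl(|\mathcal{M}(F,\mathbf{W})|<\delta(n)M\mid|F\cap\mathbf{W}|=t\bigr)$ does not depend on the choice of $F\in\mathcal{F}^*_n(\overrightarrow{D})$. Given $F,F'\in\mathcal{F}^*_n(\overrightarrow{D})$, the corresponding full cycles $F_{\mathrm{full}},F'_{\mathrm{full}}\in\mathcal{F}_n(\overrightarrow{D})$ pin every diamond vertex to a fixed position — the diamonds occur in the order $D_1,\ldots,D_\chi$ with the prescribed gap sizes $n_1,\ldots,n_\chi$ and prescribed internal orders — so they differ only in how the $n-4\chi$ non-diamond vertices are distributed among the gaps and ordered inside them. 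Let $\phi$ be the permutation of $[n]$ that is the identity on $V(D_1)\cup\ldots\cup V(D_\chi)$ and sends the $j$-th vertex of the $i$-th gap of $F_{\mathrm{full}}$ to the $j$-th vertex of the $i$-th gap of $F'_{\mathrm{full}}$. Then $\phi(F)=F'$; since $\phi$ fixes every diamond vertex it fixes every diamond edge, whence $\phi(\mathcal{E})=\mathcal{E}$; and the $\phi$-image of any $F''_{\mathrm{full}}\in\mathcal{F}_n(\overrightarrow{D})$ is again the square of a cycle visiting $D_1,\ldots,D_\chi$ in order with gap sizes $n_1,\ldots,n_\chi$, so $\phi$ maps $\mathcal{F}_n(\overrightarrow{D})$, hence $\mathcal{F}^*_n(\overrightarrow{D})$, bijectively onto itself, and therefore $g\circ\phi=g$. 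Transporting $\mathbf{W}$ by $\phi$ then gives $q(F)=q(F')$, so $q(F)$ coincides with its average over a uniformly random $\mathbf{F}\in\mathcal{F}^*_n(\overrightarrow{D})$ taken independently of $\mathbf{W}$:
$$
q(F)=\mathbb{P}\bigl(|\mathcal{M}(\mathbf{F},\mathbf{W})|<\delta(n)M\ \big|\ |\mathbf{F}\cap\mathbf{W}|=t\bigr).
$$

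It then remains to bound the right-hand side exactly as in Lemma~\ref{lm:not_bad}. Put $s:=(2n-5\chi)+(m-t)=m+(2n-5\chi)-t$. Every pair $(F,W)$ with $F\in\mathcal{F}^*_n(\overrightarrow{D})$, $W$ an $m$-element subset of $\mathcal{E}$, $|F\cap W|=t$ and $|\mathcal{M}(F,W)|<\delta(n)M$ is obtained by (i) choosing the $s$-set $A:=F\cup W\subseteq\mathcal{E}$ (at most $\binom{N-5\chi}{s}$ ways), (ii) choosing $F\in\mathcal{F}^*_n(\overrightarrow{D})$ with $F\subseteq A$ — in fewer than $\delta(n)M$ ways, because $\mathcal{M}(F,W)=\{F'\in\mathcal{F}^*_n(\overrightarrow{D}):F'\subseteq A\}$ is determined by $A$ and has size $<\delta(n)M$ — and (iii) choosing the $t$-set $F\cap W\subseteq F$ ($\binom{2n-5\chi}{t}$ ways), after which $W=(A\setminus F)\cup(F\cap W)$ is forced. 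Since the number of all pairs $(F,W)$ with $F\in\mathcal{F}^*_n(\overrightarrow{D})$, $|W|=m$ and $|F\cap W|=t$ equals $|\mathcal{F}^*_n(\overrightarrow{D})|\binom{2n-5\chi}{t}\binom{N-2n}{m-t}$, dividing and using $M=|\mathcal{F}^*_n(\overrightarrow{D})|\binom{N-2n}{m-t}\big/\binom{N-5\chi}{s}$ yields
$$
\mathbb{P}\bigl(|\mathcal{M}(\mathbf{F},\mathbf{W})|<\delta(n)M\ \big|\ |\mathbf{F}\cap\mathbf{W}|=t\bigr)\leq\frac{\binom{N-5\chi}{s}\,\delta(n)M\,\binom{2n-5\chi}{t}}{|\mathcal{F}^*_n(\overrightarrow{D})|\binom{2n-5\chi}{t}\binom{N-2n}{m-t}}=\delta(n),
$$
which together with the previous step proves the claim.

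The steps copied from Lemma~\ref{lm:not_bad} (the final display) are routine; the only point genuinely requiring care is the symmetry reduction in the second paragraph — verifying that the permutation $\phi$ built from two members of $\mathcal{F}^*_n(\overrightarrow{D})$ really fixes every diamond vertex (so that $\phi(\mathcal{E})=\mathcal{E}$, and it is exactly here that one uses that all of $\mathcal{F}_n(\overrightarrow{D})$ share the prescribed positions, ordering of the diamonds, and internal orders) and really maps $\mathcal{F}^*_n(\overrightarrow{D})$ onto itself (so that $g\circ\phi=g$). Once this is in place, a fixed $F$ behaves like a random one and the first-moment estimate applies.
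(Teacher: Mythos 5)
Your proof is correct and follows essentially the same path as the paper's: reduce the fixed-$F$ statement to the averaged statement via symmetry, then apply the same first-moment counting argument from Lemma~\ref{lm:not_bad}. The one genuine addition is that where the paper simply invokes ``by symmetry'' to argue that $\mathbb{P}(|\mathcal{M}(F,\mathbf{W})|<\delta(n)M\mid|F\cap\mathbf{W}|=t)$ is independent of $F$, you construct the witnessing permutation $\phi$ explicitly (fixing the diamond vertices, permuting gap vertices position-by-position, and verifying $\phi(\mathcal{E})=\mathcal{E}$ and $g\circ\phi=g$), which is a more careful justification of the same step; the remainder of the counting coincides with the paper's.
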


\begin{proof}
Sample a uniformly random $\mathbf{F}\in\mathcal{F}^*_n(\overrightarrow{D})$ independently of $\mathbf{W}$.
Then
\begin{multline*}
 \mathbb{P}\left(|\mathcal{M}(\mathbf{F},\mathbf{W})|<\delta(n) M\mid|\mathbf{F}\cap\mathbf{W}|=t\right)=\\
 \sum_{F\in\mathcal{F}^*_n(\overrightarrow{D})}\mathbb{P}\left(\mathbf{F}=F,|\mathcal{M}(F,\mathbf{W})|<\delta(n) M\mid|\mathbf{F}\cap\mathbf{W}|=t\right)\\
 =\sum_{F\in\mathcal{F}^*_n(\overrightarrow{D})}\mathbb{P}\left(|\mathcal{M}(F,\mathbf{W})|<\delta(n) M\mid \mathbf{F}=F,|\mathbf{F}\cap\mathbf{W}|=t\right)\\
 \times\mathbb{P}(\mathbf{F}=F\mid |\mathbf{F}\cap\mathbf{W}|=t)\\
  =\sum_{F\in\mathcal{F}^*_n(\overrightarrow{D})}\mathbb{P}\left(|\mathcal{M}(F,\mathbf{W})|<\delta(n) M\mid |F\cap\mathbf{W}|=t\right)\cdot\mathbb{P}(\mathbf{F}=F\mid |\mathbf{F}\cap\mathbf{W}|=t).
\end{multline*}
By symmetry, $\mathbb{P}(|F\cap\mathbf{W}|=t)$ and $\mathbb{P}\left(|\mathcal{M}(F,\mathbf{W})|<\delta(n) M, \, |F\cap\mathbf{W}|=t\right)$ do not depend on $F\in\mathcal{F}^*_n(\overrightarrow{D})$. 

Therefore,
\begin{align*}
\mathbb{P}(\mathbf{F}=F\mid |\mathbf{F}\cap\mathbf{W}|=t) &=
\frac{\mathbb{P}(\mathbf{F}=F,\,|F\cap\mathbf{W}|=t)}{\mathbb{P}(|\mathbf{F}\cap\mathbf{W}|=t)}
=
\frac{\mathbb{P}(\mathbf{F}=F)\Prob(|F\cap\mathbf{W}|=t)}{\sum_{\tilde F\in\mathcal{F}^*_n(\overrightarrow{D})}\mathbb{P}(\mathbf{F}=\tilde F)\Prob(|\tilde F\cap\mathbf{W}|=t)}\\
&=
\frac{\Prob(|F\cap\mathbf{W}|=t)}{\sum_{\tilde F\in\mathcal{F}^*_n(\overrightarrow{D})}\Prob(|\tilde F\cap\mathbf{W}|=t)}=
\frac{1}{|\mathcal{F}^*_n(\overrightarrow{D})|}
\end{align*} 
and 
$$
 \mathbb{P}\left(|\mathcal{M}(F,\mathbf{W})|<\delta(n) M\mid |F\cap\mathbf{W}|=t\right)=\mathbb{P}\left(|\mathcal{M}(F',\mathbf{W})|<\delta(n) M\mid |F'\cap\mathbf{W}|=t\right)
$$
for all $F'\in\mathcal{F}^*_n(\overrightarrow{D})$. We also notice that each pair $\{F\in\mathcal{F}^*_n(\overrightarrow{D}),W\in{E(K_n)\setminus(D_1\cup\ldots\cup D_d)\choose m}\}$ such that $|\mathcal{M}(F,W)|<\delta(n) M$ and $|F\cap W|=t$  can be obtained by
\begin{itemize}
\item first, choosing a set of edges $A\subset[n]$ of size $m+(2n-5d)-t$, on the role of $F\cup W$, that contains less than $\delta(n) M$ graphs $F\in\mathcal{F}^*_n(\overrightarrow{D})$ --- in at most ${N-5d\choose m+(2n-5d)-t}$ ways,
\item then, choosing an $F\in\mathcal{F}^*_n(\overrightarrow{D})$ such that $F$ is a subgraph of $A$ --- in less than $\delta(n) M$ ways, and, 
\item finally, choosing the intersection $F\cap W$ --- in ${2n-5d\choose t}$ ways.
\end{itemize}  
We conclude that
\begin{align*}
 \mathbb{P}\left(|\mathcal{M}(F,\mathbf{W})|<\delta(n) M\mid |F\cap\mathbf{W}|=t\right) & =
 \mathbb{P}\left(|\mathcal{M}(\mathbf{F},\mathbf{W})|<\delta(n) M\mid|\mathbf{F}\cap\mathbf{W}|=t\right)\\
 &\leq\frac{\delta(n) M{N-5d\choose m+(2n-5d)-t}{2n-5d\choose t}}{|\mathcal{F}^*_n(\overrightarrow{D})|{N-2n\choose m-t}{2n-5d\choose t}}=\delta(n),
\end{align*}
completing the proof.
\end{proof}

\begin{claim}
Let $F\in\mathcal{F}^*_n(\overrightarrow{D})$ and let $\mathbf{W}\subset{[n]\choose 2}\setminus(D_1\cup\ldots\cup D_d)$ be a uniformly random $m$-subset. Then $(F,\mathbf{W})$ is not $\ell_0$-bad with probability at least $1-\exp(-\Omega(\sqrt{n}))$.
\end{claim}

\begin{proof}
Fix $\varepsilon'>0$ small enough and let $\delta(n)=\exp(-\varepsilon'\sqrt{n})$. Fix $t\in\{0,\ldots,2n-5d\}$. Due to Claim~\ref{cl:many_planted_copies}, we have
$$
 \Prob((F,\mathbf{W})\text{ is $\ell_0$-bad},\, |\mathcal{M}(F,\mathbf{W})|<\delta(n)M(t)
 \mid|F\cap\mathbf{W}|=t)=\exp(-\Omega(\sqrt{n})).
$$ 
Thus, it is sufficient to prove that, uniformly over $t$,
$$
 \Prob((F,\mathbf{W})\text{ is $\ell_0$-bad},\, |\mathcal{M}(F,\mathbf{W})|\geq\delta(n)M(t)
 \mid|F\cap\mathbf{W}|=t)=\exp(-\Omega(\sqrt{n})).
$$
The latter probability equals
\begin{multline*}
\Prob\left(|\mathcal{M}_{\ell_0}(F,\mathbf{W})|>\frac{1}{\sqrt{n}}|\mathcal{M}(F,\mathbf{W})|,\, |\mathcal{M}(F,\mathbf{W})|\geq\delta(n)M(t)
 \mid|F\cap\mathbf{W}|=t\right)\\
 \leq\Prob\left(|\mathcal{M}_{\ell_0}(F,\mathbf{W})|>\frac{\delta(n)}{\sqrt{n}}M(t)
 \mid|F\cap\mathbf{W}|=t\right)
 \leq\frac{\sqrt{n}\cdot\mathbb{E}(X \mid| F\cap\mathbf{W}|=t)}{\delta(n)M(t)},
\end{multline*}
where $X$ counts the number of  $F'\in\mathcal{M}(F,\mathbf{W})$ such that $|F'\cap F|\geq\ell_0+1$. Let $\mathbf{W}'$ be a uniformly random $(m-t)$-subset of ${[n]\choose 2}\setminus (F\cup D_1\cup\ldots\cup D_d)$, and let $X'$ be the number of $F'\in\mathcal{M}(F,\mathbf{W}')$ such that $|F'\cap F|\geq\ell_0+1$. Clearly, 
$$
 \mathbb{E}(X \mid|F\cap\mathbf{W}|=t)=\mathbb{E}X'.
$$
Then
\begin{equation}
 \E X'=\sum_{\ell\geq \ell_0+1}|\mathcal{F}^*_n(\overrightarrow{D})|\Pi_{\ell}
 {m-t\choose (2n-5d)-\ell}/{N-2n\choose (2n-5d)-\ell}.
\label{eq:expectation_general}
\end{equation}
We have
\begin{align}
\frac{\E X'}{M}  
=\sum_{\ell\geq \ell_0+1}|\mathcal{F}^*_n(\overrightarrow{D})|\cdot\frac{\Pi_{\ell}}{M}\cdot
 \frac{{m-t\choose (2n-5d)-\ell}}{{N-2n\choose (2n-5d)-\ell}}
 =\sum_{\ell\geq \ell_0+1}\Pi_{\ell}\cdot\frac{{m-t\choose (2n-5d)-\ell}/{N-2n\choose (2n-5d)-\ell}}{{N-2n\choose m-t}/{N-5d\choose m-t+(2n-5d)}}.
\label{eq:expectation_binomial}
\end{align}
By Stirling's approximation,
\begin{multline*}
 \frac{{m-t\choose (2n-5d)-\ell}/{N-2n\choose (2n-5d)-\ell}}{{N-2n\choose m-t}/{N-5d\choose m-t+(2n-5d)}}=\frac{{m-t\choose (2n-5d)-\ell}{N-5d\choose m-t+(2n-5d)}}{{N-2n\choose m-t}{N-2n\choose (2n-5d)-\ell}}\\
 \sim 
\frac{(m-t)^{2(m-t)}(N-4n+5d+\ell)^{N-4n+5d+\ell}(N-5d)^{N-5d}}{(m-t-2n+5d+\ell)^{m-t-2n+5d+\ell}(m-t+2n-5d)^{m-t+2n-5d}(N-2n)^{2N-4n}}.
\end{multline*}
Therefore, for $\ell\geq\ell_0+1$
\begin{multline*}
 \frac{{m-t\choose (2n-5d)-\ell}/{N-2n\choose (2n-5d)-\ell}}{{N-2n\choose m-t}/{N-5d\choose m-t+(2n-5d)}}\sim\left(\frac{N-4n+5d+\ell}{m-t-2n+5d+\ell}\right)^{\ell}\\
 \times
\frac{\left(1+\frac{2n-5d-\ell}{m-t-2n+5d+\ell}\right)^{m-t-2n+5d}\left(1-\frac{2n-5d}{m-t+2n-5d}\right)^{m-t+2n-5d}}
{\left(1-\frac{2n-5d}{N-5d}\right)^{N-5d}\left(1+\frac{2n-5d-\ell}{N-4n+5d+\ell}\right)^{N-4n+5d}}\\
\quad<\left((1+o(1))\frac{N}{m}\right)^{\ell} e^{2n-5d-\ell-\frac{\ell(2n-5d-\ell)}{m}-\frac{(2n-5d-\ell)^2}{2m}-(2n-5d)-\frac{(2n-5d)^2}{2m}+2n-5d-(2n-5d-\ell)}\\
\quad=e^{-\frac{4n^2}{m}}\left((1+o(1))\frac{N }{m}\right)^{\ell}.
\end{multline*}
Claim~\ref{cl:lm:main} completes the proof.
\end{proof}

\end{document}